\numberwithin{equation}{section}
\renewcommand{\@secnumfont}{\bfseries}
\newtheorem{thm}{Theorem}[section]    
\newtheorem{lem}[thm]{Lemma}         
\newtheorem{prop}[thm]{Proposition}        
\newtheorem{coro}[thm]{Corollary}
\newtheorem{conj}[thm]{Conjecture}          
\theoremstyle{definition}
\newtheorem{defn}[thm]{Definition}  
\newtheorem{rmk}[thm]{Remark}
\newcommand{\N}{\mathbb{N}}
\newcommand{\Z}{\mathbb{Z}}
\newcommand{\R}{\mathbb{R}}
\newcommand{\E}{\mathbb{E}}
\newcommand{\prob}{\mathbb{P}}
\newcommand{\n}{\noindent}
\newcommand{\B}{\textbf}
\newcommand{\e}{\epsilon}
\newcommand{\al}{\alpha}
\newcommand{\vp}{\varphi}
\newcommand{\cal}{\mathcal}
\newcommand{\ov}{\overline}
\newcommand{\sig}{\sigma}
\newcommand{\pa}{\partial}
\newcommand{\om}{\omega}
\newcommand{\var}{{\rm \mathbb{V}ar}}
\newcommand{\Th}{\text{Th\'{e}ret}}
\newcommand{\wh}{\widehat}
\newcommand{\Chee}{\wh{\Phi}_{n}}
\newcommand{\1}{\B{\rm \B{1}}}
\newcommand{\un}{\underline}
\newcommand{\giant}{\text{\rm\B{C}}_n}
\newcommand{\dw}{\frak{d}}
\newcommand{\per}{\text{\rm per}}
\newcommand{\cyl}{\text{\rm \small \textsf{cyl}}}
\newcommand{\hyp}{\text{\rm \small \textsf{hyp}}}
\newcommand{\slab}{\text{\rm \small \textsf{slab}}}
\newcommand{\hemi}{\Xi_{\text{\rm \textsf{hemi}}}}
\newcommand{\face}{\Xi_{\text{\rm \textsf{face}}}}
\newcommand{\brid}{\text{\rm \small \textsf{bridge}}}
\newcommand{\str}{\text{\rm \small \textsf{string}}}
\newcommand{\web}{\text{\rm \small \textsf{web}}}
\newcommand{\hull}{\text{\rm \small \textsf{hull}}}
\newcommand{\dcyl}{\text{\rm \small \textsf{d-cyl}}}
\newcommand{\dhemi}{\text{\rm \small \textsf{d-hemi}}}
\newcommand{\dface}{\text{\rm \small \textsf{d-face}}}
\newcommand{\rmE}{\text{{\rm E}} }
\newcommand{\rmV}{\text{{\rm V}} }
\newcommand{\sfS}{{\text{\rm\textsf{S}}}}
\newcommand{\wt}{\widetilde}
\newcommand{\slice}{\text{\rm \small \textsf{slice}}}
\definecolor{azure(colorwheel)}{rgb}{0.0, 0.5, 1.0}
\definecolor{hanpurple}{rgb}{0.32, 0.09, 0.98}
\definecolor{iris}{rgb}{0.35, 0.31, 0.81}
\begin{document}
\title{Isoperimetry in supercritical bond percolation \\ in dimensions three and higher}
\author{Julian Gold}

\maketitle

\begin{abstract}

We study the isoperimetric subgraphs of the infinite cluster $\B{C}_\infty$ for supercritical bond percolation on~$\Z^d$ with $d\geq 3$. Specifically, we consider subgraphs of $\B{C}_\infty \cap [-n,n]^d$ having minimal open edge boundary to volume ratio. We prove a shape theorem for these subgraphs: when suitably rescaled, they converge almost surely to a translate of a deterministic shape. This deterministic shape is itself an isoperimetric set for a norm we construct. As a corollary, we obtain sharp asymptotics on a natural modification of the Cheeger constant for $\B{C}_\infty \cap [-n,n]^d$, settling a conjecture of Benjamini for the version of the Cheeger constant defined here.
\end{abstract}

\vspace{5mm}

\newpage

\tableofcontents 


{\large\section{\B{Introduction and results}}\label{sec:introduction}}

\subsection{Motivation}\label{sec:intro_motivation} Isoperimetric problems, namely the problem of finding a set of given size and minimal boundary measure, have been studied for millennia. In the continuum, such problems are the subject of geometric measure theory and the calculus of variations. Isoperimetric inequalities give a lower bound on the boundary measure of a set in terms of the volume measure of the set. Their applications in mathematics range from concentration of measure to PDE theory. 

Isoperimetric problems are also well-studied in the discrete setting. One can encode isoperimetric inequalities for graphs in the \emph{Cheeger constant}, or modifications thereof. Define the Cheeger constant of a graph $G$ to be
\begin{align}
\Phi_G  := \min \left\{ \frac{ |\pa_G H |}{| H|} : H \subset G, 0<  |H| \leq |G|/2 \right\} \,,
\end{align}
where $\pa_G H$ is the edge boundary of $H$ in the graph $G$ and where $|H|$ and $|G|$ respectively denote cardinalities of the vertex sets of $H$ and $G$. Introduced in the context of manifolds in Cheeger's thesis \cite{Cheeger}, the Cheeger constant was used to give a lower bound on the smallest positive eigenvalue of the negative Laplacian. Its discrete analogue, introduced by Alon \cite{Alon}, plays a similar role in spectral graph theory (see for instance Chapter 2 of \cite{FanChung}). Indeed, Cheeger's inequality and its variants are used to study mixing times of random walks and Markov chains. Ultimately, the Cheeger constant provides one of many ways to study the geometry of a~graph. 

The goal of this paper is to explore the geometry of random graphs arising from bond percolation on $\Z^d$. We view $\Z^d$ as a graph, with edge set $\rmE(\Z^d)$ determined by nearest-neighbor pairs, and we form the probability space $( \{0,1\}^{\rmE(\Z^d)}, \mathscr{F}, \prob_p)$, where $\mathscr{F}$ denotes the product $\sigma$-algebra on $\{0,1\}^{\rmE(\Z^d)}$ and where $\prob_p$ is the product Bernoulli measure associated to the \emph{percolation parameter} $p \in [0,1]$. Elements $\om = (\om_e)_{e \in \rmE(\Z^d)}$ of the probability space are \emph{percolation configurations}. An edge $e \in \rmE(\Z^d)$ is \emph{open} in the configuration $\om$ if $\om_e =1$ and is \emph{closed} otherwise. The open edges determine a random subgraph of $\Z^d$ whose connected components are called \emph{open clusters}. It is well-known (see Grimmett \cite{Grimmett} for details) that when $d\geq 2$, bond percolation exhibits a phase transition: there is a $p_c(d) \in (0,1)$ so that whenever $p > p_c(d)$, there exists a unique infinite open cluster $\prob_p$-almost surely, and whenever $p < p_c(d)$, there is no infinite open cluster $\prob_p$-almost surely. We work in the supercritical ($p > p_c(d)$) regime, and denote the unique infinite open cluster by $\B{C}_\infty$. 

We may now be more specific: our goal is to explore the geometry of $\B{C}_\infty$. There are many ways to do this, for example, one can study the asymptotic graph distance in $\B{C}_\infty$ (e.g. Antal and Pisztora  \cite{Antal_Pisztora}), the asymptotic shapes of balls in the graph distance metric of $\B{C}_\infty$ (e.g. Cox and Durrett \cite{Cox_Durrett}), or the effective resistance of $\B{C}_\infty$ within a large box (e.g. Grimmett and Kesten \cite{Grimmett_Kesten}). Our aim is to study the isoperimetry of $\B{C}_\infty$ through the Cheeger constant. 

By definition, $\Phi_G = 0$ for any amenable graph, and one can show that $\Phi_{\B{C}_\infty} = 0$ almost surely. We instead study the Cheeger constant of $\giant : =\B{C}_\infty \cap [-n,n]^d$. Let $\wt{\B{C}}_n$ be the largest connected component of $\giant$. It is known (Benjamini and Mossel \cite{BenjMo}, Mathieu and Remy \cite{Mathieu_Remy}, Rau \cite{Rau}, Berger, Biskup, Hoffman and Kozma \cite{BBHK} and Pete \cite{Pete}) that $\Phi_{\wt{\B{C}}_n} \asymp n^{-1}$ as $n \to \infty$, prompting the following conjecture of Benjamini. 

\begin{conj} For $p > p_c(d)$ and $d \geq 2$, the limit 
\begin{align}
\lim_{n \to \infty} n\, \Phi_{\wt{\B{C}}_n}
\end{align}
exists $\prob_p$-almost surely and is a positive deterministic constant.
\label{benjamini}
\end{conj}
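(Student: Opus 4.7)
The plan is to establish Benjamini's conjecture as a corollary of a shape theorem for (near-)isoperimetric subgraphs of $\wt{\B{C}}_n$, in the spirit of the Wulff construction for supercritical percolation due to Cerf and co-authors. The limiting shape will be the Wulff shape $W$ of a deterministic anisotropic norm $\beta$ on $\R^d$: the unique (up to translation and scaling) minimizer of the surface-energy functional $\mathcal{I}(F) := \int_{\pa^* F} \beta(\nu_F)\, d\mathcal{H}^{d-1}$ among sets of finite perimeter of prescribed volume. The conjecture then follows because $n\Phi_{\wt{\B{C}}_n}$ converges almost surely to $\mathcal{I}(W)/(\theta_p \, \mathcal{L}^d(W))$, where $\theta_p := \prob_p(0 \in \B{C}_\infty)$.

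The first ingredient is the norm $\beta$. For a unit vector $v$, define $\beta(v)$ as the almost-sure limit of the normalized minimum number of open edges disconnecting the top face of a large slab oriented perpendicular to $v$ from its bottom face. Subadditive-type arguments and the exponential concentration estimates available for minimal cutsets in supercritical percolation (building on the maximal-flow work of Zhang and of Th\'eret) yield existence of the limit, together with the convexity and positivity needed to make $\beta$ a genuine norm on $\R^d$ and $W$ a compact set with nonempty interior.

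The shape theorem itself follows from matching bounds. Given isoperimetric $H_n \subset \wt{\B{C}}_n$, rescale to obtain $E_n := n^{-1} H_n \subset [-1,1]^d$ and exploit the known bound $|\pa_{\wt{\B{C}}_n} H_n| = O(n^{d-1})$ to extract, via BV compactness, a subsequential $L^1$-limit $E$ of finite perimeter with $\mathcal{L}^d(E) \leq \theta_p \, \mathcal{L}^d([-1,1]^d)/2$. The main lower bound to establish is
\begin{align}
\liminf_{n \to \infty} n^{-(d-1)} |\pa_{\wt{\B{C}}_n} H_n| \geq \mathcal{I}(E),
\end{align}
obtained by tiling $\pa^* E$ by flat patches, invoking convergence in the definition of $\beta$ on each patch, and summing. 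The matching upper bound is constructive: approximate $W$ by a polyhedron, realize each face by a near-optimal open cutset in $\B{C}_\infty$, and glue the pieces into an explicit competitor for the minimization defining $\Phi_{\wt{\B{C}}_n}$. A standard isoperimetric inequality for $\mathcal{I}$ identifies the dilate of $W$ at the right volume as the unique minimizer (up to translation), completing the shape theorem and the conjecture.

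The principal obstacle is the discrete-to-continuum lower bound. The edge boundary of $H_n$ can contain ``flapping'' or filamentary pieces that inflate $|\pa_{\wt{\B{C}}_n} H_n|$ without contributing to $\mathcal{I}(E)$, and portions of $\pa^* E$ may lie on $\pa[-1,1]^d$ where surface energy must be accounted for differently. The canonical remedy is a renormalization scheme: partition $[-n,n]^d$ into mesoscopic boxes deemed ``good'' when the local geometry of $\B{C}_\infty$ closely resembles that of $\Z^d$ and when all cutset quantities are near their $\beta$-limits, label each box by the dominant behavior of $H_n$, and compare the coarse-grained set to $E$ via slicing and sliding-cutset arguments. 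Quantitative control on the density of bad boxes, together with careful treatment near $\pa[-n,n]^d$ so that the isoperimetric ratio is not artificially improved by interaction with the ambient box, will form the bulk of the technical work.
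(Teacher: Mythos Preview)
The paper does not prove Conjecture~\ref{benjamini}. It is stated as an open conjecture motivating the work, and in Section~\ref{sec:intro_5} the unmodified problem is still listed as open (in $d\ge 3$; the $d=2$ case is attributed to \cite{Gold2}). What the paper actually establishes is Theorem~\ref{main_benj} for the \emph{modified} Cheeger constant $\Chee$ of \eqref{eq:modified_chee}, in which the edge boundary is measured in all of $\B{C}_\infty$ (so edges exiting $[-n,n]^d$ count) and the volume constraint is $|H|\le |\giant|/d!$. Your outline is, in broad strokes, the strategy the paper executes for $\Chee$: build the norm $\beta_{p,d}$ from minimal open cutsets, prove concentration, get the upper bound by carving a polytope approximating $W_{p,d}$ out of $\giant$, and get the lower bound by coarse-graining the optimizer into a set of finite perimeter and comparing surface energies locally.

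The genuine gap in your proposal is exactly the boundary issue you flag only as a technical nuisance. For $\Phi_{\wt{\B{C}}_n}$ the boundary $\pa_{\wt{\B{C}}_n}H$ does \emph{not} count open edges leaving $[-n,n]^d$, so an optimizer can profitably press against $\pa[-n,n]^d$ and get ``free'' boundary there. Your limiting variational problem is then not ``minimize $\cal{I}_{p,d}(E)$ over $E\subset[-1,1]^d$ with $\cal{L}^d(E)\le 2^{d-1}$'' but rather a problem with a free-boundary (Winterbottom-type) condition on $\pa[-1,1]^d$; the Wulff crystal is not the minimizer, and identifying the correct limit shape is itself nontrivial (the paper conjectures quarter-Wulff crystals in $d=2$). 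Your lower-bound argument, which tiles $\pa^*E$ by flat patches and invokes $\beta_{p,d}$ on each, breaks down on $\pa^*E\cap\pa[-1,1]^d$, where the relevant microscopic cost is zero, not $\beta_{p,d}(\nu)$. The modification in \eqref{eq:modified_chee} is precisely designed to remove this interaction (and the $1/d!$ in place of $1/2$ ensures the optimizer need not touch $\pa[-1,1]^d$, cf.\ Lemma~\ref{containment}); without it, the ``careful treatment near $\pa[-n,n]^d$'' you allude to is not a detail but the whole difficulty, and it is not resolved in this paper.
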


Procaccia and Rosenthal \cite{Procaccia_Rosenthal} made progress towards resolving this conjecture: they proved upper bounds on the variance of the Cheeger constant, showing $\var(n \Phi_{\wt{\B{C}}_n} ) \leq cn^{2-d}$ for some positive $c = c(p,d)$. Recently, Biskup, Louidor, Procaccia and Rosenthal \cite{BLPR} settled this conjecture positively for a natural modification of $\Phi_{\wt{\B{C}}_n}$ in dimension two. Define the \emph{modified Cheeger constant} $\Chee$ of $\B{C}_n$ in dimensions $d \geq 2$:
\begin{align}
\Chee  := \min \left\{ \frac{ |\pa_{\B{C}_\infty} H |}{| H|} : H \subset \giant, 0 < |H| \leq |\giant|/d! \right\} \,,
\label{eq:modified_chee}
\end{align}
where $\pa_{\B{C}_\infty} H$ denotes the open edge boundary of $H$ within \emph{all of} $\B{C}_\infty$ as opposed to $\giant$. This modification is natural in the sense that subgraphs $H$ are treated as living within $\B{C}_\infty$, and the $d!$ in the volume upper bound ensures that $H$ need not touch the boundary of the box. Thanks to Proposition 1.2 of \cite{BenjMo}, the asymptotics of $\Chee$ are unchanged whether we use $\giant$ or $\wt{\B{C}}_n$ in \eqref{eq:modified_chee}.

Both $\Phi_{\wt{\B{C}}_n}$ and $\Chee$ are closely related to the so-called \emph{anchored isoperimetric profile}, defined in the context of the infinite cluster as 
\begin{align} 
\Phi_{\B{C}_\infty,0}(n)  := \inf \left\{ \frac{ |\pa_{\B{C}_\infty} H| }{|H|} : 0 \in H \subset \B{C}_\infty, H \text{ connected}, 0 < |H| \leq n \right\} \,,
\end{align}
where of course we must condition on the positive probability event $\{ 0 \in \B{C}_\infty\}$. In \cite{BLPR}, the analogue of Benjamini's conjecture for the anchored isoperimetric profile was also established in dimension two. Moreover, the subgraphs of $\B{C}_n$ and $\B{C}_\infty$ achieving each minimum were studied in both cases, and in fact were shown to scale uniformly to the same deterministic limit shape. This \emph{shape theorem} implies the existence of the limit in Conjecture \ref{benjamini} for (\ref{eq:modified_chee}). Indeed, the perimeter of this limit shape appears in the limiting value of the modified Cheeger constant.

\subsection{Results}

We extend the work of \cite{BLPR} to the setting $d \geq 3$ by settling Benjamini's conjecture for the modified Cheeger constant and by proving a shape theorem for isoperimetric subgraphs of $\giant$. As the arguments in \cite{BLPR} rely heavily on planar geometry and graph duality, a much different approach is needed. Nevertheless, we share a common starting point in the Wulff construction, described below, and there are structural similarities between both arguments. 

We state the main theorem of the paper first. For each $n$, let $\cal{G}_n$ be the (random) collection of subgraphs of $\B{C}_n$ realizing the minimum in \eqref{eq:modified_chee}. For $A \subset \R^d$, $r >0$ and $x \in \R^d$ the sets $rA$ and $x +A$ are defined as usual by
\begin{align}
rA := \Big \{ ra : a \in A \Big \}\,, \hspace{5mm} x + A := \Big  \{ x+a : a \in A \Big\} \,,
\end{align}
and we write $|| \cdot ||_{\ell^1}$ to denote the $\ell^1$-norm on $\Z^d$. Here is our main result:
\begin{thm} Let $d \geq 3$ and $p > p_c(d)$. There is a deterministic, convex set $W_{p,d} \subset [-1,1]^d$ so that
\begin{align}
\max_{G_n \in \cal{G}_n} \inf_{x \in \R^d} n^{-d} \big \| \1_{G_n} - \1_{\giant \cap (x + nW_{p,d})} \big \|_{\ell^1} \xrightarrow[n \to \infty]{}  0
\end{align}
holds $\prob_p$-almost surely.
\label{main_L1}
\end{thm}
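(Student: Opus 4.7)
The plan is to set up a Wulff-type variational problem for the cluster $\B{C}_\infty$ and identify $W_{p,d}$ as its unique (up to translation) solution, extending the $d = 2$ approach of \cite{BLPR} by replacing planar duality with a direct analysis of surface tension via minimum cutsets. First I would define a deterministic norm $\beta_{p,d}$ on $\R^d$: for a unit vector $\nu$, let $\beta_{p,d}(\nu)$ be the almost-sure subadditive limit of the minimum open edge cutset separating the two faces of an $R$-cylinder with axis $\nu$, divided by its cross-sectional area. Convexity, continuity, and nondegeneracy of $\beta_{p,d}$ in the supercritical regime are now classical (Zhang, Rossignol--Th\'{e}ret, Cerf--Th\'{e}ret). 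Set
\begin{align}
\cal{I}_{p,d}(A) := \int_{\pa^* A} \beta_{p,d}(\nu_A(x))\, d\cal{H}^{d-1}(x)
\end{align}
for Caccioppoli sets $A \sub \R^d$, and define $W_{p,d} \sub [-1,1]^d$ to be the (unique up to translation) minimizer of $\cal{I}_{p,d}(A)/|A|$ subject to $|A| = 2^d/d!$, the continuum analogue of the volume constraint in \eqref{eq:modified_chee}; uniqueness here is the classical Brunn--Minkowski / Taylor theorem for anisotropic isoperimetric problems, applied once one checks that a Wulff ball of this volume fits inside the box.

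Next I would convert the discrete Cheeger problem to a continuum one. For $G_n \in \cal{G}_n$, form the rescaled empirical measure $\mu_n := n^{-d}\sum_{v \in G_n}\delta_{v/n}$ on $[-1,1]^d$. Since $\Chee \asymp n^{-1}$ is known (Benjamini--Mossel, \cite{BBHK}, \cite{Pete}), $|\pa_{\B{C}_\infty} G_n| = O(n^{d-1})$ and $\mu_n$ has bounded mass and bounded perimeter in a weak sense, so after translating to fix the barycenter a subsequence converges weakly to $\theta(p) \1_A \, d\cal{L}$ for some $A \sub [-1,1]^d$ of finite perimeter. A matching upper bound $\limsup n \Chee \leq \cal{I}_{p,d}(W_{p,d})/(\theta(p)|W_{p,d}|)$ is obtained by taking as competitor a discretized, $\B{C}_\infty$-intersected, lightly trimmed copy of $nW_{p,d}$; its boundary is controlled via Pisztora's renormalization and its volume via the ergodic theorem.

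The main obstacle is the corresponding lower bound
\begin{align}
\liminf_{n \to \infty} n^{-(d-1)} |\pa_{\B{C}_\infty} G_n| \geq \cal{I}_{p,d}(A) \,.
\end{align}
I would prove it by mesoscopic coarse graining: tile $[-n,n]^d$ by cubes of side $Kn$ with $K$ small, and on each cube where $\mu_n$ admits a well-defined average normal $\nu$, use large-deviation estimates for minimum cutsets with prescribed orientation (Cerf--Th\'{e}ret, Rossignol--Th\'{e}ret) to show $|\pa_{\B{C}_\infty} G_n|$ inside the cube exceeds $\beta_{p,d}(\nu)(Kn)^{d-1}(1 - o(1))$ except on events with probability summable over cubes; adding over cubes and sending $n \to \infty$, then $K \to 0$, recovers $\cal{I}_{p,d}(A)$. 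With both bounds in hand, any subsequential limit $A$ minimizes $\cal{I}_{p,d}/|\cdot|$ at the extremal volume, so by uniqueness $A$ is a translate of $W_{p,d}$. The $\ell^1$ conclusion of the theorem then follows from concentration of $|\B{C}_\infty \cap U|$ around $\theta(p)|U|$ for large $U$, and a Borel--Cantelli upgrade using polynomial concentration of $\Chee$ (cf.\ the variance bound of \cite{Procaccia_Rosenthal}) turns subsequential convergence in probability into the stated almost sure convergence along the full sequence.
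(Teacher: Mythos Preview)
Your outline captures the overall architecture correctly (norm $\beta_{p,d}$, Wulff variational problem, upper bound by discretizing $W_{p,d}$, lower bound by coarse graining, uniqueness of the minimizer), and this is indeed the paper's strategy. But there is a genuine gap at the step you pass over most quickly.

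You write that since $|\pa_{\B{C}_\infty} G_n| = O(n^{d-1})$, the empirical measure $\mu_n$ ``has bounded perimeter in a weak sense, so after translating \ldots\ a subsequence converges weakly to $\theta(p)\1_A\,d\cal{L}$ for some $A$ of finite perimeter.'' This is precisely the hard part, and it is not automatic. The \emph{open} edge boundary $\pa^\om G_n$ can be much smaller than the full edge boundary $\pa G_n$; the naive continuum set $n^{-1}\bigcup_{x\in G_n} Q(x)$ may have perimeter growing with $n$ (for $p$ not close to $1$, vacant percolation creates many closed edges on $\pa G_n$). The paper devotes Sections~6--8 to this: it uses a renormalization due to Zhang to replace $\pa_o G_n$ by a closed cutset $\Gamma_n$ living in a $*$-connected set of $k$-cubes, then ties together the resulting contours by an auxiliary \emph{webbing} graph so that a Peierls argument can bound $|\Gamma_n| \le \gamma n^{d-1}$. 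The webbing step is where the restriction $d\ge 3$ enters (the number of large trapped components is $O(n^{d-1-1/(d-1)})$ and the paths joining them have length $O(n^{1/(d-1)})$, so the total is $O(n^{d-1})$ only when $d\ge 3$). Only after this does one obtain a polytope $P_n$ with $\per(P_n)\le \gamma$ whose representing measure is $\dw$-close to $\mu_n$. Your cube-tiling lower bound (``on each cube where $\mu_n$ admits a well-defined average normal'') presupposes exactly this structure.

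A second, smaller gap: even once you know every subsequential limit $A$ minimizes $\cal{I}_{p,d}(A)/\cal{L}^d(A)$, you need to pass from convergence of the \emph{value} $n\Chee$ to convergence of the \emph{shape}. The paper does this not by the Procaccia--Rosenthal variance bound (which concerns $\Chee$, not the optimizers) but by the quantitative Wulff inequality of Figalli--Maggi--Pratelli, combined with a compactness argument on the space $\cal{P}_{\gamma,\xi}$ of finite-perimeter representatives: if $\dw(\mu_n,\cal{W})$ stayed bounded away from $0$, the asymmetry index would force $\cal{I}_{p,d}(F)$ strictly above $\cal{I}_{p,d}(W_{p,d})$, contradicting the upper bound on $\Chee$. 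Your proposed Borel--Cantelli upgrade via variance of $\Chee$ does not by itself control the shape.
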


Following \cite{BLPR}, we build the limit shape $W_{p,d}$ through what is known as the Wulff construction, a method for solving anisotropic isoperimetric problems first introduced by Wulff \cite{Wulff} in 1901. Given a norm $\tau$ on $\R^d$, one can form an associated isoperimetric problem:
\begin{align}
\text{minimize } \frac{\cal{I}_\tau(E)}{\cal{L}^d(E) }  \hspace{5mm} \text{subject to } \cal{L}^d(E) \leq 1,
\label{eq:section1.1_iso}
\end{align}
ranging over $E \subset \R^d$ with Lipschitz boundary, where $\cal{L}^d$ denotes $d$-dimensional Lebesgue measure, and where $\cal{I}_\tau(E)$ is defined as
\begin{align}
\cal{I}_\tau(E) := \int_{\pa E} \tau(v_E(x) )  \cal{H}^{d-1}(dx) \,.
\label{eq:section1.1_functional}
\end{align}
Here $\cal{H}^{d-1}$ is the $(d-1)$-dimensional Hausdorff measure on $\pa E$ and $v_E(x)$ the unit exterior normal to $E$ at the point $x \in \pa E$, which is defined for $\cal{H}^{d-1}$-almost every point of $\pa E$. Wulff's isoperimetric set is the following intersection of half-spaces:
\begin{align}
\wh{W}_\tau := \bigcap_{v \in \mathbb{S}^{d-1}} \Big\{ x \in \R^d \:: x \cdot v \leq \tau(v) \Big\}\,,
\label{eq:section1.1_shape}
\end{align}
where $\cdot$ denotes the standard dot product in $\R^d$, and where $\mathbb{S}^{d-1}$ is the unit sphere in $\R^d$. We call $\wh{W}_\tau$ the \emph{unit Wulff crystal} associated to $\tau$; this object is the unit ball in the norm $\tau'$ dual to $\tau$ (recall that $\tau'$ is defined on $y \in \R^d$ by $\tau'(y) =  \sup\{ x \cdot y : x \in\R^d, \tau(x) \leq 1 \}$). When $\wh{W}_\tau$ is scaled to have unit volume, it becomes a candidate minimizer for (\ref{eq:section1.1_iso}). Taylor \cite{T2} ultimately proved this rescaled shape is optimal within a wide class of Borel sets, and moreover (in \cite{T3}) that this rescaled shape is the unique optimizer up to translations and modifications on a null set.

The Wulff construction is relevant because a norm emerges naturally when our problem is viewed correctly. Denoted $\beta_{p,d}$, this norm is first defined on $\mathbb{S}^{d-1}$: in a given direction $v \in \mathbb{S}^{d-1}$, first rotate a large cube so that its top and bottom faces are normal to $v$, then consider the restriction of percolation to this rotated cube. The minimum size of a cutset separating the faces of the cube in this percolated graph functions as a discrete surface energy. By requiring these cutsets to be anchored near the middle of the cube, we may employ a subadditivity argument and extract a limit as the diameter of the cube tends to infinity. This homogenized surface energy is $\beta_{p,d}(v)$.

We build $\beta_{p,d}$ in Section \ref{sec:norm}, and we define the \emph{Wulff crystal} $W_{p,d}$ to be the dilate of the unit Wulff crystal $\wh{W}_{p,d} $ associated via (\ref{eq:section1.1_shape}) to $\beta_{p,d}$ so that $\cal{L}^d(W_{p,d}) = 2^d /d!$. The Wulff crystal is then the limit shape from Theorem \ref{main_L1}, and we note that the norm $\beta_{p,d}$ gives rise to a \emph{surface energy functional} of the form (\ref{eq:section1.1_functional}) denoted $\cal{I}_{p,d}$. As in \cite{BLPR}, the shape theorem we present is intimately linked with the limiting value of the Cheeger constant. Let $\theta_p(d) := \prob_p( 0 \in \B{C}_\infty)$ be the density of the infinite cluster within $\Z^d$.

\begin{thm} Let $d \geq 3$, $p > p_c(d)$ and let $\beta_{p,d}$ be the norm to be constructed in Proposition~\ref{beta}. Let $W_{p,d}$ be the Wulff crystal for this norm, that is, the ball in the dual norm $\beta_{p,d}'$ such that $\cal{L}^d(W_{p,d}) = 2^d/d!$. Then, 
\begin{align}
\lim_{n\to \infty} n\, \Chee =  \frac{ \cal{I}_{p,d}(W_{p,d}) }{ \theta_p(d) \cal{L}^d(W_{p,d} )}
\end{align}
holds $\prob_p$-almost surely.
\label{main_benj}
\end{thm}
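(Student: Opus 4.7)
The plan is to derive Theorem \ref{main_benj} from the shape theorem, Theorem \ref{main_L1}, via matching upper and lower bounds on $n\,\Chee$. Writing $\Chee = |\pa_{\B{C}_\infty} G_n|/|G_n|$ for any minimizer $G_n \in \cal{G}_n$,
\begin{equation*}
n\,\Chee = \frac{|\pa_{\B{C}_\infty} G_n|/n^{d-1}}{|G_n|/n^d},
\end{equation*}
so the problem factors into a volume asymptotic (handled by the ergodic theorem) and a surface-energy asymptotic (where the construction of $\beta_{p,d}$ and the shape theorem enter).

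For the upper bound I would test against an explicit Wulff competitor. The normalization $\cal{L}^d(W_{p,d}) = 2^d/d!$ guarantees a shift $x_n \in \R^d$ with $x_n + nW_{p,d} \subset [-n,n]^d$, and I set $H_n := \giant \cap (x_n + nW_{p,d})$. The multidimensional ergodic theorem applied to $\1_{\{0 \in \B{C}_\infty\}}$ yields
\begin{equation*}
\frac{|H_n|}{n^d} \xrightarrow[n \to \infty]{} \theta_p(d)\,\cal{L}^d(W_{p,d})
\end{equation*}
almost surely; after possibly deflating $W_{p,d}$ by a factor $(1-\eta)$ for small $\eta > 0$, the constraint $|H_n| \leq |\giant|/d!$ is strictly satisfied for large $n$ at a cost of $o(1)$ in the final bound. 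The construction of $\beta_{p,d}$ in Proposition \ref{beta} and the surface-energy framework built on it should deliver $|\pa_{\B{C}_\infty} H_n|/n^{d-1} \to \cal{I}_{p,d}(W_{p,d})$ almost surely by polyhedral approximation of $\pa W_{p,d}$ and subadditivity applied face-by-face. Dividing yields $\limsup_n n\,\Chee \leq \cal{I}_{p,d}(W_{p,d}) /(\theta_p(d)\cal{L}^d(W_{p,d}))$.

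For the matching lower bound, pick any minimizer $G_n \in \cal{G}_n$ and let $x_n$ be the shift provided by Theorem \ref{main_L1}. The $L^1$ convergence together with the ergodic theorem yields $|G_n|/n^d \to \theta_p(d)\cal{L}^d(W_{p,d})$ almost surely. The harder step is a matching lower bound on the open edge boundary, which I would derive from a lower semicontinuity property of the discrete-to-continuum surface energy: any sequence $F_n \subset \giant$ whose rescaled indicators converge in $L^1$ to the indicator of a translate of $W_{p,d}$ should satisfy
\begin{equation*}
\liminf_{n \to \infty} \frac{|\pa_{\B{C}_\infty} F_n|}{n^{d-1}} \geq \cal{I}_{p,d}(W_{p,d}),
\end{equation*}
using the isoperimetric optimality of $W_{p,d}$ for the ratio $\cal{I}_{p,d}/\cal{L}^d$. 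Applied to $F_n = G_n$ and combined with the volume asymptotic, this gives $\liminf_n n\,\Chee \geq \cal{I}_{p,d}(W_{p,d})/(\theta_p(d)\cal{L}^d(W_{p,d}))$, which matches the upper bound.

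The main obstacle is this surface-energy lower bound, which requires converting purely $L^1$ information into control of the discrete boundary. The standard route is coarse graining: cover $\pa G_n$ by mesoscopic boxes, show that in most boxes the local open edge boundary density is close to the minimal-cutset density $\beta_{p,d}(v)$ for an approximate outer normal $v$, and assemble the contributions into a Riemann sum approaching $\cal{I}_{p,d}(W_{p,d})$. Boxes in which $G_n$ fails to be well-approximated by a half-space are controlled by the $L^1$ smallness supplied by Theorem \ref{main_L1}. Such machinery is ordinarily built up alongside the shape theorem itself, so by this point in the paper it should be available for direct invocation; a secondary but less delicate concern is the interaction with the volume constraint $|H| \leq |\giant|/d!$, which the choice $\cal{L}^d(W_{p,d}) = 2^d/d!$ was designed to accommodate.
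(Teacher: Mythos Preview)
Your outline has the right skeleton, but there is a real gap in the upper bound. Setting $H_n := \giant \cap (x_n + nW_{p,d})$ does \emph{not} give $|\pa_{\B{C}_\infty} H_n|/n^{d-1} \to \cal{I}_{p,d}(W_{p,d})$: the open edge boundary of this naive intersection is the set of open edges crossing $\pa(nW_{p,d})$, whose density in direction $v$ is of order $p\,|v|_1$, not the minimal-cutset value $\beta_{p,d}(v)$. So this competitor yields an upper bound that is strictly too large and will not match the lower bound. The paper's construction (Theorem~\ref{upper_bound} and Corollary~\ref{upper_bound_2}) is more indirect: approximate $W_{p,d}$ by a polytope, patch together near-optimal anchored cutsets (realizing $\hemi$) along each face into a global cutset $\Gamma_n$, and define the competitor as the vertices of $\giant$ that $\Gamma_n$ separates from $\infty$. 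The ``polyhedral approximation and subadditivity'' you mention is the right machinery, but it is used to \emph{build} the cutset and hence the competitor, not to analyze the boundary of the naive intersection.

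On the lower bound your plan is essentially what the paper does, with a small organizational difference. You propose to go through Theorem~\ref{main_L1}; the paper instead proves both Theorem~\ref{main_L1} and Theorem~\ref{main_benj} as parallel corollaries of the precursor Theorem~\ref{main} (closeness of $\mu_n$ to $\cal{W}$ in the metric $\dw$). The surface-energy lower semicontinuity you correctly flag as the main obstacle is precisely Proposition~\ref{prob_conversion}: on a high-probability event, $\dw(\mu_n,\nu_F)\leq \wt{\e}_F$ forces $|\pa^\om G_n| \geq (1-\zeta)\cal{I}_{p,d}(F)\,n^{d-1}$. The paper covers the compact set $\cal{W}$ by finitely many $\dw$-balls and applies this on each; the volume bound likewise comes directly from $\dw$-closeness rather than a separate ergodic-theorem step.
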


\subsection{History and discussion}\label{sec:intro_history}  Within the last thirty years, the Wulff construction has grown into an important tool in the rigorous analysis of equilibrium crystal shapes. Such problems are concerned with understanding the macroscopic behavior of one phase of matter immersed within another. 

The present work fits into this paradigm in that we may regard each Cheeger optimizer $G_n$ as a large droplet of a \emph{crystalline} phase within $\B{C}_\infty \setminus G_n$, regarded as the \emph{ambient} phase. The value of the norm $\beta_{p,d}$ in a given direction represents the energy required to form a flat interface between the two phases in this direction, and gives rise to a surface energy functional of the form (\ref{eq:section1.1_functional}). It was Gibbs \cite{Gibbs} who postulated that, in general, the asymptotic shape of the crystalline phase should minimize this surface energy. The Wulff construction furnishes this minimal shape. 

The spirit of Theorem \ref{main_L1} can be traced back to the work of Minlos and Sinai \cite{Milnos_Sinai_1,Milnos_Sinai_2} from the 1960s, in which the geometric properties of phase separation in a material are rigorously studied. The first rigorous characterizations of phase separation via the Wulff construction are due independently to Dobrushin, Koteck\'{y} and Shlosman \cite{DKS} in the context of the two-dimensional Ising model and to Alexander, Chayes and Chayes \cite{Alexander_Chayes_Chayes} in the context of two-dimensional bond percolation. The results of \cite{DKS}, valid in the low-temperature regime, were extended up to the critical temperature thanks to the work of Ioffe \cite{Ioffe} and Ioffe and Schonmann \cite{Ioffe_Schonmann}. 

The first rigorous derivation of the Wulff construction for a genuine short-range model in three dimensions was achieved by Cerf in the context of bond percolation \cite{Cerf_3D}. Analogous results for the Ising model and in higher dimensions were achieved in several substantial works of Bodineau \cite{Bodineau1,Bodineau2} and Cerf and Pisztora \cite{Cerf_Pisztora_2,Cerf_Pisztora}. The coarse graining results of Pisztora \cite{P} played an integral role in this study of the Ising model, FK percolation and bond percolation in higher dimensions. A comprehensive survey of these results and of others can be found in Section 5.5 of Cerf's monograph \cite{stflour} and in the review article of Bodineau, Ioffe and Velenik \cite{BIV}. 

In all cases, the jump to dimensions strictly larger than two has, at least so far, necessitated a shift from the uniform topology to the $\ell^1$-topology on the space of shapes (we are intentionally vague about which space we consider). Indeed, the variational problem (\ref{eq:section1.1_iso}) is not stable in $d\geq 3$ when the space of shapes is equipped with the uniform topology: it is possible to construct a sequence of shapes bounded away from the optimal shape in the uniform topology, but whose surface energies tend to the optimal surface energy. This has implications at the microscopic level; to prove a uniform shape theorem in $d\geq 3$ for the Cheeger optimizers, one would first have to rule out the existence of long thin filaments (as in Figure \ref{fig:filament}) in these discrete objects with high probability. 

\begin{figure}[h]
\centering
\includegraphics[scale=0.75]{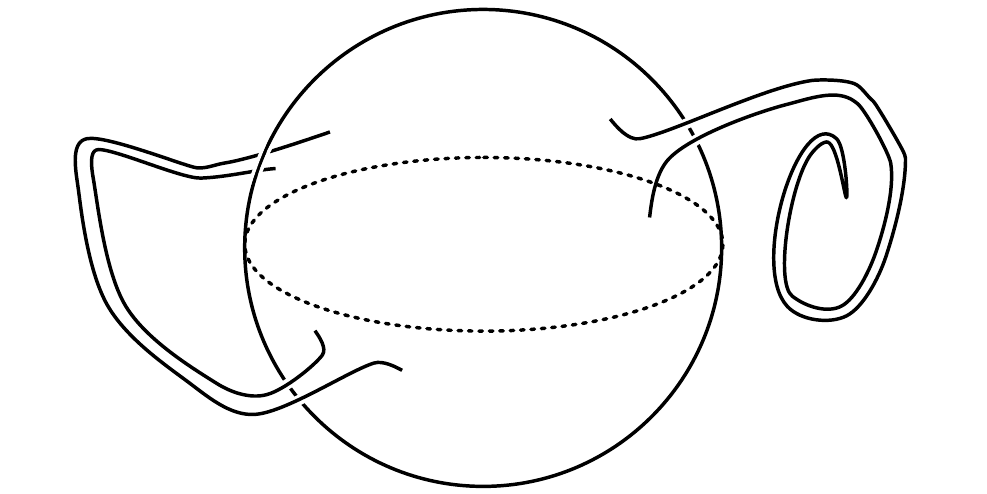}
\caption{In $d = 3$, filaments added to the optimal shape for the Euclidean isoperimetric problem produce a set which is almost optimal and and yet has large uniform distance to the sphere.} 
\label{fig:filament}
\end{figure}

This lack of regularity at the microscopic level requires that we consider the variational problem over a wider class of shapes, and it is here that geometric measure theory emerges as a valuable tool, as first realized by Alberti, Bellettini, Cassandro and Presutti \cite{ABCP}.

\subsection{Outline}

Our goals may be summarized as follows: we wish to show that the sequence of discrete, random isoperimetric problems (\ref{eq:modified_chee}) scale to a continuous, deterministic isoperimetric problem (\ref{eq:section1.1_iso}) corresponding to some norm $\beta_{p,d}$ on $\R^d$. We do not use the language of $\Gamma$-convergence, though this has been used in recent related work of Braides and Piatnitski \cite{Braides_Piatnitski_1,Braides_Piatnitski_2}. 

The first task is to construct a suitable norm $\beta_{p,d}$ on $\R^d$, done in Section \ref{sec:norm} after introducing some definitions and notation in Section \ref{sec:notation}. The key to the existence of $\beta_{p,d}$ is a spatial subadditivity argument applied to the geometric setting described briefly before Theorem \ref{main_benj}. 

The resulting norm $\beta_{p,d}$ gives rise to a surface energy $\cal{I}_{p,d}$, and the remainder of the paper is concerned with demonstrating that the unique optimizer of the isoperimetric problem associated to $\cal{I}_{p,d}$ faithfully describes the macroscopic shape of each large $G_n \in \cal{G}_n$. We must show a correspondence between discrete objects (the various subgraphs of $\giant$) and continuous objects (Borel subsets of $[-1,1]^d$ for which isoperimetric problems can be defined). This correspondence should be strong enough to link the isoperimetric ratio of subgraphs of $\giant$ to the ratio for continuous objects, as in the limiting value of Theorem~\ref{main_benj}. 

Concentration estimates proved in Section \ref{sec:concentration} allow us to pass from continuous objects to discrete objects in Section \ref{sec:consequences}, yielding high probability upper bounds on $\Chee$. This is in line with the strategy of \cite{BLPR}, and is in contrast to large deviation methods used in some of the work referenced in Section~\ref{sec:intro_history}, where the nature of these earlier problems requires working within events of small probability. All arguments presented up to this point work in the setting $d \geq 2$. 

Passing from discrete objects to continuous objects is more delicate, and requires a renormalization argument given in Section \ref{sec:coarse_original}. We base our argument on a construction from an unpublished note of Zhang \cite{Zhang}, but we must improve this construction and study it carefully in order to apply it to our situation. It is here that, for reasons which will be made clear in Section \ref{sec:coarse_applied}, we must restrict ourselves to the setting $d \geq 3$. This is no loss as the case $d=2$ is covered by results in \cite{BLPR}. 

In Section \ref{sec:contiguity}, we reap the efforts of Section \ref{sec:coarse_applied}, passing from $G_n \in \cal{G}_n$ to sets of finite perimeter (defined in Section \ref{sec:notation}). Such sets have just enough regularity that we may work locally on their boundaries. We exploit this in Section \ref{sec:final} to show whenever a $G_n$ is close to a set of finite perimeter, the surface energy of this set is roughly a lower bound on the open edge boundary of $G_n$. Our notion of closeness allows us to relate the volumes of these objects; we may then deduce that whenever $G_n$ is close to a set of finite perimeter, the isoperimetric ratio of $G_n$ (hence the Cheeger constant) is controlled from below by the isoperimetric ratio of the given continuum set. 

Invoking the results of Section \ref{sec:consequences} and the work of Taylor \cite{T2,T3}, we find that with high probability, each $G_n$ must be close to the Wulff crystal, giving Theorem \ref{main_L1} and Theorem \ref{main_benj} in quick succession. 

\subsection{Open problems}\label{sec:intro_5}

We pose several open questions, some of which were stated in~\cite{BLPR}. \\

\n\emph{(1) Boundary conditions and more general domains: } Conjecture \ref{benjamini} was recently settled for the unmodified Cheeger constant in dimension two \cite{Gold2}, though it remains to link the set of limit shapes in this case with the Wulff shape. Motivated by the Winterbottom construction (see \cite{Winterbottom,Pfister_Velenik_2,Pfister_Velenik_1,BIV_winter}), we conjecture the limit shapes in this case are rescaled quarter-Wulff crystals. 

One can generalize Benajmini's conjecture in the two-dimensional setting to domains other than boxes: given a nice bounded open set $\Omega \subset \R^2$, one can study the asymptotics of the unmodified Cheeger constant as well as the shapes of the Cheeger optimizers for the largest connected component of ${\B{C}}_\infty \cap n\Omega$. \\


\n\emph{(2) More information on the Wulff crystal: } Little is known about the geometric properties of the Wulff crystal. One recent result of Garet, Marchand, Procaccia and $\Th$ \cite{GMPT} is that, in two dimensions, the Wulff crystal varies continuously with respect to the uniform metric on compact sets as a function of the percolation parameter $p \in (p_c(2),1]$. It was conjectured in \cite{BLPR} that the two-dimensional Wulff crystal tends to a Euclidean ball as $p \downarrow p_c(2)$; this is still widely open. It is natural to ask whether the Wulff crystal has facets (open portions of the boundary with zero curvature) or corners, and how such questions depend on the percolation parameter. \\

\n\emph{(3) Uniform convergence for $d\geq 3$: } An interesting and challenging question is whether a form of Theorem~\ref{main_L1} holds in $d \geq 3$ when we replace $\ell^1$-convergence by uniform convergence. Such a result would have to overcome the challenges outlined in Section~\ref{sec:intro_history}. 

\subsection{Acknowledgements}

I thank my advisor Marek Biskup for suggesting this problem, for his guidance and his support. I am deeply indebted to Rapha\"{e}l Cerf for patiently sharing his expertise and insight during my time in Paris. I likewise thank Eviatar Procaccia for his guidance. I am grateful to Vincent Vargas, Claire Berenger and Shannon Starr for making it possible for me to attend the IHP Disordered Systems trimester. I thank Yoshihiro Abe, Ian Charlesworth, Arko Chatterjee, Hugo Duminil-Copin, Aukosh Jagannath, Ben Krause, Sangchul Lee, Tom Liggett, Jeff Lin, Peter Petersen, Jacob Rooney and Ian Zemke for helpful conversations. Finally, I wish to express my gratitude to anonymous referees for their helpful comments. This research has been partially supported by the NSF grant DMS-1407558.




{\large\section{\B{Definitions and notation}}\label{sec:notation}}

\subsection{Paths, boundaries, cutsets}\label{sec:p_b_c}

We work almost entirely within the graph $\Z^d$, whose vertex set consists of all integer $d$-tuples, and where there is an edge between two vertices if their Euclidean distance is one. Edges have no orientation, and if vertices $x$ and $y$ are adjacent, we write $x \sim y$. 

A \emph{path} between vertices $x$ and $y$ in $\Z^d$ is a finite, alternating sequence of vertices and edges $\gamma = (x \equiv x_0, e_1, x_1, \dots , e_m, x_m \equiv y)$ such that $e_i$ joins $x_{i-1}$ and $x_i$ for $i = 1, \dots, m$.  The path $\gamma$ \emph{joins} $x$ and $y$, and the \emph{length} of $\gamma$ is $m$. A subgraph $G = (\rmV(G), \rmE(G))$ of $\Z^d$ is \emph{connected} if for any vertices $x,y \in G$, there is a path using only vertices and edges of $G$ joining $x$ and $y$. For a vertex $x \in \Z^d$, a \emph{path from $x$ to $\infty$} is an infinite alternating sequence of vertices and edges $\gamma = ( x \equiv x_0, e_1, x_1, \dots)$ where no finite box contains all $e_i$. In both the finite and infinite cases, a path is \emph{simple} if it uses each vertex no more than once, and paths are often regarded as sequences of edges out of convenience.

For the rest of this subsection, let $G$ be a finite subgraph of $\Z^d$. The \emph{edge boundary} and \emph{outer edge boundary} of $G$ are respectively the following sets of edges:
\begin{align}
\pa G &:= \Big\{ e \in \rmE (\Z^d) : \text{ exactly one endpoint of $e$ lies in $G$} \Big\} \\
\pa_o G &:= \left\{e \in \pa G : \begin{matrix} \text{ the endpoint of $e$ in $G$ is connected to $\infty$} \\ \text{ via a path using no other vertices of $G$} \end{matrix} \right\} \,.
\end{align}
The \emph{vertex boundary} of $G$ is the following set of vertices:
\begin{align}
\pa_* G := \Big\{ v \in \rmV(G) : \text{$v$ is an endpoint of an edge in $\pa_oG$} \Big\} \,.
\end{align}

A \emph{cutset separating $G$ from $\infty$} is a finite collection of edges $S \subset \rmE(\Z^d)$ where any path from $G$ to $\infty$ uses an edge of $S$. If $A, B \subset \rmV(G)$ are disjoint vertex sets, a \emph{cutset separating $A$ and $B$ in $G$} is a finite collection of edges $S \subset \rmE(G)$ where any path in $G$ from $A$ to $B$ uses an edge of $S$. A cutset is \emph{minimal} if it is no longer a cutset upon removing an edge.  
 
We occasionally work with the graph $\mathbb{L}^d$, which has the same vertex set as $\Z^d$, but where vertices $x$ and $y$ are now adjacent if their $\ell^\infty$-distance is one. When $x$ and $y$ are adjacent in $\mathbb{L}^d$, write $x \sim_* y$ and say they are \emph{$*$-adjacent}. Paths in $\mathbb{L}^d$ are \emph{$*$-paths}, and $G \subset \Z^d$ is \emph{$*$-connected} if any two vertices of $G$ are joined by a $*$-path whose vertices all lie in $G$. Proposition \ref{star_conn} is standard in the literature and is useful for Peierls estimates appearing frequently in the study of lattice models.

\begin{prop} (Deuschel-Pisztora \cite{DP}, Tim\'{a}r \cite{Timar})\, Let $G \subset \Z^d$ be a finite, connected subgraph of $\Z^d$. Then $\pa_*G$ is $*$-connected, as is the set of vertices which are endpoints of edges in $\pa_o G$.
\label{star_conn}
\end{prop}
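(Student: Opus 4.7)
The plan is to prove this by a topological reduction followed by a path-shadowing argument, in the spirit of Tim\'ar \cite{Timar}.

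First I would reduce to the case that $\Z^d \setminus G$ is connected. Let $\bar G$ denote $G$ together with the vertex sets of all \emph{bounded} connected components of $\Z^d \setminus G$. Then $\bar G$ is still finite and connected, and $\Z^d \setminus \bar G$ consists of a single infinite component $U$. Because every edge in $\pa_o G$ has its exterior endpoint joined to $\infty$ by a path missing $G$, that exterior endpoint must lie in $U$; hence $\pa_o G = \pa_o \bar G$, which forces $\pa_* G = \pa_* \bar G$ and makes the two sets in the statement coincide for $G$ and for $\bar G$. So I may assume $U := \Z^d \setminus G$ is itself (the single) infinite component.

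Next, given $u, v \in \pa_* G$, fix exterior neighbors $u', v' \in U$ with $u \sim u'$ and $v \sim v'$, and choose a simple lattice path $\gamma' = (u' \equiv w_0, w_1, \dots, w_m \equiv v')$ in $U$. I would build a $*$-path in $\pa_* G$ from $u$ to $v$ by ``shadowing'' $\gamma'$: for each index $i$ such that $w_i$ admits a neighbor in $G$, select one such $g_i$, so that $g_i \in \pa_* G$. The content of the proposition is that the resulting sequence $g_0, g_1, \dots, g_m$ (with missing indices skipped) can be completed to a $*$-connected chain in $\pa_* G$, possibly by inserting a bounded number of intermediate $*$-neighbors between consecutive entries; standard induction on $m$ then closes the argument.

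The main obstacle is the local step: two $\ell^1$-adjacent exterior vertices $w_i, w_{i+1}$ may have $G$-neighbors $g_i, g_{i+1}$ at $\ell^\infty$-distance $2$, so $*$-adjacency of consecutive $g_i$'s is not automatic. The resolution is a case analysis inside the unit sub-cube of $\Z^d$ spanned by $w_i, w_{i+1}, g_i, g_{i+1}$: since $U$ is connected and $G$ contains both $g_i$ and $g_{i+1}$, at least one corner of this sub-cube either lies in $G$ and is simultaneously $*$-adjacent to $g_i$ and $g_{i+1}$ (yielding an intermediate vertex of $\pa_* G$), or else the path $\gamma'$ can be locally rerouted through an alternative exterior vertex admitting a better-placed $G$-neighbor. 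This combinatorial corner-by-corner check is precisely what the cited works of Deuschel--Pisztora \cite{DP} and Tim\'ar \cite{Timar} carry out in full. The second claim then follows from the first by attaching to each $g_i \in \pa_* G$ its exterior partner across an edge of $\pa_o G$; $*$-adjacency propagates to this enlarged vertex set because each such exterior partner is $\ell^\infty$-adjacent to its chosen $g_i$, hence to the $*$-neighbors of $g_i$ in the already-constructed chain.
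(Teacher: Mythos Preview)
The paper does not supply its own proof of this proposition; it is stated as a known result and attributed to Deuschel--Pisztora and Tim\'ar. So there is no ``paper's approach'' to compare against beyond the citations themselves.

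Your sketch, however, has a genuine gap in the shadowing step. You take an arbitrary simple path $\gamma' = (w_0,\dots,w_m)$ in $U$ from $u'$ to $v'$ and record a $G$-neighbour $g_i$ only at those indices $i$ for which $w_i$ has one, skipping the rest. Your ``main obstacle'' paragraph then treats only the local case where $w_i$ and $w_{i+1}$ \emph{both} have $G$-neighbours. But nothing prevents $\gamma'$ from leaving the vicinity of $G$ entirely for a long stretch: if $w_i$ has a $G$-neighbour $g_i$ while $w_{i+1},\dots,w_{j-1}$ do not, the next recorded vertex $g_j$ can sit at $\ell^\infty$-distance of order $j-i$ from $g_i$, and no amount of corner-by-corner analysis inside a single unit sub-cube will bridge that. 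Restricting $\gamma'$ to stay within the outer vertex boundary of $G$ would fix this, but the existence of such a path is essentially the dual of the statement you are trying to prove. The arguments in the cited references do not proceed by shadowing an arbitrary exterior path in this way; Tim\'ar's proof in particular is a more structural graph-theoretic argument about separating sets, and Deuschel--Pisztora argue via a direct induction on the size of $G$.

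A smaller point: your reduction asserts $\pa_o G = \pa_o \bar G$, but this is false as written. An edge $\{x,y\}$ lies in $\pa_o G$ as soon as its $G$-endpoint $x$ has \emph{some} neighbour in the infinite component $U$; the other endpoint $y$ may lie in a finite hole of $\Z^d\setminus G$, in which case $y\in\bar G$ and the edge is not in $\pa\bar G$ at all. What survives is $\pa_*G=\pa_*\bar G$ (both equal the set of $x\in G$ with a neighbour in $U$), which is enough for the first claim; and since every extra endpoint picked up by $\pa_o G$ over $\pa_o\bar G$ is $\ell^1$-adjacent to some $x\in\pa_*G$, the second claim can still be recovered from the first. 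But the equality you wrote should be weakened accordingly.
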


For $K \subset \R^d$ compact, write $G \cap K$ for the graph obtained by restricting $G$ to the vertex set $\rmV(G) \cap K$ in the natural way. Write $|G|$ for the cardinality of the vertex set of $G$, and in general if $F$ is any finite set, let $|F|$ denote the cardinality of $F$. 

Bond percolation gives rise to another notion of graph boundary: if $G$ is a finite subgraph of $\B{C}_\infty$, the \emph{open edge boundary} of $G$ is
\begin{align} 
\pa^\om G := \Big\{ e \in \pa G : \om(e) = 1 \Big\} \,, 
\end{align}
and the \emph{conductance} of $G$, written $\vp_G$, is the ratio $| \pa^\om G | / |G|$. A subgraph $G$ of $\giant = \B{C}_\infty \cap [-n,n]^d$ is \emph{valid} if it satisfies $0 < |G| \leq |\giant|/d!$, and a valid subgraph $G \subset \giant$ is \emph{optimal} if $\vp_G = \Chee$. 

\begin{rmk} Each $G_n \in \cal{G}_n$ is determined by its vertex set, else we could strictly reduce $|\pa^\om G_n|$.
\label{rem:barry_white}
\end{rmk}

\subsection{A metric on measures}\label{sec:notation_3}

To prove Theorem \ref{main_L1}, we first encode each optimizer $G_n$ as a measure and prove closeness to a set of limiting measures. Given $G_n \in \cal{G}_n$, the \emph{empirical measure} of $G_n$ is the following non-negative Borel measure on $[-1,1]^d$:
\begin{align}
\mu_n := \frac{1}{n^d} \sum_{ x\, \in\, \rmV(G_n)} \delta_{x/n} \,.
\label{eq:2.3_empirical} 
\end{align}
Given a Borel set $E \subset [-1,1]^d$, define $\nu_E$ as the measure on $[-1,1]^d$ having density $\theta_p(d) \1_E$ with respect to Lebesgue measure, and say that $\nu_E$ \emph{represents} $E$. The collection of finite signed Borel measures on $[-1,1]^d$ is denoted $\cal{M}([-1,1]^d)$, and the closed ball (with respect to the total variation norm) of radius $3^d$ about the zero measure in this space is written $\cal{B}_d$. For every percolation configuration $\om$, the empirical measures $\mu_n$ lie within $\cal{B}_d$, as does every $\nu_E$ for $E \subset [-1,1]^d$ Borel. 

We now equip $\cal{B}_d$ with a metric. For $k \in \{0,1,2,\dots\}$, the \emph{dyadic cubes at scale $k$} are all sets of the form $2^{-k} ( [-1,1]^d + x )$ for $x \in \Z^d$. Let $\Delta^k \equiv \Delta^{k,d}$ denote the dyadic cubes at scale $k$ contained in $[-1,1]^d$. Given $\mu, \nu \in \cal{B}_d$, define
\begin{align}
\frak{d}(\mu,\nu) := \sum_{k=0}^\infty \frac{1}{2^k} \sum_{Q\, \in\, \Delta^k} \frac{1}{| \Delta^k |} | \mu(Q) - \nu(Q) | \,.
\label{eq:2_metric}
\end{align}
The metric $\dw$ is useful for comparing discrete and continuous objects when both are encoded as measures. It figures prominently in the final section of the paper.

\subsection{Geometric measure theory, miscellaneous notation}\label{sec:gmt_misc} 

We introduce sets of finite perimeter; the following definitions are taken from Sections 13.3 and 14.1 of \cite{stflour}. Write $\cal{L}^d$ for $d$-dimensional Lebesgue measure and $\cal{H}^d$ for $d$-dimensional Hausdorff measure. Given a norm $\tau$ on $\R^d$ and a Borel subset $E$ of $\R^d$, define the \emph{surface energy of $E$ with respect to $\tau$} as 
\begin{align}
\cal{I}_\tau(E) : = \sup \left\{ \int_E \text{div} f(x)  \cal{L}^d(dx) : f \in C_c^\infty(\R^d, \wh{W}_\tau) \right\} \,,
\label{eq:new_surface_energy}
\end{align}
where $\wh{W}_\tau$ is the unit Wulff crystal defined in \eqref{eq:section1.1_shape}. Here $\text{div}(f)$ denotes the divergence of the function $f$, which is a smooth compactly supported function on $\R^d$ taking values in $\wh{W}_\tau$. By the divergence theorem, \eqref{eq:new_surface_energy} extends \eqref{eq:section1.1_functional} to Borel sets. When $\tau$ is the Euclidean norm, call $\cal{I}_\tau(E)$ the \emph{perimeter} of $E$, writing $\per(E)$. Naturally, $E$ has \emph{finite perimeter} if $\per(E) < \infty$. 

The following theorem is vital to the proof of Theorem~\ref{main_L1}. 

\begin{thm} (Taylor \cite{T2,T3,T1})\, Let $\tau$ be a norm on $\R^d$ and consider the variational problem for Borel sets $E \subset \R^d$:
\begin{align}
\text{minimize } \cal{I}_\tau(E) \hspace{5mm} \text{ subject to } \cal{L}^d(E) \leq \cal{L}^d(\wh{W}_\tau) \,,
\end{align}
A set $E$ is a minimizer of this variational problem if and only if there is $x \in \R^d$ such that the symmetric difference of $\wh{W}_\tau$ and $E + x$ has Lebesgue measure zero. 
\label{wulff_theorem}
\end{thm}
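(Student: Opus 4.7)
The natural route is to combine the Brunn--Minkowski inequality with a Minkowski first-variation formula for $\cal{I}_\tau$, and then upgrade via the rigidity of the Brunn--Minkowski equality case. I interpret the stated minimization as being over sets with $\cal{L}^d(E) = \cal{L}^d(\wh{W}_\tau)$, since otherwise $E = \emptyset$ is trivially optimal.

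\emph{First-variation formula.} By construction \eqref{eq:section1.1_shape}, $\wh{W}_\tau$ is the unit ball of the dual norm $\tau'$, so biduality of norms in finite dimension identifies its support function $\sup_{y \in \wh{W}_\tau} y\cdot u$ with $\tau''(u) = \tau(u)$. I would first establish, for a set $E$ of finite perimeter,
\[
\cal{L}^d(E + t\wh{W}_\tau) = \cal{L}^d(E) + t\,\cal{I}_\tau(E) + o(t), \qquad t \downarrow 0.
\]
For smooth $E$ this is the direct normal-displacement calculation, in which every boundary point is pushed outward by $\tau(v_E(x))$. For general sets of finite perimeter it follows from the test-vector-field characterization \eqref{eq:new_surface_energy} together with approximation of $E$ by smooth sets in the perimeter topology and lower semicontinuity of $\cal{I}_\tau$.

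\emph{Wulff inequality.} Applying Brunn--Minkowski to $E$ and $t\wh{W}_\tau$,
\[
\cal{L}^d(E + t\wh{W}_\tau)^{1/d} \geq \cal{L}^d(E)^{1/d} + t\,\cal{L}^d(\wh{W}_\tau)^{1/d},
\]
then raising to the $d$-th power, subtracting $\cal{L}^d(E)$, dividing by $t$ and sending $t \downarrow 0$ using the first variation yields
\[
\cal{I}_\tau(E) \geq d\,\cal{L}^d(\wh{W}_\tau)^{1/d}\,\cal{L}^d(E)^{(d-1)/d}.
\]
Since $\cal{L}^d(\wh{W}_\tau + t\wh{W}_\tau) = (1+t)^d \cal{L}^d(\wh{W}_\tau)$ exactly, the Brunn--Minkowski step is saturated at $E = \wh{W}_\tau$, so $\wh{W}_\tau$ realizes the minimum among sets with $\cal{L}^d(E) = \cal{L}^d(\wh{W}_\tau)$, with optimal value $d\,\cal{L}^d(\wh{W}_\tau)$.

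\emph{Rigidity, and the main obstacle.} If $E$ is also an extremizer, then Brunn--Minkowski must be saturated for $E$ and $t\wh{W}_\tau$ to leading order as $t \downarrow 0$; its equality case for a convex body $\wh{W}_\tau$ forces $E$ and $\wh{W}_\tau$ to be homothetic up to null sets, and the volume constraint pins the dilation factor at $1$, leaving only translations as the residual freedom. The delicate point is carrying this rigidity through when $E$ is merely a set of finite perimeter: one must first show that any extremal competitor is equivalent (up to a Lebesgue-null set) to a convex body, which requires working on the reduced boundary and exploiting lower semicontinuity of $\cal{I}_\tau$ under $L^1$-convergence as developed in Chapter 14 of \cite{stflour}. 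This structural input is precisely the content of Taylor's papers \cite{T1,T2,T3}, and I would expect it to consume the bulk of the work; a conceptually cleaner alternative, due to Figalli--Maggi--Pratelli, routes the rigidity through Brenier transport maps and handles arbitrary competitors in one stroke while also yielding quantitative stability.
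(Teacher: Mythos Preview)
The paper does not prove this theorem: it is stated as a result of Taylor and cited to \cite{T1,T2,T3}, with no argument given in the text. So there is no ``paper's own proof'' to compare against, and your proposal stands on its own as a sketch of how one would establish the result.

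Your outline is the standard route: the anisotropic first-variation identity $\cal{L}^d(E + t\wh{W}_\tau) = \cal{L}^d(E) + t\cal{I}_\tau(E) + o(t)$ combined with Brunn--Minkowski gives the Wulff inequality, and equality in Brunn--Minkowski supplies uniqueness. You have correctly flagged the genuine difficulty, namely rigidity when $E$ is only a set of finite perimeter rather than a convex body; this is exactly where Taylor's work enters, and your mention of the Figalli--Maggi--Pratelli transport approach as a cleaner alternative is apt (indeed the paper invokes their quantitative version later as Theorem~\ref{fmp}). Your reading of the constraint as $\cal{L}^d(E) = \cal{L}^d(\wh{W}_\tau)$ is also sensible: as literally written the inequality constraint admits $E = \emptyset$ with $\cal{I}_\tau(\emptyset) = 0$, which would contradict the stated characterization of minimizers; the paper's earlier formulation \eqref{eq:section1.1_iso} as a ratio problem avoids this, and the intended meaning here is surely the equality-constrained version.
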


We close the section by collecting some notation. For $p \in [1,\infty]$, use $| \cdot |_p$ to denote the $\ell^p$-norm on $\R^d$. For $x \in \R^d$ and $r >0$, let $B(x,r)$ denote the closed $r$-ball centered at $x$ in the $\ell^2$-norm. For $E \subset \R^d$ and $a >0$, let $\cal{N}_a(E)$ denote the closed Euclidean $a$-neighborhood of $E$: $\cal{N}_{a}(E) := E + B(0,a)$,
and define the closed $a$-neighborhood of $E$ in the $\ell^1$-norm analogously, writing this as $\cal{N}_{a}^{(1)}(E)$. Lastly, the Hausdorff metric on compact subsets of $\R^d$ is defined via
\begin{align}
{\rm d}_H(A,B) := \max \left( \sup_{x \in A} \inf_{y \in B} \big| x- y\big|_\infty,\, \sup_{y \in B} \inf_{x \in A} \big| x-y \big|_\infty \right) \,.
\label{eq:2.5_Haus}
\end{align}


{\large\section{\B{The norm $\beta_{p,d}$ and the Wulff crystal}}\label{sec:norm}}

To motivate the construction of the norm, we regard an optimizer $G_n \in \cal{G}_n$ as a droplet in $\B{C}_\infty$, and look at a small but macroscopic (diameter on the order of $n$) box intersecting the boundary of $G_n$, as in Figure \ref{fig:droplet}.

\begin{figure}[h]
\centering
\includegraphics[scale=0.75]{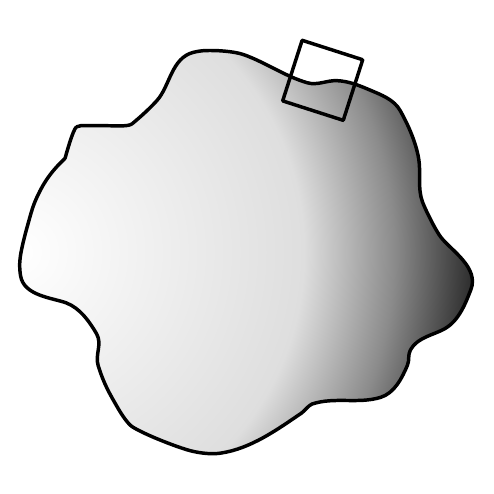}
\caption{A small box $B$ placed on the boundary of $G_n$.} 
\label{fig:droplet}
\end{figure}

The small box $B$ captures a piece of $\pa G_n$, which we imagine separates the top and bottom faces of $B$. The position of this cutset does not greatly affect the enclosed volume $|G_n|$ as $B$ is so small relative to $G_n$, so minimizing the number of open edges used by this cutset is most important to minimizing the conductance of $G_n$. This minimal number of open edges in a cutset separating the  faces of $B$ is a microscopic \emph{surface energy} in the direction normal to these faces. This energy grows like $O(n^{d-1})$ regardless of the normal direction. We then construct $\beta_{p,d}$ as a limit of these microscopic surface energies, properly normalized. 

Minimal randomly weighted cutsets in boxes ($d \geq 2$) well-studied. In $d =2$, such cutsets are dual to paths and fall within the realm of first-passage percolation. In higher dimensions, they were first examined by Kesten \cite{Kesten_Surfaces}. Variants of these objects have been studied by $\Th$ \cite{Theret}, Rossignol and $\Th$ \cite{RoTh_0,RoTh}, Zhang \cite{Zhang} and Garet \cite{Garet}. For a detailed list of these results, see Section 3.1 of \cite{CeTh}. 

Most of the work above constructs and uses the norm we are about to build. We emphasize that results presented in this section and in Section \ref{sec:concentration} are neither new nor optimal. Nevertheless, we find it important to present a relatively self-contained argument, and the notation introduced here will be used heavily throughout the paper. 

\subsection{Discrete cylinders}\label{sec:norm_1} We set up objects and notation needed to define $\beta_{p,d}$. We use notation from Cerf and $\Th$ \cite{stflour, CeTh} to build cylinders over $(d-1)$-dimensional objects; among other things these will function as boxes as in Figure~\ref{fig:droplet}.

Let $F \subset \R^d$ be the isometric image (see Remark \ref{rem:isometric}) of either a non-degenerate polytope in $\R^{d-1}$ or a Euclidean ball in $\R^{d-1}$. Polytopes are defined at the beginning of Section~\ref{sec:consequences_2}; in the present section we only ever need $F$ to be a square. 

Write $\hyp(F)$ to denote the hyperplane spanned by $F$, and let $v(F)$ denote one of the two unit vectors normal to $\hyp(F)$; the choice does not matter for our definitions. For $\rho > 0$, define $\cyl(F,\rho)$ to be the closed cylinder in $\R^d$ whose top and bottom faces are respectively $F_\rho^+ := F + \rho v(F)$ and $F_\rho^- := F - \rho v(F)$. The choice of $v(F)$ creates ambiguity over which face of the cylinder is the top, but this ambiguity is unimportant and plays no role in the definition of the norm. Define
\begin{align}
\slab(F,\rho) := \hyp(F) + B(0,\rho)\,.
\end{align}
Figure~\ref{fig:cyl_slab} depicts the geometric objects introduced so far. 

\begin{figure}[h]
\centering
\includegraphics[scale=1]{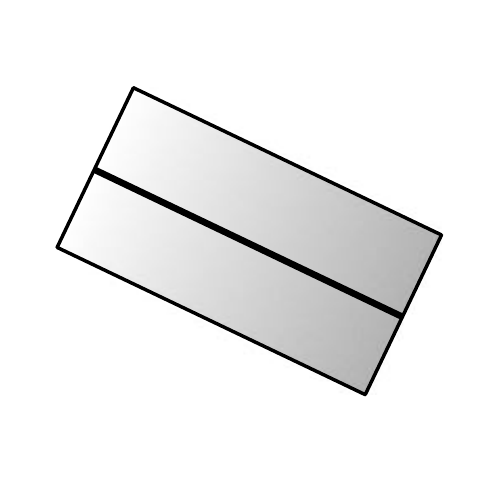} \hspace{5mm}
\includegraphics[scale=1]{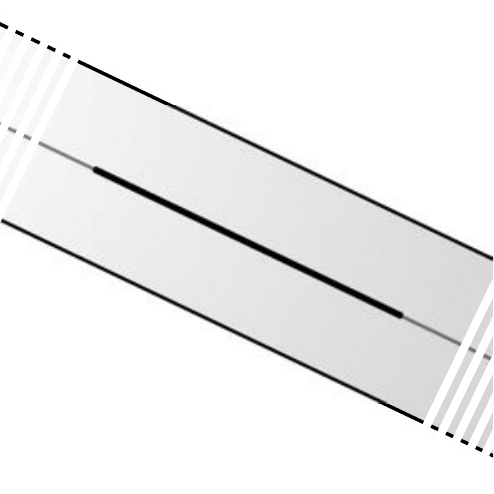} 
\caption{In both graphics, the bold line is $F$. The set $\cyl(F,\rho)$ is depicted as a box on the left. The top and bottom faces of this box are $F_\rho^+$ and $F_\rho^-$ respectively.  The set $\slab(F,\rho)$ is on the right, and the pale line running through the center is $\hyp(F)$.} 
\label{fig:cyl_slab}
\end{figure}

For $r > 0$, which we think of as large, define the \emph{discrete cylinder} $\dcyl(F,\rho,r)$ as
\begin{align}
\dcyl(F, \rho,r) := \Big\{ x \in \Z^d : x/r  \in  \cyl(F,\rho) \Big\} \,,
\end{align}
and note that $\cyl(F,\rho) \setminus \hyp(F)$ consists of two connected components. The top component, which contains $F_\rho^+$, is denoted $\cyl^+(F,\rho)$, while the bottom is denoted $\cyl^-(F,\rho)$. The following sets are the top (corresponding to ``$+$") and bottom (``$-$") \emph{hemispheres} of $\dcyl(F,\rho,r)$:
\begin{align}
\dhemi^\pm(F,\rho,r) := \Big\{ x \in \pa_* \dcyl(F,\rho, r) : x/r \in  \cyl^\pm (F,\rho) \Big\} \,.
\end{align}

\begin{figure}[h]
\centering
\includegraphics[scale=.7]{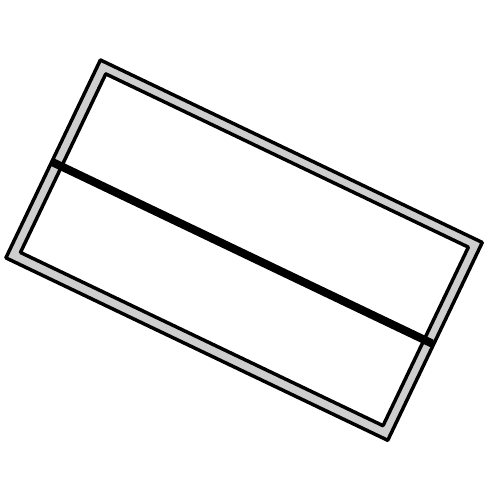} \hspace{20mm}
\includegraphics[scale=.7]{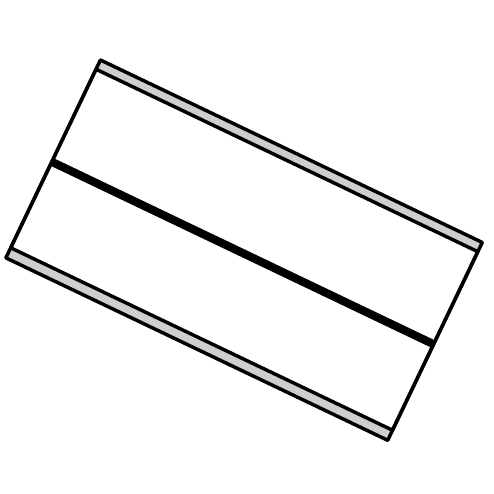} 

\caption{On the left, the vertex set $\dhemi^+(F,\rho,r)$ (respectively $\dhemi^-(F,\rho,r)$) is the shaded region above (respectively below) the bold line. On the right, the vertex sets $\dface^\pm(F,\rho,r)$ are the shaded regions above ($+$) and below ($-$) the bold line.} 
\label{fig:3_hemi}
\end{figure}

Define the top and bottom \emph{faces} of $\dcyl(F,\rho,r)$:
\begin{align} 
\dface^\pm(F,\rho,r) := \Big\{ x \in \pa_*  [ (r \cdot \slab(F,\rho)) \cap \Z^d ] : x / r \in  \cyl^\pm(F,\rho) \Big\} \,.
\end{align}
This definition of $\dface$ looks complicated, but is conceptually even simpler than $\dhemi$, it is depicted on the right side of Figure~\ref{fig:3_hemi}.

\subsection{Cutsets in discrete cylinders} The vertex sets $\dhemi^\pm (F,\rho,r)$ and $\dface^\pm(F,\rho,r)$ are contained in $\dcyl(F,\rho,r)$, which inherits a graph structure from $\Z^d$. We may then consider cutsets within $\dcyl(F,\rho,r)$ separating opposite hemispheres or faces. Bond percolation on $\Z^d$ induces bond percolation within $\dcyl(F,\rho,r)$, yielding a relevant \emph{weight} to assign to these cutsets. For any cutset $S$, let $|S|_\om$ denote the number of open edges in $S$, so that $|S|_\om$ is a random variable. Define
\begin{align}
\hemi(F,\rho,r) := \min\Big( |S|_\om : S \text{ separates } \dhemi^\pm(F,\rho,r) \text{ within } \dcyl(F,\rho,r) \Big) \,,
\label{eq:3_hemi_def}
\end{align}
and likewise define
\begin{align}
\face(F,\rho,r) := \min\Big( |S|_\om : S \text{ separates } \dface^\pm(F,\rho,r) \text{ within } \dcyl(F,\rho,r) \Big) \,.
\label{eq:3_face_def}
\end{align}
The cutsets in the definition of $\hemi(F,\rho,r)$ are \emph{anchored} at the equator of the cylinder $\cyl(F,\rho,r)$, whereas the cutsets in the definition of $\face(F,\rho,r)$ are allowed to meet the sides of $\cyl(F,\rho,r)$ at any height relative to the equator. 

\begin{rmk}
Whenever $r$ or $\rho$ are too small relative to $F$, $\hemi(F,\rho,r)$ and $\face(F,\rho,r)$ may not be well-defined. Say the parameters $r$ and $\rho$ are \emph{suitable} for $F$ if the vertex sets $\dhemi^\pm(F,\rho,r)$ and $\dface^\pm(F,\rho,r)$ are non-empty, and if the vertex sets $\dface^\pm(F,\rho,r)$ are a Euclidean distance of at least $100d$. When $\rho$ and $r$ are suitable for $F$, define $\hemi(F,\rho,r)$ and $\face(F,\rho,r)$ as in \eqref{eq:3_hemi_def} and \eqref{eq:3_face_def} respectively. Otherwise define these random variables to be zero.
\label{rmk:3_suitable}
\end{rmk}

To study cutsets within large discrete boxes, we specialize the above construction to cylinders based at squares. A \emph{square} in $\R^d$ is the isometric image of $[-1,1]^{d-1} \times \{0\}$. For $v \in \mathbb{S}^{d-1}$, consider a square in $\R^d$ centered at $0$ whose spanning hyperplane is normal to $v$. In dimensions at least three, this constraint does not uniquely determine the square, so we must assign each direction $v \in \mathbb{S}^{d-1}$ a unique square to define the norm. Let $\textsf{S}$ be such an assignment; that is for each $v \in \mathbb{S}^{d-1}$, $\textsf{S}(v)$ is a square in $\R^d$ centered at $0$ with $\hyp( \textsf{S}(v) )$ normal to $v$. Refer to $\textsf{S}$ as the \emph{chosen orientation}.

The value of $\beta_{p,d}$ in a given direction will not depend on $\textsf{S}$ (as we show in Proposition \ref{beta}). However, later proofs are simplified by building $\beta_{p,d}$ from a chosen orientation varying nicely over the sphere. Throughout this subsection and the next, treat $\textsf{S}$ as given. The random variables used to define $\beta_{p,d}$ are
\begin{align} 
\frak{X}(x,v,r) := \hemi(\textsf{S}(v) + x, 1, r) \,.
\end{align}
The final observation of this subsection is that the expected value of these random variables is not too sensitive to where the cylinder is centered. 

\begin{lem} Let $d\geq 2$. There is a positive constant $c(d)$ so that for all $p \in [0,1]$, $x \in \R^d$, $v \in \mathbb{S}^{d-1}$ and $r >0$, 
\begin{align}
\E_p \frak{X} \big(x, v,r + d^{1/2}\big) \leq \E_p \frak{X}(0,v, r) + c(d) r^{d-2} \,.
\end{align}
\label{center}
\end{lem}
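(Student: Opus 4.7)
The plan is to build an admissible cutset for the larger cylinder $C_1 := \dcyl(\textsf{S}(v)+x, 1, r+\sqrt{d})$ from an optimal cutset for the smaller, origin-based cylinder $C_0 := \dcyl(\textsf{S}(v), 1, r)$, by adjoining a thin ``equatorial collar'' of edges. Since $\prob_p$ is invariant under integer translations, $\hemi(F, 1, r)$ and $\hemi(F + y/r, 1, r)$ have the same law for every $y \in \Z^d$. Applying this with radius $r+\sqrt{d}$, we first shift $x$ to a representative $x'$ satisfying $|(r+\sqrt{d}) x'|_\infty \le 1/2$, and relabel $x' \mapsto x$. This reduction ensures $C_0 \subset C_1$ with a buffer of order $\sqrt{d}$ on all sides once $r$ is large; small $r$ is trivial from the unconditional bound $\frak{X}(\cdot, v, r) \le c(d) r^{d-1}$, which can be absorbed into $c(d)$.

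Let $S_0$ be an optimal cutset in $C_0$ separating $\dhemi^\pm(\textsf{S}(v), 1, r)$, so $|S_0|_\omega = \frak{X}(0, v, r)$, and let $T$ be the set of edges $e \in \rmE(C_1)$ with at least one endpoint in the annular region $A := C_1 \setminus C_0$ whose two endpoints lie on opposite sides of the equator hyperplane $\hyp(\textsf{S}(v)+x)$. Put $S_1 := S_0 \cup T$. I claim that $S_1$ separates $\dhemi^\pm(\textsf{S}(v)+x, 1, r+\sqrt{d})$ within $C_1$. Indeed, take any path $\gamma \subset C_1$ running between the two hemispheres of $C_1$: the buffer guarantees $\pa_* C_1 \subset A$, so both endpoints of $\gamma$ lie in $A$, and any equator-crossing edge of $\gamma$ either has an endpoint in $A$ (so lies in $T$) or has both endpoints in $C_0$. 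Decompose $\gamma$ into its maximal subpaths contained in $C_0$; each such subpath begins and ends in $\pa_* C_0$, which is contained in the union of the two hemispheres of $C_0$. A subpath whose endpoints lie in opposite hemispheres of $C_0$ must cross $S_0$; every other subpath contributes an even number of equator crossings, but the net equator-crossing count of $\gamma$ is odd (it joins opposite hemispheres of $C_1$), so at least one equator crossing of $\gamma$ must occur outside every such subpath and hence belongs to $T$.

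Finally, the annular portion of the equator hyperplane inside $C_1$ has $(d-1)$-dimensional area at most $(2(r+\sqrt{d}))^{d-1} - (2r)^{d-1} \le c(d) r^{d-2}$, and each unit of hyperplane area is met by $O(d)$ lattice edges, so $|T| \le c(d) r^{d-2}$. Admissibility of $S_1$ yields
\[
\hemi(C_1) \le |S_1|_\omega \le |S_0|_\omega + |T|_\omega,
\]
and taking $\E_p$ with $\E_p |T|_\omega = p|T| \le c(d) r^{d-2}$, together with the translation reduction, gives the stated inequality. The main technical point is the separation claim: the parity-of-crossings bookkeeping must handle $\gamma$'s repeated excursions into and out of $C_0$, and one must verify $\pa_* C_1 \cap C_0 = \emptyset$ uniformly in the dimensions covered by the lemma.
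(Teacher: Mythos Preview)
Your approach is the same as the paper's---patch an optimal cutset for the inner cylinder with an equatorial collar in the annulus---and both the translation reduction and the bound $|T|\le c(d)r^{d-2}$ are fine. The gap is in the separation claim. The equator of $C_0$ is the hyperplane through the origin normal to $v$, while the equator you use to define $T$ passes through $(r+\sqrt d)\,x$; after your reduction these parallel hyperplanes are offset in the $v$-direction by as much as $\sqrt d/2$. Your parity argument silently identifies them: you assert that a maximal $C_0$-subpath of $\gamma$ with both endpoints in the same $C_0$-hemisphere contributes an \emph{even} number of $C_1$-equator crossings, but that need not hold. Concretely, take $S_0$ to be the flat cutset at the $C_0$-equator and let $\gamma$ start in $\dhemi^+(C_1)\cap A$, stay in $A$ above the $C_1$-equator until entering $C_0$ at a side vertex $a$ whose $v$-height exceeds the offset, descend inside $C_0$ along the side to a vertex $b$ whose $v$-height lies strictly between $0$ and the offset (so $b\in\dhemi^+(C_0)$ yet $b$ sits below the $C_1$-equator), step out to a neighbour $b'\in A$ still below the $C_1$-equator, and then proceed within $A$, below the $C_1$-equator throughout, to $\dhemi^-(C_1)$. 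This path never meets $S_0$ (the $a$-to-$b$ excursion stays above the $C_0$-equator) and never meets $T$ (its sole $C_1$-equator crossing is on an edge with both endpoints in $C_0$). Hence $S_0\cup T$ is not a cutset.

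The cure is to thicken the collar: replace $T$ by all edges meeting a $5d$-neighbourhood of the annular portion of the $C_1$-equator, as the paper does. This is still $O(r^{d-2})$ edges and simultaneously handles the offset issue and your unverified claim $\pa_* C_1\cap C_0=\emptyset$ (which can also fail for $d\le 3$, where the buffer $\sqrt d/2$ is below $1$). With the thickened collar either a direct case analysis, as in the paper's companion lemma on cutset patching, or a repaired parity argument goes through.
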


\begin{proof} The idea of this proof is captured in Figure \ref{fig:integer_center}. It is intuitive that by starting with an anchored cutset for the smaller box and adding a microscopic ring of edges around the equator of this box, we produce an anchored cutset in a slightly larger box. This is indeed the case, though we are careful to show in the next lemma that patching these edge sets together truly is a cutset in the larger box. 

 Let $x \in \R^d$ and $v \in \mathbb{S}^{d-1}$. Choose $x' \in \Z^d$ so that $|x -x' |_\infty \leq 1$. For notational ease, make the following abbreviations within this proof and the next.
\begin{align}
\cyl(x') := r \cyl ( \textsf{S}(v) + x' , 1) \hspace{10mm} \cyl(x) :=  \left(r + d^{1/2} \right) \cyl ( \textsf{S}(v) + x , 1)\, 
\end{align}
\begin{align}
\hyp(x) := \hyp \left(   \left(r + d^{1/2} \right) ( \textsf{S}(v) + x ) \right) \,.
\end{align}
Thus, $\cyl(x')$ is the slightly larger box with integer center containing $\cyl(x')$. Let $A$ be the microscopic ring of edges, formally the collection of edges in $\Z^d$ having non-empty intersection with the neighborhood
\begin{align}
\cal{N}_{5d} \left( \left( \cyl(x) \setminus \cyl(x') \right) \cap \hyp(x) \right) \,.
\label{eq:3_neighborhood}
\end{align}
\begin{figure}[h]
\centering
\includegraphics[scale=1]{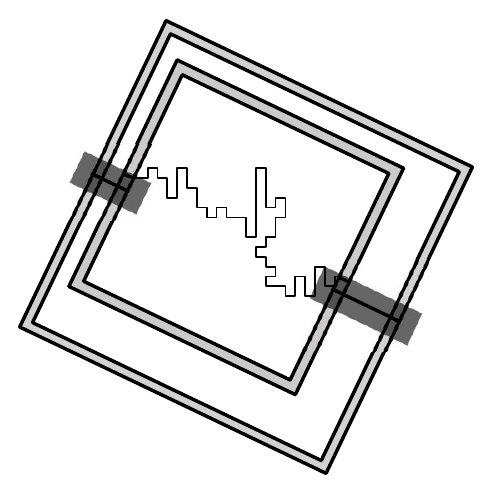} 
\caption{The inner box is $\cyl(x')$, the outer box is $\cyl(x)$, and the darker shaded region is the neighborhood \eqref{eq:3_neighborhood} used to define $A$. The thin discrete interface is the cutset $E$.} 
\label{fig:integer_center}
\end{figure}

From the construction of $A$, there is $c(d) >0$ with $|A| \leq c(d) r^{d-2}$. This is because \eqref{eq:3_neighborhood} is a microscopic thickening of a $(d-2)$-dimensional set. We now choose a cutset in the smaller cylinder. For notational clarity, make the following abbreviations.
\begin{align}
\dcyl(x') &:=  \dcyl(\textsf{S}(v) + x', 1, r) \,,\\
\dhemi^\pm(x') &:= \dhemi^\pm(\textsf{S}(v) + x',1,r) \,,  \\
\dcyl(x) &:=  \dcyl(\textsf{S}(v) +x, 1, r + d^{1/2}) \,, \\
\dhemi^\pm(x) &:= \dhemi^\pm(\textsf{S}(v) + x,1,r + d^{1/2}) \,.
\end{align}

Let $E = E(\omega)$ be a minimal cutset separating $\dhemi^\pm(x')$ within $\dcyl(x')$. We claim the edges in $A \cup E $ which lie in $\dcyl(x)$ separate $\dhemi^\pm(x)$ in $\dcyl(x)$. Assuming this, we have 
\begin{align}
\frak{X}(x,v,r + d^{1/2} ) \leq \frak{X}(x',v,r) + c(d) r^{d-2} \,,
\end{align}
and the lemma is proved upon taking expectations as $x' \in \Z^d$. \end{proof}

We now carefully show that the patching of edge sets produces a cutset in the larger box. We appeal to the following argument at several points in the future without repeating details. 

\begin{lem} In the proof of the preceding lemma, the edges of $A \cup E$ contained in $\dcyl(x)$ separate $\dhemi^\pm(x)$ in $\dcyl(x)$. 
\label{lem:cutset_reference}
\end{lem}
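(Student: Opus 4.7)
The plan is to take any simple path $\gamma \subset \dcyl(x)$ from $\dhemi^+(x)$ to $\dhemi^-(x)$ and show it uses an edge of $(A \cup E) \cap \dcyl(x)$. Since $\gamma$ starts above the equator $\hyp(x)$ and ends below it, some edge $e^* = \{u,w\}$ of $\gamma$ straddles $\hyp(x)$, with $u$ strictly above and $w$ strictly below. I would split into cases based on whether $e^*$ meets the annulus $\dcyl(x) \setminus \dcyl(x')$ or lies entirely inside $\dcyl(x')$.

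If $e^*$ has at least one endpoint in $\dcyl(x) \setminus \dcyl(x')$, then because $e^*$ has unit Euclidean length and crosses $\hyp(x)$, both of its endpoints sit within distance one of $\hyp(x)$. Combined with an endpoint in the annulus, this puts $e^*$ inside the neighborhood $\cal{N}_{5d}\bigl((\cyl(x) \setminus \cyl(x')) \cap \hyp(x)\bigr)$, placing $e^* \in A$ by construction. As $e^*$ is an edge of $\gamma$ it lies in $\dcyl(x)$, so this case is done.

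Otherwise both endpoints of $e^*$ lie in $\dcyl(x')$. I then extract the maximal sub-path $\gamma^* \subset \gamma$ containing $e^*$ whose vertices all belong to $\dcyl(x')$. Under the standing geometric setup---which may be reduced to a bounded-$x$ regime by percolation translation invariance, as invoked implicitly at the end of the proof of Lemma~\ref{center}---$\dcyl(x')$ is strictly interior to $\dcyl(x)$ with a margin of order $d^{1/2}$, so the endpoints of $\gamma$ are not in $\dcyl(x')$; hence the endpoints $y_+, y_-$ of $\gamma^*$ lie on $\pa_*\dcyl(x') = \dhemi^+(x') \cup \dhemi^-(x')$. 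Provided $y_+ \in \dhemi^+(x')$ and $y_- \in \dhemi^-(x')$, the path $\gamma^*$ becomes a path in $\dcyl(x')$ between the two hemispheres, forcing it to use an edge of $E$.

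The main obstacle is establishing these hemisphere assignments. To check $y_+ \in \dhemi^+(x')$, I trace $\gamma$ backwards from $y_+$: each sub-segment outside $\dcyl(x')$ lies in the annulus and, by the annular-case reasoning, cannot cross $\hyp(x)$ while avoiding $A$. Thus each such outside segment stays on one side of $\hyp(x)$, and since $\gamma$ begins in $\dhemi^+(x)$, above $\hyp(x)$, every outside segment preceding $y_+$ remains above $\hyp(x)$; hence $y_+$ itself is above $\hyp(x)$. In the bounded-$x$ regime, the equators of $\dcyl(x)$ and $\dcyl(x')$ differ by an $O(1)$ offset, negligible against the $r$-scale separation between the two hemispheres of $\dcyl(x')$, so the ``above/below'' classifications induced by the two equators agree on $\pa_*\dcyl(x')$. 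This gives $y_+ \in \dhemi^+(x')$, and the symmetric argument applied forward yields $y_- \in \dhemi^-(x')$, closing the proof.
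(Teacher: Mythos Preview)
Your approach is different from the paper's (which performs a direct case analysis on the first and last vertices of $\gamma$ lying in $\dcyl(x')$), and the crucial step of your argument has a genuine gap. The claim that the ``above/below $\hyp(x)$'' classification agrees with the $\dhemi^+(x')/\dhemi^-(x')$ classification on $\pa_*\dcyl(x')$ is false: the two hemispheres of $\pa_*\dcyl(x')$ are \emph{not} separated by an $r$-scale distance --- they meet along the equator of $\cyl(x')$, and boundary vertices can sit arbitrarily close to that equator. Thus even an $O(1)$ offset between the two equators allows a vertex $y_+$ to be above $\hyp(x)$ and yet belong to $\dhemi^-(x')$. A related issue is that you only track \emph{outside} segments when arguing that the path stays above $\hyp(x)$: an inside segment avoiding $E$ preserves the \emph{hemisphere} of its endpoints (relative to $\cyl(x')$), not the side of $\hyp(x)$, so it can carry the path from above $\hyp(x)$ to below while both endpoints remain in $\dhemi^+(x')$. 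Together these break the induction that is supposed to force $y_\pm \in \dhemi^\pm(x')$.

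The paper's proof sidesteps both difficulties by never trying to equate the two classifications. It works entirely with membership in $\dhemi^\pm(x')$, looking at the first and last vertices $z^\pm$ of $\gamma$ in $\dcyl(x')$ and splitting into cases according to which hemisphere each lies in. The equator mismatch is absorbed by the $5d$-thickness of $A$: when a vertex of $\pa_*\dcyl(x')$ falls between the two equators, the edge leaving it into the annulus already lies in the neighborhood defining $A$. Your argument could in principle be repaired by invoking this same thickness, but as written the hemisphere assignment for $y_\pm$ is not established, so the conclusion that $\gamma^*$ uses an edge of $E$ does not follow.
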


\begin{proof} Adopt the notation from the proof of Lemma \ref{center}. It suffices to show that any $\Z^d$-path joining $\dhemi^\pm(x)$ within $\dcyl(x)$ uses an edge of $A \cup E(\om)$. Let $y^\pm \in \dhemi^\pm(x)$, and let $\gamma$ be a simple path from $y^-$ to $y^+$ using only edges of $\dcyl(x)$. If $\gamma$ does not pass through a vertex of $\pa_* \dcyl(x')$, $\gamma$ lies entirely within $\cyl(x) \setminus \cyl(x')$, in which case $\gamma$ must use an edge of $A$. We may then suppose that $\gamma$ passes through a vertex of $\pa_* \dcyl(x')$ and consider several cases.

\emph{Case (i):} Suppose that the last vertex $z^+$ of $\dcyl(x')$ used by $\gamma$ lies within $\dhemi^-(x')$. Let $\gamma'$ denote the subpath of $\gamma$ connecting $z^+$ to $y^+$, and observe that $\gamma'$ is contained within $\cyl(x) \setminus \cyl(x')$.  As $\gamma'$ starts either in the bottom half of $\cyl(x)$ or in the neighborhood defined in \eqref{eq:3_neighborhood}, $\gamma'$ must use an edge in $A$. 

\emph{Case (ii):} Suppose that the first vertex $z^-$ of $\dcyl(x')$ used by $\gamma$ lies in $\dhemi^+(x')$. Using the same reasoning as in Case (i), we see that $\gamma$ must use an edge in $A$ between $y^-$ and $z^-$. 

\emph{Case (iii):} We may now suppose $z^\pm \in \dhemi^\pm(x')$. Let $z$ be the vertex of $\dhemi^-(x')$ used last by $\gamma$, and consider the subpath $\gamma'$ of $\gamma$ joining $z$ to $z^+$. If $\gamma'$ is contained completely within $\dcyl(x')$, then $\gamma'$ uses an edge of $E(\om)$. On the other hand, if $\gamma'$ is not contained in $\dcyl(x')$, we may assume the vertex following $z$ in the path $\gamma'$ lies outside of $\dcyl(x')$, else $\gamma'$ would either use an edge of $E(\om)$, or would not use $z$ last among all vertices of $\dhemi^-(x')$. Under this assumption, $\gamma'$ leaves $\dcyl(x')$ at the vertex $z$, and only returns to $\dcyl(x')$ at some vertex $z' \in \dhemi^+(x')$. Along the subpath $\gamma''$ of $\gamma'$ joining $z$ with $z'$, all intermediate vertices lie in $\cyl(x) \setminus \cyl(x')$, and $\gamma''$ uses an edge of $A$. 
\end{proof}

\subsection{Defining the norm}

To define $\beta_{p,d}$ as quickly as possible, we use a subadditivity argument taken from Rossignol and $\Th$ (see Section 4.3 of \cite{RoTh}).

\begin{prop} Let $d \geq 2$. For all $v \in \mathbb{S}^{d-1}$, the limit
\begin{align}
\beta_{p,d}(v) : = \lim_{n \to \infty} \frac{ \E_p \frak{X}(0,v,n) }{ (2 n)^{d-1} } 
\end{align}
exists and is finite. Moreover, this limit is independent of the chosen orientation ${\rm \sfS}$. 
\label{beta}
\end{prop}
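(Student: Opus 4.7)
The plan is to establish convergence via an approximate subadditivity for $a_n := \E_p \frak{X}(0,v,n)$, and then verify orientation independence by a transplantation argument.

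For the subadditivity, I aim for a bound of the form
\begin{align}
a_{kn} \le k^{d-1}\,a_n + C\,k^{d-1}\,n^{d-2}
\label{eq:my_subadd}
\end{align}
valid for all integers $k, n \ge 1$, with $C = C(d)$. To produce this, consider the big discrete cylinder $\dcyl(\textsf{S}(v), 1, kn)$, whose equatorial face $kn\cdot\textsf{S}(v)$ is a square of side $2kn$, and tile this equator with $k^{d-1}$ translates of $n\cdot\textsf{S}(v)$, centered at points $y_1,\ldots,y_{k^{d-1}}\in\hyp(\textsf{S}(v))$ spaced $2n$ apart (rounded to $\Z^d$ when needed, absorbing the cost into the error). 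Over each $y_i$ form the small cylinder $\dcyl(\textsf{S}(v)+y_i/n,1,n)$; by construction these fill the equatorial slab of the big cylinder at heights $[-n,n]$. For each $i$ let $E_i$ be a minimal cutset realizing $\frak{X}(y_i/n,v,n)$. The claim is that $\bigcup_i E_i$, augmented by a set $A$ of at most $C k^{d-1} n^{d-2}$ edges placed on the shared vertical walls between adjacent small cylinders near the equator, separates $\dhemi^\pm$ of the big cylinder within the big cylinder.

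The claim follows from a $\Z^d$-path analysis in the spirit of Lemma \ref{lem:cutset_reference}: a simple path in the big cylinder from $\dhemi^+$ to $\dhemi^-$ must cross $\hyp(\textsf{S}(v))$, so it enters some small cylinder, and a case analysis tracking when the path enters and exits consecutive small cylinders shows it must use an edge of some $E_i$ or of $A$. Taking expectations and applying Lemma \ref{center} to compare each $\E_p\frak{X}(y_i/n,v,n)$ to $a_n$ up to error $O(n^{d-2})$ yields \eqref{eq:my_subadd}. Combined with the trivial upper bound $a_n\le C n^{d-1}$ (a $(d-1)$-dimensional horizontal plate at the equator is always a valid cutset) and a standard Fekete-type argument for approximately subadditive sequences, one concludes
\begin{align}
\beta_{p,d}(v) := \lim_{n\to\infty} \frac{a_n}{(2n)^{d-1}} = \inf_{n\ge 1}\left(\frac{a_n}{(2n)^{d-1}} + \frac{C}{n}\right) \in [0,\infty).
\end{align}

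For orientation independence, let $\textsf{S}_1,\textsf{S}_2$ be two chosen orientations and $a_n^{(1)},a_n^{(2)}$ the corresponding expectations. For fixed $v\in\mathbb{S}^{d-1}$, the squares $\textsf{S}_1(v),\textsf{S}_2(v)$ differ by a rotation of $\hyp(v)$ about the axis spanned by $v$. Transplanting a minimal cutset from one cylinder to the other by restricting to the common intersection and patching the symmetric difference with a $(d-2)$-dimensional ring of $O(n^{d-2})$ edges gives $|a_n^{(1)}-a_n^{(2)}|=O(n^{d-2})$, so the normalized sequences share the same limit. The main obstacle throughout is the geometric verification that the union of small cutsets together with the patch $A$ truly separates the big hemispheres: the case analysis is a higher-dimensional analogue of Lemma \ref{lem:cutset_reference}, complicated by an array of many abutting small cylinders rather than two nested ones, but the underlying strategy --- tracking equator crossings and patching along shared vertical walls --- is the same.
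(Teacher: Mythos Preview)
Your subadditivity approach is in the right spirit, but there are two genuine gaps.

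First, the inequality $a_{kn} \le k^{d-1}a_n + Ck^{d-1}n^{d-2}$ relates $a_{kn}$ to $a_n$ only along exact integer multiples, and no standard Fekete-type lemma converts this alone into convergence of the full sequence $a_n/(2n)^{d-1}$. For a concrete obstruction, the sequence $b_n$ equal to $1$ when $n$ is prime and $0$ otherwise satisfies $b_{kn} \le b_n + 1/n$ for all $k,n\ge 1$ yet fails to converge. The paper closes this by writing $n = km + r$ with $0 \le r < m$ and tiling $n\sfS(v)$ by at most $(k+1)^{d-1}$ translates of a slightly enlarged copy of $m\wt{\sfS}(v)$, absorbing the leftover strip into an $O(mn^{d-2})$ patch; dividing by $n^{d-1}$, taking $\limsup_n$ and then $\liminf_m$ gives $\limsup \le \liminf$ directly.

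Second, and more seriously, your orientation-independence transplant is wrong. Two unit squares in $\hyp(v)$ centered at $0$ that differ by a nontrivial rotation have a symmetric difference of \emph{positive} $(d-1)$-dimensional measure (the protruding corners), so after scaling by $n$ the equatorial patch needed to turn a cutset for one cylinder into a cutset for the other is $\Theta(n^{d-1})$ edges, not $O(n^{d-2})$; this does not vanish upon normalization. The paper's remedy avoids comparing the two orientations at the same scale: it tiles the large $\sfS$-square at scale $n$ with many small $\wt{\sfS}$-squares at scale $m$, yielding the cross-inequality
\begin{align*}
\limsup_{n\to\infty}\frac{\E_p \frak{X}(0,v,n)}{n^{d-1}} \le \liminf_{m\to\infty}\frac{\E_p \wt{\frak{X}}(0,v,m)}{m^{d-1}},
\end{align*}
which by symmetry in $\sfS, \wt{\sfS}$ simultaneously gives existence of the limit and its independence of the chosen orientation.
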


\begin{proof} Let $n,m \in \N$ with $n$ much larger than $m$ and both numbers larger than $d$. Write $n = km +r$ for $k,r \in \N \cup\{0\}$ and $r < m$. Let $\sfS$ be the chosen orientation, and let $\wt{\sfS}$ be another assignment of unit vectors $v \in \mathbb{S}^{d-1}$ to squares $\wt{\sfS}(v)$ so that $v$ is normal to $\hyp( \wt{\sf{S}}(v))$. Define $\wt{\frak{X}}(x,v,r)$ using $\wt{\sfS}$ in place of $\sfS$:
\begin{align}
\wt{\frak{X}}(x,v,r) := \hemi\left(\wt{\sfS}(v) + x,1,r\right) \,.
\end{align}
Choose a finite collection $\{ \wt{S}_i\}_{i=1}^\ell$ of translates of $(m+d^{1/2}) \wt{\sfS}(v)$, each contained in $n\sfS(v)$, so that:

\emph{(i)} The translates $\{ \wt{S}_i \}_{i=1}^\ell$ are disjoint.

\emph{(ii)} There is a positive constant $c(d)$ so that $\cal{H}^{d-1}\left( n \sfS(v) \setminus \bigcup_{i=1}^\ell \wt{S}_i  \right) \leq c(d) m n^{d-2}$. 

\emph{(iii)} $\ell \leq (k+1)^{d-1}$. 

\begin{figure}[h]
\centering
\includegraphics[scale=1]{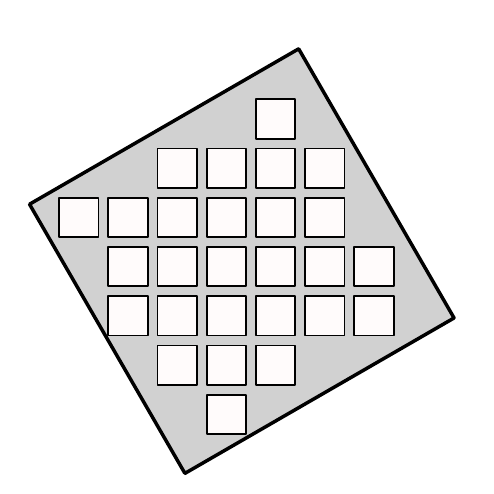} 
\caption{The small white squares are the collection $\{ \wt{S}_i \}_{i=1}^\ell$, which are disjoint and nearly exhaust the large square $n \sfS(v)$. In this diagram, we draw squares as two-dimensional objects, whereas in all previous diagrams they were drawn as one-dimensional objects.} 
\label{fig:square_tile}
\end{figure}

Make the abbreviations
\begin{align}
\dcyl(i) &:= \dcyl (\wt{S}_i, m+d^{1/2},1) \,,\\
\dhemi^\pm(i) &:= \dhemi^\pm (\wt{S}_i, m+d^{1/2},1) \,, \\
\dcyl &:= \dcyl( \sfS(v) , 1, n) \,,\\
\dhemi^\pm &:= \dhemi^\pm ( \sfS(v), 1, n) \,.
\end{align}

For each $\wt{S}_i$, let $E_i$ be a minimal cutset in separating $\dhemi^\pm (i)$ within $\dcyl(i)$. Let $A$ be the collection of edges in $\Z^d$ having non-empty intersection with
\begin{align}
\cal{N}_{5d}  \left( n \sfS(v) \setminus \bigcup_{i=1}^\ell \wt{S}_i \right) \,.
\end{align}
By \emph{(ii)} above, there is $c(d)>0$ so that $|A| \leq c(d) m n^{d-2}$. We soon take $n$ to infinity, thus we lose no generality supposing $n$ is large enough so that each $\dcyl(i)$ is contained in $\dcyl$, and in particular, that each $E_i$ is contained in the edge set of $\dcyl$ across all configurations $\om$. 

The argument of Lemma~\ref{lem:cutset_reference} shows that the edges $A \cup \left(\bigcup_{i=1}^\ell E_i \right)$ lying in $\dcyl$ separate $\dhemi^\pm$ in $\dcyl$. Though there are more boxes in this case, the complexity of the argument does not go up: we can always reduce to the case that our simple path $\gamma$ last uses any vertex of $\dhemi^-(i)$ for all $i$, and we may also assume $\gamma$ uses a vertex within some $\dhemi^+(j)$ at a later point. Between these two points, we find that we must either use an edge in $A$, or an edge in one of the $E_i$. Thus, 
\begin{align}
\frak{X}(0,v,n) \leq \sum_{i=1}^\ell \hemi (\wt{S}_i , m+d^{1/2},1 ) + c(d) mn^{d-2} \,.
\label{eq:3.24}
\end{align}
The chosen orientation thus far has been arbitrary, so the preceding lemma also applies to $\wt{\frak{X}}(0,v,n)$. Take expectations of both sides in \eqref{eq:3.24} and apply Lemma \ref{center} to each term in the sum of \eqref{eq:3.24}, using the bound $\ell \leq (k+1)^{d-1}$ from \emph{(iii)}.
\begin{align}
\E_p \frak{X}(0,v,n) &\leq \ell  \E_p \wt{\frak{X}}(0,v,m) + \ell c(d) m^{d-2} + c(d) mn^{d-2} \,, \\
&\leq (k+1)^{d-1}  \E_p \wt{\frak{X}}(0,v,m) + (k+1)^{d-1}c(d) m^{d-2} + c(d) mn^{d-2} \,.
\end{align}
Divide through by $n^{d-1}$:
\begin{align}
\frac{\E_p \frak{X}(0,v,n)}{n^{d-1}} &\leq  (k+1)^{d-1}  \frac{\E_p \wt{\frak{X}}(0,v,m)}{n^{d-1}} + \frac{(k+1)^{d-1}m^{d-2}c(d)}{ n^{d-1}} + \frac{c(d) m}{n} \,,\\
&\leq  \left(\frac{k+1}{k}\right)^{d-1}  k^{d-1} \cdot \frac{\E_p \wt{\frak{X}}(0,v,m)}{n^{d-1}} +\left(\frac{k+1}{k}\right)^{d-1} \left(\frac{k}{n}\right)^{d-1} m^{d-2} c(d)  + \frac{c(d) m}{n}\,, \\
&\leq  \left(\frac{k+1}{k}\right)^{d-1}   \frac{\E_p \wt{\frak{X}}(0,v,m)}{m^{d-1}} +\left(\frac{k+1}{k}\right)^{d-1}  \frac{c(d)}{m}  + \frac{c(d) m}{n} \,.
\end{align}
First take the $\limsup$ of both sides in $n$, 
\begin{align}
\limsup_{n \to \infty} \frac{\E_p \frak{X}(0,v,n)}{n^{d-1}} \leq  \frac{\E_p \wt{\frak{X}}(0,v,m)}{m^{d-1}} + \frac{c(d)}{m}\,, 
\end{align}
and then the $\liminf$ of both sides in $m$:
\begin{align}
\limsup_{n \to \infty} \frac{\E_p \frak{X}(0,v,n)}{n^{d-1}} \leq \liminf_{m \to \infty}  \frac{\E_p \wt{\frak{X}}(0,v,m)}{m^{d-1}}  \,,
\end{align}
and the proof is complete upon dividing both sides by $2^{d-1}$: setting $\wt{\sfS} \equiv \sfS$ gives us the existence of the limit in question, and interchanging $\wt{\sfS}$ and $\sfS$ in the above argument tells us this limit does not depend on the chosen orientation. The finiteness of this limit can be seen as follows: given a direction $v \in \mathbb{S}^{d-1}$, the collection of edges intersecting the neighborhood $\cal{N}_{5d} (n\sfS(v) )$ forms a cutset in $\dcyl(\sfS(v),1,n)$ separating $\dhemi^\pm(\sfS(v),1,n)$ and this cutset has cardinality bounded above by $c(d) n^{d-1}$ for some positive constant $c(d)$ not depending on the direction. \end{proof}

We immediately deduce that $\beta_{p,d}$ inherits the symmetries of $\Z^d$.

\begin{coro} Let $d\geq 2$. For all $v \in \mathbb{S}^{d-1}$ and for all linear transformations $L : \R^d \to \R^d$ such that $L(\Z^d) = \Z^d$, we have $\beta_{p,d}( Lv) = \beta_{p,d}(v)$.
\label{sym}
\end{coro}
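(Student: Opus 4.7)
The plan is to exploit the orientation-independence of $\beta_{p,d}$ established in Proposition~\ref{beta}. First I would note that any linear map $L : \R^d \to \R^d$ with $L(\Z^d) = \Z^d$ is automatically orthogonal --- indeed, it must be a signed permutation of the coordinate axes --- so $L$ preserves the Euclidean structure and carries squares to squares.

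Given a direction $v \in \mathbb{S}^{d-1}$ and a chosen orientation $\sfS$ used to compute $\beta_{p,d}(v)$, I would introduce an auxiliary orientation $\wt{\sfS}$ by setting $\wt{\sfS}(Lv) := L(\sfS(v))$ and extending arbitrarily over the rest of $\mathbb{S}^{d-1}$. This is a valid assignment since $L(\sfS(v))$ is a square centered at $0$ whose spanning hyperplane is normal to $Lv$. Let $\wt{\frak{X}}(0, Lv, n) := \hemi(L(\sfS(v)), 1, n)$ denote the corresponding random variable.

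The key step is to establish the identity
\begin{align}
\E_p \wt{\frak{X}}(0, Lv, n) \,=\, \E_p \frak{X}(0, v, n) \quad \text{for every } n \,.
\end{align}
To do so I would observe that since $L$ is linear and $L(\Z^d) = \Z^d$, it induces a bijection between $\dcyl(\sfS(v), 1, n)$ and $\dcyl(L(\sfS(v)), 1, n)$ that carries $\dhemi^\pm(\sfS(v), 1, n)$ onto $\dhemi^\pm(L(\sfS(v)), 1, n)$, and therefore puts cutsets separating the hemispheres in the first discrete cylinder into one-to-one correspondence with cutsets separating the hemispheres in the image cylinder. Pushing forward a configuration $\om$ via the induced map on edges --- a permutation of $\rmE(\Z^d)$ that preserves $\prob_p$ --- then shows that the two minimal-cutset random variables have the same distribution, which gives the identity.

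Dividing by $(2n)^{d-1}$, passing to the limit as $n \to \infty$, and invoking the independence of $\beta_{p,d}$ from the chosen orientation (Proposition~\ref{beta}) yields $\beta_{p,d}(Lv) = \beta_{p,d}(v)$. There is no substantive obstacle here: the corollary is essentially a formal consequence of the invariance of Bernoulli percolation under lattice symmetries combined with the orientation-independence already established in the previous proposition.
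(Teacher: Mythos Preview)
Your proof is correct and follows essentially the same route as the paper's: both transport the cutset problem via the lattice automorphism $L$ and then invoke the orientation-independence from Proposition~\ref{beta}; the paper pulls back by setting $\wt{\sfS}(v) := L^{-1}\sfS(Lv)$ rather than pushing forward as you do, but this is cosmetic. One small caveat: the assertion that $L(\Z^d) = \Z^d$ alone forces $L$ to be a signed permutation is false (consider $\left(\begin{smallmatrix} 1 & 1 \\ 0 & 1 \end{smallmatrix}\right)$ on $\Z^2$); what is true---and what both you and the paper tacitly use---is that the corollary only makes sense, and the graph-automorphism argument only applies, for lattice \emph{isometries}, which are precisely the signed permutations.
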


\begin{proof} Let $v \in \mathbb{S}^{d-1}$, and let $\sfS$ be the chosen orientation. Then 
\begin{align}
\wt{\sfS}(v) := L^{-1} \sfS(Lv)
\end{align}
 is a rotation of $\sfS(v)$ contained in $\hyp (\sfS(v))$. From the preceding Proposition \ref{beta}, we know
 \begin{align}
 \lim_{n\to\infty} \frac{ \E_p \hemi( \wt{\sfS}(v),1,n)}{(2n)^{d-1}} = \lim_{n\to \infty} \frac{\E_p \frak{X}(0,v,n)} { (2n)^{d-1} } \,.
\end{align}
Moreover, because $L$ induces a graph automorphism of $\Z^d$, we know $\E_p \hemi( \wt{\sfS}(v), 1, n) = \E_p \frak{X}(0, Lv,n)$, so that 
\begin{align}
\lim_{n \to \infty} \frac{ \E_p \frak{X}(0, Lv,n)}{(2n)^{d-1}} = \lim_{n\to\infty} \frac{  \E_p \frak{X}(0, v,n)}{(2n)^{d-1}} \,,
\end{align}
as desired. \end{proof}

\subsection{The chosen orientation and properties of $\beta_{p,d}$}\label{sec:norm_3} Defining $\beta_{p,d}$ using cylinders based at squares (instead of discs, for instance) allows us to execute subadditivity arguments with ease. There is a tradeoff between the tidiness of these arguments and the artificial nature of the chosen orientation; we feel we have taken the route which is ultimately cleanest. Part of this tradeoff is that $\sfS$ must vary over most of the sphere in a Lipschitz way. Given an $\sfS$ and $A \subset \mathbb{S}^{d-1}$, say $\sfS$ is \emph{nicely varying over $A$} if there is $M(d) >0$ so that $\sfS$ satisfies 
\begin{align}
{\rm d}_H( \sfS(v) , \sfS(w) ) \leq M \e 
\label{eq:3_chosen_lip}
\end{align} 
whenever $|v - w|_2 < \e$ and $v,w \in A$. Here ${\rm d}_H$ is defined in \eqref{eq:2.5_Haus}. For $\sfS$ nicely varying on $\mathbb{S}^{d-1}$, one can show the functions $v \mapsto \E_p \frak{X}(0,v,n) / (2n)^{d-1} $ converge uniformly to $\beta_{p,d}$, allowing us to prove concentration estimates in Section~\ref{sec:concentration}. 

For topological reasons (the hairy ball theorem) it is not in general possible to have $\sfS$ vary nicely over the entire sphere. However, it will suffice to work with $\sfS$ varying nicely over the upper and lower hemispheres of $\mathbb{S}^{d-1}$. Introduce the closed upper hemisphere $\mathbb{S}_+^{d-1} := \mathbb{S}^{d-1} \cap \{ x \in \R^d : x_d \geq 0 \}$. A corollary of Proposition \ref{chosen} is that we may first define $\sfS$ over $\mathbb{S}_+^{d-1}$ so that $\sfS$ is nicely varying over $\mathbb{S}_+^{d-1}$. With such $\sfS$ defined on the upper hemisphere, extend the definition of $\sfS$ to the rest of $\mathbb{S}^{d-1}$ in a natural way by reflection. This yields $\sfS$ which varies nicely over $\mathbb{S}_+^{d-1}$ and $\mathbb{S}^{d-1} \setminus \mathbb{S}_+^{d-1}$. Henceforth we suppose $\sfS$ has these two properties. 

\begin{prop} Let $d \geq 2$, $p > p_c(d)$ and suppose $\sfS$ varies nicely over $\mathbb{S}_+^{d-1}$ and $\mathbb{S}^{d-1} \setminus \mathbb{S}_+^{d-1}$. Then the functions $v \mapsto \E_p \frak{X}(0,v,n) / (2n)^{d-1}$ converge uniformly to $\beta_{p,d}$.
\label{uniform}
\end{prop}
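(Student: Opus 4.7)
The plan is to combine the pointwise convergence established in Proposition~\ref{beta} with an equicontinuity estimate for the family $g_n(v) := \E_p \frak{X}(0,v,n)/(2n)^{d-1}$. Specifically, I aim to show: for each hemisphere $H \in \{ \mathbb{S}_+^{d-1},\, \mathbb{S}^{d-1} \setminus \mathbb{S}_+^{d-1}\}$ there is $C = C(d)$ with
\[
|g_n(v) - g_n(w)| \leq C\, |v-w|_2 \quad \text{for all } n \geq 1 \text{ and all } v,w \in H.
\]
Since these two pieces cover $\mathbb{S}^{d-1}$, and the Lipschitz bound extends from the open lower hemisphere to its closure by continuity of $\sfS$ across the equator, a standard $\eta/3$ argument on a finite $\delta$-net (with $\delta$ chosen so that $C\delta \leq \eta/3$) converts the pointwise convergence into uniform convergence.

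The equicontinuity estimate is proved by adapting the patching in Lemma~\ref{center}. Fix $v,w$ in a common hemisphere with $|v-w|_2 = \epsilon$. The nice variation hypothesis \eqref{eq:3_chosen_lip} yields ${\rm d}_H(\sfS(v), \sfS(w)) \leq M\epsilon$, hence ${\rm d}_H( n\cyl(\sfS(v),1),\, n\cyl(\sfS(w),1) ) \leq C(d)\, n\epsilon$. Let $E = E(\omega)$ be a minimal cutset separating $\dhemi^\pm(\sfS(v),1,n)$ within $\dcyl(\sfS(v),1,n)$, and let $A$ be the set of edges of $\Z^d$ meeting the $5d$-neighborhood of
\[
n \cdot \bigl[ \bigl(\cyl(\sfS(v),1) \triangle \cyl(\sfS(w),1)\bigr) \cup \bigl(\hyp(\sfS(w)) \cap \cyl(\sfS(w),1)\bigr) \bigr].
\]
Both components of this set are $(d-1)$-dimensional sets of $\mathcal{H}^{d-1}$-measure $O(\epsilon\, n^{d-1})$, so $|A| \leq C(d)\, \epsilon\, n^{d-1}$. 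Repeating the three-case analysis of Lemma~\ref{lem:cutset_reference} then shows the edges of $E \cup A$ lying in $\dcyl(\sfS(w),1,n)$ separate $\dhemi^\pm(\sfS(w),1,n)$ inside that cylinder: any simple path between the two hemispheres of the $w$-cylinder either exits $\dcyl(\sfS(v),1,n)$ and is forced to use an edge of $A$, or remains in both cylinders and, on reaching a vertex of $\dhemi^+(\sfS(v),1,n)$ after one in $\dhemi^-(\sfS(v),1,n)$, must cross $E$ (edges near the equator are covered by $A$ by construction). Hence $\frak{X}(0,w,n) \leq \frak{X}(0,v,n) + C(d)\, \epsilon\, n^{d-1}$; taking expectations, dividing by $(2n)^{d-1}$, and swapping $v$ with $w$ delivers the claimed bound with constant independent of $n$.

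Passing $n \to \infty$ in the equicontinuity estimate transfers the Lipschitz property to $\beta_{p,d}$. Given $\eta > 0$, take $\delta = \eta/(3C)$ and a finite $\delta$-net $\{v_1,\dots,v_K\}$ of $\mathbb{S}^{d-1}$ with each $v_i$ lying in at least one of the two closed hemispheres; by Proposition~\ref{beta} there is $N$ with $|g_n(v_i) - \beta_{p,d}(v_i)| \leq \eta/3$ for all $i$ and $n \geq N$. For arbitrary $v \in \mathbb{S}^{d-1}$, choose $v_i$ in a common hemisphere with $|v-v_i|_2 \leq \delta$ and apply the triangle inequality, using the Lipschitz bounds for $g_n$ and $\beta_{p,d}$, to obtain $|g_n(v) - \beta_{p,d}(v)| \leq \eta$ for all $n \geq N$.

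The main obstacle is the verification that $E \cup A$ genuinely separates the two hemispheres of $\dcyl(\sfS(w),1,n)$: the $v$- and $w$-cylinders are Hausdorff-close but neither is contained in the other, and the equatorial discs of the two cylinders differ by an $O(\epsilon\, n^{d-1})$-sized tilted slab. This is exactly the subtlety managed in the proof of Lemma~\ref{lem:cutset_reference}, and the three-case path analysis there carries over provided $A$ is designed to thicken both the lateral symmetric difference of the cylinders and the $w$-cylinder's equatorial disc, as above.
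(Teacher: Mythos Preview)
There is a genuine gap in the key estimate $|A| \leq C(d)\,\epsilon\, n^{d-1}$. Neither of the two pieces you use to define $A$ has the size you claim. The symmetric difference $n\cdot\bigl(\cyl(\sfS(v),1)\triangle\cyl(\sfS(w),1)\bigr)$ is a $d$-dimensional region of Lebesgue measure $O(\epsilon\, n^d)$, so its $5d$-thickening contains on the order of $\epsilon\, n^d$ edges. Worse, $n\cdot\bigl(\hyp(\sfS(w))\cap\cyl(\sfS(w),1)\bigr)$ is exactly the full equatorial square $n\,\sfS(w)$, with $\mathcal{H}^{d-1}$-measure $(2n)^{d-1}$ independent of $\epsilon$; including it makes $A$ already a cutset on its own, and the inequality $\frak{X}(0,w,n)\leq \frak{X}(0,v,n)+|A|$ becomes vacuous. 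Your later remark that ``edges near the equator are covered by $A$ by construction'' confirms that you really are thickening the whole disc, so this is not a typo.

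The paper sidesteps this by comparing cylinders at \emph{two different scales}: it keeps the $v$-cylinder at scale $n$ but takes the $w$-cylinder at scale $\lceil n(1+M\epsilon)\rceil$, so that, thanks to the nice-variation hypothesis, the smaller cylinder sits inside the larger one. With containment in hand, only two genuinely thin bridging sets are needed (an equatorial band of height $nM\epsilon$ on the lateral boundary of the inner cube, and an annular piece of the outer cube's equatorial hyperplane), each of cardinality $O(\epsilon\, n^{d-1})$. The price is that the resulting inequality relates $g_{\lceil n(1+M\epsilon)\rceil}(w)$ to $g_n(v)$ rather than $g_n(w)$ to $g_n(v)$; the paper applies it twice through a net point $v_i$ to finish the uniform-convergence argument. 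Your equicontinuity route would work if you replaced the two oversized pieces by such thin sets, but as written the cardinality bound on $A$ fails and the argument does not go through.
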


\begin{proof} Let $\e > 0$, Let $v,w \in \mathbb{S}_+^{d-1} $ be such that $|v-w|_2 < \e$. Let us fix some notation:
\begin{align}
\cyl(v) &:= n \cyl( \sfS(v), 1) \,, \\
\dcyl(v) &:= \dcyl(\sfS(v),1,n) \,,\\
\dhemi^\pm(v) &:= \dhemi^\pm( \sfS(v), 1, n) \,, \\
\cyl(w) &:= \lceil n(1+M\e) \rceil \cyl ( \sfS(w), 1)\,, \\
\dcyl(w) &:= \dcyl(\sfS(w),1,\lceil n(1+M\e) \rceil) \,, \\
\dhemi^\pm(w) &:= \dhemi^\pm( \sfS(w), 1, \lceil n(1+M\e) \rceil ) \,.
\end{align}

Let $E$ be a minimal cutset separating $\dhemi^\pm (v)$ in $\dcyl(v)$. By the hypothesis on $\sfS$, $\cyl(v) \subset \cyl(w)$, and $E$ is contained in the edge set of $\dcyl(w)$. As before, we use $E$ in conjunction with a small collection of edges to produce a cut separating the hemispheres of $\dcyl(w)$. We actually use two other collections of edges to do this. 

Writing $\hyp(w)$ for $\hyp(\lceil n(1+M\e) \rceil \sfS(w) )$, we define the edge set $A$ as in Lemma \ref{center} to be the edges of $\Z^d$ intersecting the neighborhood
\begin{align}
\cal{N}_{5d} \left(  (\cyl (w) \setminus \cyl(v) ) \cap \hyp(w)\right)\,.
\label{eq:uniform_2}
\end{align}
Likewise, let $B$ be the collection of edges having non-empty intersection with 
\begin{align}
\cal{N}_{5d} \left( \pa \cyl(v) \cap \slab ( \sfS(v), nM\e) \right) \,.
\label{eq:uniform_1}
\end{align}
Here we suppose that $\e$ is small enough so that $\slab(\sfS(v), nM\e)$ does not contain the top and bottom faces of the cube $\cyl(v)$. The neighborhood (\ref{eq:uniform_1}) is thus slight thickening of an equatorial band of height $nM\e$ in $\pa \cyl(v) $, and it follows that $|B| \leq c(d) M\e n^{d-1}$ for some $c(d) > 0$.

The neighborhood (\ref{eq:uniform_2}) forms a bridge between the neighborhood (\ref{eq:uniform_1}) defining $B$ and the equator of the larger cube $\cyl(w)$. By construction, we also have $|A| \leq c(d) M\e n^{d-1}$ for some $c(d) > 0$. Figure~\ref{fig:uni} illustrates the cutset $E$ with the edge sets $A$ and $B$.

\begin{figure}[h]
\centering
\includegraphics[scale=1]{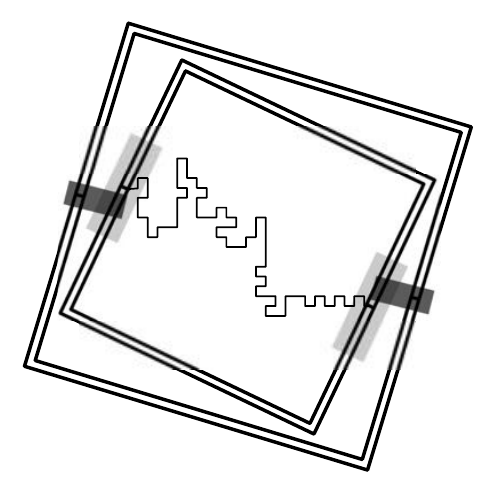}
\caption{The cutset $E$ in the smaller cube $\cyl(v)$ is central. At the equator of $\cyl(v)$, this cutset meets with the edge set $B$, the lightly shaded regions. $B$ is joined to the equator of the larger cube $\cyl(w)$ by $A$, the darker shaded regions.} 
\label{fig:uni}
\end{figure}

The edges of the union $E \cup A \cup B$ contained in $\dcyl( w)$ form a cutset separating the hemispheres $\dhemi^\pm(w)$. The argument for this is nearly identical to the proof of Lemma \ref{lem:cutset_reference}. Indeed, we are looking at nested cubes, with the only difference that one is tilted slightly relative to the other. This tilt is why $B$ is introduced. Thus,
\begin{align}
\frak{X}(0,w, \lceil n(1+M\e) \rceil) \leq \frak{X}(0,v,n) + c(d) M \e n^{d-1} \,,
\end{align}
so that by taking expectations, 
\begin{align}
\frac{\E_p \frak{X}(0,w, \lceil n(1+M\e) \rceil)}{(2\lceil n(1+M\e) \rceil)^{d-1}} \leq \frac{\E_p\frak{X}(0,v,n)}{(2n)^{d-1}} + c(d) M \e \,.
\label{eq:uni}
\end{align}
Taking $n \to \infty$, we have shown when $v,w \in \mathbb{S}_+^{d-1}$ satisfy $|v-w|_2 < \e$, 
\begin{align}
| \beta_{p,d}(v) - \beta_{p,d}(w) | < c(d) M\e \,.
\label{eq:3_close}
\end{align}
A symmetric argument shows the same bounds hold when $v,w \in \mathbb{S}^{d-1} \setminus \mathbb{S}_+^{d-1}$ and $|v-w|_2 < \e$.

Choose a finite collection of unit vectors $\{ v_i \}_{i=1}^m$ (with $m = m(\e)$), so that for any $v \in \mathbb{S}_+^{d-1}$, there is $v_i \in \mathbb{S}_+^{d-1}$ with $|v - v_i|_2 < \e$, and if $v \in \mathbb{S}^{d-1} \setminus \mathbb{S}_+^{d-1}$, there is $v_i \in \mathbb{S}^{d-1} \setminus \mathbb{S}_+^{d-1}$ with $|v-v_i|_2 < \e$. Take $N$ large enough so that whenever $n \geq N$, for each $i$,
\begin{align}
\left| \frac{ \E_p \frak{X}(0,v_i,n) }{ (2n)^{d-1} } - \beta_{p,d}(v_i) \right| < \e \,.
\end{align}
Let $v \in \mathbb{S}^{d-1} $ and take $v_i$ in the same hemisphere so that $|v - v_i |_2 < \e$. Apply (\ref{eq:uni}) twice:
\begin{align}
\frac{\E_p \frak{X}(0,v_i, \left\lceil \lceil n(1+M\e) \rceil(1+M\e) \right\rceil )}{(2 \left\lceil \lceil n(1+M\e) \rceil (1+M\e)\right\rceil)^{d-1}} -c(d) M\e &\leq \frac{\E_p \frak{X}(0,v, \lceil n(1+M\e) \rceil)}{(2\lceil n(1+M\e) \rceil)^{d-1}}\\
&\leq \frac{\E_p\frak{X}(0,v_i,n)}{(2n)^{d-1}} + c(d) M \e  \,.
\end{align}
By (\ref{eq:3_close}), we have
\begin{align} \beta_p(v) - \e - 2c(d)M\e \leq  \frac{\E_p \frak{X}(0,v, \lceil n(1+M\e) \rceil)}{(2\lceil n(1+M\e) \rceil)^{d-1}} \leq \beta_p(v) + \e + 2c(d) M \e \,,
\end{align}
which establishes the desired uniform convergence. \end{proof}

Extend $\beta_{p,d}$ to a function on all of $\R^d$ via homogeneity; for $x \in \R^d$ define
\begin{align}
\beta_{p,d} (x) := \begin{cases}  |x|_2\,\beta_{p,d} ( x / |x|_2) & |x|_2 > 0 \\ 0 & |x|_2 = 0 \end{cases} \,.
\end{align}

\begin{prop} For $d\geq2$ and $p > p_c(d)$, the function $\beta_{p,d} : \R^d \to [0,\infty)$ defines a norm on $\R^d$.
\label{norm}
\end{prop}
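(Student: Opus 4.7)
Three norm properties must be verified: absolute homogeneity, the triangle inequality, and strict positivity on $\R^d \setminus \{0\}$. Positive homogeneity is built into the definition of $\beta_{p,d}$ on $\R^d$. The symmetry $\beta_{p,d}(-x) = \beta_{p,d}(x)$ follows from Corollary \ref{sym} applied with $L = -\mathrm{id}$, which is a graph automorphism of $\Z^d$; combined with positive homogeneity this yields absolute homogeneity.

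For the triangle inequality I would use a Cauchy-type ``tent'' patching argument, in the same spirit as Lemma \ref{center}, Lemma \ref{lem:cutset_reference}, and Proposition \ref{beta}. Let $x, y \in \R^d$ with $x, y, x+y$ all nonzero, set $x_3 = x+y$, and write $v_i = x_i/|x_i|_2$, $a_i = |x_i|_2$ for $i = 1,2,3$, so that $a_1 v_1 + a_2 v_2 = a_3 v_3$. Inside a large discrete cylinder over $N a_3 \sfS(v_3)$, cover the equatorial square by two $(d-1)$-dimensional rectangles $R_1, R_2$ whose spanning hyperplanes are normal to $v_1$ and $v_2$ and which meet along a common $(d-2)$-dimensional seam, forming a tent over the equator. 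In the two-dimensional slice spanned by $v_1, v_2$, the traces of $R_1, R_2$ form the two non-base sides of a triangle whose base is the trace of $N a_3 \sfS(v_3)$, so the vector identity forces the $(d-1)$-dimensional area of $R_i$ to equal $a_i N^{d-1}$ up to a constant depending only on $d$. Tile each $R_i$ by translates of $\sfS(v_i)$ and take a minimal open cutset in the discrete cylinder over each translate. Patch these cutsets together and to the wall of the outer cylinder using microscopic rings of edges along the seam, the equator, and the mismatches between adjacent tiles; the separation argument of Lemma \ref{lem:cutset_reference} shows that the patched edge set is a cutset separating the two hemispheres of the outer cylinder, and the total patching cost is $O(N^{d-2})$. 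Taking expectations, dividing by $(2 N a_3)^{d-1}$, invoking Proposition \ref{beta} (after checking that tiling errors are negligible), and letting $N \to \infty$ gives
\[
a_3\,\beta_{p,d}(v_3) \;\leq\; a_1\,\beta_{p,d}(v_1) + a_2\,\beta_{p,d}(v_2),
\]
which unpacks to the desired triangle inequality.

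Strict positivity is where supercriticality is essential. By Corollary \ref{sym} it suffices to show $\beta_{p,d}(v) > 0$ for a single representative direction $v \in \mathbb{S}^{d-1}$. Using the Grimmett--Marstrand slab percolation theorem together with max-flow/min-cut duality, one can produce at least $c\, n^{d-1}$ edge-disjoint open paths from $\dhemi^-(\sfS(v), 1, n)$ to $\dhemi^+(\sfS(v), 1, n)$ inside $\dcyl(\sfS(v), 1, n)$ with probability tending to one, for some $c = c(p, d) > 0$. Any cutset separating the hemispheres must contain at least one open edge from each such path, giving $\E_p \frak{X}(0, v, n) \geq c' n^{d-1}$ and hence $\beta_{p,d}(v) \geq c' / 2^{d-1} > 0$; alternatively, this positivity can simply be cited from the references preceding the statement (Zhang, Kesten, Th\'{e}ret, etc.).

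The main obstacle is the tent construction for the triangle inequality: one must carry out the geometric bookkeeping (the area identity for $R_i$, the compatibility of $\sfS$ on the three faces, the tiling of $R_i$ by $\sfS(v_i)$-translates) and verify carefully, as in Lemma \ref{lem:cutset_reference}, that the microscopic patching edges truly close the union of local cutsets into a global cutset of the outer cylinder. Since this is a direct extension of patching techniques already developed in Section \ref{sec:norm}, no genuinely new ingredient is needed beyond careful execution.
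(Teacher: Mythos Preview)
Your proposal is correct and matches the paper's approach in substance. The paper's proof simply cites the weak triangle inequality from Proposition~11.6 of \cite{stflour} (or Proposition~4.5 of \cite{RoTh}) and then invokes Corollary~11.7 of \cite{stflour} to upgrade to convexity, whereas you have sketched the underlying ``tent'' patching argument that those references contain; for non-degeneracy the paper does exactly what you propose, reducing via Corollary~\ref{sym} to a single axis direction and then citing the existence of order $n^{d-1}$ edge-disjoint open crossings (Theorem~7.68 of \cite{Grimmett}) together with Menger's theorem.
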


\begin{proof} The proof of Proposition 11.6 in \cite{stflour} (or Proposition 4.5 of \cite{RoTh}) tells us that $\beta_{p,d}$ satisfies the weak triangle inequality. By Corollary 11.7 of \cite{stflour}, $\beta_{p,d}$ is a convex function on $\R^d$. To show non-degeneracy of $\beta_{p,d}$, it suffices to show non-degeneracy in the cardinal directions. By Corollary~\ref{sym}, it suffices to show non-degeneracy in a single cardinal direction. 

This non-degeneracy is a consequence of Theorem 7.68 in \cite{Grimmett}, for instance: within a large axis-parallel cube, with high probability, there are at least $c n^{d-1}$ edge-disjoint open paths between the top and bottom faces, for some $c(p,d) >0$. Menger's theorem converts this fact into a high probability lower bound on the size of a minimal cut separating opposing faces of this cube.  \end{proof}

\begin{rmk} That $\beta_{p,d}$ is a norm allows us to define the associated surface energy $\cal{I}_{p,d}$, as in Section \ref{sec:notation}, as well as the unit Wulff crystal $\wh{W}_{p,d}$, which is the unit ball in the norm dual to $\beta_{p,d}$.  Define the \emph{Wulff crystal} $W_{p,d}$ to be the dilate of $\wh{W}_{p,d}$ about the origin so that $\cal{L}^d ( W_{p,d}) = 2^d / d!$. The Wulff crystal $W_{p,d}$ is the limit shape appearing in Theorem \ref{main_L1}. So that this theorem makes sense, we must know that $W_{p,d}$ is contained in $[-1,1]^d$.
\end{rmk}

\begin{lem} For $d\geq2$ and $p > p_c(d)$, the Wulff crystal $W_{p,d}$ is contained in $[-1,1]^d$. 

\label{containment}
\end{lem}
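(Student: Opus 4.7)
The plan is to sandwich the unit Wulff crystal $\wh{W}_{p,d}$ between an inscribed cross-polytope of $\ell^1$-radius $b$ and the circumscribed cube $[-b,b]^d$, where $b := \beta_{p,d}(e_1)$, and then compare the resulting volume bound to $\cal{L}^d(W_{p,d}) = 2^d/d!$. By Corollary~\ref{sym}, $\beta_{p,d}(\pm e_i) = b$ for every $i \in \{1,\ldots,d\}$, so the defining intersection of half-spaces~\eqref{eq:section1.1_shape} evaluated at $v = \pm e_i$ immediately yields $\wh{W}_{p,d} \subset [-b,b]^d$.

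The main step is the reverse inclusion $\{x : |x|_1 \leq b\} \subset \wh{W}_{p,d}$. Since $\wh{W}_{p,d}$ is the unit ball of the dual norm $\beta_{p,d}'$, the norm $\beta_{p,d}$ is the support function of $\wh{W}_{p,d}$, so
\begin{align}
\sup\{y_1 : y \in \wh{W}_{p,d}\} = \beta_{p,d}(e_1) = b \,,
\end{align}
and the supremum is attained by compactness at some $y^* \in \wh{W}_{p,d}$. Corollary~\ref{sym} also implies that $\wh{W}_{p,d}$ is invariant under every coordinate sign change $x_j \mapsto -x_j$, so averaging $y^*$ over the group $\{-1,+1\}^{d-1}$ acting on the coordinates $j \neq 1$ produces, by convexity, a point of $\wh{W}_{p,d}$ whose only nonzero entry is the first coordinate and equals $b$; that is, $b e_1 \in \wh{W}_{p,d}$. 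Central symmetry (the $-I$ symmetry) and coordinate permutations upgrade this to $\pm b e_i \in \wh{W}_{p,d}$ for every $i$, and their convex hull is precisely the desired cross-polytope.

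To conclude, the inscribed cross-polytope has Lebesgue measure $(2b)^d/d!$, so $\cal{L}^d(\wh{W}_{p,d}) \geq (2b)^d/d!$. Since $W_{p,d} = \lambda \wh{W}_{p,d}$ with $\lambda$ determined by $\lambda^d \cal{L}^d(\wh{W}_{p,d}) = 2^d/d!$, this volume lower bound forces $\lambda b \leq 1$. Combined with the circumscribed cube from the first step, this gives
\begin{align}
W_{p,d} \subset \lambda [-b,b]^d = [-\lambda b, \lambda b]^d \subset [-1,1]^d \,.
\end{align}

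The only step requiring any real care is the symmetrization that promotes an abstract maximizer $y^*$ of the first coordinate into the axis point $b e_1$; this is routine once the $\Z^d$-symmetries of $\beta_{p,d}$ in Corollary~\ref{sym} have been transferred to $\wh{W}_{p,d}$ through the polar-duality description.
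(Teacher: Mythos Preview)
Your proof is correct and follows essentially the same approach as the paper's. The paper's argument is more terse: it simply asserts that by Corollary~\ref{sym} one has $cB_1 \subset \wh{W}_{p,d} \subset cB_\infty$ for some $c>0$ (with $B_1$, $B_\infty$ the unit $\ell^1$- and $\ell^\infty$-balls), then notes $\cal{L}^d(B_1) = 2^d/d!$ to finish. You have carried out the same sandwich argument but made the constant explicit as $b = \beta_{p,d}(e_1)$ and supplied the symmetrization step that justifies why the \emph{same} constant appears on both sides --- a point the paper leaves to the reader.
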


\begin{proof} By Corollary \ref{sym}, the unit Wulff crystal $\wh{W}_{p,d}$ satisfies
\begin{align}
c B_1 \subset \wh{W}_{p,d} \subset c B_\infty
\end{align}
for some $c >0$, where $B_1$ and $B_\infty$ respectively denote unit $\ell^1$- and unit $\ell^\infty$-balls in $\R^d$ centered at the origin. The claim follows from the fact that $\cal{L}^d(B_1) = 2^d / d!$. \end{proof}

\begin{rmk} We may use $\cal{I}_{p,d}$ to define an analogous notion of conductance in the continuum: for $E \subset \R^d$ a set of finite perimeter, we define the \emph{conductance} of $E$ as $\cal{I}_{p,d}(E) / \theta_p(d) \cal{L}^d(E)$. 
\end{rmk}




{\large\section{\B{Concentration estimates for $\beta_{p,d}$}}\label{sec:concentration}}

We now derive concentration estimates for the random variables used to define $\beta_{p,d}$, following an argument of Zhang in Section 9 of \cite{Zhang}. We use results from his paper in conjunction with the following concentration estimate due to Talagrand. \newline

\begin{thm} (Talagrand \cite{Tala}, Section 8.3)\, Let $(\rm V,E)$ be a finite graph with $\{X_e\}_{e \in{\rm E}}$ a collection of iid Bernoulli($p$) random variables. Let $\cal{S}$ denote a family of sets of edges and for $S \in \cal{S}$, let $X_S := \sum_{ e \in S} X_e$. Let $Z_{\cal{S}} := \inf_{S \in \cal{S}} X_S$, and let $M_{\cal{S}}$ be a median of $Z_{\cal{S}}$. There is $c(p) > 0$ so that for all $u > 0$, 
\begin{align}
\prob_p ( | Z_{\cal{S}} - M_{\cal{S}} | \geq u ) \leq 4 \exp \left( - c\min \left( \frac{u^2}{\al}, u \right) \right) \,,
\end{align}
where $\al = \sup_{S \in \cal{S}} |S|$.
\label{tala}
\end{thm}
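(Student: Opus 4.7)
The plan is to deduce the theorem from Talagrand's convex distance inequality, which asserts that for every measurable $A \subset \{0,1\}^{\rm E}$,
\begin{align*}
\prob_p(A) \cdot \E_p \exp\!\bigl( d_T(\cdot, A)^2/4 \bigr) \leq 1 \,,
\end{align*}
where $d_T(\omega, A) := \sup \bigl\{ \inf_{\omega' \in A} \sum_{e} \beta_e \1_{\omega_e \neq \omega'_e} : \beta \in \R_+^{\rm E},\, |\beta|_2 \leq 1 \bigr\}$ is Talagrand's convex distance. The deterministic bound $0 \leq X_S \leq |S| \leq \al$ forces $|Z_{\cal{S}} - M_{\cal{S}}| \leq \al$ pointwise, so on the range $u \in (0, \al]$ one has $u^2/\al = \min(u^2/\al, u)$, while for $u > \al$ the left-hand side of the target inequality vanishes. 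It therefore suffices to produce a subgaussian bound of the form $\prob_p(|Z_{\cal{S}} - M_{\cal{S}}| \geq u) \leq 4\exp(-cu^2/\al)$.

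For the lower tail I would set $A = \{Z_{\cal{S}} \geq M_{\cal{S}}\}$ and note $\prob_p(A) \geq 1/2$. Given $\omega$ with $Z_{\cal{S}}(\omega) \leq M_{\cal{S}} - u$, choose $S^\star = S^\star(\omega)$ so that $Z_{\cal{S}}(\omega) = X_{S^\star}(\omega)$; observe $|S^\star| \leq \al$. For every $\omega' \in A$ we have $X_{S^\star}(\omega') \geq Z_{\cal{S}}(\omega') \geq M_{\cal{S}}$, and the Bernoulli bound $X_e(\omega') - X_e(\omega) \leq \1_{\omega_e \neq \omega'_e}$ gives
\begin{align*}
u \;\leq\; X_{S^\star}(\omega') - X_{S^\star}(\omega) \;\leq\; \sum_{e \in S^\star} \1_{\omega_e \neq \omega'_e} \,.
\end{align*}
The non-negative unit vector $\beta_e := \1_{e \in S^\star}/\sqrt{|S^\star|}$ does not depend on $\omega'$, so inserting it into the definition of $d_T$ yields $d_T(\omega, A) \geq u/\sqrt{\al}$. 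Applying Markov's inequality to Talagrand's inequality then produces $\prob_p(Z_{\cal{S}} \leq M_{\cal{S}} - u) \leq 2\exp(-u^2/(4\al))$, the desired lower-tail estimate.

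The upper tail is the main obstacle. Taking $A = \{Z_{\cal{S}} \leq M_{\cal{S}}\}$ and $\omega$ with $Z_{\cal{S}}(\omega) \geq M_{\cal{S}} + u$, the symmetric argument shows that for every $\omega' \in A$ the minimizer $S^\star(\omega')$ satisfies $|S^\star(\omega')| \leq \al$ and $\sum_{e \in S^\star(\omega')} \1_{\omega_e \neq \omega'_e} \geq u$; however this certificate now depends on $\omega'$, so no single witness $\beta$ can be placed inside the outer supremum in $d_T$. I would circumvent this by passing to the dual characterization
\begin{align*}
d_T(\omega, A)^2 = \inf\bigl\{ |v|_2^2 : v \in \overline{\mathrm{conv}}\{ (\1_{\omega_e \neq \omega'_e})_{e} : \omega' \in A \} \bigr\} \,,
\end{align*}
and combining the above combinatorial input (each extreme point carries at least $u$ ones inside a certificate of size $\leq \al$) with the induction-on-coordinates argument that forms the technical core of Talagrand's proof of the convex distance inequality. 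This yields $|v|_2 \geq u/\sqrt{\al}$ for every convex combination $v$, whence $d_T(\omega, A) \geq u/\sqrt{\al}$, and the matching upper-tail bound follows. The two tails together deliver the theorem, with a constant $c$ that is in fact independent of $p$ (any $p$-dependence can be harmlessly absorbed into the stated $c(p)$).
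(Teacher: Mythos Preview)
The paper does not prove this theorem; it is quoted from Talagrand and used as a black box, so there is no in-paper argument to compare against. Your reduction to the range $u\le\al$ and your lower-tail argument are both correct and standard.

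The upper-tail argument, however, has a genuine gap. The assertion that the dual characterization together with ``the induction-on-coordinates argument'' forces $|v|_2\ge u/\sqrt{\al}$ for every $v$ in the convex hull is not justified, and as a purely combinatorial claim it is false. With two coordinates, $u=1$, $\al=1$, and extreme points $u_1=(1,0)$ (certificate $\{1\}$) and $u_2=(0,1)$ (certificate $\{2\}$), the midpoint $v=(1/2,1/2)$ has $|v|_2=1/\sqrt{2}<1=u/\sqrt{\al}$. The induction on coordinates you invoke is the proof of the convex-distance inequality itself, not a tool for lower-bounding $d_T$ in applications; it does not supply the missing inequality.

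The actual fix is simpler than what you propose: swap the roles of the two sets. Take $A=\{Z_{\cal S}\ge M_{\cal S}+u\}$ and bound $d_T(\omega,A)$ for $\omega$ with $Z_{\cal S}(\omega)\le M_{\cal S}$. Now the minimizer $S^\star=S^\star(\omega)$ is taken at the \emph{low-value} configuration and depends only on $\omega$: for every $\omega'\in A$ one has $X_{S^\star}(\omega')\ge Z_{\cal S}(\omega')\ge M_{\cal S}+u\ge X_{S^\star}(\omega)+u$, hence $\sum_{e\in S^\star}\1_{\omega_e\neq\omega'_e}\ge u$ and $d_T(\omega,A)\ge u/\sqrt{\al}$. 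Since $\prob_p(Z_{\cal S}\le M_{\cal S})\ge 1/2$, Talagrand's inequality yields
\[
\prob_p(A)\cdot\tfrac12\,e^{u^2/(4\al)}\;\le\;\prob_p(A)\,\E_p\!\left[e^{d_T(\cdot,A)^2/4}\right]\;\le\;1,
\]
so $\prob_p(Z_{\cal S}\ge M_{\cal S}+u)\le 2e^{-u^2/(4\al)}$. Combined with your lower tail this gives the theorem with $c=1/4$, indeed independent of $p$.
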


\begin{rmk} The random variables $\hemi$ and $\face$ are easily expressed as $Z_\cal{S}$ for some family of edge sets $\cal{S}$, but use of Theorem \ref{tala} requires control over the size of the largest edge set in $\cal{S}$ through the term $\al$. We must then control the size of the largest minimal cut separating opposing hemispheres (or faces) of a~cube. 
\end{rmk}

\begin{rmk} The chosen orientation \sfS, introduced in the previous section, has been fixed since the beginning of Section \ref{sec:norm_3}. Following Zhang in \cite{Zhang}, we use Theorem \ref{tala} to prove concentration for a variant of the $\frak{X}(0,v,n)$.
\end{rmk}

 Let $\gamma >0$, and let $\cal{S}_{n,v}(\gamma)$ be the family of cutsets in $\dcyl(\sfS(v),1,n)$ satisfying $|S| \leq \gamma (2n)^{d-1}$, and which separate $\dhemi^\pm(\sfS(v),1,n)$. Define $
Z_{n,v}^{(\gamma)}(\om) := \inf_{S \in \cal{S}_n(\gamma)} |S|_\om$, and apply Theorem \ref{tala} to $Z_{n,v}^{(\gamma)}$, using the bound $\al \leq \gamma (2n)^{d-1}$. 

\begin{prop} Let $\e, \gamma > 0$. There are $c_1(p,\gamma,\e),c_2(p,\gamma,\e) >0$ so that for all $v \in \mathbb{S}^{d-1}$ and $n \geq 1$ 
\begin{align}
\prob_p \left( \frac{ |Z_{n,v}^{(\gamma)} - \E_p Z_{n,v}^{(\gamma)} |}{(2n)^{d-1} } \geq \e  \right) \leq c_1 \exp \left( -c_2 n^{(d-1)/3} \right) \,.
\end{align}

\label{trunc_con}
\end{prop}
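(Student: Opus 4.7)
The plan is to apply Talagrand's inequality (Theorem~\ref{tala}) directly to the infimum $Z_{n,v}^{(\gamma)}$ and then convert the resulting concentration around the median into concentration around the mean via a standard tail integration. The key observation is that $Z_{n,v}^{(\gamma)}$ is literally the random variable $Z_\cal{S}$ from Theorem~\ref{tala} applied to $\cal{S} = \cal{S}_{n,v}(\gamma)$, and the \emph{a priori} truncation $|S| \leq \gamma (2n)^{d-1}$ built into the family supplies exactly the controlling parameter
\begin{align}
\al \;:=\; \sup_{S \in \cal{S}_{n,v}(\gamma)} |S| \;\leq\; \gamma (2n)^{d-1}
\end{align}
needed for Theorem~\ref{tala} to be effective.

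Applying Theorem~\ref{tala} with $u = \e(2n)^{d-1}/2$, and writing $M = M_{n,v}^{(\gamma)}$ for a median of $Z_{n,v}^{(\gamma)}$, yields
\begin{align}
\prob_p\!\left( \bigl|Z_{n,v}^{(\gamma)} - M\bigr| \geq \tfrac{\e(2n)^{d-1}}{2} \right) \;\leq\; 4\exp\!\left(-c(p)\,\min\!\left(\tfrac{\e^2}{4\gamma},\,\tfrac{\e}{2}\right)(2n)^{d-1}\right).
\end{align}
This is already Gaussian-scale concentration in $n$, which is strictly stronger than the claimed $n^{(d-1)/3}$ rate.

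To replace the median by the mean, integrate the Talagrand tail: splitting at $t = \al$ into the Gaussian and exponential regimes gives
\begin{align}
\bigl|\E_p Z_{n,v}^{(\gamma)} - M\bigr| \;\leq\; \int_0^\infty \prob_p\!\left(\bigl|Z_{n,v}^{(\gamma)} - M\bigr| \geq t\right)\,dt \;\leq\; C(p)\sqrt{\al} \;\leq\; C(p,\gamma)\,n^{(d-1)/2}.
\end{align}
Since $n^{(d-1)/2} \ll \e(2n)^{d-1}$ for $n$ large (depending on $\e,\gamma$), the triangle inequality converts the preceding display into the analogous bound centered at $\E_p Z_{n,v}^{(\gamma)}$; finitely many small $n$ are absorbed by enlarging $c_1$.

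There is no real obstacle here: Theorem~\ref{tala} was chosen precisely because it applies to infima over edge-indexed families of Bernoulli sums, and the truncation parameter $\gamma$ feeds cleanly into the Talagrand exponent through $\al$. The only mild subtlety is checking that the median-to-mean gap is genuinely negligible at the scale $\e (2n)^{d-1}$, which the tail integration handles at once. The gap between the obtained rate $n^{d-1}$ and the sub-optimal rate $n^{(d-1)/3}$ stated in the proposition provides comfortable slack for the Borel--Cantelli arguments that follow.
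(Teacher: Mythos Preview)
Your proof is correct and follows essentially the same approach as the paper: apply Talagrand's inequality to get concentration about the median, integrate the tail to bound $|\E_p Z_{n,v}^{(\gamma)} - M_{n,v}^{(\gamma)}|$, and finish with the triangle inequality. Your tail integration is in fact cleaner than the paper's (yielding $|\E_p Z - M| \leq C(p,\gamma)\,A^{1/2}$ rather than the paper's $\tfrac{3}{2}A^{2/3}$, where $A=(2n)^{d-1}$), and consequently you obtain the sharper rate $\exp(-c\,n^{d-1})$ versus the paper's stated $\exp(-c\,n^{(d-1)/3})$; the paper's weaker rate comes from applying Talagrand at scale $u = 2A^{2/3}$ rather than at $u = \e A/2$.
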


\begin{proof} We follow the argument at the beginning of Section 9 in \cite{Zhang}.
Write $A = A(n) := (2n)^{d-1}$ for the $\cal{H}^{d-1}$-measure (or ``area") of the square $n\sfS(v)$. Let $M_{n,v}^{(\gamma)}$ be a median of $Z_{n,v}^{(\gamma)}$. Then, 
\begin{align}
| \E_p Z_{n,v}^{(\gamma)} - M_{n,v}^{(\gamma)} | &\leq \E_p | Z_{n,v}^{(\gamma)} - M_{n,v}^{(\gamma)} |  \,, \\
& \leq \sum_{j=1}^{\lfloor A^{2/3}\rfloor } \prob_p( |Z_{n,v}^{(\gamma)} - M_{n,v}^{(\gamma)}| \geq j) + \sum_{j= \lceil A^{2/3} \rceil }^\infty \prob_p( |Z_{n,v}^{(\gamma)} - M_{n,v}^{(\gamma)}| \geq j ) \,.
\end{align}
Apply Theorem \ref{tala} with $\al \leq \gamma A$ to the right-most sum above:
\begin{align} 
| \E_p Z_{n,v}^{(\gamma)} - M_{n,v}^{(\gamma)}| &\leq A^{2/3} + 4 \left( \sum_{j= \lceil A^{4/3} \rceil }^\infty \exp\left(-c \frac{j}{\gamma A} \right) +  \sum_{j= \lceil A^{2/3} \rceil }^\infty  \exp(-cj) \right) \,, \\
&\leq A^{2/3} + \frac{4}{1 - \exp( - c / \gamma A)} \exp( -c A^{1/3} / \gamma) + \frac{4}{1- \exp(-c)} \exp ( -c A^{2/3} ) \,.
\end{align}
For $n$ large depending on $p$ and $\gamma$, $|\E_p Z_{n,v}^{(\gamma)} - M_{n,v}^{(\gamma)} | \leq (3/2) A^{2/3}$. Use the triangle inequality to conclude that, 
\begin{align}
\prob_p ( | Z_{n,v}^{(\gamma)} - \E_p Z_{n,v}^{(\gamma)} | \geq 4A^{2/3} ) &\leq \prob_p ( | Z_{n,v}^{(\gamma)} - M_{n,v}^{(\gamma)}| + |M_{n,v}^{(\gamma)} - \E_p Z_{n,v}^{(\gamma)} | \geq 4A^{2/3}) \,, \\
&\leq \prob_p( |Z_{n,v}^{(\gamma)} - M_{n,v}^{(\gamma)}| \geq 2A^{2/3} )\,.
\end{align}
Use Theorem \ref{tala} again to complete the proof:
\begin{align}
\prob_p \left( | Z_{n,v}^{(\gamma)} - \E_p Z_{n,v}^{(\gamma)} | \geq 4A^{2/3} \right) &\leq 4 \exp\left( -c \min\left( \frac{4}{\gamma}A^{1/3}, 2A^{2/3} \right)\right) \,.
\end{align}
\end{proof}

To use Proposition \ref{trunc_con} on the $\frak{X}(0,v,n)$, we need the following input. For a percolation configuration $\om$, let $N_{n,v}(\om)$ denote the minimum cardinality $|S|$ over all cutsets $S$ in $\dcyl(\sfS(v),1,n)$ separating $\dhemi^\pm(\sfS(v),1,n)$ such that $|S|_\om = [\frak{X}(0,v,n)](\om)$. 
\begin{prop} (Rossignol-$\Th$ \cite{RoTh}, Proposition 4.2)\, Let $d \geq 2$ and let $p >p_c(d)$. There are positive constants $\gamma(p,d), c_1(p,d)$ and  $c_2(p,d)$ so that for all $u >0$, all $v \in \mathbb{S}^{d-1}$ and all $n \geq 1$, 
\begin{align}
\prob_p( N_{n,v} \geq \gamma u \text{ and } \frak{X}(0,v,n) \leq u) \leq c_1 \exp ( - c_2 u ) \,.
\end{align}
\label{nobig}
\end{prop}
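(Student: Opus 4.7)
\medskip

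\noindent\textbf{Proposed proof plan.} The plan is to bound the event by a union over possible minimum-weight, minimum-cardinality cutsets, and then combine a Peierls-type enumeration with a Cram\'er bound for binomial tails.

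\medskip

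First I would observe that on the event $\{N_{n,v}\geq \gamma u,\ \frak{X}(0,v,n)\leq u\}$, any cutset $S$ in $\dcyl(\sfS(v),1,n)$ separating $\dhemi^\pm(\sfS(v),1,n)$ that achieves $|S|=N_{n,v}$ and $|S|_\omega=\frak{X}(0,v,n)$ must be minimal as a cutset: removing any edge would yield a strictly smaller cutset of no larger weight, contradicting the minimum-cardinality property. Hence on this event there exists a minimal cutset $S$ with $|S|\geq \gamma u$ and $|S|_\omega \leq u$, so the number of closed edges in $S$ is at least $(\gamma-1)u$. The task reduces to bounding
\[
\prob_p\Bigl(\exists\, S\subset \rmE(\dcyl(\sfS(v),1,n)),\ S \text{ minimal cut separating } \dhemi^\pm,\ |S|\geq \gamma u,\ |S|_\omega \leq u\Bigr).
\]

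\medskip

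Second, I would control the combinatorial complexity via a Peierls-type enumeration. Associated to a minimal cutset $S$ separating the hemispheres is one of the two connected components of the complement in $\dcyl$; the vertex boundary of this component is $*$-connected by Proposition~\ref{star_conn}. Standard lattice animal enumeration then gives a constant $C=C(d)>1$ such that the number of minimal cutsets $S$ with $|S|=m$ and containing a fixed anchor edge is at most $C^m$. Taking the anchor to be (say) the edge of $S$ of smallest lexicographic index among those crossing a neighborhood of the equator of $\cyl(\sfS(v),1)$, the number of admissible anchors is at most $A(n)\leq c_d\, n^{d-1}$.

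\medskip

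Third, for any fixed edge set $S$ with $|S|=m$, the random variable $|S|_\omega$ is $\mathrm{Bin}(m,p)$-distributed, so by Cram\'er's inequality, for $m\geq \gamma u$ with $\gamma > 1/p$,
\[
\prob_p(|S|_\omega \leq u)\ \leq\ \prob(\mathrm{Bin}(m,p)\leq m/\gamma)\ \leq\ \exp\bigl(-I_p(1/\gamma)\,m\bigr),
\]
where $I_p(1/\gamma)\to \infty$ as $\gamma\to\infty$. Union-bounding gives
\[
\prob_p\bigl(N_{n,v}\geq \gamma u,\ \frak{X}(0,v,n)\leq u\bigr)\ \leq\ A(n)\sum_{m\geq \gamma u} C^m\exp\bigl(-I_p(1/\gamma)\,m\bigr).
\]
Choose $\gamma=\gamma(p,d)$ large enough that $\log C - I_p(1/\gamma) \leq -\kappa$ for some $\kappa=\kappa(p,d)>0$. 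The geometric sum is then bounded by $c\exp(-\kappa \gamma u/2)\exp(-\kappa\gamma u/2)$, and the polynomial factor $A(n)=O(n^{d-1})$ is handled by combining with the trivial bound $\leq 1$: for $u\leq \tfrac{2(d-1)}{\kappa\gamma}\log n$ the trivial bound already has the desired form after inflating $c_1$, and for $u$ larger the exponential decay absorbs $n^{d-1}$. Together this yields the claimed bound $c_1\exp(-c_2 u)$ uniformly in $v$ and $n$.

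\medskip

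The main obstacle I anticipate is the Peierls enumeration itself, i.e.\ producing the constant $C(d)$ cleanly: one must anchor the minimal cutset correctly and verify that the $*$-connectedness furnished by Proposition~\ref{star_conn} yields an exponential bound of the form $C^m$. Once this combinatorial input is in place, the probabilistic estimate via Cram\'er is routine, and the choice of $\gamma$ large enough to beat $C$ follows because $I_p(1/\gamma)\to\infty$.
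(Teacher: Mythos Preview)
The paper does not supply its own proof of this proposition; it is quoted directly from Rossignol--Th\'eret \cite{RoTh}. Your Peierls-plus-Cram\'er strategy is indeed the one used there, so the overall plan is sound. However, there is a genuine gap in how you handle the anchoring, and your attempted patch does not work.

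You bound the number of anchor edges by $A(n)\leq c_d n^{d-1}$ and then try to absorb this polynomial via a case split: for $u\leq \tfrac{2(d-1)}{\kappa\gamma}\log n$ you appeal to the trivial bound $\prob_p(\cdot)\leq 1$. But the target is $c_1\exp(-c_2 u)$ with $c_1,c_2$ \emph{independent of $n$}, and for $u$ of order $\log n$ the quantity $c_1\exp(-c_2 u)$ is of order $n^{-c}$, not bounded below by $1$. So the trivial bound does not yield the desired inequality in that range, and the $n^{d-1}$ prefactor is genuinely unabsorbed.

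The correct fix is to sharpen the anchoring, not to split on $u$. A minimal cutset $S$ separating $\dhemi^\pm$ must cut every path in $\dcyl$ joining a vertex of $\dhemi^+$ to a vertex of $\dhemi^-$. Fix once and for all a pair $z^+\in\dhemi^+$, $z^-\in\dhemi^-$ at $\ell^1$-distance $O(1)$ from each other (such a pair exists near any point of the equatorial boundary of $n\sfS(v)$); the short path between them must use an edge of $S$, so $S$ contains an edge in a fixed $O(1)$-neighbourhood of that point. Hence the number of admissible anchors is $O(1)$, uniformly in $n$, and the polynomial prefactor disappears entirely. With $A=O(1)$ your geometric sum gives $c\exp\bigl(-(\,I_p(1/\gamma)-\log C\,)\gamma u\bigr)$ directly.

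A secondary point: Proposition~\ref{star_conn} as stated concerns the boundary of a finite connected subgraph of $\Z^d$, and does not immediately give the $*$-connectedness (in the plaquette sense) of a minimal cutset separating two prescribed vertex sets inside a cylinder. One needs the cylinder analogue, which is what \cite{RoTh} (following Zhang) actually establish; you correctly flag this as the main technical point, but it is worth noting that it is not a direct consequence of Proposition~\ref{star_conn}.
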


Using Proposition \ref{nobig} with Proposition \ref{trunc_con}, we deduce the following.

\begin{coro} Let $d\geq 2$, $p > p_c(d)$, $v \in \mathbb{S}^{d-1}$ and let $\e >0$. There are positive constants $c_1(p,d,\e)$ and $c_2(p,d,\e)$ so that for all $n \geq 1$,
\begin{align}
\prob_p \left( \frac{ | \frak{X}(0,v,n) - \E_p \frak{X}(0,v,n) |}{(2n)^{d-1}} \geq \e \right) \leq c_1 \exp \left(-c_2 n^{(d-1)/3} \right) \,.
\end{align}

\label{con}
\end{coro}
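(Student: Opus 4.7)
\bigskip

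\noindent\emph{Proof plan for Corollary \ref{con}.} The strategy is to sandwich $\frak{X}(0,v,n)$ between $Z_{n,v}^{(\gamma)}$ for a suitable fixed $\gamma$ and then invoke Proposition~\ref{trunc_con}. First I would observe that $\frak{X}(0,v,n)$ is \emph{deterministically} bounded by a constant times $(2n)^{d-1}$: the argument at the end of the proof of Proposition~\ref{beta} shows that the edges of $\Z^d$ meeting $\cal{N}_{5d}(n\sfS(v))$ form a cutset in $\dcyl(\sfS(v),1,n)$ separating $\dhemi^\pm(\sfS(v),1,n)$, and this cutset has at most $c_0(d)(2n)^{d-1}$ edges. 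Hence $\frak{X}(0,v,n)\leq c_0(2n)^{d-1}$ pointwise, and in particular $\E_p \frak{X}(0,v,n) \leq c_0(2n)^{d-1}$.

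Next, apply Proposition~\ref{nobig} with the choice $u = c_0(2n)^{d-1}$: with $\gamma(p,d)$ as produced by that proposition, set $\gamma' := \gamma c_0$ and define the event
\begin{align}
\cal{E}_n := \Big\{ N_{n,v} \leq \gamma'(2n)^{d-1} \Big\}.
\end{align}
Since $\frak{X}(0,v,n)\leq u$ always holds, Proposition~\ref{nobig} gives
\begin{align}
\prob_p(\cal{E}_n^c) \leq c_1 \exp\!\big(-c_2 n^{d-1}\big).
\end{align}
On $\cal{E}_n$ there is a minimal cutset separating the hemispheres which lies in $\cal{S}_{n,v}(\gamma')$ and achieves the value $\frak{X}(0,v,n)$; together with the trivial bound $\frak{X}(0,v,n)\leq Z_{n,v}^{(\gamma')}$ (the latter is an infimum over a smaller family), this gives $\frak{X}(0,v,n) = Z_{n,v}^{(\gamma')}$ on $\cal{E}_n$.

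It remains to compare the two means. Both random variables lie in $[0,c_0(2n)^{d-1}]$ (for $Z^{(\gamma')}$, use the same deterministic cutset above, which has size $\leq \gamma'(2n)^{d-1}$ once $\gamma'$ is taken large enough, so it belongs to $\cal{S}_{n,v}(\gamma')$). Hence
\begin{align}
0 \leq \E_p Z_{n,v}^{(\gamma')} - \E_p \frak{X}(0,v,n) \leq c_0(2n)^{d-1}\,\prob_p(\cal{E}_n^c) \leq c_1' (2n)^{d-1}\exp\!\big(-c_2 n^{d-1}\big),
\end{align}
which is $\leq (\e/2)(2n)^{d-1}$ for all $n$ large enough. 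Now Proposition~\ref{trunc_con} applied with $\e/2$ in place of $\e$ yields
\begin{align}
\prob_p\!\left( \frac{|Z_{n,v}^{(\gamma')} - \E_p Z_{n,v}^{(\gamma')}|}{(2n)^{d-1}} \geq \tfrac{\e}{2} \right) \leq c_1'' \exp\!\big(-c_2'' n^{(d-1)/3}\big).
\end{align}
Combining this bound with $\prob_p(\cal{E}_n^c) \leq c_1 \exp(-c_2 n^{d-1})$ and the above mean comparison gives the desired concentration for $\frak{X}(0,v,n)$, after absorbing constants. Small $n$ are handled by enlarging $c_1$.

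The only delicate point is the choice of truncation threshold: we must pick $\gamma'$ large enough that (i)~the deterministic cutset witnessing $\frak{X}\leq c_0(2n)^{d-1}$ actually lies in $\cal{S}_{n,v}(\gamma')$, so $Z^{(\gamma')}$ is nontrivially bounded, and (ii)~Proposition~\ref{nobig} produces a minimizer below the threshold with overwhelming probability. Once $\gamma'$ is fixed in terms of $p$ and $d$, everything follows. The weaker rate $n^{(d-1)/3}$ comes entirely from Proposition~\ref{trunc_con} and is not improved by this reduction.
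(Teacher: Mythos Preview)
Your proposal is correct and follows essentially the same approach as the paper: bound $\frak{X}(0,v,n)$ deterministically, use Proposition~\ref{nobig} to show $\frak{X}(0,v,n)=Z_{n,v}^{(\gamma')}$ off an event of probability $\exp(-c\,n^{d-1})$, and then invoke Proposition~\ref{trunc_con}. You are in fact slightly more careful than the paper, which writes the inequality
\[
\prob_p\!\left(\tfrac{|\frak{X}-\E_p\frak{X}|}{(2n)^{d-1}}\geq \e\right)\leq \prob_p\big(Z^{(\gamma)}\neq \frak{X}\big)+\prob_p\!\left(\tfrac{|Z^{(\gamma)}-\E_p Z^{(\gamma)}|}{(2n)^{d-1}}\geq \e\right)
\]
without explicitly justifying the implicit mean comparison; your bound $|\E_p Z^{(\gamma')}-\E_p\frak{X}|\leq c_0(2n)^{d-1}\prob_p(\cal{E}_n^c)$ fills that small gap.
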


\begin{proof} As remarked at the end of the proof of Proposition \ref{beta}, uniformly in $v \in \mathbb{S}^{d-1}$ and all configurations $\om$, $[\frak{X}(0,v,n)](\om) \leq c(d) n^{d-1}$ for some $c(d) >0$. Apply Proposition \ref{nobig} with $u = c(d)n^{d-1}$ to obtain $\gamma(p,d)$ so that  
\begin{align}
\prob_p( N_{n,v} \geq \gamma c(d) n^{d-1} ) \leq c_1 \exp \left(- c_2 c(d)n^{d-1} \right) \,.
\label{eq:4_number}
\end{align}
We use this bound shortly. For this $\gamma$ and for $\e > 0$, use Proposition \ref{trunc_con} to obtain $c_1(p,\gamma,\e)$, $c_2(p,\gamma,\e) > 0$ so that
\begin{align}
\prob_p \left( \frac{ | \frak{X}(0,v,n) - \E_p \frak{X}(0,v,n)|}{(2n)^{d-1}} \geq \e \right) &\leq \prob_p ( Z_{n,v}^{(\gamma)} \neq \frak{X}(0,v,n) ) + \prob_p \left( \frac{ |Z_{n,v}^{(\gamma)} - \E_p Z_{n,v}^{(\gamma)} |}{(2n)^{d-1} } \geq \e  \right) \,,\\
&\leq \prob_p( Z_{n,v}^{(\gamma)} \neq \frak{X}(0,v,n) ) + c_1 \exp \left( -c_2 n^{(d-1)/3}\right) \,.
\end{align} 
As$\{ Z_{n,v}^{(\gamma)} \neq \frak{X}(0,v,n) \} \subset \{ N_{n,v} \geq \gamma c(d) n^{d-1} \}$, (\ref{eq:4_number}) implies
\begin{align}
\prob_p \left( \frac{ | \frak{X}(0,v,n) - \E_p \frak{X}(0,v,n)|}{(2n)^{d-1}} \geq \e \right) &\leq \prob_p \left( N_{n,v} \geq \gamma c(d) n^{d-1} \right) + c_1 \exp \left( -c_2n^{(d-1)/3} \right) \,, \\
&\leq c_1 \exp\left(-c_2 c(d)n^{d-1} \right) + c_1 \exp \left( -c_2 n^{(d-1)/3} \right) \,.
\end{align} 
The proof is complete. \end{proof}

We obtain the desired concentration estimates by combining Corollary \ref{con} with Proposition \ref{uniform}. The following is the main result of the section.  

\begin{thm} Let $d\geq 2$, $p > p_c(d)$, and let $\e > 0$. There are positive constants $c_1(p,d,\e), c_2(p,d,\e)$ so that for all $x \in \R^d, v \in \mathbb{S}^{d-1}$, and all $r > 0$, 
\begin{align}
\prob_p \left( \left| \frac{\frak{X}(x,v,r)}{(2n)^{d-1}} - \beta_{p,d}(v) \right| \geq \e \right) \leq c_1 \exp \left(-c_2 r^{(d-1)/3} \right) \,.
\label{eq:4_concentration}
\end{align}

\label{concentration}
\end{thm}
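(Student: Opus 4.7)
The plan is to reduce \eqref{eq:4_concentration} to the special case $x=0$, $r=n\in\mathbb{N}$ already handled by Corollary~\ref{con}, using two ingredients: (i) a pathwise sandwich of $\frak{X}(x,v,r)$ between two variants based at lattice centers, produced by the ring-patching construction from the proof of Lemma~\ref{center}; and (ii) the invariance of $\prob_p$ under lattice translations. Uniform convergence from Proposition~\ref{uniform} then replaces the resulting means by $\beta_{p,d}(v)$, with an error uniform in $v\in\mathbb{S}^{d-1}$.

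Fix $\e>0$. For $r$ larger than some threshold $R(\e)$, choose integers $n_-=\lfloor r-d^{1/2}\rfloor$ and $n_+=\lceil r+d^{1/2}\rceil$, together with $y_\pm\in\Z^d$ satisfying $|y_\pm - rx|_\infty\leq 1$, so that $n_-\cyl(\sfS(v)+y_-/n_-,1) \subset r\cyl(\sfS(v)+x,1) \subset n_+\cyl(\sfS(v)+y_+/n_+,1)$. The slack of order $d^{1/2}$ in the side lengths absorbs both the noninteger center $rx$ and the tilt of $\sfS(v)$ relative to $\Z^d$. Applying the patching construction from the proof of Lemma~\ref{center} (whose cutset validity is supplied by Lemma~\ref{lem:cutset_reference}) to each pair of nested cylinders yields the pathwise sandwich
\begin{align*}
\hemi\bigl(\sfS(v)+y_+/n_+,1,n_+\bigr) - c(d)r^{d-2} \,\leq\, \frak{X}(x,v,r) \,\leq\, \hemi\bigl(\sfS(v)+y_-/n_-,1,n_-\bigr) + c(d)r^{d-2}.
\end{align*}
Because $y_\pm\in\Z^d$, the translation invariance of the Bernoulli product measure gives the distributional identity $\hemi(\sfS(v)+y_\pm/n_\pm,1,n_\pm)\stackrel{d}{=}\frak{X}(0,v,n_\pm)$.

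Dividing through by $(2r)^{d-1}$, using $((2n_\pm)/(2r))^{d-1}=1+O(1/r)$, and invoking Proposition~\ref{uniform} to replace $\E_p\frak{X}(0,v,n_\pm)/(2n_\pm)^{d-1}$ by $\beta_{p,d}(v)$ up to a $v$-uniform error $o_r(1)$, the event in \eqref{eq:4_concentration} is contained for $r\geq R(\e)$ in the union of the two concentration events
\begin{align*}
\biggl\{\, \biggl|\frac{\frak{X}(0,v,n_\pm)}{(2n_\pm)^{d-1}} - \E_p\frac{\frak{X}(0,v,n_\pm)}{(2n_\pm)^{d-1}}\biggr| \geq \e/3 \,\biggr\},
\end{align*}
and Corollary~\ref{con} bounds each by $c_1'\exp(-c_2' r^{(d-1)/3})$ with constants depending only on $p,d,\e$. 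For $r<R(\e)$ the claim is trivial after inflating $c_1$. The main subtlety is the sandwich construction: $r\cyl(\sfS(v)+x,1)$ is both tilted with respect to $\Z^d$ and has noninteger center and side length, so the simultaneous choice of $n_\pm$ and $y_\pm$ realizing the nested containments must carefully exploit the $d^{1/2}$ slack in both directions; once the containments are secured, the ring is a microscopic thickening of a $(d-2)$-dimensional set, giving cardinality $O(r^{d-2})$, and the cutset validity follows from the argument of Lemma~\ref{lem:cutset_reference} verbatim.
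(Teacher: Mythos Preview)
Your proposal is correct and follows essentially the same approach as the paper: establish the case $x=0$, $r\in\N$ via Proposition~\ref{uniform} and Corollary~\ref{con}, then sandwich the general $\frak{X}(x,v,r)$ between two lattice-centered, integer-sidelength variants using the ring-patching construction of Lemma~\ref{center}/Lemma~\ref{lem:cutset_reference}, with the $O(r^{d-2})$ ring cost absorbed for large $r$. The paper performs the sandwich in two separate steps (first rounding $r$ via $\lfloor r\rfloor,\lceil r\rceil$, then moving the center to a lattice point via Lemma~\ref{center}), whereas you collapse both into a single nesting with $n_\pm$ and $y_\pm$; this is a cosmetic difference only.
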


\begin{proof} We first prove \eqref{eq:4_concentration} in the case that $x =0$ and $r = n \in \N$. Use Proposition \ref{uniform} to choose $n_0(\e)$ large so that for all $v \in \mathbb{S}^{d-1}$, $n \geq n_0$ implies
\begin{align}
\left| \frac{ \E_p \frak{X} (0,v,n)}{(2n)^{d-1}} - \beta_{p,d}(v) \right| < \e/2 \,.
\end{align}
For $n \geq n_0$,
\begin{align}
\prob_p\left( \left| \frac{\frak{X}(0,v,n)}{(2n)^{d-1}} - \beta_{p,d}(v) \right| \geq \e \right) &\leq \prob_p\left( \frac{ | \frak{X}(0,v,n) - \E_p \frak{X}(0,v,n) |}{ (2n)^{d-1}} + \left| \frac{\E_p\frak{X}(0,v,n)}{(2n)^{d-1}} - \beta_{p,d}(v) \right| \geq \e \right) \,, \\
&\leq  \prob_p\left( \frac{ | \frak{X}(0,v,n) - \E_p \frak{X}(0,v,n) |}{ (2n)^{d-1}}  \geq \e/2 \right) \,,
\end{align}
and \eqref{eq:4_concentration} is shown to hold by applying Corollary \ref{con} to the right-hand~side.

Now consider general $x$ and $r$; we claim that for any $r >0$ and $x \in \R^d$, there is $c(d) > 0$ so that 
\begin{align}
\frak{X}(x,v,\lceil r \rceil) - c(d) r^{d-2} \leq \frak{X}(x,v,r) \leq \frak{X}(x,v, \lfloor r \rfloor ) + c(d)r^{d-2} \,.
\label{eq:concentration}
\end{align}
To see this, let $A$ be the collection of edges having non-empty intersection with 
\begin{align}
\cal{N}_{5d}( r \sfS(v) \setminus \lfloor r \rfloor \sfS(v) ) \,,
\end{align}
and let $E$ be a minimal cutset in $\dcyl( \sfS(v), 1, \lfloor r \rfloor)$ separating $\dhemi^\pm ( \sfS(v),1, \lfloor r \rfloor)$.
The now standard argument from Lemma \ref{lem:cutset_reference} tells us the edges of $E \cup A$ contained in $\dcyl( \sfS(v),1,r)$ separate $\dhemi^\pm( \sfS(v),1,r)$. That $|A | \leq c(d) r^{d-2}$ establishes the upper bound on $\frak{X}(0,v,r)$ in \eqref{eq:concentration}, and we obtain the lower bound through a similar procedure. 

The proof of Lemma \ref{center} tells us that for $x \in \R^d$ and $r > 0$, there exists $x' \in \Z^d$ so that 
\begin{align}
\frak{X}(x', v, r + d^{1/2} ) - c(d) r^{d-2} \leq \frak{X}(x,v,r) \leq \frak{X}(x',v,r - d^{1/2} ) + c(d) r^{d-2} \,.
\end{align}
Apply (\ref{eq:concentration}) to conclude
\begin{align}
\frak{X}\left(x', v, \left\lceil r + d^{1/2} \right\rceil \right) - c(d) r^{d-2} \leq \frak{X}(x,v,r) \leq \frak{X} \left(x',v, \left\lfloor r - d^{1/2} \right\rfloor \right) + c(d) r^{d-2} \,.
\end{align}
As $x' \in \Z^d$, the variables $\frak{X}(x', v, \lceil r + d^{1/2}\rceil )$ and $\frak{X}(x',v, \lfloor r - d^{1/2} \rfloor)$ have the same law as $\frak{X}(0, v, \lceil r + d^{1/2}\rceil )$ and $\frak{X}(0,v, \lfloor r - d^{1/2} \rfloor)$ respectively, so concentration estimates \eqref{eq:4_concentration} established in the case $x = 0$ and $r = n \in \N$ hold for these variables as well. Within the high probability event
\begin{align}
\left\{ \left| \frac{ \frak{X}\left(x', v, \left\lceil r + d^{1/2} \right\rceil \right)}{ \left(2 \left \lceil r + d^{1/2} \right\rceil \right)^{d-1} } - \beta_{p,d}(v) \right| < \e \right\} \cap \left\{ \left| \frac{ \frak{X} \left(x', v, \left\lfloor r - d^{1/2} \right\rfloor \right)}{ \left(2 \left\lfloor r - d^{1/2} \right\rfloor \right)^{d-1} } - \beta_{p,d}(v) \right| < \e \right\} \,,
\end{align}
and for $r$ taken large depending on $\e$ and $d$, we obtain
\begin{align}
\beta_{p,d}(v) - 3\e \leq \frac{ \frak{X}(x,v,r) }{ (2r)^{d-1} } \leq \beta_{p,d}(v) + 3 \e\,,
\end{align}
completing the proof. \end{proof}




{\large\section{\B{Consequences of concentration estimates}}\label{sec:consequences}}

We now derive important consequences of Theorem \ref{concentration}. In Section \ref{sec:consequences_1}, we obtain information about the random variables $\hemi$ and $\face$ for cylinders with small height, a crucial input for Section \ref{sec:final}. In Section \ref{sec:consequences_2}, we show any polytope $P \subset [-1,1]^d$ satisfying $\cal{L}^d(P) \leq 2^d / d!$ gives an upper bound on $\Chee$. Specializing these results to a sequence of polytopes which are progressively better approximates of the Wulff crystal, we obtain the easier half of Theorem \ref{main_benj}. \newline

\subsection{Lower bounds for cuts in thin cylinders}\label{sec:consequences_1} We apply our concentration estimates to random variables $\face$ for cylinders of small height. It is important to recall the convention established in Remark \ref{rmk:3_suitable}. Throughout this section, adopt the notation $\sfS(x,v) := \sfS(v) + x$.

\begin{lem} Let $d\geq 2$, $p > p_c(d)$ and let $\e > 0$. There exists $\eta(p,d,\e) > 0$ small and positive constants $c_1(p,d,\e)$, $c_2(p,d,\e)$ so that for all $x \in \R^d$, all $v \in \mathbb{S}^{d-1}$, $h \in (0, \eta)$, and $r >0$ taken sufficiently large depending on $h$, we have
\begin{align}
\prob_p \left( \face (\sfS(x,v), h, r) \leq (1- \e) \cal{H}^{d-1}( r \sfS(x,v) ) \beta_{p,d}(v) \right) \leq c_1\exp \left( -c_2 r^{(d-1)/3} \right)  \,.
\end{align}

\label{good_face_square}
\end{lem}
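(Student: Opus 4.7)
The plan is to reduce the desired lower bound to the concentration of $\frak{X}(x,v,r) = \hemi(\sfS(x,v),1,r)$ for the fat cylinder (Theorem \ref{concentration}) by patching a minimal $\face$-cut with a thin lateral collar, in the spirit of Lemma \ref{lem:cutset_reference}.

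Write $T := \dcyl(\sfS(x,v),h,r)$ for the thin cylinder of interest and $F := \dcyl(\sfS(x,v),1,r)$ for the fat cylinder over the same footprint; since $h < \eta < 1$ one has $T \subset F$. Let $B$ be the set of edges of $F$ incident to a vertex on the lateral boundary of $T$. A direct count gives $|B| \leq c(d) h r^{d-1}$, because the lateral boundary of $T$ has only $O(h r^{d-1})$ vertices. Let $S = S(\om)$ realize $\face(\sfS(x,v),h,r)$, so $|S|_\om = \face(\sfS(x,v),h,r)$.

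The key step is a purely geometric claim: $S \cup B$ separates $\dhemi^{\pm}(F)$ within $F$. To verify it, consider any $F$-path $\gamma$ from $\dhemi^+(F)$ to $\dhemi^-(F)$. Such a $\gamma$ crosses the equatorial hyperplane $\{x_d = 0\}$, and for $r$ large enough depending on $h$ (so that $hr > 1$), both endpoints of any equator-crossing edge have $|x_d| \leq 1 < hr$ and hence lie in $T$. Tracking the entries and exits of $\gamma$ with respect to $T$, each occurs through the top or bottom face of $T$ (producing a vertex in $\dface^\pm(T)$) or through the lateral boundary (yielding an edge of $B$). Considering the last exit of $\gamma$ from $T$ below the equator, $\gamma$ either contains a $T$-sub-path joining $\dface^+(T)$ to $\dface^-(T)$ (cut by $S$) or exits laterally (cut by $B$); a path that stays on $\pa F$ meets $B$ at its equatorial crossing, which lies on the lateral of $T$.

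The claim yields
\begin{equation*}
\frak{X}(x,v,r) \leq |S \cup B|_\om \leq |S|_\om + |B| \leq \face(\sfS(x,v),h,r) + c(d) h r^{d-1}.
\end{equation*}
By non-degeneracy of $\beta_{p,d}$ (Proposition \ref{norm}), $\beta_{\min} := \inf_{v \in \mathbb{S}^{d-1}} \beta_{p,d}(v) > 0$. Applying Theorem \ref{concentration} with tolerance $\e \beta_{\min}/4$ gives, outside an event of probability at most $c_1 \exp(-c_2 r^{(d-1)/3})$, the lower bound $\frak{X}(x,v,r) \geq (1 - \e/2)(2r)^{d-1}\beta_{p,d}(v)$. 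Setting $\eta := \e \cdot 2^{d-2} \beta_{\min} / c(d)$ ensures $c(d) h r^{d-1} \leq (\e/2)(2r)^{d-1}\beta_{p,d}(v)$ for all $h < \eta$ and $v \in \mathbb{S}^{d-1}$, whence $\face(\sfS(x,v),h,r) \geq (1-\e)\cal{H}^{d-1}(r\sfS(x,v))\beta_{p,d}(v)$ on the concentration event.

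The main obstacle is the careful topological verification of the cutset patching claim: paths from $\dhemi^+(F)$ to $\dhemi^-(F)$ may enter and exit $T$ through its top face many times without ever crossing the equator, and one must confirm that $S$ together with the thin collar $B$ intercepts every such trajectory before it can escape to the bottom hemisphere.
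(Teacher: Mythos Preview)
Your approach is correct and essentially identical to the paper's: both patch a minimal $\face$-cutset in the thin cylinder with a lateral collar of size $O(hr^{d-1})$ to obtain a $\hemi$-cutset in the fat cylinder, deduce $\frak{X}(x,v,r) \le \face(\sfS(x,v),h,r) + c(d)hr^{d-1}$, and then invoke Theorem~\ref{concentration}. One small imprecision in your case analysis: since $T$ and $F$ share the same lateral footprint, a path in $F$ cannot exit $T$ ``laterally'' --- the role of $B$ is instead to catch paths whose endpoints in $\dhemi^\pm(F)$ already lie on the lateral boundary of $T$ (i.e., at height $\le hr$), exactly as your last parenthetical remark indicates.
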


\begin{proof} Write $S$ for  $\sfS(x,v)$ and consider a minimal cutset $E$ in $\dcyl(S,h,r)$ separating $\dface^\pm(S,h,r)$. Recall that $S_h^+$ and $S_h^-$ are the top and bottom faces of the cylinder $\cyl(S,h)$, and consider the collection of edges $A$ which intersect 
\begin{align}
\cal{N}_{5d} ( r ( \pa \cyl(S,h) \setminus (S_h^+ \cup S_h^- ) ) ) \,.
\end{align}
The edges of $E \cup A$ contained in $\dcyl(S,h,r)$ separate $\dhemi^{\pm}(S,h,r)$ within $\dcyl(S,h,r)$, and hence also $\dhemi^\pm(S,1,r)$ in the larger cylinder $\dcyl(S,1,r)$, provided that $h$ and $r$ are suitable for $S$ in the sense of Remark \ref{rmk:3_suitable}. It is for this reason we must take $r$ large depending on $h$. By construction, the cardinality of $A$ is at most $c(d) hr^{d-1}$ for some $c(d) > 0$, and
\begin{align}
\frak{X}(x,v,r) \leq \face(S,h,r) + c(d) hr^{d-1} \,.
\end{align}
Thus,
\begin{align}
\Big \{ \face(S, h,r) \leq (1-\e) \cal{H}^{d-1}(rS) \beta_{p,d}(v) \Big \} &\subset \Big\{ \frak{X}(x,v,r) \leq (1-\e) \cal{H}^{d-1}(rS) \beta_{p,d}(v) + c(d) hr^{d-1} \Big\} \,, \\
&\subset \Big\{ \frak{X}(x,v,r) \leq (1-\e/2) \cal{H}^{d-1}(rS) \beta_{p,d}(v) \Big\} \,,
\end{align}
where $h$ is chosen small depending on $p,d,\e$ to obtain the second line directly above. We complete the proof by applying Theorem \ref{concentration} to the event on the second line. \end{proof}

We now prove the analogue of Lemma \ref{good_face_square} for cylinders of small height based at discs.

\begin{prop} Let $d\geq 2$, $p> p_c(d)$ and let $\e >0$. Given $x \in \R^d$ and $v \in \mathbb{S}^{d-1}$, let $D(x,v)$ be the isometric image of the unit Euclidean ball in $\R^{d-1}$ centered at $x$ and oriented so that $\hyp(D(x,v))$ is orthogonal to $v$. There is $\eta(p,d,\e) > 0$ small and positive constants $c_1(p,d,\e)$ and $c_2(p,d,\e)$ so that for all $x \in \R^d, v \in \mathbb{S}^{d-1}$, $h \in (0, \eta)$ and $r >0$ sufficiently large depending on $h$, we have
\begin{align}
\prob_p \Big( \face( D(x,v), h, r) \leq (1-\e) \cal{H}^{d-1} (rD(x,v)) \beta_{p,d}(v)  \Big) \leq c_1 \exp \left(-c_2 r^{(d-1)/3} \right) \,.
\end{align}
\label{disc}
\end{prop}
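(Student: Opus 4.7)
The plan is to deduce Proposition~\ref{disc} from Lemma~\ref{good_face_square} by a packing argument in which small square cylinders fill out most of the disc cylinder. Given $\e>0$, first choose a parameter $\sigma = \sigma(\e,d) > 0$ small and a finite collection $\{T_i\}_{i=1}^\ell$ of pairwise disjoint translates of $\sigma\sfS(v)$ contained in $D(x,v)$ with
\begin{align}
\sum_{i=1}^\ell \cal{H}^{d-1}(T_i) \geq (1 - \e/2)\cal{H}^{d-1}(D(x,v)), \qquad \ell \leq c(d)\sigma^{-(d-1)}.
\end{align}
Such a packing exists by standard covering arguments since $D(x,v)$ has smooth boundary. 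Every $T_i$ lies in the hyperplane $\hyp(D(x,v))$, so $\slab(T_i,h) = \slab(D(x,v),h)$ and $\cyl(T_i,h) \subset \cyl(D(x,v),h)$; hence the small discrete cylinders $\dcyl(T_i,h,r)$ are pairwise disjoint subsets of $\dcyl(D(x,v),h,r)$, and $\dface^\pm(T_i,h,r) \subset \dface^\pm(D(x,v),h,r)$.

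The key simplification relative to the proofs of Lemma~\ref{center}, Lemma~\ref{lem:cutset_reference} and Lemma~\ref{good_face_square} is that no microscopic shielding edge set is needed here, because the small cylinders and the ambient disc cylinder share the same height $2rh$. Let $E$ be a minimal cutset realizing $\face(D(x,v),h,r)$. Any path within $\dcyl(T_i,h,r)$ joining $\dface^-(T_i,h,r)$ to $\dface^+(T_i,h,r)$ is automatically a path within $\dcyl(D(x,v),h,r)$ joining $\dface^-(D(x,v),h,r)$ to $\dface^+(D(x,v),h,r)$, and therefore uses an edge of $E$; this edge lies in $\rmE(\dcyl(T_i,h,r))$. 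It follows that $E \cap \rmE(\dcyl(T_i,h,r))$ separates $\dface^\pm(T_i,h,r)$ in $\dcyl(T_i,h,r)$, so by the disjointness of the small cylinders,
\begin{align}
\face(D(x,v),h,r) = |E|_\om \geq \sum_{i=1}^\ell |E \cap \rmE(\dcyl(T_i,h,r))|_\om \geq \sum_{i=1}^\ell \face(T_i,h,r).
\end{align}

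Next I would apply the square estimate to each $T_i$. Writing $T_i = \sigma\sfS(y_i/\sigma,v)$ and using the scaling identity $\dcyl(\sigma F, h, r) = \dcyl(F, h/\sigma, r\sigma)$ (which holds because $\cyl(\sigma F,h) = \sigma\cyl(F, h/\sigma)$), one obtains $\face(T_i,h,r) = \face(\sfS(y_i/\sigma,v),h/\sigma,r\sigma)$. Provided $h < \sigma\eta(p,d,\e/2)$ (so that $h/\sigma$ lies in the range where Lemma~\ref{good_face_square} applies) and $r$ is large depending on $h$ and $\sigma$, Lemma~\ref{good_face_square} with parameter $\e/2$ yields
\begin{align}
\prob_p\bigl( \face(T_i,h,r) \leq (1-\e/2)\cal{H}^{d-1}(rT_i)\beta_{p,d}(v) \bigr) \leq c_1\exp\bigl(-c_2(r\sigma)^{(d-1)/3}\bigr).
\end{align}
A union bound over the $\ell$ squares, absorbing the fixed factor $\sigma$ into the constants, bounds the probability that this estimate fails for some $i$ by $c_1'\exp(-c_2' r^{(d-1)/3})$. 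On the complementary event,
\begin{align}
\face(D(x,v),h,r) \geq (1-\e/2)\beta_{p,d}(v) r^{d-1}\sum_{i=1}^\ell \cal{H}^{d-1}(T_i) \geq (1-\e/2)^2\beta_{p,d}(v)\cal{H}^{d-1}(rD(x,v)),
\end{align}
which gives the conclusion since $(1-\e/2)^2 \geq 1-\e$.

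The one thing to get right is the ``no-shield'' observation underlying the displayed inequality in the second paragraph. Once the inclusion $\dface^\pm(T_i,h,r) \subset \dface^\pm(D(x,v),h,r)$ is recognized, which is immediate from $\hyp(T_i) = \hyp(D(x,v))$, the restriction of $E$ to the edge set of $\dcyl(T_i,h,r)$ is automatically a cutset, sidestepping the delicate patching argument of Lemma~\ref{lem:cutset_reference}. The remaining work is careful tracking of scales through the rescaling identity and a routine union bound.
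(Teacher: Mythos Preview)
Your argument is correct and is essentially the paper's own proof: both tile the disc by small disjoint squares in the same hyperplane, observe that restricting a face cutset for the disc cylinder to each sub-cylinder of the same height yields a face cutset there (so $\face(D,h,r)\ge\sum_i\face(T_i,h,r)$ with no shielding needed), then rescale each square to invoke Lemma~\ref{good_face_square} and finish with a union bound. The only cosmetic differences are that the paper uses dyadic squares pushed forward by an isometry compatible with $\sfS$ and shrinks them by a factor $(1-\delta)$ to ensure the closed sub-cylinders are genuinely disjoint, whereas you pack directly with translates of $\sigma\sfS(v)$; just make sure your ``pairwise disjoint'' means positive distance so the discrete edge sets $\rmE(\dcyl(T_i,h,r))$ are disjoint.
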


\begin{proof} Figure \ref{fig:square_tile} captures the idea of this proof: we tile the disc with a collection of small, nearly exhaustive squares. Let $\e' > 0$, write $D = D(x,v)$ and let $D' := \{ x \in \R^{d-1}: |x|_2 \leq 1\}$. Let $\vp : D' \to \R^d$ be an isometry taking $D'$ to $D$ and let $\Delta^k \equiv \Delta^{k,d-1}$ be the dyadic squares in $[-1,1]^{d-1}$ at scale $k$. Choose $k \in \N$ large enough (depending on $\e'$ and $d$) so that 
\begin{align}
\cal{L}^{d-1} \left( D' \setminus \bigcup_{S' \in \Delta^k,\, S' \subset D' } S' \right) \leq \e' \cal{L}^{d-1}(D') \,,
\end{align}
and enumerate the squares $S' \in \Delta^k$ with $S' \subset D'$ as $S_1', \dots, S_m'$. The number $m$ of these squares depends on $\e'$ and $d$. Shrink each square slightly to form a new disjoint collection $\{ S_i''\}_{i=1}^m$ of closed squares. Specifically, $S_i''$ shall be the $(1-\delta)$-dilate of $S_i'$ about its center for some $\delta \in (0,1)$. For each $i$, define $S_i := \vp(S_i'')$ and choose $\delta$ small enough (also depending on $\e'$ and $d$) so that
\begin{align}
\cal{H}^{d-1} \left( D \setminus \bigcup_{i=1}^m S_i \right) \leq 2\e' \cal{H}^{d-1}(D) \,.
\end{align}

We arrange that the isometry $\vp$ is compatible with the chosen orientation, in that for each $i$, $S_i = \al \sfS(y_i, v)$ for some $y_i \in \R^d$, where $\al := (1-\delta)2^{-k}$. Let $\e >0$, choose $\eta = \eta(p,d,\e/2)$ as in Lemma \ref{good_face_square} and let $h \in (0, \al \eta)$. 

Let $E$ be a minimal cutset separating $\dface^\pm(D,h,r)$ within $\dcyl(D,h,r)$. Let $E_i$ denote the edges of $E$ lying in edge set of $\dcyl(S_i,h,r)$. Each $E_i$ separates $\dface^\pm(S_i,h,r)$ within $\dcyl(S_i,h,r)$, so by the disjointness of the $\{S_i\}_{i=1}^m$, we have
\begin{align}
\sum_{i=1}^m \face \left(S_i, h,r \right) \leq \face(D,h,r) \,,
\end{align}
and thus, 
\begin{align}
\prob_p \Big( \face(D,h,r) \leq (1- \e) &\cal{H}^{d-1} (rD) \beta_{p,d}(v) \Big) \\
&\leq \prob_p \left( \sum_{i=1}^m \face(S_i, h,r) \leq (1- \e) \cal{H}^{d-1} (rD) \beta_{p,d}(v) \right) \,,\\
&\leq \prob_p \left( \sum_{i=1}^m \face(S_i, h,r) \leq \frac{(1- \e)}{1-2\e'} \sum_{i=1}^m \cal{H}^{d-1} (rS_i) \beta_{p,d}(v) \right) \,,
\label{eq:5_disc_1}
\end{align}
with \eqref{eq:5_disc_1} following from our choice of $\delta$ and the squares $S_i'$. As $\cal{H}^{d-1}(rS_i)$ is the same for each $i$, a union bound gives
\begin{align}
\prob_p \Big( \face(D,h,r) \leq (1- \e) &\cal{H}^{d-1} (rD) \beta_{p,d}(v) \Big)\\
 &\leq \sum_{i=1}^m \prob_p \left( \face(S_i, h,r) \leq \left( \frac{1-\e}{1-2\e'}\right) \beta_{p,d}(v) \cal{H}^{d-1}(rS_i) \right) \,, \\
&\leq \sum_{i=1}^m \prob_p \left( \face(S_i,h,r) \leq (1- \e/2) \beta_{p,d}(v) \cal{H}^{d-1}(rS_i) \right) \,.
\label{eq:5_disc_2}
\end{align}
To obtain \eqref{eq:5_disc_2}, we have taken $\e'$ small enough so that $1- \e/2 > \tfrac{1-\e}{1-2\e'}$. Thus, $m$ and $\al$ now depend on $\e$ and $d$. Use that $\vp$ was chosen to be compatible with the chosen orientation $\sfS$, writing each $S_i$ as $\al \sfS(y_i,v)$ for some $y_i$. Making this switch in \eqref{eq:5_disc_2}, we find
\begin{align}
\prob_p \Big( &\face(D,h,r) \leq (1- \e) \cal{H}^{d-1} (rD) \beta_{p,d}(v) \Big)\\
&\leq \sum_{i=1}^m \prob_p \left( \face(\sfS(y_i,v), h/\al,\al r) \leq (1- \e/2) \beta_{p,d}(v) \cal{H}^{d-1}(\al r\sfS(y_i,v)) \right) \,.
\end{align}
Having chosen $h$ so that $h /\al \leq \eta$, we apply Lemma \ref{good_face_square} to each summand directly above, using $\al r$ in place of $r$ and $\e/2$ in place of $\e$ in the statement of this lemma:
\begin{align}
\prob_p \Big( \face(D,h,r) \leq (1- \e) \cal{H}^{d-1} (rD) \beta(v) \Big)\leq m c_1 \exp \left(-c_2 (\al r)^{(d-1)/3} \right) \,.
\end{align}
To apply Lemma \ref{good_face_square}, $r$ is taken large depending on $h$. We take $\al \eta$ to be the $\eta$ in the statement of this proposition and complete the proof by renaming constants and noting their dependencies. \end{proof}

Until this point, we have only used the concentration estimates from Section \ref{sec:concentration} to show the random variables $\face$ cannot be too small. In the next subsection, we put the complementary estimates to use.

\subsection{Upper bounds on $\Chee$, or efficient carvings of ice}\label{sec:consequences_2}

A \emph{convex polytope} is a compact subset of $\R^d$ which can be written as a finite intersection of closed half-spaces. A \emph{polytope} is a compact subset of $\R^d$ which may be written as finite union of convex polytopes. We do not require polytopes to be connected subsets of $\R^d$, but say a polytope is \emph{connected} if its interior is a connected subset of $\R^d$. A polytope $P \subset \R^d$ is a \emph{$d$-polytope} if it is non-degenerate ($\cal{L}^d(P) >0$).

To obtain upper bounds on $\Chee$, we use a polytope $P$ to obtain a valid subgraph of $\B{C}_n$, controlling both the volume and open edge boundary of this subgraph. Equivalently, we view $\B{C}_n$ as a block of ice, and we use the dilate $nP$ as a blueprint for carving this block. 

Our first task is to perform an \emph{efficient} carving at the boundary of $nP$, and this is where the other side of our concentration estimates are used. The next result allows us to work on each face of the polytope $P$ individually. We think of a $(d-1)$-polytope $\sig \subset \R^d$ as a face of a $d$-polytope $P$. For such $\sig$, let $v_\sig$ denote one of the unit vectors orthogonal to $\hyp(\sig)$. 

\begin{prop} Let $d \geq 2$, $p > p_c(d)$ and $\e >0$. Let $\sig \subset \R^d$ be a connected $(d-1)$-polytope. There is a positive constant $\eta(p,d,\e,\sig)$ and another connected $(d-1)$-polytope $\wt{\sig}$ depending on $p, d, \e$ and $\sig$ so that:

(i) $\wt{\sig} \subset \sig$, $\wt{\sig} \cap \cal{N}_\eta (\pa \sig) = \emptyset$ and $\cal{H}^{d-1}( \sig \setminus \wt{\sig} ) \leq \e \cal{H}^{d-1}(\sig)$.

(ii) There are positive constants $c_1(p,d,\e,\sig)$ and  $c_2(p,d,\e,\sig)$ so that when $h \in (0, \eta)$ and for all $r>0$ sufficiently large depending on $h$,
\begin{align}
\prob_p \Big( \hemi(\wt{\sig},h,r) \geq (1+\e) \cal{H}^{d-1}(r \sig) \beta_{p,d}(v_\sig) \Big) \leq c_1 \exp \left(-c_2 r^{(d-1)/3} \right) \,.
\end{align}

\label{poly_face}
\end{prop}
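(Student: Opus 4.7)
The plan is to construct $\wt\sig$ by shrinking $\sig$ inside $\hyp(\sig)$, tile $\wt\sig$ by small squares $\{S_i\}$ compatible with the chosen orientation $\sfS(v_\sig)$, invoke Theorem~\ref{concentration} on each small discrete cylinder to produce a near-optimal cutset $E_i$ separating its hemispheres, and glue the $E_i$'s into a cutset for $\dcyl(\wt\sig, h, r)$ using a thin filler band $A$ at the equator over the uncovered region. For property~(i) I would take $\wt\sig$ to be the Euclidean $\eta$-erosion of $\sig$ inside $\hyp(\sig)$; for $\eta$ small depending on $\sig$ and $\e$ this yields a $(d-1)$-polytope satisfying $\cal{H}^{d-1}(\sig \setminus \wt\sig) \leq \e_0 \cal{H}^{d-1}(\sig)$ for a small parameter $\e_0$ to be chosen later.

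For the tiling, choose disjoint squares $S_i = \al\sfS(v_\sig) + y_i \subset \wt\sig$ with $\al \leq \min(h, \al_0)$, where $\al_0 = \al_0(\sig, \e_0)>0$ is small enough that $\cal{H}^{d-1}(\wt\sig \setminus \bigcup_i S_i) \leq \e_0 \cal{H}^{d-1}(\sig)$. The constraint $\al \leq h$ is essential, placing each $\dcyl(S_i, \al, r)$ inside $\dcyl(\wt\sig, h, r)$; the number of squares satisfies $m \leq C(\sig)/\al^{d-1}$. Using the scaling identity $\dcyl(\al\sfS(v_\sig) + y, \al, r) = \dcyl(\sfS(v_\sig) + y/\al, 1, r\al)$ exploited in the proof of Proposition~\ref{disc}, we have $\hemi(S_i, \al, r) = \frak{X}(y_i/\al, v_\sig, r\al)$, to which Theorem~\ref{concentration} applies directly with parameter $r\al$. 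A union bound over the $m$ squares yields
$$\prob_p\Bigl(\hemi(S_i, \al, r) > (1 + \e/3)\cal{H}^{d-1}(rS_i)\beta_{p,d}(v_\sig)\text{ for some }i\Bigr) \leq \frac{C(\sig)}{\al^{d-1}}c_1 \exp\bigl(-c_2(r\al)^{(d-1)/3}\bigr),$$
and taking $r$ sufficiently large depending on $h$ absorbs the polynomial factor $\al^{-(d-1)}$ into the exponential to match the bound stated in (ii).

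On the complementary high-probability event, take $E_i$ realizing $\hemi(S_i, \al, r)$ and let $A$ be the collection of edges of $\Z^d$ meeting the neighborhood $\cal{N}_{5d}\bigl(r(\wt\sig \setminus \bigcup_i S_i)\bigr)$, so that $|A| \leq c(d)r^{d-1}\cal{H}^{d-1}(\wt\sig \setminus \bigcup_i S_i) \leq c(d)\e_0\cal{H}^{d-1}(r\sig)$. The hard part---which I expect to be the main obstacle---is verifying that $\bigl(\bigcup_i E_i\bigr) \cup A$, restricted to $\dcyl(\wt\sig, h, r)$, separates $\dhemi^\pm(\wt\sig, h, r)$. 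By a Lemma~\ref{lem:cutset_reference}-style case analysis, any simple path $\gamma$ from $\dhemi^+$ to $\dhemi^-$ in the flat cylinder must contain an edge crossing the equator $r\hyp(\sig)$, and one must show that the excursion of $\gamma$ through the small cylinder $\dcyl(S_i, \al, r)$ containing the crossing edge (when the crossing projects into $S_i$) connects $\dhemi^+(S_i, \al, r)$ to $\dhemi^-(S_i, \al, r)$ and thus uses an edge of $E_i$, while a crossing that projects into $\wt\sig \setminus \bigcup_i S_i$ uses an edge of $A$. This verification is delicate because the hemispheres of the big, flat cylinder include lateral pieces not contained in any small cylinder; tracking where $\gamma$ enters and exits each small cylinder through its lateral walls is the key bookkeeping step. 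Once this is handled, bounding
$$\bigl|\bigl(\bigcup_i E_i\bigr) \cup A\bigr|_\om \leq (1 + \e/3)\cal{H}^{d-1}(r\sig)\beta_{p,d}(v_\sig) + c(d)\e_0\cal{H}^{d-1}(r\sig)$$
and choosing $\e_0$ small depending on $p, d$ so that the filler contribution is at most $(2\e/3)\cal{H}^{d-1}(r\sig)\beta_{p,d}(v_\sig)$---possible because $\beta_{p,d}(v_\sig) > 0$ by Proposition~\ref{norm}---yields the desired upper bound $(1+\e)\cal{H}^{d-1}(r\sig)\beta_{p,d}(v_\sig)$.
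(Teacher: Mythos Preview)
Your proposal is correct and follows essentially the same approach as the paper: erode $\sig$ to obtain $\wt{\sig}$, tile $\wt{\sig}$ by small squares $S_i$ of side $\al \leq h$ so that each $\dcyl(S_i,\al,r)$ sits inside $\dcyl(\wt{\sig},h,r)$, patch the hemispherical cutsets $E_i$ with an equatorial filler $A$ over the uncovered region, and apply Theorem~\ref{concentration} to each $\frak{X}(y_i/\al, v_\sig, r\al)$ via the scaling identity. The cutset verification you flag as ``the hard part'' is treated in the paper as routine via the by-now-standard Lemma~\ref{lem:cutset_reference} argument (already reused in Propositions~\ref{beta} and~\ref{uniform}), so your concern there is somewhat overcautious.
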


\begin{proof} Let $\e' > 0$, and for the parameter $\eta >0$, define $\wt{\sig}$ to be the closure of $\sig \setminus \cal{N}_{2\eta}^{(1)} (\pa \sig)$, where we recall from Section \ref{sec:gmt_misc} the notation for the $\ell^1$-neighborhood of a set. Then $\wt{\sig}$ is a $(d-1)$ polytope, and because $\sig$ is connected, we may choose $\eta$ sufficiently small depending on $\sig$ and $\e$ so that $\wt{\sig}$ is also connected, and so that 
\begin{align}
\cal{H}^{d-1}( \sig \setminus \wt{\sig} ) \leq \e' \cal{H}^{d-1}(\sig) \,.
\end{align}
For such $\wt{\sig}$, property \emph{(i)} is already satisfied after requiring $\e' \leq \e$. 

To show \emph{(ii)} holds, we employ the strategy used in the proof of Proposition \ref{disc}. Let $h \leq \eta$, and let $\wt{\sig}' \subset \R^{d-1}$ be a $(d-1)$-polytope with an isometry $\vp: \wt{\sig}' \to \wt{\sig}$. Choose $k \in \N$ to be smallest such that $2^{-k} < h$, but large enough so that 
\begin{align}
\cal{L}^{d-1}\left( \wt{\sig}' \setminus \bigcup_{S' \in \Delta^k,\, S' \subset \wt{\sig}'} S' \right) \leq \e' \cal{L}^{d-1}(\wt{\sig}' ) \,,
\end{align}
where, as before, $\Delta^k \equiv \Delta^{k,d-1}$ denotes the dyadic squares in $[-1,1]^{d-1}$ at scale $k$. Enumerate such squares contained in $\wt{\sig}'$ as $S_1', \dots, S_m'$. Let $\delta >0$ and dilate each $S_i'$ about its center by a factor of $(1-\delta)$ to produce a new, disjoint collection $\{ S_i'' \}_{i=1}^m$ of closed squares contained in $\wt{\sig}'$. Let $S_i = \vp(S_i'')$, and choose $\delta$ small enough so that 
\begin{align}
\cal{H}^{d-1} \left( \wt{\sig} \setminus \bigcup_{i=1}^m S_i \right) < 2 \e' \cal{H}^{d-1}(\sig) \,.
\end{align}
As before, write $\al := (1-\delta)2^{-k}$; we lose no generality assuming $\wt{\sig}'$ and $\vp$ are compatible with $\sfS$, so that each $S_i$ is $\al \sfS(y_i, v_\sig)$ for some $y_i \in \R^d$. For each $i$, let $E_i$ denote a cutset in $\dcyl(S_i,\al,r)$ separating $\dhemi^\pm(S_i, \al, r)$. Let $A$ denote the edges intersecting
\begin{align}
\cal{N}_{5d}\left( r \left( \wt{\sig} \setminus \bigcup_{i=1}^m S_i \right) \right)
\end{align}
so that $| A | \leq c(d) \e' \cal{H}^{d-1}(r \sig)$ for some $c(d) >0$. The now standard argument from Lemma~\ref{lem:cutset_reference} tells us the edges of $A \cup \bigcup_{i=1}^m E_i$ contained in $\dcyl(\wt{\sig}, h, r)$ separate $\dhemi^\pm( \wt{\sig}, h,r)$. Here $r$ is taken sufficiently large depending on $h$ to ensure this argument goes through. It is here we use that $k$ satisfies $\al \leq 2^{-k} \leq h$. Under these conditions, we conclude 
\begin{align}
\hemi(\wt{\sig}, h, r) \leq c(d) \e' \cal{H}^{d-1}(r \sig) + \sum_{i=1}^m \hemi( S_i,\al, r)\,,
\end{align}
and thus, 
\begin{align}
\prob_p \Big( \hemi(\wt{\sig}, h, &r) \geq (1 + \e) \cal{H}^{d-1} (r \sig) \beta_{p,d}( v_\sig) \Big) \\
&\leq \prob_p \left( \sum_{i=1}^m \hemi (S_i, \al, r ) \geq ( 1+ \e - c(p,d) \e') \cal{H}^{d-1}(r\sig) \beta_{p,d}(v_\sig) \right) \,.
\end{align}
Now choose $\e'$ small enough depending on $p,d,\e$ so that $1 + \e - c(p,d)\e' \geq 1 + \e/2$:
\begin{align}
\prob_p \Big( \hemi(\wt{\sig}, h, &r) \geq (1 + \e) \cal{H}^{d-1} (r \sig) \beta_{p,d}( v_\sig) \Big) \\
&\leq \prob_p\left( \sum_{i=1}^m \hemi (S_i, \al, r ) \geq ( 1+ \e/2) \sum_{i=1}^m \cal{H}^{d-1}(r S_i) \beta_{p,d}(v_\sig) \right) \,,\\
&\leq \sum_{i=1}^m \prob_p\left(  \hemi (S_i, \al, r ) \geq ( 1+ \e/2)\cal{H}^{d-1}(r S_i)  \beta_{p,d}(v_\sig)  \right) \,, \\
&\leq \sum_{i=1}^m \prob_p\left(  \frak{X}(y_i, v_\sig, \al r) \geq ( 1+ \e/2)  \cal{H}^{d-1}(\al r \sfS(y_i,v_\sig)) \beta_{p,d}(v_\sig) \right) \,.
\end{align}
Here we have used a union bound and that each $S_i = \al \sfS(y_i, v_\sig)$ for some $y_i \in \R^d$. Applying Theorem \ref{concentration} to each summand on the right, we obtain
\begin{align} 
\prob_p \Big( \hemi(\wt{\sig}, h, r) \geq (1 + \e) \cal{H}^{d-1} (r \sig) \beta_p( v_\sig) \Big) \leq m c_1 \exp\left( -c_2 (\al r)^{(d-1)/3} \right) \,,
\end{align}
completing the proof. \end{proof}

We now use a $d$-polytope $P$ to obtain a high probability upper bound on $\Chee$ in terms of the conductance of $P$. 

\begin{thm} Let $d \geq 2$ and let $p > p_c(d)$. Let $P \subset [-1,1]^d$ be a polytope such that $\cal{L}^d(P) \leq 2^d / d!$, and let $\e >0$. There exist positive constants $c_1(p,d,\e,P)$ and $c_2(p,d,\e,P)$ so that 
\begin{align}
\prob_p \left( \Chee \geq (1 + \e) \left(\frac {\cal{I}_{p,d}(nP) }{\theta_p(d) \cal{L}^d(nP) } \right) \right) \leq c_1 \exp \left(-c_2 n^{(d-1)/3} \right) \,.
\end{align}

\label{upper_bound}
\end{thm}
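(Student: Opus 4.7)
The plan is to exhibit, on a high-probability event, an explicit valid subgraph $G_n \subset \giant$ whose conductance is at most $(1+\e)\,\cal{I}_{p,d}(nP)/(\theta_p(d)\,\cal{L}^d(nP))$, and then invoke the trivial upper bound $\Chee \leq \vp_{G_n}$. To create slack for the volume constraint $|G_n| \leq |\giant|/d!$, I would first replace $P$ by $P_\delta := (1-\delta)P$ for a small $\delta > 0$: by homogeneity of $\cal{I}_{p,d}$ and $\cal{L}^d$, the conductance of $nP_\delta$ exceeds that of $nP$ only by a factor $(1-\delta)^{-1} = 1 + O(\delta)$ absorbable into $\e$, while strictly $\cal{L}^d(P_\delta) < 2^d/d!$. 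The natural candidate is $G_n := \B{C}_\infty \cap nP_\delta$, so that every edge of $\pa_{\B{C}_\infty} G_n$ must cross $\pa(nP_\delta)$.

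To bound $|\pa^\om G_n|$, I would decompose $P_\delta$ as a finite union of convex $d$-polytopes with disjoint interiors and enumerate the outward $(d-1)$-faces of $\pa P_\delta$ as $\sig_1, \dots, \sig_k$ with outward normals $v_i$. Applying Proposition~\ref{poly_face} with parameter $\e/3$ to each $\sig_i$ produces a subpolytope $\wt{\sig}_i \subset \sig_i$ separated from $\pa\sig_i$ and a common height $h$ for which the event
\begin{align}
\Oh_i := \Big\{ \hemi(\wt{\sig}_i, h, n) \leq (1 + \e/3)\, \cal{H}^{d-1}(n\sig_i)\,\beta_{p,d}(v_i) \Big\}
\end{align}
has complementary probability at most $c_1 \exp(-c_2 n^{(d-1)/3})$. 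On $\bigcap_i \Oh_i$ I would pick a minimizing cutset $S_i$ for each face and seal them with a belt $B$ of edges lying in an $O(hn)$-neighborhood of the $(d-2)$-skeleton $\bigcup_i n\bigl(\pa\sig_i \cup (\sig_i \setminus \wt{\sig}_i)\bigr)$. A direct count gives $|B| \leq c(d,P)\,h\,n^{d-1}$, since the belt is a $(d-1)$-dimensional thickening of this skeleton. A multi-face extension of the patching argument from Lemma~\ref{lem:cutset_reference} then shows that $B \cup \bigcup_i S_i$ contains a separator of $nP_\delta$ from its exterior in $\Z^d$, yielding $|\pa^\om G_n| \leq (1 + \e/3)\,\cal{I}_{p,d}(nP_\delta) + c(d,P)\,h\,n^{d-1}$. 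Since $\cal{I}_{p,d}(nP_\delta) \asymp n^{d-1}$ by positivity of $\beta_{p,d}$, choosing $h$ small absorbs the belt cost and gives $|\pa^\om G_n| \leq (1 + 2\e/3)\,\cal{I}_{p,d}(nP_\delta)$.

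For the volume denominator, standard concentration of the density of the infinite cluster in a bounded box (provable via exponential decay of the finite-cluster radius in the supercritical regime) furnishes events of probability at least $1 - c_1 \exp(-c_2 n^\alpha)$ on which simultaneously $|G_n| \geq (1-\e)\,\theta_p(d)\,\cal{L}^d(nP_\delta)$ and $|\giant| \leq (1+\e)\,\theta_p(d)\,(2n)^d$. The latter combined with $\cal{L}^d(P_\delta) < 2^d/d!$ ensures $|G_n| \leq |\giant|/d!$ for $n$ large, so $G_n$ is valid. Combining with the boundary bound and rechoosing $\delta,\e$ produces $\vp_{G_n} \leq (1+\e)\,\cal{I}_{p,d}(nP)/(\theta_p(d)\,\cal{L}^d(nP))$ on the intersection of all these events, and a union bound over the $k$ face events $\Oh_i$ together with the two density events finishes the estimate.

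The hard part will be the sealing step: constructing $B$ of size $O(h\,n^{d-1})$ that is genuinely sufficient, together with the $S_i$, to cut off $nP_\delta$ from its exterior in $\Z^d$. This requires a careful geometric argument near the $(d-2)$-skeleton of $\pa P_\delta$, where slabs $\cyl(\wt{\sig}_i, h)$ of adjacent faces overlap or meet at corners, generalizing the two-cylinder patching of Lemma~\ref{lem:cutset_reference} to an arbitrary polytopal arrangement. Once this deterministic geometric combinatorics is handled, all probabilistic estimates follow by union bound from Proposition~\ref{poly_face} and standard supercritical percolation inputs.
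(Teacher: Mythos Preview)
Your overall strategy matches the paper's, but there is a genuine gap in the step where you pass from the constructed cutset to a bound on $|\pa^\om G_n|$. You take $G_n := \B{C}_\infty \cap nP_\delta$ and then assert that, because $B \cup \bigcup_i S_i$ separates $nP_\delta$ from its exterior, this yields $|\pa^\om G_n| \leq (1+\e/3)\cal{I}_{p,d}(nP_\delta) + O(hn^{d-1})$. This implication is false. With your choice of $G_n$, the open edge boundary $\pa^\om G_n$ is precisely the set of open $\B{C}_\infty$-edges with one endpoint in $nP_\delta$ and one outside; it is a \emph{fixed} random quantity determined by $\om$ and the polytope, and its size is governed by the density of open $\B{C}_\infty$-edges crossing each face (roughly $\prob_p(e \text{ open},\, e \subset \B{C}_\infty)$ per unit area), not by $\beta_{p,d}$. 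The cutsets $S_i$ you build live in thickened slabs $\cyl(\wt{\sig}_i,h)$ and are \emph{different} edge sets, optimized to have small open weight; that they form a separator says nothing about how many open edges cross $\pa(nP_\delta)$ itself. In particular $\beta_{p,d}(v)$ is the \emph{minimal} open cutset density and is generally strictly smaller than the density of open edges across a flat face, so your argument would not recover the sharp constant $\cal{I}_{p,d}(nP)$.

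The fix, which is exactly what the paper does, is to let the cutset define the subgraph rather than the polytope: set $\Gamma_n := A_n \cup \bigcup_i E_n^{(i)}$ and take $H_n$ to be the set of vertices of $\B{C}_n$ separated from $\infty$ by $\Gamma_n$. Then $\pa^\om H_n \subset \Gamma_n$ holds \emph{by construction}, so $|\pa^\om H_n| \leq \sum_i \hemi(\wt{\sig}_i,h,n) + |A_n|$ and your face-by-face estimate via Proposition~\ref{poly_face} goes through. The volume control then requires a two-sided density argument on dyadic cubes (Corollary~\ref{density_control}) to show $|H_n| \approx \theta_p(d)\cal{L}^d(nP_\delta)$, since $H_n$ is no longer exactly $\B{C}_\infty \cap nP_\delta$ but is sandwiched between $\B{C}_\infty \cap n(P_\delta \setminus \cal{N}_\eta(\pa P_\delta))$ and $\B{C}_\infty \cap n\cal{N}_\eta(P_\delta)$; the paper handles this by choosing $\delta$ so that the upper volume bound exactly matches the validity constraint. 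Once you make this change, the rest of your outline (the belt $A_n$, the union bound over faces, the density events) is correct and agrees with the paper.
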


\begin{proof} Begin by working with $P_\delta := (1-\delta)P$ for $\delta \in (0,1)$, so that the Euclidean distance from $P_\delta$ to $\pa [-1,1]^d$ is positive. We choose $\delta$ carefully at the end of the argument. Let $\e, \e' >0$ and enumerate the faces of $P_\delta$ as $\sig_1, \dots, \sig_m$, suppressing the dependence of these faces on $\delta$. Use Proposition \ref{poly_face}, picking $\eta$ depending on $\e'$ and on each face $\sig_i$; for each $i$, we produce $\wt{\sig}_i$ so that
\begin{enumerate}
\item  $\cal{H}^{d-1}( \sig_i \setminus \wt{\sig}_i) \leq \e' \cal{H}^{d-1}(\sig_i)$
\item $\wt{\sig}_i \subset \sig_i$, $\wt{\sig}_i \cap \cal{N}_\eta (\pa \sig_i) = \emptyset$ and $\cal{H}^{d-1}( \sig_i \setminus \wt{\sig}_i ) \leq \e' \cal{H}^{d-1}(\sig_i)$. \label{eq:item_two_upper_bound}
\item There are positive constants $c_1(p,d,\e', P, \delta)$ and $c_2(p,d,\e',P,\delta)$ so that if $h \in(0, \eta)$, and if $r > 0$ is sufficiently large depending on $h$, 
\begin{align}
\prob_p \Big( \hemi(\wt{\sig}_i,h,r) \geq (1+\e') \cal{H}^{d-1}(r \sig_i) \beta_{p,d}(v_\sig) \Big) \leq c_1 \exp \left(-c_2 r^{(d-1)/3} \right) \,.
\label{eq:upper_bound_event}
\end{align}
\end{enumerate}

Assume $\eta$ is small enough so that $\cal{N}_\eta(P_\delta)$ is contained in $[-1,1]^d$, and choose $h \in (0,\eta)$, \emph{henceforth fixed}, so that the cylinders $\big\{ \cyl( \wt{\sig}_i, h, n ) \big\}_{i=1}^m$ are disjoint. We will use Proposition~\ref{poly_face} in each cylinder $\cyl( \wt{\sig}_i, h,n)$ to control the open edge boundary of a subgraph of $\giant$, to be constructed momentarily. Before doing so, we position ourselves to control the volume of this subgraph. 

Let $Q_1, \dots, Q_\ell$ enumerate the dyadic cubes at scale $k$ within $[-1,1]^d$. Suppose these cubes are ordered so that for $\ell_1 \leq \ell_2 \in \{1, \dots, \ell\}$, the collection $Q_1, \dots, Q_{\ell_2}$ enumerates all cubes intersecting $\cal{N}_\eta(P_\delta)$, and that $Q_1, \dots, Q_{\ell_1}$ enumerates all cubes contained in $P_\delta \setminus \cal{N}_\eta(P_\delta)$. Take $k$ sufficiently large and take $\eta$ smaller if necessary so that 
\begin{align}
\cal{L}^d \left( \bigcup_{j= \ell_1 + 1}^{\ell_2} Q_j \right) < \e' \cal{L}^d(P_\delta) \,,
\end{align}
and for each $j \in \{1, \dots, \ell\}$, let $\cal{E}_n^{(j)}$ be the event that 
\begin{align}
\left\{ \frac{ | \B{C}_\infty \cap nQ_j|}{ \cal{L}^d(n Q_j) } \in ( \theta_p(d) - \e' , \theta_p(d) + \e' ) \right\} \,.
\label{eq:carve_vol_events}
\end{align}

We now construct a subgraph of $\giant$ from $P$. Fix a percolation configuration $\om$, and for each face $\sig_i$, let $E_n^{(i)}(\om)$ denote a cutset within $\dcyl( \wt{\sig}_i, h,n)$ separating $\dhemi^\pm(\wt{\sig}_i,h,n)$, with $|E_n^{(i)}(\om) |_\om =\hemi(\wt{\sig}_i, h,n)$ in the configuration $\om$. Let $A_n$ be the edges intersecting
\begin{align}
\cal{N}_{5d} \left( n \left( \pa P_\delta \setminus \bigcup_{i=1}^m \wt{\sig}_i \right) \right) \,.
\end{align}
We chose $\wt{\sig}_i$ to satisfy \eqref{eq:item_two_upper_bound}, thus $|A_n| \leq c(d) \e' \cal{H}^{d-1}(\pa P_\delta) n^{d-1} $ for some $c(d)>0$. Define 
\begin{align}
\Gamma_n : = \left( \bigcup_{i=1}^m E_n^{(i)}(\om) \right) \cup A_n \,.
\end{align}

\begin{figure}[h]
\centering
\includegraphics[scale=1.5]{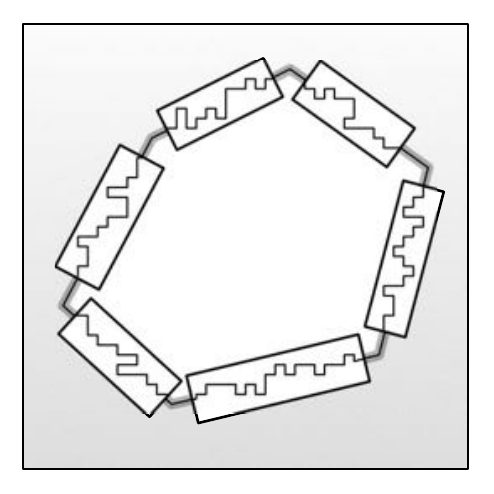}
\caption{The polytope $nP$ has six faces. Each of the boxes at the boundary of $nP$ is one of the $\cyl( \wt{\sig}_i, h, n)$, and within each is the corresponding cutset $E_n^{(i)}$. The set $A_n$ is the grey outline of each corner.} 
\label{fig:carve}
\end{figure}

Define the vertex set $H_n(\om)$ to be all $x \in \B{C}_n$ separated from $\infty$ by $\Gamma_n(\om)$. The proof of Lemma \ref{lem:cutset_reference} tells us $H_n(\om)$ is non-empty and in fact contains every vertex $x \in \B{C}_\infty \cap Q_j$ for $j \in \{1, \dots, \ell_1\}$. For this proof to go through, we ensure $n$ is large enough depending on $h$ so that the cylinders $\cyl( \wt{\sig}_i, h, n)$ are suitable in the sense of Remark \ref{rmk:3_suitable}. As $h$ has been fixed and depends only on $\e'$ and $P$, this is no issue, and $H_n(\om)$ is well-defined for all $n$ sufficiently large. Though $H_n(\om)$ was defined as a collection of vertices, we now view it as a graph whose structure comes from restricting $\B{C}_n$ and suppress the dependence of $H_n$ and $\Gamma_n$ on the percolation configuration. 

We now exhibit control on the volume and open edge boundary of $H_n$, first working within the intersection $\cal{E}$ of the high probability events $\cal{E}_n^{(j)}$ defined in \eqref{eq:carve_vol_events} to control $|H_n|$. For all $\om \in \cal{E}$, 
\begin{align}
( \theta_p(d) - \e') \left( \sum_{j=1}^{\ell_1} \cal{L}^d(nQ_j) \right) - \ell c(d) \left(2^{-k} n \right)^{d-1}  \leq |H_n| \leq (\theta_p(d) + \e') \sum_{j=1}^{\ell_2} \cal{L}^d(nQ_j) \,,
\end{align}
where the term subtracted on the left arises because the $Q_j$ only have disjoint \emph{interiors}. For $n$ sufficiently large (depending on $p,d,\e',P$), we have 
\begin{align}
( \theta_p(d) - 2\e') \sum_{j=1}^{\ell_1} \cal{L}^d(nQ_j)  \leq |H_n| \leq (\theta_p(d) + \e') \sum_{j=1}^{\ell_2} \cal{L}^d(nQ_j) \,,
\end{align}
and hence that
\begin{align}
( \theta_p(d) - 2\e')(1-\e') \cal{L}^d(nP_\delta) &\leq |H_n| \leq (\theta_p(d) + \e')(1+\e') \cal{L}^d(nP_\delta) \,, \\
( \theta_p(d) - 2\e')(1-\e')(1-\delta)^d \cal{L}^d(nP) &\leq |H_n| \leq (\theta_p(d) + \e')(1+\e')(1-\delta)^d \cal{L}^d(nP) \,.
\label{eq:5_carve_1}
\end{align}

We now show that $H_n$ is a valid subgraph of $\giant$ when $\delta$ is chosen appropriately. On $\cal{E}$, 
\begin{align}
| \giant | &\geq ( \theta_p(d) - \e')(2n)^d - \ell c(d) (2^{-k} n)^{d-1} \,, \\
&\geq (\theta_p(d) - 2\e') (2n)^d \,.
\end{align}
for $n$ sufficiently large. As $\cal{L}^d(P) \leq 2^d/ d!$, choosing $\delta$ in accordance with \eqref{eq:5_carve_1} so that 
\begin{align}
(\theta_p(d) + \e') ( 1+ \e') (1-\delta)^d = (\theta_p(d) -2\e') \,,
\end{align}
ensuring $H_n$ is a valid subgraph of $\giant$ within $\cal{E}$. Defining $\delta$ this way implies $\delta \to 0$ as $\e' \to 0$. 

Not only have we shown $H_n$ is valid within a high probability event, we also have exhibited a lower bound on $|H_n|$. To bound $\Chee$, it then suffices to bound $|\pa^\om H_n|$ from above. The construction of $H_n$, guarantees $\pa^\om H_n \subset \Gamma_n$. Using the disjointness of the cylinders $\cyl(\wt{\sig}_i, h, n)$,
\begin{align}
|\pa^\om H_n| \leq \sum_{i=1}^m | E_n^{(i)} (\om)|_\om +  c(d) \cal{H}^{d-1}(\pa P_\delta) \e'n^{d-1} \,.
\end{align}
For $i\in \{1,\dots, m\}$, let $\cal{F}_n^{(i)}$ be the event in \eqref{eq:upper_bound_event}. On the intersection $\cal{F}$ of all $\cal{F}_n^{(i)}$,
\begin{align}
| \pa^\om H_n| &\leq (1+\e') \cal{I}_{p,d}(nP) + c(d) \cal{H}^{d-1}(\pa P)  \e' n^{d-1} \,, \\
&\leq (1+\e' + c(p,d) \e') \cal{I}_{p,d}(nP) \,.
\end{align}
Thus, on the intersection of $\cal{E}$ and $\cal{F}$, we have
\begin{align}
\Chee \leq  \left(\frac{ 1 + \e' + c(p,d)\e' }{ ( \theta_p(d) - 2 \e' )(1 - \e')(1- \delta)^d } \right) \frac{ \cal{I}_{p,d}(nP)}{\cal{L}^d(nP)} \,,
\end{align}
and we take $\e'$ small enough (recall $\delta = \delta(\e')$ goes to zero as $\e'$ does) so that
\begin{align}
\Chee \leq (1+\e)\frac{ \cal{I}_{p,d}(nP)}{\theta_p(d)\cal{L}^d(nP)} \,.
\end{align}
Use the bounds in Corollary \ref{density_control} on $\cal{E}$ and in Proposition \ref{poly_face} on $\cal{F}$ to conclude 
\begin{align}
\prob_p \left( \left( \cal{E} \cap \cal{F} \right)^c \right) \leq mc_1 \exp\left(-c_2 n^{(d-1)/3} \right) + \ell c_1 \exp\left(-c_2 (2^{-k} n)^{d-1} \right) \,,
\end{align}
which completes the proof, upon tracking the dependencies of $\ell$ and $k$. \end{proof}

Using Proposition \ref{poly_limit}, Theorem \ref{upper_bound} and Borel-Cantelli we deduce the following. 

\begin{coro} Let $d \geq 2$ and let $p > p_c(d)$. Consider the Wulff crystal $W_{p,d}$ corresponding $\beta_{p,d}$, and let $\e > 0$. The event below occurs $\prob_p$-almost surely:
\begin{align} \left\{ \limsup_{n \to \infty} n \Chee \leq (1 + \e) \frac{\cal{I}_{p,d}(W_{p,d}) }{\theta_p(d) \cal{L}^d(W_{p,d}) } \right\} \,.
\end{align}
\label{upper_bound_2}
\end{coro}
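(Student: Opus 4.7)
The plan is to exploit the fact that Theorem \ref{upper_bound} already gives a high-probability upper bound on $n\Chee$ in terms of the continuum conductance of any admissible polytope, so it suffices to approximate $W_{p,d}$ in conductance by polytopes contained in $[-1,1]^d$ with volume at most $2^d/d!$. The almost sure statement will follow by Borel--Cantelli because the exponential rate $\exp(-c_2 n^{(d-1)/3})$ furnished by Theorem \ref{upper_bound} is summable in $n$ for every $d \geq 2$.

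Concretely, given $\e > 0$, I would first fix $\e' > 0$ small enough that $(1+\e')^2 \leq 1+\e$, and then invoke Proposition \ref{poly_limit} to produce a polytope $P \subset [-1,1]^d$ with $\cal{L}^d(P) \leq 2^d/d!$ satisfying
\begin{align}
\frac{\cal{I}_{p,d}(P)}{\theta_p(d)\, \cal{L}^d(P)} \leq (1+\e')\, \frac{\cal{I}_{p,d}(W_{p,d})}{\theta_p(d)\, \cal{L}^d(W_{p,d})}.
\end{align}
By homogeneity of $\cal{I}_{p,d}$ and $\cal{L}^d$, the right-hand side of the bound in Theorem \ref{upper_bound} equals $(1+\e')\, \cal{I}_{p,d}(P)/(n\, \theta_p(d)\, \cal{L}^d(P))$, so applying that theorem to $P$ with tolerance $\e'$ yields constants $c_1, c_2 > 0$ for which
\begin{align}
\prob_p\!\left( n\Chee \geq (1+\e')\, \frac{\cal{I}_{p,d}(P)}{\theta_p(d)\, \cal{L}^d(P)} \right) \leq c_1 \exp\!\left(-c_2 n^{(d-1)/3}\right).
\end{align}

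Summability of these probabilities in $n$ together with Borel--Cantelli then implies that almost surely only finitely many of these events occur, so
\begin{align}
\limsup_{n \to \infty} n\Chee \leq (1+\e')\, \frac{\cal{I}_{p,d}(P)}{\theta_p(d)\, \cal{L}^d(P)} \leq (1+\e')^2\, \frac{\cal{I}_{p,d}(W_{p,d})}{\theta_p(d)\, \cal{L}^d(W_{p,d})} \leq (1+\e)\, \frac{\cal{I}_{p,d}(W_{p,d})}{\theta_p(d)\, \cal{L}^d(W_{p,d})}
\end{align}
holds $\prob_p$-almost surely, which is the desired conclusion.

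The only nontrivial point is ensuring the approximating polytope $P$ may simultaneously be taken inside $[-1,1]^d$ and with volume at most $2^d/d!$; this is granted by Proposition \ref{poly_limit} (since $W_{p,d} \subset [-1,1]^d$ by Lemma \ref{containment} with room to spare, any sufficiently fine polytopal approximation inherits the containment, and a slight inward dilation preserves the volume constraint while only improving the conductance by a factor arbitrarily close to one). No genuine obstacle remains beyond bookkeeping of the tolerances $\e'$ versus $\e$.
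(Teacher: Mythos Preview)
Your proof is correct and follows essentially the same route as the paper: approximate $W_{p,d}$ by a polytope via Proposition \ref{poly_limit}, apply Theorem \ref{upper_bound} to that polytope, and conclude by Borel--Cantelli. The only cosmetic difference is in how the tolerances are tracked (the paper uses $(1+\e/2)\tfrac{1+\e'}{1-\e'}$ where you use $(1+\e')^2$), and your final paragraph about inward dilation is unnecessary since Proposition \ref{poly_limit} already yields $P_{\e'} \subset W_{p,d}$, which automatically gives both $P_{\e'} \subset [-1,1]^d$ and $\cal{L}^d(P_{\e'}) \leq 2^d/d!$.
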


\begin{proof} Recall $\cal{L}^d(W_{p,d}) = 2^d / d!$. Let $\e,\e' >0$ and apply Proposition \ref{poly_limit}  to obtain a polytope $P_{\e'} \subset W_{p,d}$ with $| \cal{I}_{p,d} (P_{\e'}) - \cal{I}_{p,d} (W_{p,d}) | < \e'$ and with $\cal{L}^d( W_{p,d} \setminus P_{\e'}) < \e'$. Apply Theorem \ref{upper_bound} to $P_{\e'}$ to obtain positive constants $c_1(p,d,\e,P_{\e'})$ and $c_2(p,d,\e,P_{\e'})$ so that 
\begin{align}
\prob_p \left( n \Chee \geq (1 + \e/2) \left(\frac{1+\e'}{1-\e'}\right)  \left(\frac {\cal{I}_{p,d}(W_{p,d}) }{\theta_p(d) \cal{L}^d(W_{p,d}) } \right) \right) &\leq \prob_p \left( n \Chee \geq (1 + \e/2) \left(\frac {\cal{I}_{p,d}(P_{\e'}) }{\theta_p(d) \cal{L}^d(P_{\e'}) } \right) \right)\\
&\leq c_1 \exp \left(-c_2 n^{(d-1)/3} \right)\,.
\end{align}
Choosing $\e'$ sufficiently small depending on $\e$ and applying Borel-Cantelli completes the proof.
 \end{proof}

Corollary \ref{upper_bound_2} is the easier half of Theorem \ref{main_benj}. Before moving to the next section, we make an observation to aid in the proof of Theorem \ref{main_L1}. For $K \subset [-1,1]^d$ convex with non-empty interior, define the \emph{empirical measure} associated to $K$ as 
\begin{align}
\ov{\nu}_K(n) := \frac{1}{n^d} \sum_{ x \in \giant \cap nK} \delta_{x/n} \,.
\label{eq:5_convex_empirical}
\end{align}
Following the proof of Theorem \ref{upper_bound}, it is not difficult to deduce the following result (recall that the metric $\dw$ introduced in \eqref{eq:2_metric}). 

\begin{coro} Let $d\geq 2$, $p > p_c(d)$ and let $W \subset [-1,1]^d$ be a translate of $W_{p,d}$. For $\e >0$, there are positive constants $c_1(p,d,\e)$ and $c_2(p,d,\e)$ so that 
\begin{align}
\prob_p \left( \dw \left(\ov{\nu}_W(n), \nu_W \right) > \e\right) \leq c_1 \exp \left( - c_2 n^{d-1} \right) \,.
\end{align}
\label{facilitate}
\end{coro}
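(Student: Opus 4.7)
The plan is to reduce the bound to the dyadic density estimate for $\B{C}_\infty$ already used inside Theorem~\ref{upper_bound} (Corollary~\ref{density_control}). Since $\ov{\nu}_W(n), \nu_W \in \cal{B}_d$, the innermost average in the definition of $\dw$ is deterministically bounded by $2\cdot 3^d$ at every scale, so for $K_0 = K_0(\e,d)$ chosen large enough the contribution to $\dw(\ov{\nu}_W(n), \nu_W)$ from scales $k > K_0$ is at most $\e/2$. It therefore suffices, with probability at least $1 - c_1 \exp(-c_2 n^{d-1})$, to secure the per-cube estimate $|\ov{\nu}_W(n)(Q) - \nu_W(Q)| \leq \e/4$ uniformly over the finite (only $\e,d$-dependent) collection of dyadic cubes $Q$ at scales $k \leq K_0$.

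For each such $Q$, the inclusions $W, Q \subset [-1,1]^d$ give $\giant \cap nW \cap nQ = \B{C}_\infty \cap nK$ with $K := W \cap Q$ convex, so the per-cube discrepancy reads
\begin{align}
\left| n^{-d} |\B{C}_\infty \cap nK| - \theta_p(d) \cal{L}^d(K) \right| \leq \e/4.
\end{align}
To reduce this to cubes I would sandwich $K$ at a fine but $n$-independent dyadic scale $m$: let $K^{-}_m$ (resp.\ $K^{+}_m$) denote the union of dyadic cubes at scale $m$ contained in (resp.\ intersecting) $K$. Convexity of $K$ inside $[-1,1]^d$ bounds its $\cal{H}^{d-1}$-perimeter by a universal constant $c(d)$, whence $\cal{L}^d(K^{+}_m \setminus K^{-}_m) \leq c(d) 2^{-m}$; I choose $m = m(\e,d)$ making this error at most $\e/(16\theta_p(d))$. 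Applying the dyadic density control (Corollary~\ref{density_control}) to each cube comprising $K^\pm_m$ and combining with the inclusion $nK^{-}_m \subset nK \subset nK^{+}_m$ yields the desired per-cube estimate, the only slack being the volume of $K^{+}_m \setminus K^{-}_m$, which is absorbed by the choice of $m$.

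The total number of dyadic cubes involved -- first at the scales $k \leq K_0$, then at the fine scale $m$ inside each such cube -- is bounded by a constant $N = N(\e,d)$. A union bound over the $N$ applications of Corollary~\ref{density_control}, each failing with probability at most $c_1 \exp(-c_2 n^{d-1})$ for constants depending on $p,d,\e$, produces the claimed bound of the form $c_1(p,d,\e) \exp(-c_2(p,d,\e) n^{d-1})$. The only non-bookkeeping ingredient is the deterministic perimeter bound for convex subsets of $[-1,1]^d$; this is elementary and so no step in the argument presents a genuine obstacle beyond choosing the truncation scales $K_0$ and $m$ in the right order (first $K_0$ from $\e$, then $m$ from $\e$ and $d$, then density control at scale $n2^{-m}$).
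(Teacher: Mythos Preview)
Your proposal is correct and follows essentially the same route the paper sketches in the remark immediately after Corollary~\ref{facilitate}: truncate the series defining $\dw$ at a finite scale, then control the per-cube discrepancy using Corollary~\ref{density_control} on a fine dyadic mesh, and finish with a union bound. The only point of divergence is how the boundary layer $K_m^+ \setminus K_m^-$ is controlled: the paper invokes the polytope approximation of $W_{p,d}$ (Proposition~\ref{poly_limit}) to reduce to a set whose boundary is manifestly covered by $O(2^{-m})$-many scale-$m$ cubes, whereas you bypass this by noting that $K = W \cap Q$ is convex and hence has $\cal{H}^{d-1}$-perimeter bounded by a dimensional constant inside $[-1,1]^d$. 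Your shortcut is slightly more direct and avoids an extra approximation layer; the paper's route has the mild advantage of reusing machinery already in place and not relying on the convexity of $W_{p,d}$ (which, while true, is not otherwise needed at this point).
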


\begin{rmk} Corollary \ref{facilitate} follows from the approximation result Proposition \ref{poly_limit}, from the density result Corollary \ref{density_control} (used as in the proof of Theorem~\ref{upper_bound}) applied to a fine mesh of dyadic cubes and finally from the definition of the metric $\dw$. No concentration estimates for $\beta_{p,d}$ are needed, so the proof of Corollary \ref{facilitate} is less involved than that of Theorem \ref{upper_bound}, and we choose to omit it.
\end{rmk}




{\large\section{\B{Coarse graining}}\label{sec:coarse_original}}

Having spent the last section passing from continuous objects to discrete objects, we now move in the more difficult direction. To each $G_n \in \cal{G}_n$, we associate a set of finite perimeter $P_n \subset [-1,1]^d$ with comparable conductance. A natural candidate for $P_n$ is
\begin{align}
\frac{1}{n} \left( \bigcup_{x\, \in\, \rmV(G_n)} Q(x) \right) \,,
\label{eq:6_natural}
\end{align}
where $Q(x)$ is the unit dual cube correspinding to $x \in \Z^d$. However, the perimeter of  the set in \eqref{eq:6_natural} is directly related to $n^{-(d-1)}|\pa G_n|$ instead of $n^{-(d-1)}| \pa^\om G_n|$ and may grow with $n$ due to vacant percolation, unless $p$ is taken close to one. This suggests a renormalization argument, and indeed, in the present section we clarify and modify a procedure due to Zhang \cite{Zhang}. Because of the intricacy of the coarse graining is, we do not build the $P_n$ until Section \ref{sec:contiguity}. 

The construction works only in dimensions strictly larger than two. We comment on this in Section \ref{sec:webbing}, and for now we simply assume $d \geq 3$ for the remainder of the paper. \newline

\subsection{Preliminary notation}\label{sec:coarse_0} Let $k$ be a natural number called the \emph{renormalization parameter}. Given $x \in \Z^d$, define the \emph{$k$-cube} centered at $x$ as:
\begin{align}
\un{B}(x) := (2k)x + [-k,k]^d  \,,
\end{align}
suppressing the dependence of $\un{B}(x)$ on $k$ to avoid cumbersome notation. Underscores are used to denote sets of $k$-cubes. If $\un{G}$ is a set of $k$-cubes and $x \in \rmV(\mathbb{Z}^d)$, write $x \in \un{G}$ if $x$ is contained in one of the $k$-cubes of $\un{G}$. Likewise, if $e \in \rmE(\Z^d)$ is an edge, we write $e \in \un{G}$ if both endpoint vertices of $e$ lie in $\un{G}$. 

A \emph{$3k$-cube} centered at $x$ is defined as follows: 
\begin{align}
\un{B}_3(x) := (2k)x + [-3k,3k]^d \,.
\label{eq:6.1_3k}
\end{align}
We emphasize that $x$ lies in $\Z^d$, thus each $3k$-cube contains exactly $3^d$ $k$-cubes. Two cubes $\un{B}(x)$ and $\un{B}(x')$ are \emph{adjacent} if $x \sim x'$, or equivalently if they share a face. Two cubes $\un{B}(x)$ and $\un{B}(x')$ are \emph{$*$-adjacent} if $x \sim_* x'$, or equivalently if either $\un{B}(x') \subset \un{B}_3(x)$ or $\un{B}(x) \subset \un{B}_3(x')$. 

\subsection{Discovering a cutset}\label{sec:coarse_1} 

We describe Zhang's method in general, then apply it to the Cheeger optimizers in the next section. The idea is to form a collection of $k$-cubes containing $\pa_o G_n$, and then to discover within these cubes a more tame cutset separating $G_n$ from $\infty$.

Let $G = G(\om) \subset \B{C}_\infty$ be a finite connected graph and from $G$, define several sets of $k$-cubes:
\begin{align}
\un{G} := \Big\{ \un{B}(x) \:: \un{B}(x) \cap ( G \cup \pa_o G) \neq \emptyset \Big\}\,, \hspace{5mm}  \hspace{5mm} \un{A} := \Big\{ \un{B}(x) \:: \un{B}(x) \cap \pa_o G \neq \emptyset \Big\} \,.
\end{align}
Figure \ref{fig:zhang_1} depicts a possible $G$ and $\un{A}$. As $G$ is finite, so too is $\un{G}$, thus the cubes $\un{B}(x)$ not in $\un{G}$ split into a single infinite $*$-connected component called the \emph{ocean}, labeled $\un{Q}$, and finitely many finite $*$-connected components $\un{Q}'(1), \dots, \un{Q}'(u')$, called \emph{ponds}. 

\begin{figure}[h]
\centering
\includegraphics[scale=1]{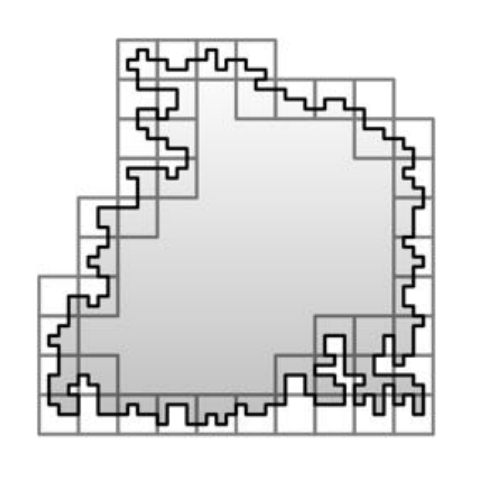}
\caption{The black contour and its interior are $\pa_oG$ and $G$ respectively. Notice that $\un{A}$, depicted by the squares covering $\pa_o G$, is not necessarily the boundary of $\un{G}$.  } 
\label{fig:zhang_1}
\end{figure}

Let $\Delta \un{Q}$ denote the $k$-cubes $*$-adjacent to a cube in the ocean $\un{Q}$ but not contained in $\un{Q}$. Likewise, for each pond $\un{Q}'(i)$, let $\Delta \un{Q}'(i)$ denote the $k$-cubes $*$-adjacent to $\un{Q}'(i)$ but not contained in $\un{Q}'(i)$. 

\begin{rmk}The next step in Zhang's construction is to pass to the unique configuration $\om'$ obtained by closing each open edge in $\pa_o G$. This is done while preserving both $G$ and $\B{C}_\infty$, so that we still work with the graphs $G(\om)$ and $\B{C}_\infty(\om)$, now with each modified by closing each open edge of $\pa_o G$. Counterintuitively, $\B{C}_\infty$ is then a disconnected graph after passing to the configuration $\om'$. This is only a \emph{formal} procedure as we eventually pass \emph{back} to the configuration $\om$. 
\label{rmk:6_cluster}
\end{rmk}

Pass to the configuration $\om'$. Each pond may intersect an open cluster connected to the ocean, and we emphasize these open clusters need not be contained in $\B{C}_\infty$. A pond is \emph{live} if it intersects an open cluster also intersecting the ocean. If $\un{Q}'(i)$ intersects an open cluster also intersecting a distinct live pond, say it is \emph{almost-live}. If $\un{Q}'(i)$ intersects an open cluster also intersecting a distinct pond labeled as almost-live, call $\un{Q}'(i)$ \emph{almost-live} also. Thus, the label almost-live \emph{propagates} through the $\un{Q}'(i)$ via open clusters, starting with the live ponds. 

A pond is \emph{dead} if it is neither live nor almost-live. Refine the collection of ponds $\{ \un{Q}'(i) \}_{i=1}^{u'}$ to the live and almost-live ponds $\un{Q}(1), \dots, \un{Q}(u)$; Figure \ref{fig:zhang_2} depicts a possible configuration of ponds. 

\begin{figure}[h]
\centering
\includegraphics[scale=1]{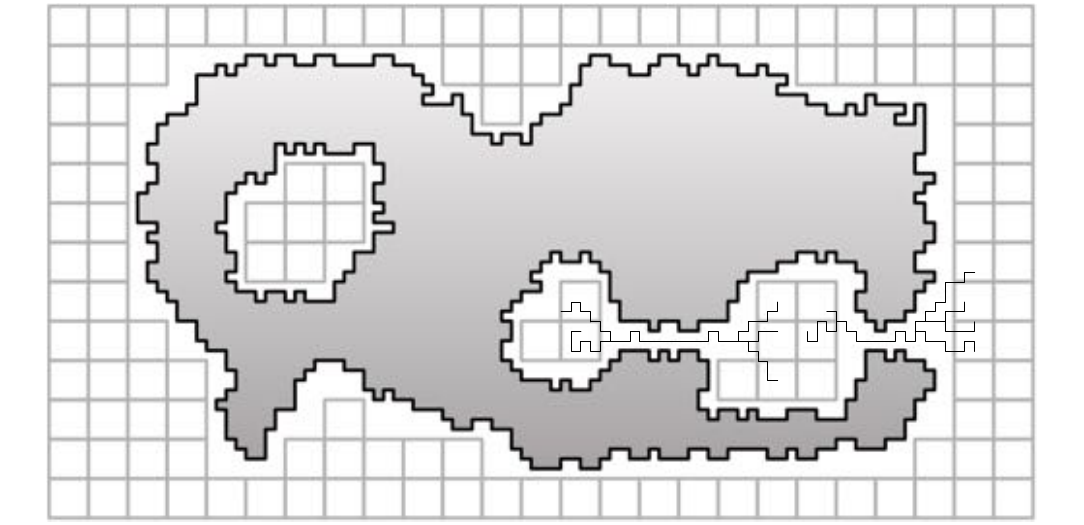}
\caption{The graph $G$ is the shaded region between closed curves. The connected components of cubes in the diagram are ponds or the ocean. The left-most pond is dead, the right-most pond is live and the middle pond is almost-live. The portions of the thin curves which do not intersect any cube represent the set $\brid$.} 
\label{fig:zhang_2}
\end{figure}

Now construct a graph called $\brid$ which joins the live and almost-live ponds. Let $C$ be the union of all open clusters intersecting $\un{Q}$, and let $C_i$ be the union of all open clusters intersecting the live or almost-live pond $\un{Q}(i)$. The components of $C$ and $C_i$ are not necessarily in $\B{C}_\infty$. To specify the vertices of $C_i$ contained in $\un{Q}(i)$ only, define $Q_i := C_i \cap \un{Q}(i)$, and likewise define $Q := C \cap \un{Q}$. Let $\brid$ be the \emph{remainder} of these components in $\un{G}$:
\begin{align}
\brid := \left[ \left( \bigcup_{\un{B}(x) \in \un{G} } \un{B}(x)  \right)  \cap \left( C \cup \left( \bigcup_{i=1}^u C_i \right) \right) \right] \setminus \left( Q \cup \left( \bigcup_{i=1}^u Q_i \right)\right) \,.
\end{align}
The set $\brid$ inherits a graph structure from $C$ and the $C_i$. Let us make an observation.

\begin{lem} In the configuration $\om'$, the vertex sets of $\brid$ and $G$ are disjoint, and all edges of  $\pa \brid$ are closed, except those joining a vertex of $\brid$ and a vertex in some $Q_i$ or those joining a vertex of $\brid$ with a vertex in $Q$.

\label{bridge_disjoint}
\end{lem}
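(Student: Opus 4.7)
The key leverage is Remark \ref{rmk:6_cluster}: in $\om'$ every edge of $\pa_o G$ has been closed, so $G$ is separated from $\infty$. Hence the open cluster in $\om'$ of any vertex of $G$ is a finite set contained in $G$ together with those finite components of $\Z^d \setminus G$ that meet $G$ across an open edge of $\pa G \setminus \pa_o G$; in particular, no vertex of $G$ can be connected by an open path in $\om'$ to any vertex lying in the infinite component of $\Z^d \setminus G$.

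For the first assertion, the plan is to show that every lattice vertex inside the cubes of a live or almost-live pond $\un{Q}(i)$ lies in the infinite component of $\Z^d \setminus G$. Because the cubes of any pond are disjoint from $\un{G}$, they contain no vertex of $G$, and the $*$-connected union of $k$-cubes constituting $\un{Q}(i)$ has a $\Z^d$-connected lattice vertex set: any $*$-adjacent pair of cubes shares a lattice point from which one can step inside either cube, and the lattice points of a single $k$-cube are connected by a lattice path inside it. It follows that all lattice vertices of $\un{Q}(i)$ sit in a single component of $\Z^d \setminus G$. Liveness then supplies a vertex of $\un{Q}(i)$ joined to the ocean by an open path in $\om'$, and since that path avoids $G$ the witnessing vertex must lie in the infinite component, so the whole pond does. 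For almost-live ponds the same conclusion follows by induction along the propagation chain used to label them, every linking open path being forced into $\Z^d \setminus G$ as well. Combined with the observation of the previous paragraph, this rules out $v \in G \cap (C \cup \bigcup_i C_i)$, and hence $G \cap \brid = \emptyset$.

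For the second assertion, let $e \in \pa \brid$ be open in $\om'$, with endpoints $w \in \brid$ and $w' \notin \brid$. Then $w$ and $w'$ lie in a common open cluster in $\om'$, and since $w \in C \cup \bigcup_i C_i$ the same holds for $w'$. The vertex $w'$ cannot lie in a cube of $\un{G}$, for such a $w'$ would satisfy $w' \in C \cup \bigcup_i C_i$ and $w' \notin Q \cup \bigcup_i Q_i$ (those sets sitting inside ocean or pond cubes, never inside cubes of $\un{G}$), and would therefore itself lie in $\brid$, contradicting the choice of $w'$. Hence $w'$ is inside an ocean cube or a pond cube. In the ocean case $w' \in C \cap \un{Q} = Q$. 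In the pond case $w' \in \un{Q}(j)$, the common open cluster of $w$ and $w'$ in $\om'$ meets $\un{Q}(j)$ together with either the ocean or an already live or almost-live pond, so the propagation rule used to define almost-live ponds forces $\un{Q}(j)$ to itself be live or almost-live, and therefore $w' \in C_j \cap \un{Q}(j) = Q_j$.

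The principal technical point is the geometric statement at the heart of part (1): that all lattice vertices of a single pond lie in one and the same component of $\Z^d \setminus G$. This is where the cube-scale $*$-connectivity used to define ponds must be translated into lattice-scale connectivity compatible with the definitions of $\un{G}$ and $\pa_o G$, and this translation is what allows liveness, an open property in $\om'$ of a single vertex, to propagate to the entire pond.
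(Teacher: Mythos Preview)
Your proof is correct and follows essentially the same route as the paper's. The paper's argument is terser: it simply asserts that an open path in $\om'$ from $G$ to a vertex of $\un{Q}$ (or of some $\un{Q}(i)$) would contradict $\pa_o G$ being a cutset, and that open edges of $\pa\brid$ can only lead into ocean or pond cubes; you make explicit two points the paper leaves implicit, namely that the lattice vertices of each live or almost-live pond sit in the infinite component of $\Z^d\setminus G$ (via the $*$-connectivity-to-$\Z^d$-connectivity argument and induction along the liveness propagation) and that in part~(2) the pond containing $w'$ cannot be dead.
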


\begin{proof} 
To show $\brid$ and $G$ are disjoint, it suffices to show $C_i \cap G = \emptyset$ for each $i$ and that $C \cap G = \emptyset$. If the intersection of $C$ and $G$ were non-empty, there would then be an open path beginning from a vertex of $G$ and ending at a vertex contained in $\un{Q}$, which is impossible in the configuration $\om'$, as it would imply $\pa G$ is not a cutset separating the vertices of $G$ from $\infty$. The same reasoning shows that $C_i \cap G = \emptyset$ for each $i$. 

As $C$ and the $C_i$ are unions of open clusters, it is impossible that $\pa C$ or any $\pa C_i$ contain open edges. Due to the construction of $\brid$, the only open edges present in $\pa \brid$ either join $\brid$ with some vertex in the ocean $\un{Q}$, or they join $\brid$ with a vertex in $\un{Q}(i)$ for some $i$. \end{proof}

The coarse-grained image of $\brid$ is the last ingredient in a cube set which will nearly contain a closed cutset. Define
\begin{align}
\un{\brid} := \Big\{ \un{B}(x) : \un{B}(x) \text{ contains a vertex of $\brid$} \Big\} \,,
\end{align}
and then define the cube set $\un{\Gamma}$:
\begin{align}
\un{\Gamma} := \Delta \un{Q} \cup \un{\brid} \cup \left( \bigcup_{i=1}^u \Delta \un{Q}(i) \right) \,.
\end{align}

\begin{figure}[h]
\centering
\includegraphics[scale=1]{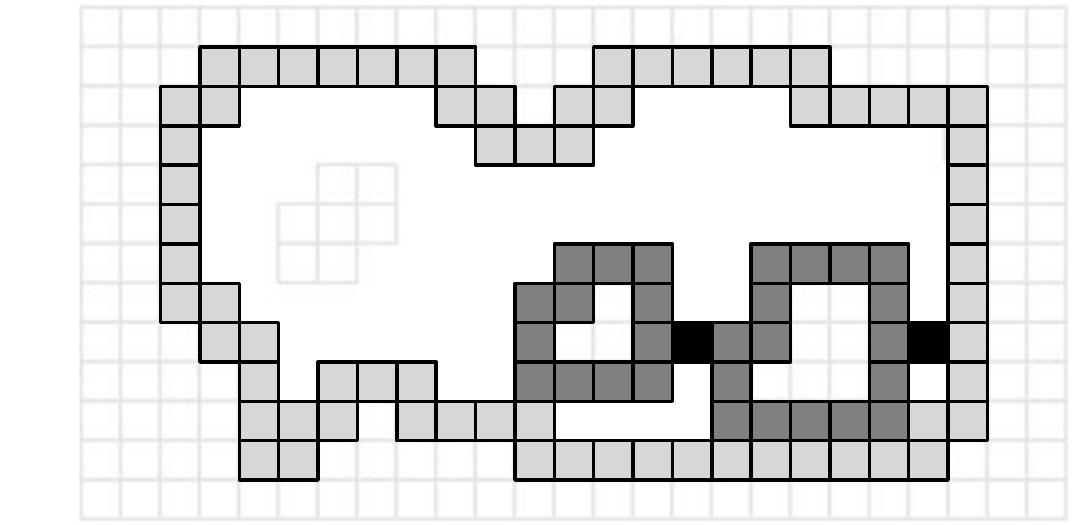}
\caption{This image is built from Figure \ref{fig:zhang_2}. We have removed $\pa_oG$ and $\brid$ from the diagram for clarity. The light-grey cubes depict $\Delta \un{Q}$, the dark-grey cubes depict the two $\Delta \un{Q}(i)$ and the black cubes depict $\un{\brid}$. The cubes adjacent to the black cubes are also in $\un{\brid}$, so $\un{\brid}$ is not necessarily disjoint from the boundary of the ponds and ocean.} 
\label{fig:zhang_3}
\end{figure}

\begin{rmk} The cube set $\un{\Gamma}$ insulates $G$ from $\infty$, and is depicted in Figure \ref{fig:zhang_3}. The boundaries of the ocean and live / almost-live ponds cover the relevant parts of $\pa_o G$, and $\un{\brid}$ connects these boundaries, allowing $\un{\Gamma}$ to be $*$-connected. 
\end{rmk}

We introduce one more piece of notation to state the central result of this section. For a $k$-cube $\un{B}(x)$, define the corresponding \emph{augmented} cube as follows:
\begin{align}
\un{B}^+(x) := 2k x + [ -2k-1 , 2k +1 ]^d \,.
\end{align}

\begin{prop} In the configuration $\om'$, the augmented cube set 
\begin{align}
\un{\Gamma}^+ := \Big\{ \un{B}^+(x) : \un{B}(x) \in \un{\Gamma} \Big\}
\end{align}
 contains a closed cutset $\Gamma$ which separates $G$ from $\infty$.

\label{gamma_cut}
\end{prop}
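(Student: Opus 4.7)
The plan is to exhibit a finite vertex set $H \subseteq \rmV(\Z^d)$ with $G \subseteq H$ and take
\[
\Gamma := \big\{ e \in \rmE(\Z^d) : e \text{ has exactly one endpoint in } H \big\}.
\]
Once $H$ is finite with $G \subseteq H$, $\Gamma$ is automatically a $\Z^d$-cutset separating $G$ from $\infty$; the real work is to choose $H$ so that (a) every edge of $\Gamma$ is closed in $\om'$ and (b) every edge of $\Gamma$ lies in $\bigcup \un{\Gamma}^+$.

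Writing $\un{D}$ for the collection of dead ponds, I would take
\[
H := \Big( \bigcup_{\un{B}(x) \,\in\, \un{G} \,\cup\, \un{D}} \un{B}(x) \Big) \cap \rmV(\Z^d) \;\setminus\; \brid.
\]
Finiteness is immediate, and $G \subseteq H$ combines the defining property of $\un{G}$ with the disjointness $G \cap \brid = \emptyset$ from Lemma~\ref{bridge_disjoint}. For (b), given $e = \{v,w\} \in \Gamma$ with $v \in H$ and $w \notin H$, the vertex $w$ must sit in an ocean cube, in a cube of a live or almost-live pond $\un{Q}(i)$, or else $w \in \brid$; since dead-pond cubes are $*$-adjacent only to cubes of $\un{G}$, a $w$ in a dead-pond cube would already be in $H$. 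In the three allowed subcases, the appropriate endpoint of $e$ sits in a cube of $\Delta \un{Q}$, of $\Delta \un{Q}(i)$, or of $\un{\brid}$, and the $1$-augmentation of cubes absorbs the whole edge $e$ into a cube of $\un{\Gamma}^+$.

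The heart of the plan is (a). Suppose some $e = \{v,w\} \in \Gamma$ is open in $\om'$. Then $e \notin \pa_o G$ and $e$ is open in $\om$, so $v$ and $w$ lie in a common $\om$-open cluster $C'$; depending on where $w$ sits, $C' \subseteq C$ or $C' \subseteq C_i$ for some $i$. If $v$'s cube lies in $\un{G}$, then $v \in (C \cup \bigcup_j C_j) \cap \un{G}$ while avoiding $Q \cup \bigcup_j Q_j$, which by the very definition of $\brid$ forces $v \in \brid$, contradicting $v \in H$. If instead $v$'s cube lies in a dead pond $\un{Q}'(d)$, then $C'$ intersects both $\un{Q}'(d)$ and either the ocean or some $\un{Q}(i)$, so the definition of live and the propagation rule for almost-live ponds force $\un{Q}'(d)$ to be live or almost-live, a contradiction.

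The main obstacle I expect is this dead-pond subcase of (a): the argument leans entirely on the propagation rule used to label ponds almost-live, which is exactly the mechanism designed to prevent open clusters from bridging a dead pond with the ocean or a live pond. Confirming the clean dichotomy between $v \in \brid$ and the dead-pond alternative requires a careful reading of how open clusters of $\B{C}_\infty$ decompose into $C$ and the $C_i$ after the surgery producing $\om'$.
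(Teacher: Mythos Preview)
Your approach---building an explicit finite $H \supseteq G$ and taking $\Gamma = \pa H$---is a clean alternative to the paper's path-tracking argument, but there is a genuine gap in part~(a). The $k$-cubes $\un{B}(x) = 2kx + [-k,k]^d$ are closed and therefore share their faces, so a single vertex can lie simultaneously in a $\un{G}$-cube and an ocean cube. Take $v$ on the common face of $\un{B}(x_1) \in \un{G}$ and $\un{B}(x_2) \in \un{Q}$: the $\om'$-open cluster of $v$ meets $\un{Q}$ at $v$ itself, so $v \in C$ and $v \in Q = C \cap \un{Q}$, hence $v \notin \brid$ and $v \in H$. If $w$ is a neighbour of $v$ lying in the interior of $\un{B}(x_1)$ with $\{v,w\}$ open in $\om'$, then $w \in C$, $w$ sits only in the $\un{G}$-cube $\un{B}(x_1)$, and $w \notin Q \cup \bigcup_i Q_i$; thus $w \in \brid$ and $w \notin H$. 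The edge $\{v,w\}$ lands in your $\Gamma$ but is open in $\om'$. Your sentence ``If $v$'s cube lies in $\un{G}$, then $v \ldots$ avoid[s] $Q \cup \bigcup_j Q_j$'' is precisely where this fails: belonging to $Q$ only needs $v$ to sit in \emph{some} ocean cube, which is compatible with also sitting in a $\un{G}$-cube.

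The repair is to set $H := \big( \bigcup_{\un{B}(x) \in \un{G} \cup \un{D}} \un{B}(x) \big) \setminus \big( \brid \cup Q \cup \bigcup_i Q_i \big)$. Then $v \in H$ forces \emph{every} cube containing $v$ to lie in $\un{G} \cup \un{D}$ (otherwise $v \in Q \cup \bigcup_i Q_i$), after which your dichotomy in~(a) becomes honest and your argument for~(b) goes through unchanged. The containment $G \subseteq H$ survives because no vertex of $G$ lies in an ocean or pond cube. For comparison, the paper avoids this boundary issue altogether by never building $H$: it follows an arbitrary path from $G$ to $\infty$, reverses it from its first ocean vertex, and cases on how the path enters and exits $\brid$ and the $Q_i$ to locate a closed edge in $\un{\Gamma}^+$.
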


\begin{proof} 
Let $\gamma'$ be a path from $G$ to $\infty$. We show $\gamma'$ uses a closed edge contained in $\un{\Gamma}^+$. We lose no generality supposing $\gamma'$ is simple. As $\un{G}$ and each pond are finite sets, there is a first vertex $v_0 \in \un{Q}$ used by $\gamma'$. Consider the subpath of $\gamma'$ starting at the beginning of $\gamma'$ and ending at $v_0$. Name the reversal of this subpath $\gamma$, so that $\gamma$ is a path from $v_0$ to $G$. It will suffice to show $\gamma$ uses a closed edge which is contained in $\un{\Gamma}^+$. 

If the edge following $v_0$ in $\gamma$ is closed, we are happy, as this edge lies in $\Delta \un{Q}$. Thus we may suppose that the edge following $v_0$ in $\gamma$ is open, so that $\gamma$ joins $\brid$. The path $\gamma$ must connect with $G$. As $\brid$ and $G$ are disjoint (by Lemma \ref{bridge_disjoint}), and because $\gamma$ eventually uses a vertex of $G$, $\gamma$ eventually leaves $\brid$. If $\gamma$ leaves $\brid$ through a closed edge, this edge lies in one of the augmented cubes corresponding to the set $\un{\brid}$, and the proposition holds. 

We may then suppose $\gamma$ first leaves $\brid$ through an open edge. By Lemma \ref{bridge_disjoint}, and because $\gamma$ cannot return to $\un{Q}$, $\gamma$ must pass into some $Q_i$. As $\gamma$ is simple and the $Q_i$ are disjoint from $G$, there is a vertex $v_1$, last among all $Q_i$ used by $\gamma$. Let $\gamma_1$ denote the subpath of $\gamma$ obtained by starting from $v_1$. If the first edge of $\gamma_1$ is closed, our claim holds as this edge lies in $\Delta \un{Q}(i)$ for some $i$. Thus we may suppose the first edge of $\gamma_1$ is open, so that $\gamma_1$ rejoins $\brid$. However, $\gamma_1$ can no longer exit $\brid$ through a pond or through the ocean, and thus $\gamma_1$ must exit $\brid$ through a closed edge.

This establishes that $\un{\Gamma}^+$ contains a closed cutset separating $G$ from $\infty$. Via some deterministic method, choose a minimal cutset within $\un{\Gamma}^+$ and label it $\Gamma$.
\end{proof}

\subsection{Properties of the discovered cutset}\label{sec:properties} We now derive properties of the cutset $\Gamma$ and the cube set $\un{\Gamma}$. We begin by showing $\un{\Gamma}$ is contained in the coarse grained image of $\pa_o G$.

\begin{lem} The $k$-cube set $\un{\Gamma}$ is contained in $\un{A}$.
\label{sub_A}
\end{lem}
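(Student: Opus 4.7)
The plan is to prove $\un{\Gamma} \subseteq \un{A}$ by handling each of the three components $\Delta \un{Q}$, $\un{\brid}$, and $\Delta \un{Q}(i)$ through a single template. First I would observe that every cube $\un{B}(x) \in \un{\Gamma}$ automatically lies in $\un{G}$. For $\Delta \un{Q}$ (respectively, $\Delta \un{Q}(i)$) this is because a cube $*$-adjacent to the ocean (respectively, to the pond $\un{Q}(i)$) that were not in $\un{G}$ would have to belong to the same $*$-connected component of non-$\un{G}$ cubes, hence to the ocean (respectively, to $\un{Q}(i)$ itself), contradicting its membership in $\Delta \un{Q}$ (respectively, $\Delta \un{Q}(i)$). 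For $\un{\brid}$ this is because bridge vertices are defined to lie in $\bigcup_{\un{B}(y) \in \un{G}} \un{B}(y)$.

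With this in hand, let $\un{B}(x) \in \un{\Gamma}$. If $\un{B}(x) \cap \pa_o G \neq \emptyset$ there is nothing to do; otherwise $\un{B}(x) \in \un{G}$ forces $\un{B}(x)$ to contain some vertex $v \in G$. The template is to exhibit a second vertex $w$ in a cube near $\un{B}(x)$ with $w \notin G$, together with an infinite $\Z^d$-path $\gamma_w$ from $w$ to $\infty$ that avoids every vertex of $G$. Given such a pair, take any nearest-neighbor $\Z^d$-path $\gamma$ from $v$ to $w$ contained in the $3k$-cube $\un{B}_3(x)$ and isolate the \emph{last} edge $e$ of $\gamma$ lying in $\pa G$. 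The portion of $\gamma$ past $e$ avoids $G$ by that choice, so concatenating it with $\gamma_w$ yields a path from the non-$G$ endpoint of $e$ to $\infty$ avoiding $G$, certifying $e \in \pa_o G$. The $G$-endpoint of $e$ must in turn lie in $\un{B}(x)$, because in each case the cube containing $w$ and every cube visited after $e$ is disjoint from $G$.

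It remains to produce $w$ and $\gamma_w$ in each case. For $\Delta \un{Q}$, I would take $\un{B}(y)$ to be the $*$-adjacent ocean cube and $w$ any vertex of $\un{B}(y)$; then $\gamma_w$ threads any unbounded $*$-chain of ocean cubes emanating from $\un{B}(y)$, each of which is disjoint from $G$. For $\un{\brid}$, the vertex $w$ is the bridge vertex inside $\un{B}(x)$ itself, which is not in $G$ by Lemma \ref{bridge_disjoint}; $\gamma_w$ is built by first following an open $\om'$-path inside the cluster of $w$ (contained in some $C$ or $C_i$, hence disjoint from $G$ by Lemma \ref{bridge_disjoint}) into a vertex of $Q$, then continuing through the ocean as before. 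For $\Delta \un{Q}(i)$, I would take $w$ in a $*$-adjacent cube of $\un{Q}(i)$, path within the pond (which is disjoint from $G$) to a vertex of $Q_i$, and then chain open $\om'$-paths through the clusters $C_{i_1}, \dots, C_{i_m}$ and intervening ponds witnessing that $\un{Q}(i)$ is live or almost-live, until the chain terminates in $Q$ and the ocean tail completes the route to $\infty$.

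The main obstacle is the $\Delta \un{Q}(i)$ case, where one must stitch $\Z^d$-moves within ponds together with open $\om'$-steps through bridge clusters and verify that every vertex along the way lies outside $G$. The propagation rule defining ``almost-live'' supplies a finite chain terminating at a live pond, and the disjointness of every $C_j$ and of $C$ from $G$ (essentially Lemma \ref{bridge_disjoint}) guarantees that no vertex of $G$ is ever visited; the ocean cubes, pond cubes, and bridge cubes all respect this disjointness, so the concatenated path is a valid witness for $e \in \pa_o G$ and hence for $\un{B}(x) \in \un{A}$.
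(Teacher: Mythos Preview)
Your template matches the paper's argument exactly: show $\un{B}(x)\in\un{G}$, locate a vertex $v\in G$ inside $\un{B}(x)$ and a vertex $w\notin G$ nearby that reaches $\infty$ through $\Z^d\setminus G$, then find an edge of $\pa_o G$ on a path between them. In fact you are more careful than the paper on one point: for $\Delta\un{Q}(i)$ (and implicitly for $\un{\brid}$ when $w\in C_i$) you actually build the escape route to $\infty$ by chaining open $\om'$-paths through the live/almost-live ponds to the ocean, whereas the paper simply asserts that any vertex in a live/almost-live pond cube ``is connected to $\infty$ via a $\Z^d$-path disjoint from $G$'' and leaves the verification to the reader.

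There is, however, a localization gap in your argument. You allow $\gamma$ to be \emph{any} path from $v$ to $w$ inside $\un{B}_3(x)$ and then claim that the $G$-endpoint of the last $\pa G$-edge $e$ must lie in $\un{B}(x)$, ``because \ldots\ every cube visited after $e$ is disjoint from $G$.'' That inference is not valid: after $e$ the \emph{vertices} of $\gamma$ avoid $G$, but they may sit in $k$-cubes of $\un{B}_3(x)$ that do intersect $G$ elsewhere, so the $G$-endpoint of $e$ could land in one of those cubes rather than in $\un{B}(x)$. The paper avoids this by never leaving $\un{B}(x)$: in the $\Delta\un{Q}$ and $\Delta\un{Q}(i)$ cases it takes $w$ (their $z$) to be a vertex \emph{shared} by $\un{B}(x)$ and the $*$-adjacent ocean/pond cube, so the entire path $\gamma$, and hence $e$, lies in $\un{B}(x)$; in the $\un{\brid}$ case $w$ is already in $\un{B}(x)$. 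Make the same choice and your proof goes through.
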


\begin{proof} Suppose $\un{B}(x) \in \un{\brid}$. We claim $\un{B}(x)$ either contains a vertex of $G$ or the endpoint vertex of an edge in $\pa_o G$, that is, $\un{B}(x) \in \un{G}$. If not, either $\un{B}(x) \in \un{Q}'(i)$ or $\un{B}(x) \in \un{Q}$. As $\un{B}(x) \in \un{\brid}$, there is $y \in \un{B}(x)$ lying in an open cluster connected to a live or almost-live pond. Thus, if $\un{B}(x)$ is a member of a pond, this pond is live or almost-live. Then either $y \in Q_i$ for some $i$ or $y \in Q$. Both are impossible, as we cut out such vertices in the construction of $\brid$, and we conclude $\un{\brid} \subset \un{G}$. 

Continue to suppose $\un{B}(x) \in \un{\brid}$. As $\un{B}(x)$ contains $y \in \brid \subset C \cup \bigcup_i C_i$, and as this union is disjoint from $G$, any path $\gamma$ from $y$ to $G$ within $\un{B}(x)$ uses an edge in $\pa G$. But any $y \in C \cup \bigcup_{i} C_i$ is connected to $\infty$ via a path disjoint from $G$. Thus the path $\gamma$ from $y$ to $G$ in $\un{B}(x)$ must actually use an edge of $\pa_oG$, and $\un{\brid} \subset \un{A}$. 

We now show $\Delta \un{Q}(i) \subset \un{A}$ for each $i$. Let $\un{B}(x) \in \Delta \un{Q}(i)$, so that $\un{B}(x)$ is $*$-adjacent to a cube $\un{B}(x') \in \un{Q}(i)$. Then $\un{B}(x)$ either contains  a vertex of $G$ or an endpoint vertex of an edge in $\pa_o G$, otherwise $\un{B}(x)$ would be a member of $\un{Q}(i)$. If $\un{B}(x)$ contains an endpoint vertex of an edge in $\pa_0 G$, we are done, thus we suppose $\un{B}(x)$ contains a vertex $y$ of $G$.

Note that $\un{B}(x)$ and $\un{B}(x')$ have at least one vertex $z$ in common, and $z$ (by virtue of lying within some $\un{Q}(i)$) is connected to $\infty$ in via a $\Z^d$-path disjoint from $G$. Any path joining $y$ and $z$ in $\un{B}(x)$ then intersects $\pa_oG$, showing $\Delta \un{Q}(i) \subset \un{A}$. An identical argument shows $\Delta \un{Q} \subset \un{A}$. \end{proof}

We now apply analogues of Proposition~\ref{star_conn} to $\Delta \un{Q}(i)$ and $\Delta \un{Q}$ to establish the following essential result. 

\begin{lem} The $k$-cube set $\un{\Gamma}$ is $*$-connected. 
\label{un_gamma_star}
\end{lem}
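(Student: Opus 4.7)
The plan is to show $\un{\Gamma}$ is $*$-connected by treating its three pieces separately and then linking them through $\un{\brid}$. First I will verify that $\Delta \un{Q}$ and each $\Delta \un{Q}(i)$ are themselves $*$-connected via a coarse-grained application of Proposition \ref{star_conn}. Then I will build explicit $*$-paths through $\un{\brid}$ joining $\Delta \un{Q}$ to every $\Delta \un{Q}(i)$.

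For the first step, identify each $k$-cube $\un{B}(x)$ with its center $x$, so that $\un{G}$ and the ponds/ocean become subsets of a scaled copy of $\Z^d$ equipped with the adjacencies $\sim$ and $\sim_*$. Let $\wh{G}$ be the finite set obtained by adjoining to $\un{G}$ every finite $*$-connected component of its complement (that is, every pond, dead or otherwise). Then $\wt{V}\setminus\wh{G} = \un{Q}$, which is infinite and $\sim_*$-connected, and the standard duality between $\sim$- and $\sim_*$-connectedness in $\Z^d$ forces $\wh{G}$ to be $\sim$-connected. Proposition \ref{star_conn} applied to $\wh{G}$ then yields that the cubes of $\wh{G}$ which are $\sim$-adjacent to $\un{Q}$ form a $*$-connected set; the few remaining cubes of $\Delta\un{Q}$ (those $\sim_*$-adjacent to $\un{Q}$ only through a corner) are, by the cube geometry, $\sim_*$-adjacent to cubes already in this set, so all of $\Delta\un{Q}$ is $*$-connected. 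An analogous procedure applied to the topological fill of each pond $\un{Q}(i)$ gives $*$-connectedness of each $\Delta \un{Q}(i)$.

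For the bridging step, fix a live pond $\un{Q}(i)$. By definition there is an open cluster $C_i$ in the configuration $\om'$ that meets both $\un{Q}$ and $\un{Q}(i)$, so there is a simple nearest-neighbor path $\gamma \subset C_i$ from some $y_0 \in \un{Q}$ to some $y_m \in \un{Q}(i)$. The sequence of $k$-cubes visited by $\gamma$ is automatically $*$-connected, since consecutive vertices of a $\sim$-path lie in the same cube or in $\sim$-adjacent cubes. The cube of $\gamma$ immediately after leaving $\un{Q}$ lies in $\Delta\un{Q}$; the cube just before entering $\un{Q}(i)$ lies in $\Delta\un{Q}(i)$; and Lemma \ref{bridge_disjoint} together with the definition of $\brid$ ensures that every cube of $\gamma$ strictly inside $\un{G}$ contains a vertex of $\brid$, hence lies in $\un{\brid}$. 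For an almost-live pond, iterate: the propagation defining ``almost-live'' produces a chain of open clusters linking $\un{Q}(i)$ through almost-live and live ponds back to $\un{Q}$, and each link supplies a bridging path exactly as above.

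Concatenating the bridges with the $*$-connected sets $\Delta\un{Q}$ and $\Delta\un{Q}(i)$ shows that any two cubes of $\un{\Gamma}$ can be joined by a $*$-path in $\un{\Gamma}$. The main obstacle is the first step: one must carefully justify the duality argument showing $\wh{G}$ (and the analogous fills of the $\un{Q}(i)$) is $\sim$-connected, and then reconcile the $\sim$-adjacency appearing in Proposition \ref{star_conn} with the $\sim_*$-adjacency used to define $\Delta\un{Q}$. Both steps are standard in the coarse graining literature but require the careful bookkeeping sketched above to ensure that Proposition \ref{star_conn} delivers exactly the connectivity statement needed.
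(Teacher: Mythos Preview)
Your overall structure matches the paper's: establish that $\Delta\un{Q}$ and each $\Delta\un{Q}(i)$ are individually $*$-connected, then link them through the coarse-grained bridge. The difference lies in the first step. The paper invokes Lemma~2 of Tim\'ar \cite{Timar} directly, which delivers $*$-connectedness of these boundary sets without any hypothesis of $\sim$-connectedness on the underlying set. You instead try to route through Proposition~\ref{star_conn}, and this is where a genuine gap appears.

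Proposition~\ref{star_conn} requires a finite $\sim$-connected subgraph as input. The ponds $\un{Q}(i)$ are defined merely as $*$-connected components of the complement of $\un{G}$, and there is no reason they (or their topological fills) are $\sim$-connected: two cubes meeting only at a corner, with the two ``other diagonal'' cubes lying in $\un{G}$, already give a pond that is $*$-connected but not $\sim$-connected, and filling changes nothing. Even if the fill were $\sim$-connected, Proposition~\ref{star_conn} yields $\pa_*$ of that fill, the \emph{inner} vertex boundary, whereas $\Delta\un{Q}(i)$ is the \emph{outer} $*$-boundary; the reconciliation you flag as ``standard bookkeeping'' is in fact another invocation of a Tim\'ar-type statement. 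Your duality argument for $\Delta\un{Q}$ is on firmer ground, since the complement of the unique infinite $*$-component can indeed be shown $\sim$-connected, but the pond side genuinely needs the stronger Tim\'ar lemma the paper cites.

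For the bridging step the paper argues with connected components $D$ of $\brid$ and their coarse images $\un{D}$, while you trace explicit open paths; the content is the same. One point to tighten: your path $\gamma$ from $\un{Q}$ to a live $\un{Q}(i)$ may traverse other ponds $\un{Q}(j)$ en route, and the cubes of $\un{Q}(j)$ lie outside $\un{\brid}$. You must break $\gamma$ at each such pond and use the $*$-connectedness of the intermediate $\Delta\un{Q}(j)$ to continue, which is exactly the iteration you already invoke for the almost-live case.
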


\begin{proof} It follows directly from Lemma 2 of Tim\'{a}r \cite{Timar} that $\Delta\un{Q}$ and each $\Delta \un{Q}(i)$ are $*$-connected cube sets. Let $D$ be a connected component of $\brid$, and let $\un{D}$ be the collection of $k$-cubes containing a vertex of $D$, so that $\un{D} \subset \un{\brid}$. It follows from the construction of $\brid$ that $\un{D}$ either intersects $\Delta \un{Q}(i)$ for some $i$, or $\un{D}$ intersects $\un{Q}$. As $D$ is connected in $\Z^d$, it is immediate that coarse grained image $\un{D}$ is $*$-connected. The set $\un{\brid}$ is itself the union of all such cube sets $\un{D}$, and it follows from the defining properties of live and almost-live ponds that $\un{\Gamma}$ is $*$-connected. \end{proof}

We finish the section by showing that, in the configuration $\om'$, each cube in $\un{\Gamma}$ has one of two rare geometric properties, defined below. Each $k$-cube $\un{B}(x)$ has $2d$ faces $\sig_1(x), \dots, \sig_{2d}(x)$, each of which is a $(d-1)$-dimensional square of side-length $2k$. A \emph{surface} of $\un{B}(x)$ is a vertex set of the form $\sig_i(x) \cap \Z^d$; each $k$-cube $\un{B}(x)$ possesses $2d$ distinct surfaces. In the context of a $3k$-cube $\un{B}_3(x)$, a surface is any surface of any of the $k$-cubes $\un{B}(x') \subset \un{B}_3(x)$. 

\begin{defn} A $k$-cube $\un{B}(x)$ is \emph{Type-I} if there is an open path $\gamma$ and a surface $\sig \cap \Z^d$ in $\un{B}_3(x)$ so that $\gamma$ joins a vertex in $\un{B}^+(x)$ to a vertex of $\pa \un{B}_3(x) \cap \Z^d$, with no vertex along $\gamma$ joined via another open path to $\sig \cap \Z^d$. We require all paths in this definition to use only edges which are \emph{internal} to $\un{B}_3(x)$, that is, no edge in any path has both endpoints in $\pa \un{B}_3(x)$.
\end{defn}

\begin{defn} A $k$-cube $\un{B}(x)$ is \emph{Type-II} if there are disjoint open paths $\gamma_1$ and $\gamma_2$, each joining vertices of $\un{B}^+(x)$ to vertices in $\pa \un{B}_3(x)$, with no open path in $\un{B}_3(x)$ from any vertex in $\gamma_1$ to a vertex in $\gamma_2$. All paths in this definition are required to use only edges internal to $\un{B}_3(x)$.
\label{def:typeII}
\end{defn}

Figure \ref{fig:zhang_4} illustrates these geometric properties. Because of the requirement that all paths in the above definitions are internal, the event that a $k$-cube $\un{B}(x)$ is Type-I or Type-II does not depend on the state of any edge contained in $\pa \un{B}_3(x)$.

\begin{figure}[h]
\centering
\includegraphics[scale=1]{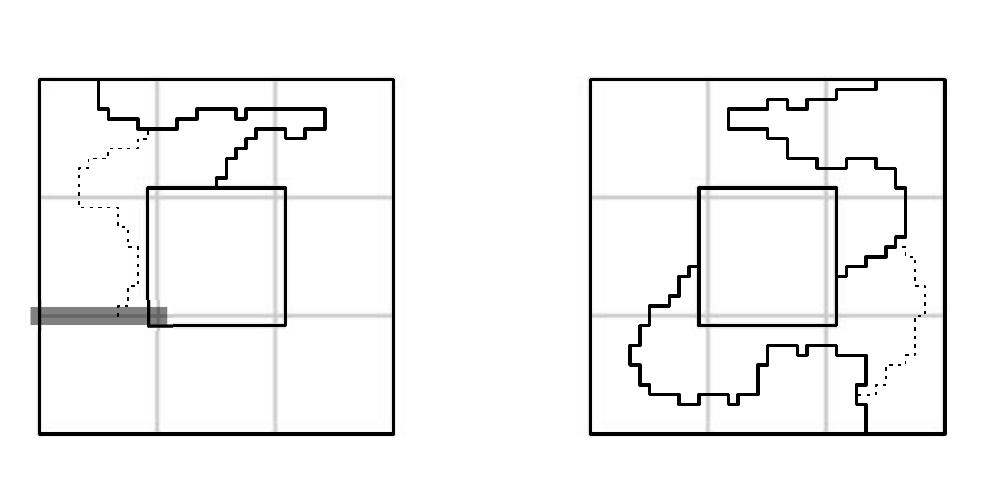}
\caption{On the left is an illustration of what \emph{cannot} happen in a Type-I cube. The dotted line is an open path joining the solid line (also an open path) to one of the surfaces of the $3k$-cube. Likewise, on the right is an illustration of what \emph{cannot} happen in a Type-II cube.} 
\label{fig:zhang_4}
\end{figure}

\begin{prop} Suppose $G$ is not contained within any $3k$-cube. Then, in the configuration $\om'$, each $k$-cube of $\un{\Gamma}$ is either Type-I or Type-II.

\label{bad_cubes}
\end{prop}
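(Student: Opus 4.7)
Work throughout in the modified configuration $\om'$, where $\pa_o G$ has been closed so that $G$'s open cluster is finite and isolated from infinity. The strategy is to exhibit, inside $\un{B}_3(x)$, two open paths from $\un{B}^+(x)$ to $\pa\un{B}_3(x)$ lying in distinct open clusters of $\om'$ (giving Type-II) or, in a degenerate sub-case, one such path together with a surface of a constituent $k$-cube disconnected from it (giving Type-I). The first path will come from $G$ itself; the second from the external structure (ocean, ponds, or bridges) witnessing $\un{B}(x) \in \un{\Gamma}$.

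\textbf{Step 1: an interior path via $G$.} By Lemma \ref{sub_A}, $\un{B}(x) \in \un{A}$, hence contains an endpoint of some $e = (v,w) \in \pa_o G$ with $v \in G$; in particular $v,w \in \un{B}^+(x)$. Because $G \subset \B{C}_\infty$ is connected and, by hypothesis, not contained in any $3k$-cube, a simple path inside $G$ beginning at $v$ must cross $\pa\un{B}_3(x)$. Truncating at the first crossing yields an open path $\gamma_1$ from $v$ to $\pa\un{B}_3(x) \cap \Z^d$ using only edges internal to $\un{B}_3(x)$; these edges are open in $\om'$ because they are internal edges of $G$, hence not in $\pa_o G$.

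\textbf{Step 2: an exterior open path and conclusion.} Seek next a second open path $\gamma_2$ from $\un{B}^+(x)$ to $\pa\un{B}_3(x) \cap \Z^d$, internal to $\un{B}_3(x)$, in an open cluster of $\om'$ distinct from that of $\gamma_1$. The construction splits by the reason $\un{B}(x) \in \un{\Gamma}$: if $\un{B}(x) \in \un{\brid}$, then $\un{B}(x)$ contains a vertex $y \in C \cup \bigcup_i C_i$, so $y$ lies in an open cluster of $\om'$ touching $\un{Q}$ or some live / almost-live pond $\un{Q}(i)$; if $\un{B}(x) \in \Delta\un{Q}$, an adjacent ocean cube $\un{B}(x') \subset \un{B}_3(x)$ contributes a vertex in $\un{B}^+(x)$ whose open cluster in $\om'$ reaches the ocean and hence infinity; if $\un{B}(x) \in \Delta\un{Q}(i)$, the live / almost-live property chains open clusters of $\om'$ from $Q_i$ through (possibly several almost-live ponds) to the ocean. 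In each case the relevant cluster is infinite in $\om'$, so it must exit the bounded set $\un{B}_3(x)$, and truncation at the first crossing produces $\gamma_2$. By Lemma \ref{bridge_disjoint}, $\gamma_1$'s cluster is contained in $G$ (finite, cut off from $\infty$), while $\gamma_2$'s is infinite; they are therefore vertex-disjoint, and no open path in $\un{B}_3(x)$ can merge them, yielding Type-II. If Step 2's cluster happens not to exit $\un{B}_3(x)$, one instead selects a surface of a constituent $k$-cube of $\un{B}_3(x)$ whose vertices avoid $\gamma_1$'s internal open cluster, producing Type-I; the hypothesis that $G$ is not confined to a $3k$-cube prevents $\gamma_1$'s cluster from covering every such surface.

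\textbf{Main obstacle.} The delicate point is Step 2: converting the purely topological infiniteness of $\un{Q}$ (a property of cubes) into genuine open paths to $\infty$ in $\om'$. The classification of live and almost-live ponds and the construction of $\brid$ are tailored precisely to this purpose, ensuring that the open clusters of $\om'$ containing vertices of $\brid$, $Q$, or $Q_i$ truly extend through the ocean to infinity; still, verifying the chain of connections case by case requires careful bookkeeping. A secondary subtlety is that $\gamma_1$ and $\gamma_2$ must use only edges internal to $\un{B}_3(x)$, so that the event of $\un{B}(x)$ being Type-I or Type-II is measurable with respect to edges strictly interior to $\un{B}_3(x)$; the augmentation $\un{B}^+(x) \supset \un{B}(x)$ provides the extra room for paths emanating from the boundary of $\un{B}(x)$ to travel interior to $\un{B}_3(x)$ until their first crossing of $\pa\un{B}_3(x)$.
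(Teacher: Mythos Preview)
Your Step 1 is correct and matches the paper. The gap is in Step 2: you conflate the ocean $\un{Q}$ (a cube-level, purely topological object) with an infinite open cluster. A cube belongs to $\un{Q}$ simply because it avoids $G\cup\pa_o G$; its vertices may lie in tiny finite open clusters or have no open edges at all. So in the $\Delta\un{Q}$ and $\Delta\un{Q}(i)$ cases your claimed $\gamma_2$ need not exist, and the assertion ``in each case the relevant cluster is infinite in $\om'$'' is unjustified. The same confusion infects your treatment of $\un{\brid}$: a component $D$ of $C$ or $C_i$ is an open cluster that merely \emph{touches} some ocean or pond cube, which does not make $D$ infinite. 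Your Type-I fallback (``select a surface whose vertices avoid $\gamma_1$'s cluster'') is too vague: you give no reason such a surface exists, and the sentence about $G$ not being confined to a $3k$-cube is pointed in the wrong direction---that hypothesis is used to produce $\gamma_1$, not to find a disconnected surface.

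The paper's argument is structured differently and avoids this trap by a careful case split. If $\un{B}(x)\in\Delta\un{Q}\cup\bigcup_i\Delta\un{Q}(i)$, it proves Type-I directly: the $*$-adjacent ocean/pond cube $\un{B}(x')$ contains no vertex of $G$ and no endpoint of $\pa_o G$, so any open path from a vertex of $\gamma_1\subset G$ to a surface of $\un{B}(x')$ would have to cross $\pa_o G$, which is closed in $\om'$; hence that surface witnesses Type-I. Only in the complementary case $\un{B}(x)\in\un{\brid}\setminus\big(\Delta\un{Q}\cup\bigcup_i\Delta\un{Q}(i)\big)$ does the paper produce Type-II, and the reason the bridge component $D$ exits $\un{B}_3(x)$ is precisely this exclusion: if $D$ stayed inside $\un{B}_3(x)$, its vertex in $\un{Q}$ or $\un{Q}(i)$ would force one of the $3^d$ sub-cubes of $\un{B}_3(x)$ into the ocean or a pond, putting $\un{B}(x)$ back in $\Delta\un{Q}\cup\bigcup_i\Delta\un{Q}(i)$. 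Your proposal misses both the correct Type-I mechanism and the role of the case split.
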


\begin{proof} Following Zhang, consider two cases. First suppose $\un{B}(x) \in \un{\Gamma}$ is a member of
\begin{align}
\Delta \un{Q} \cup \left( \bigcup_{i=1}^u \Delta \un{Q}(i) \right) \,.
\end{align}
Such a $\un{B}(x)$ is $*$-adjacent to a cube $\un{B}(x')$ which neither intersects $G$ nor an endpoint vertex of $\pa_o G$. By Lemma \ref{sub_A}, $\un{B}(x) \in \un{A}$, so that $\un{B}(x)$ contains an endpoint vertex of $\pa_o G$. Thus, $\un{B}^+(x)$ contains a vertex $y \in G$. There can be no open path from any surface of $\un{B}(x')$ to $y$: such a path could not use an edge of $\pa_oG$ but could be extended to a path from $y$ to $\infty$ using no other vertices of $G$. On the other hand, as $G$ is not contained in any $3k$-cube, there must be an open path from $y$ to a vertex of $\pa \un{B}_3(x)$. We may arrange this open path uses edges internal to $\un{B}_3(x)$ by stopping it at the first vertex of $\pa \un{B}_3(x)$ it meets. Thus in the first case, $\un{B}(x)$ is Type-I.

In the second case, suppose $\un{B}(x)$ is a member of 
\begin{align} 
\un{\brid} \setminus \left(  \Delta \un{Q} \cup \left( \bigcup_{i=1}^u \Delta \un{Q}(i) \right) \right) \,,
\end{align}
and let $y \in \un{B}(x) \cap \brid$. Then $y$ lies in some connected component $D$ of either $C$ or one of the $C_i$. The component $D$ cannot be contained in $\un{B}_3(x)$, otherwise one of the $k$-cubes $*$-adjacent to $\un{B}(x)$ would be a member of either $\un{Q}$ or some $\un{Q}(i)$. This is impossible as it would imply $\un{B}(x) \in \Delta \un{Q}$ or $\un{B}(x) \in  \Delta \un{Q}(i)$ for some $i$. It follows that $y$ is joined to the the boundary of $\un{B}_3(x)$ by an open internal path (contained in $D$). 

On the other hand, thanks to Lemma \ref{sub_A}, the cube $\un{B}(x)$ contains an endpoint vertex of an edge in $\pa_o G$, and $\un{B}^+(x)$ contains a vertex $z \in G$. As $G$ is not contained in any  $3k$-cube, the vertex $z$ is connected to the boundary of $\un{B}_3(x)$ by an open internal path (in $G$). But $D$ and $G$ are disjoint, thus the corresponding paths from $y$ and $z$ to the boundary of $\un{B}_3(x)$ cannot lie in the same open cluster. We conclude that in this second case, $\un{B}(x)$ is Type-II.  \end{proof}

We close the section by citing that it is rare for a cube to be Type-I or Type-II when $k$ is large.

\begin{prop} (Grimmet \cite{Grimmett}, Lemma 7.89 and Zhang \cite{Zhang}, Section 3)\, Let $d\geq 2$ and let $p > p_c(d)$. There are positive constants $c_1(p,d)$ and $c_2(p,d)$ so that for each $k$-cube $\un{B}(x)$, 
\begin{align}
\prob_p ( \un{B}(x) \text{ is Type-I or Type-II} ) \leq c_1 \exp(- c_2 k ) \,.
\end{align}
\label{rare_type}
\end{prop}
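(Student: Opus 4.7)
The strategy is to show that both Type-I and Type-II events are contained in a single failure event: the failure of a local uniqueness statement for the large open cluster inside the box $\un{B}_3(x)$. By translation invariance we reduce to $x = 0$. Let $\cal{G}_k$ be the event that inside $\un{B}_3(0)$ there exists a (necessarily unique) open cluster $\cal{C}^*$ with the following properties:
\begin{align}
&\text{(a) every surface $\sig \cap \Z^d$ of any $k$-sub-cube $\un{B}(y) \subset \un{B}_3(0)$ meets $\cal{C}^*$,} \\
&\text{(b) every open path using only edges internal to $\un{B}_3(0)$ that connects $\un{B}^+(0)$} \\
&\qquad\text{to $\pa \un{B}_3(0)$ lies entirely in $\cal{C}^*$.}
\end{align}
The first task is to verify that on $\cal{G}_k$ neither Type-I nor Type-II can occur. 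If $\un{B}(0)$ were Type-I, the witnessing path $\gamma$ would lie in $\cal{C}^*$ by (b), while the offending surface $\sig \cap \Z^d$ would meet $\cal{C}^*$ by (a); connecting a vertex of $\sig \cap \Z^d \cap \cal{C}^*$ to a vertex of $\gamma$ inside $\cal{C}^*$ yields an open path, contradicting the definition. If $\un{B}(0)$ were Type-II, then by (b) both $\gamma_1$ and $\gamma_2$ lie in $\cal{C}^*$, so they are connected by an open path internal to $\un{B}_3(0)$, again a contradiction.

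The second task is the quantitative estimate $\prob_p(\cal{G}_k^c) \leq c_1 e^{-c_2 k}$. This is where the cited results of Grimmett (Lemma~7.89 of \cite{Grimmett}) and of Pisztora \cite{P} enter: in the supercritical regime $p > p_c(d)$, with probability at least $1 - c_1 e^{-c_2 k}$ the box of side $O(k)$ contains a unique crossing cluster that touches all faces of every axis-parallel $k$-subcube and absorbs every open path of diameter $\geq k$. More precisely, one combines two standard ingredients whose exponential rates are by now classical: the Grimmett--Marstrand slab result together with a Peierls argument shows that the probability that a surface of a $k$-cube inside $\un{B}_3(0)$ is not touched by a path connecting to the giant cluster of $\un{B}_3(0)$ decays exponentially in $k$; and the uniqueness of this giant cluster inside $\un{B}_3(0)$ (no two disjoint open clusters both of diameter $\geq k$) likewise fails with probability at most $c_1 e^{-c_2 k}$. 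A union bound over the $O(1)$ surfaces of the $3^d$ sub-cubes of $\un{B}_3(0)$ preserves the exponential rate and yields $\prob_p(\cal{G}_k^c) \leq c_1 e^{-c_2 k}$.

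Combining the two steps gives $\prob_p(\un{B}(0) \text{ is Type-I or Type-II}) \leq \prob_p(\cal{G}_k^c) \leq c_1 e^{-c_2 k}$, as required. The only subtle point — and the main obstacle if one wished to give a fully self-contained argument — is the exponential bound on failure of local uniqueness of the crossing cluster; this is precisely the content of the coarse-graining machinery of Pisztora and the slab result of Grimmett--Marstrand, which we invoke rather than reprove.
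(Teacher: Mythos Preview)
The paper does not supply its own proof of this proposition: it is stated with attribution to Lemma~7.89 of Grimmett \cite{Grimmett} and Section~3 of Zhang \cite{Zhang}, and the text moves on immediately. Your proposal is therefore not competing with an argument in the paper but rather reconstructing the content of those citations, and as such it is a faithful sketch: the reduction to a single ``good'' event $\cal{G}_k$ (existence of a unique large cluster in $\un{B}_3(0)$ that touches every sub-surface and absorbs every long open path) is exactly the mechanism behind the cited results, and your verification that $\cal{G}_k$ excludes both Type-I and Type-II is correct. One small point worth making explicit is that $\cal{C}^*$ should be taken as a cluster for the edge set \emph{internal} to $\un{B}_3(0)$, so that the connecting paths produced in the Type-I and Type-II contradictions automatically satisfy the internality requirement in those definitions; this is implicit in your formulation but deserves a word. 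Otherwise the sketch is sound and matches the literature you (and the paper) invoke.
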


{\large\section{\B{Coarse graining applied}}\label{sec:coarse_applied}}

The point of applying Zhang's construction to the $G_n$ is that, as we will see, the associated cutset $\Gamma_n$ has cardinality on the order of $n^{d-1}$ with high probability. The cutset $\Gamma_n$ may be thought of as a \emph{deformation} of the contour $\pa_o G_n$, and the vertices enclosed by this deformation form a subgraph of $F_n \subset \giant$ which we \emph{flatten} into a continuum set whose perimeter is bounded by $|\Gamma_n|$. The rescaled continuum set then has a bounded perimeter (not depending on $n$). This is how we pass from Cheeger optimizers to sets of finite perimeter. 

Our notion of closeness in Theorem \ref{main_L1} comes from the $\ell^1$-metric, thus $|G_n\, \Delta\, F_n|$ must be small. Figures \ref{fig:zhang_1} and \ref{fig:6.2_large} suggests the extra volume enclosed by $\Gamma_n$ may be substantial, for instance due to ponds trapped by $\Gamma_n$. These large enclosed components are surgically removed using another application of Zhang's construction. The boundary of $F_n$ is then more complicated than we first asserted: it is made of all contours created through Zhang's construction. We tie these contours together via an auxiliary edge set called the webbing and execute a Peierls argument to bound their total size. \newline

\subsection{Contours}\label{sec:webbing}

We formalize the argument sketched above. Given $G_n = G_n(\om) \in \cal{G}_n$, list the connected components of $G_n$ as $G_n^{(1)}, \dots, G_n^{(M)}$. For each $G_n^{(q)}$, let $\om_q'$ be the unique configuration obtained from $\om$ by closing each open edge in $\pa^\om G_n^{(q)}$. Within $\om_q'$, apply Zhang's construction, producing a closed cutset $\Gamma_n^{(q)}$ and $k$-cube set $\un{\Gamma}_n^{(q)}$ having the properties discussed in Section \ref{sec:properties}.  

\begin{rmk} In light of our notation for $3k$-cubes, we emphasize that each $\un{\Gamma}_n^{(q)}$ is a set of $k$-cubes. 
\end{rmk}

Let $\om'$ be the configuration built from $\om$ in which all edges of $\pa^\om G_n$ are closed. Given a collection of edges $S$, a connected component of $\Lambda$ of $\B{C}_\infty$ (in $\om'$, see Remark \ref{rmk:6_cluster}) is \emph{surrounded} by $S$ if every path from $\Lambda$ to $\infty$ uses an edge of $S$. Only the connected components of $\B{C}_\infty$ surrounded by $S$ matter, not other open clusters. Define
\begin{align}
\e(d) := 1 - \frac{d}{(d-1)^2} \,,
\label{eq:epsilon_d}
\end{align}
and observe $\e(d)$ is positive when $d \geq 3$. Within $\om'$, the cutsets $\Gamma_n^{(q)}$ may surround other connected components of $\B{C}_\infty$ aside from the $G_n^{(q)}$. If $\Lambda$ is such a component, say $\Lambda$ is \emph{large} if $|\Lambda | \geq n^{1-\e(d)}$, and say $\Lambda$ is \emph{small} otherwise. Enumerate the large components $L_1, \dots, L_m$ of $\B{C}_\infty$ surrounded by any of the cutsets $\Gamma_n^{(q)}$, but do not include any of the $G_n^{(q)}$ in this list. Likewise enumerate the small components $S_1, \dots, S_t$ of $\B{C}_\infty$ surrounded by any of the cutsets $\Gamma_n^{(q)}$. Our notation suppresses the dependence of the $L_i$ and the $S_j$ on $n, \om$ and $G_n$. Figure \ref{fig:6.2_large} depicts how large and small components may arise. 

\begin{figure}[h]
\centering
\includegraphics[scale=1]{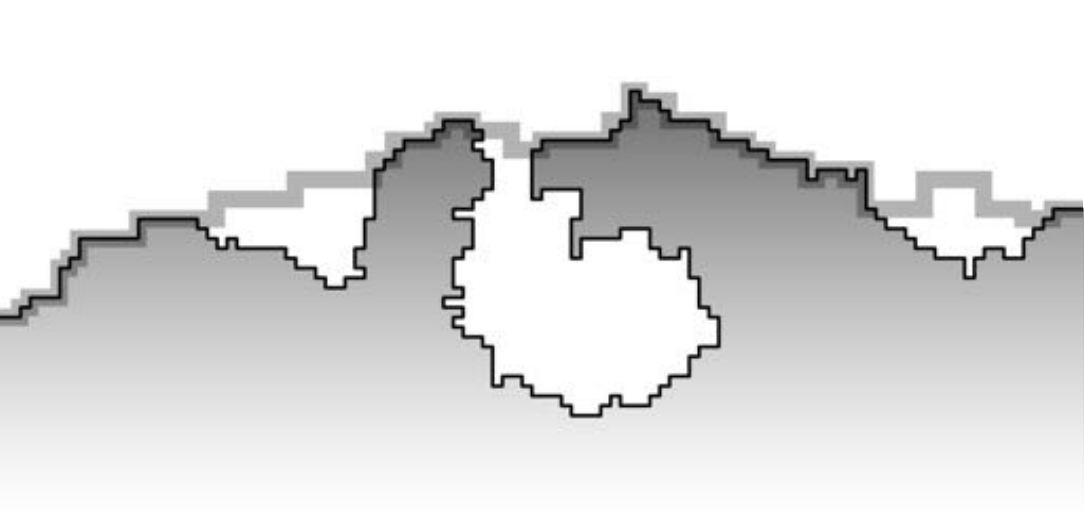}
\caption{The black contour is a close-up of the boundary of some $G_n^{(q)}$. The thicker grey contour is the associated cutset $\Gamma_n^{(q)}$. It is possible that connected components of $\B{C}_\infty$ are bounded between these two contours (see Remark \ref{rmk:6_cluster}).} 
\label{fig:6.2_large}
\end{figure}

Define $F_n \subset \B{C}_\infty$ from $G_n$ by \emph{filling in} the small components.
\begin{align}
F_n : = G_n \cup \left( \bigcup_{j=1}^t S_j \right) \,.
\label{eq:6.2_F}
\end{align}

We build cutsets surrounding the large components to fashion the boundary of $F_n$. Given a large component $L_i$, let $\om_i'$ denote the configuration in which all open edges of $\pa^\om L_i$ are closed. Within $\om_i'$, apply Zhang's construction, producing a closed cutset $\wh{\Gamma}_n^{(i)}$ separating $L_i$ from $\infty$, and a corresponding $k$-cube set $\un{\wh{\Gamma}}_n^{(i)}$. The edge sets $\Gamma_n^{(q)}$ and $\wh{\Gamma}_n^{(i)}$ together represent the boundary of $F_n$, and we define:
\begin{align}
\Gamma_n := \left(\bigcup_{q =1}^M \Gamma_n^{(q)}\right) \cup \left(   \bigcup_{i=1}^m \wh{\Gamma}_n^{(i)} \right)\,, \hspace{5mm} \un{\Gamma}_n := \left(\bigcup_{q =1}^M \un{\Gamma}_n^{(q)}\right) \cup \left(   \bigcup_{i=1}^m \wh{\un{\Gamma}}_n^{(i)} \right)\,.
\label{eq:6.2_gamma}
\end{align}

\begin{figure}[h]
\centering
\includegraphics[scale=1]{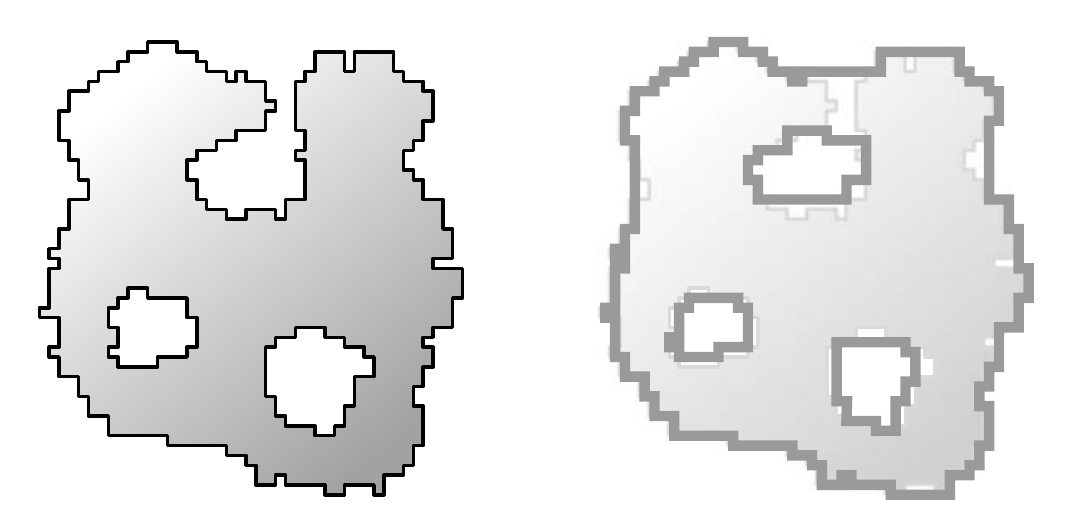}
\caption{On the left is $G_n \in \cal{G}_n$. On the right, the thick grey contours together form the edge set $\Gamma_n$. The inner contours arise from large components and are of the form $\wh{\Gamma}_n^{(i)}$. The outer contour corresponds to $G_n$ itself. It is natural to wonder how these contours \emph{interact}; we address this question at the start of Section \ref{sec:contiguity}. } 
\label{fig:6.2_contours}
\end{figure}

\subsection{Webbing}\label{sec:webbing} We now tie the contours together using an edge set called the webbing. We think of the webbing as a one-dimensional object because it is made up of paths; as the number of large components grows with $n$, the size of the webbing becomes too large in dimension two, which is why our argument works only in dimensions three and higher. 

By, for instance, fixing an ordering of finite subsets of $\Z^d$, choose in a unique way an endpoint vertex $\zeta_q$ of an edge in $\Gamma_n^{(q)}$. Do the same for each $\wh{\Gamma}_n^{(i)}$, calling these vertices $z_i$. These are the endpoints of paths making up the webbing. Let $\al > 0$ be a parameter to be chosen later. Consider all cubes of the form $\lceil n^\al \rceil x + [ - \lceil n^\al \rceil , \lceil n^\al\rceil ]^d$ intersecting $[-2n,2n]^d$, where $x \in \Z^d$. List these cubes as $\{ \ov{B}_j \}_{j=1}^\ell$, ordered so that consecutive cubes share a face. For $n$ sufficiently large (depending on the renormalization parameter $k$), all $\zeta_q$ and $z_i$ lie in the union of the $\ov{B}_j$. For each $j \in \{1, \dots, \ell\}$, let $m_j$ be the number of $z_i$ contained in the cube $\ov{B}_j$. 

In each $\ov{B}_j$, do the following: begin with $z_i \in \ov{B}_j$ least in our ordering of finite subsets of $\Z^d$. Pick (in some unique way) a not necessarily open $\Z^d$-path in $\ov{B}_j$ joining this ``smallest" vertex to the next smallest $z_k$, arranging for the path to use the fewest edges possible. Continue building paths between the current vertex in $\ov{B}_j$ and the next smallest within $\ov{B}_j$ until all $z_i$ in $\ov{B}_j$ are used. The union of all paths created this way is a graph $T_j$ called a \emph{tangle}. Define $T_j$ to be empty if $m_j =0$. 

We link each tangle using a single long path. Begin with $\ov{B}_1$ and select $z_i \in \ov{B}_1$ minimal in our ordering, then select a minimal vertex from $\ov{B}_2$ and connect the two vertices by a uniquely chosen shortest path in $\Z^d$. Do nothing when these vertices are identical, and if at any point a cube $\ov{B}_j$ contains no $z_i$, take instead the vertex of $\Z^d \cap \ov{B}_j$ minimal in our ordering. Continue joining consecutive vertices until the minimal vertex in the last $\lceil n^\al \rceil$-cube has been used. Link this last vertex to $\zeta_1$, and then to successive $\zeta_q$'s by uniquely chosen shortest paths until we reach $\zeta_M$. The union of all paths created here is denoted $\str$. We have suppressed the $n$, $\om$ and $G_n$ dependence of both $\str$ and the $T_j$. The webbing is
\begin{align}
\web_n := \str\cup \left( \bigcup_{j=1}^\ell T_j \right) \,.
\end{align}
To bound the size of $\web_n$, we first bound the number of large components associated to each $G_n$. 

\begin{lem} Let $d\geq 3$ and $p > p_c(d)$. For each $G_n \in \cal{G}_n$, let $m(G_n,n,\om)$ be the number of large components surrounded by the $\Gamma_n^{(q)}$. Let $M_n(\om)$ be the maximum of $m(G_n)$ over all $G_n \in \cal{G}_n$. There are positive constants $c_1(p,d,k), c_2(p,d,k)$ and $c_3(p,d)$ so that 
\begin{align}
\prob_p \left(  M_n > c_3 n^{d-1 - 1/(d-1) } \right) \leq c_1 \exp \left( -c_2 n^{1/(d-1)} \right) \,.
\end{align}

\label{not_too_many_large}
\end{lem}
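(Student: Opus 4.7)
The proof plan rests on two complementary estimates: an upper bound of order $n^{d-1}$ on $|\pa^\om G_n|$ valid uniformly over $G_n \in \cal{G}_n$, and a lower bound of order $n^{1/(d-1)}$ on $|\pa^\om L_i|$ valid for each large component $L_i$. These combine via the structural inclusion $\pa^\om L_i \subset \pa^\om G_n$ (with the $\pa^\om L_i$ pairwise disjoint) to produce $m(G_n,n,\om) \leq c_3 n^{d-1-1/(d-1)}$.

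The structural step is essentially tautological: each $L_i$ is, by construction, a connected component of $\B{C}_\infty$ in the modified configuration $\om'$ in which edges of $\pa^\om G_n$ are closed. If $e$ is an open edge of $\om$ with exactly one endpoint in $L_i$, then either the other endpoint lies in $G_n$ (so $e \in \pa^\om G_n$), or $e$ remains open in $\om'$ and would extend $L_i$ within $\B{C}_\infty(\om')$, contradicting maximality. The same dichotomy excludes an open edge between two distinct $L_i$ and $L_j$, since neither endpoint would then lie in $G_n$. Summing over $i$ gives
\[
\sum_{i=1}^{m} |\pa^\om L_i| \;\leq\; |\pa^\om G_n|.
\]

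For the upper bound on $|\pa^\om G_n|$, I use that $G_n$ is a Cheeger optimizer with $|G_n| \leq (2n+1)^d/d!$, so $|\pa^\om G_n| = \Chee \cdot |G_n| \leq \Chee \cdot (2n+1)^d/d!$. Applying Theorem \ref{upper_bound} to a fixed $d$-polytope approximation of $W_{p,d}$ bounds $\Chee \leq C/n$ off an event of probability $c_1 \exp(-c_2 n^{(d-1)/3})$, hence $|\pa^\om G_n| \leq C' n^{d-1}$ uniformly in $G_n \in \cal{G}_n$. For the lower bound on $|\pa^\om L_i|$, I invoke a uniform isoperimetric inequality for the supercritical infinite cluster \cite{BenjMo,Mathieu_Remy,Rau,Pete}: with probability at least $1 - c_1 \exp(-c_2 s^{(d-1)/d})$, every finite connected $L \subset \B{C}_\infty$ passing through a fixed vertex with $|L| \geq s$ satisfies $|\pa^\om L| \geq c |L|^{(d-1)/d}$. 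A union bound over starting vertices in $[-n,n]^d$, with $s = n^{1-\e(d)} = n^{d/(d-1)^2}$, yields $|\pa^\om L| \geq c n^{1/(d-1)}$ for every such large $L$ simultaneously, with failure probability at most $c_1 \exp(-c_2 n^{1/(d-1)})$.

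On the intersection of these two high-probability events, for every $G_n \in \cal{G}_n$,
\[
m(G_n) \cdot c\, n^{1/(d-1)} \;\leq\; \sum_i |\pa^\om L_i| \;\leq\; |\pa^\om G_n| \;\leq\; C' n^{d-1},
\]
so $m(G_n) \leq c_3 \,n^{d-1-1/(d-1)}$ with $c_3 := C'/c$, and the bound is inherited by $M_n$. The dominant contribution to the failure probability is $c_1 \exp(-c_2 n^{1/(d-1)})$ from the isoperimetric input, matching the statement; positivity of $\e(d)$ for $d \geq 3$ is what makes the relevant exponent $1/(d-1)$ strictly positive. The main obstacle is locating the uniform isoperimetric inequality for $\B{C}_\infty$ in the precise quantitative form used here (exponential tail at scale $s^{(d-1)/d}$, simultaneously for all connected subgraphs in the box); this is implicit but scattered across the cited literature, and care must be taken that the tail is strong enough to survive the $O(n^d)$ union bound over starting vertices.
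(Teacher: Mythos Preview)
Your proposal is correct and follows essentially the same argument as the paper. Both proofs combine (i) a uniform upper bound $|\pa^\om G_n| \leq C n^{d-1}$, (ii) a uniform isoperimetric lower bound $|\pa^\om L_i| \geq c n^{1/(d-1)}$ for each large component, and (iii) the inclusion $\bigsqcup_i \pa^\om L_i \subset \pa^\om G_n$ to conclude $m \leq (C/c)\, n^{d-1-1/(d-1)}$. The only differences are in the sources for the two quantitative inputs: the paper uses its in-house Lemma~\ref{surface_boundary} (which rests only on the elementary Lemma~\ref{chee_asym} and Corollary~\ref{density_control}) rather than the heavier Theorem~\ref{upper_bound}, and invokes its own Corollary~\ref{coro_BBHK} (a direct consequence of Proposition~A.2 of \cite{BBHK}) with $\al = 1-\e(d)$ rather than citing the external isoperimetric literature; your structural paragraph justifying $\pa^\om L_i \subset \pa^\om G_n$ and pairwise disjointness is in fact more explicit than what the paper writes.
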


\begin{proof} Fix $G_n \in \cal{G}_n$ and specialize Corollary \ref{coro_BBHK} to the setting $d \geq 3$, $\al = 1 -\e(d)$: work within the high probability event $\{ R \leq n\}$. By  Lemma \ref{sub_A}, all large components $L_i$ corresponding to $G_n$ are contained in $\cal{C}_{2n}$. On $\{R \leq n\}$, 
\begin{align}
| \pa^\om L_i | \geq cn^{1/(d-1)} \,,
\label{eq:not_too_many_large}
\end{align}
for each $i \in \{1,\dots,m\}$. Also work in the high probability event (from Lemma \ref{surface_boundary}) that there is $\eta_3(p,d) > 0$ so that for all $G_n \in \cal{G}_n$, we have $|\pa^\om G_n| \leq \eta_3 n^{d-1}$. Use this upper bound with (\ref{eq:not_too_many_large}) and the fact that distinct large components have disjoint open edge boundaries to obtain 
\begin{align}
m \leq \frac{\eta_3}{c} n^{d-1-1/(d-1)} \,.
\end{align}
We use the estimates from Lemma \ref{surface_boundary} and Corollary \ref{coro_BBHK} to complete the proof. \end{proof}

We now deduce bounds on the number of edges in the $\web_n$.

\begin{rmk} In the following proof, we fix the parameter $\al$ (controlling the side-length of the cubes $\ov{B}_j$) to be $\tfrac{1}{(d-1)}$. 
\end{rmk}

\begin{prop} Let $d \geq 3$ and $p > p_c(d)$. Let $W_n = W_n(\om)$ be the maximum of $|\rmE (\web_n)|$ taken over all $G_n \in \cal{G}_n$. There are positive constants $c_1(p,d,k)$, $c_2(p,d,k)$ and $c_3(p,d)$ so that 
\begin{align}
\prob_p \left( W_n > c_3 n^{d-1} \right) \leq c_1 \exp \left( -c_2 n^{1/(d-1)} \right) \,.
\end{align}

\label{small_web}
\end{prop}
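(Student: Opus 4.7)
The plan is to decompose $\web_n = \str \cup \bigcup_{j=1}^\ell T_j$ and bound each piece on a high-probability event. The choice $\al = 1/(d-1)$ fixed in the remark preceding the statement is what makes the two pieces balance at the target scale $n^{d-1}$.

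\textbf{Tangles.} For each $j$, the tangle $T_j$ is built from at most $m_j-1$ shortest $\Z^d$-paths inside the cube $\ov{B}_j$ of side length $\lceil n^\al \rceil$, so $|\rmE(T_j)| \le c(d)\, m_j\, n^\al$. Summing over $j$ and using $\sum_j m_j = m$, the total number of tangle edges is at most $c(d)\, n^\al\, m$. On the event of Lemma~\ref{not_too_many_large}, $m \le c_3 n^{d-1-1/(d-1)}$, giving $\sum_j |\rmE(T_j)| \le c'(p,d)\, n^{d-1}$, with the complementary event having probability at most $c_1\exp(-c_2 n^{1/(d-1)})$.

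\textbf{String.} The string splits into two parts. The first is the chain joining minimal vertices of consecutive cubes $\ov{B}_1, \ldots, \ov{B}_\ell$; each of the $\ell - 1$ linking paths has length at most $c(d) n^\al$ because consecutive cubes share a face. Since $\ell \le c(d)(n/n^\al)^d = c(d) n^{d(1-\al)}$, this portion contributes at most $c(d) n^{d(1-\al)+\al} = c(d) n^{d-(d-1)\al} = c(d) n^{d-1}$ thanks to the choice $\al = 1/(d-1)$. The second part appends $M$ additional shortest paths joining the end of this chain to $\zeta_1, \ldots, \zeta_M$, each of length at most $c(d)\, n$ since all relevant vertices lie in $[-2n,2n]^d$; this contributes at most $c(d)\,Mn$.

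\textbf{Controlling $M$ and concluding.} The main thing left to handle is the number $M$ of connected components of $G_n$. Because $G_n \in \cal{G}_n$ is optimal, removing any component would not decrease the conductance, which forces $|\pa^\om G_n^{(q)}|/|G_n^{(q)}| \le \Chee$ for every $q$; combined with the high-probability upper bound $\Chee \le C/n$ and the isoperimetric inequality $|\pa^\om H|/|H| \ge c\, |H|^{-1/d}$ for connected $H \subset \B{C}_\infty$ (from \cite{BBHK}, as in Section~\ref{sec:consequences}), each $|G_n^{(q)}| \ge c' n^d$. The volume cap $|G_n| \le |\giant|/d! \le (2n)^d/d!$ then forces $M \le 2^d/(d!\, c')$ on this high-probability event, so the $M n$ term is $O(n)$ and negligible compared to $n^{d-1}$ for $d\ge 3$. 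Intersecting with the event of Lemma~\ref{not_too_many_large} and applying a union bound yields a deterministic bound $W_n \le c_3 n^{d-1}$ off an event of probability at most $c_1\exp(-c_2 n^{1/(d-1)})$. The main obstacle is the control of $M$; it is precisely here that the dimension restriction $d \ge 3$ is needed, since the remaining string contribution $Mn$ is absorbed only when $d-1 \ge 1$, and it is this constraint together with the balance $\al = 1/(d-1)$ that pins down the $n^{1/(d-1)}$ rate.
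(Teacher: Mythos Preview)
Your proof is essentially correct and follows the paper's approach: decompose into tangles and string, bound tangles via Lemma~\ref{not_too_many_large}, bound the chain part of the string by $n^{d(1-\al)+\al}=n^{d-1}$, and bound the $\zeta_q$-linking part by $Mn$ after showing $M$ is bounded. The paper does exactly this, citing Corollary~\ref{conn_finite} for the bound on $M$, which itself rests on Lemma~\ref{opt_macro}; you reprove that corollary inline, so no real difference.

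Two small corrections. First, the cubes $\ov{B}_j$ overlap (they have the form $\lceil n^\al\rceil x + [-\lceil n^\al\rceil,\lceil n^\al\rceil]^d$), so $\sum_j m_j$ is not equal to $m$ but bounded by $2d\cdot m$; this is harmless. Second, your attribution of the $d\geq 3$ restriction to the $Mn$ term is inaccurate: the inequality $n\leq n^{d-1}$ holds already for $d\geq 2$, so absorbing $Mn$ requires nothing. The genuine dimensional constraint enters through $\e(d)=1-d/(d-1)^2$, which must be positive for the large-component machinery (and hence Lemma~\ref{not_too_many_large}) to function; this fails at $d=2$.
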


\begin{proof} Work in the high probability event from Lemma \ref{not_too_many_large} that the maximum number $M$ of large components $L_i$ across all $G_n \in \cal{G}_n$ is at most $c n^{d-1 -1/(d-1)}$. Also work in the high probability event from Corollary \ref{conn_finite} that the number of connected components of any $G_n \in \cal{G}_n$ is at most $\eta_4 > 0$.

Fix $G_n \in \cal{G}_n$ for the rest of the proof. Consider the tangle $T_j$ for $G_n$ associated to the $\lceil n^\al \rceil$-cube $\ov{B}_j$. Based on our construction of each tangle, the number of edges $| \text{\rm E}(T_j)|$ is at most the $\ell^1$-diameter of $\ov{B}_j$ times the number of $z_i$ within $\ov{B}_j$. Choosing constants below appropriately, 
\begin{align}
\sum_{j=1}^\ell | \text{\rm E}(T_j)| &\leq 8d n^\al \sum_{j=1}^\ell m_j \,,\\
&\leq c(d) mn^\al \,, \\
&\leq c(p,d) n^{\al + n-1 - 1/(d-1) } \,,
\end{align}
where to obtain second line directly above, we use that each vertex $z_i$ lies in at most $2d$ distinct $\lceil n^\al \rceil$-cubes, and to obtain the third line we used Lemma \ref{not_too_many_large}. It remains to bound $|\rmE(\str)|$. A shortest $\Z^d$-path between the vertices of two adjacent $\lceil n^\al \rceil$-cubes uses at most $16 n^\al$ edges, and there are at most $c(d) n^{d(1-\al)}$ cubes in total. The final paths in the construction of $\str$ which join the vertices $\zeta_q$ each use at most $c(d)n$ edges. Thus,
\begin{align}
| \text{\rm E}(\str) | \leq c(d) n^\al n^{d(1-\al)} + \eta_4 c(d) n \,,
\end{align}
so that upon choosing $\al := 1/(d-1)$,
\begin{align}
|\text{\rm E}(\web_n) | &\leq c(p,d) \left[ n^{\al + (d-1) - 1/(d-1) } + n^{\al + d(1-\al) } +n \right]  \,,\\
&\leq c(p,d) n^{d-1} \,.
\end{align}
Use the estimates from Lemma \ref{not_too_many_large} and from Corollary \ref{conn_finite} to complete the proof. \end{proof}

We are finished working with $\lceil n^\al \rceil$-cubes. Define the coarse-grained image of each $\web_n$ as 
\begin{align}
\un{\web}_n := \Big\{ \un{B}(x) : \un{B}(x) \cap \web_n \neq \emptyset \Big\}
\end{align}
so that each $\un{\web}_n$ is a collection of $k$-cubes depending on $n,\om$ and $G_n$. The last lemma of this subsection follows directly from the construction of $\web_n$ and from Lemma \ref{un_gamma_star}:

\begin{lem} For each $G_n \in \cal{G}_n$, the $k$-cube set $\un{\Gamma}_n \cup \un{\web}_n$ corresponding to $G_n$ is $*$-connected.
\label{web_conn}
\end{lem}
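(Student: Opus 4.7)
\medskip

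The plan is to piece $\un{\Gamma}_n \cup \un{\web}_n$ together out of $*$-connected subsets, each of which is $*$-adjacent to (or meets) the webbing. By Lemma \ref{un_gamma_star} applied to each of the configurations $\om_q'$ and $\om_i'$, each cube set $\un{\Gamma}_n^{(q)}$ and $\wh{\un{\Gamma}}_n^{(i)}$ is individually $*$-connected. What remains is to show (i) that $\un{\web}_n$ is $*$-connected, and (ii) that $\un{\web}_n$ meets or is $*$-adjacent to every one of these $\un{\Gamma}_n^{(q)}$ and $\wh{\un{\Gamma}}_n^{(i)}$.

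For (i), I would simply observe that $\web_n$ is a connected subgraph of $\Z^d$: its construction is a sequence of shortest $\Z^d$-paths. Within each cube $\ov{B}_j$, the tangle $T_j$ is by definition a union of consecutive $\Z^d$-paths linking the $z_i \in \ov{B}_j$ through the locally minimal one, and is therefore connected (or empty). The string $\str$ is likewise a concatenation of shortest paths, visiting a distinguished vertex of each consecutive $\ov{B}_j$ (which is either the minimal $z_i$ in that cube or, if none exists, the minimal vertex of $\Z^d \cap \ov{B}_j$), and then linking the last such vertex successively to $\zeta_1, \ldots, \zeta_M$. Thus $\str$ is connected and shares a vertex with every nonempty tangle $T_j$, so $\web_n$ is a connected subgraph of $\Z^d$. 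The coarse-grained image of a connected $\Z^d$-graph is $*$-connected, so $\un{\web}_n$ is $*$-connected.

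For (ii), I would use that $\web_n$ passes through every $\zeta_q$ and every $z_i$ by construction. The point that requires a moment of care is that $\zeta_q$ need not itself lie in a cube of $\un{\Gamma}_n^{(q)}$: we only know that $\zeta_q$ is an endpoint of an edge of $\Gamma_n^{(q)}$, which by Proposition \ref{gamma_cut} lies in some augmented cube $\un{B}^+(x)$ with $\un{B}(x) \in \un{\Gamma}_n^{(q)}$. Since $\un{B}^+(x) \subset \un{B}_3(x)$, the $k$-cube containing $\zeta_q$ is equal to or $*$-adjacent to $\un{B}(x)$. That $k$-cube lies in $\un{\web}_n$, so $\un{\web}_n$ contains a cube that is equal to, or $*$-adjacent to, a cube of $\un{\Gamma}_n^{(q)}$. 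Exactly the same reasoning applied to $z_i$ shows $\un{\web}_n$ meets or is $*$-adjacent to each $\wh{\un{\Gamma}}_n^{(i)}$.

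Combining (i) and (ii): $\un{\Gamma}_n \cup \un{\web}_n$ is a union of the $*$-connected pieces $\un{\web}_n$, $\un{\Gamma}_n^{(1)}, \ldots, \un{\Gamma}_n^{(M)}$, $\wh{\un{\Gamma}}_n^{(1)}, \ldots, \wh{\un{\Gamma}}_n^{(m)}$, and the $*$-connected hub $\un{\web}_n$ is $*$-adjacent to each of the remaining pieces, so the union is $*$-connected. I expect the only real subtlety is the augmented-cube point just noted, i.e.\ verifying that the endpoint vertices $\zeta_q$ and $z_i$ used to anchor the webbing are close enough to the cube sets $\un{\Gamma}_n^{(q)}$ and $\wh{\un{\Gamma}}_n^{(i)}$ to establish $*$-adjacency; everything else is immediate from the construction.
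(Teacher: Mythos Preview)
Your proof is correct and takes essentially the same approach as the paper, which simply states that the lemma ``follows directly from the construction of $\web_n$ and from Lemma~\ref{un_gamma_star}.'' You have spelled out the details the paper omits, including the small subtlety about $\zeta_q$ lying in an augmented cube rather than a $k$-cube of $\un{\Gamma}_n^{(q)}$ itself.
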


\subsection{A Peierls argument}\label{sec:Peierls}

We use a Peierls argument to show that when the renormalization parameter $k$ is taken large, each $|\Gamma_n|$ is with high probability on the order of $n^{d-1}$. We make a small observation first.

\begin{lem} For each $G_n \in \cal{G}_n$, any edge of $\Z^d$ is contained in at most $(11k)^d$ distinct edge sets among the $\Gamma_n^{(q)}$ and the $\wh{\Gamma}_n^{(i)}$ corresponding to $G_n$.
\label{i_love_coffee}
\end{lem}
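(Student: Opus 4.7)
The approach is to combine two counts: a geometric count of the number of augmented cubes that can contain a given edge $e$, and a combinatorial count exploiting the disjointness of the underlying vertex sets that feed each cutset.

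First, I would observe that if $e \in \Gamma_n^{(q)}$, then $\Gamma_n^{(q)} \subset \un{\Gamma}_n^{(q)+}$ (Proposition \ref{gamma_cut}) forces $e \subset \un{B}^+(x)$ for some $\un{B}(x) \in \un{\Gamma}_n^{(q)}$, and analogously for each $\wh{\Gamma}_n^{(i)}$. Using the explicit description $\un{B}^+(x) = (2k)x + [-2k-1,2k+1]^d$, a coordinate-by-coordinate check shows that the set of $x \in \Z^d$ for which both endpoints of $e$ lie in $\un{B}^+(x)$ has size at most $3^d$: each coordinate of $x$ is constrained to an interval of length $(4k+2)/(2k) = 2 + 1/k$, which contains at most $3$ integers.

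Next, for a fixed such $x$, I would bound the number of indices $q$ or $i$ for which $\un{B}(x) \in \un{\Gamma}_n^{(q)}$ or $\un{B}(x) \in \un{\wh{\Gamma}}_n^{(i)}$. Lemma \ref{sub_A}, applied separately to each finite connected graph $G_n^{(q)} \subset \B{C}_\infty$ and to each large component $L_i$, gives $\un{\Gamma}_n^{(q)} \subset \un{A}_n^{(q)}$ and $\un{\wh{\Gamma}}_n^{(i)} \subset \un{A}_n^{(i)}$, forcing $\un{B}(x)$ to contain a vertex of $G_n^{(q)}$ (respectively, of $L_i$). Since the $G_n^{(q)}$ are the connected components of $G_n$ and the $L_i$ are, by construction, connected components of $\B{C}_\infty$ in the configuration $\om'$ distinct from every $G_n^{(q)}$, the collection $\{G_n^{(1)}, \ldots, G_n^{(M)}, L_1, \ldots, L_m\}$ consists of $M + m$ pairwise vertex-disjoint subsets of $\B{C}_\infty$. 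The number of them intersected by a single $k$-cube is therefore at most $|\rmV(\un{B}(x))| = (2k+1)^d$.

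Multiplying the two bounds gives at most $3^d(2k+1)^d = (6k+3)^d$ distinct edge sets containing $e$, which is dominated by $(11k)^d$ for every $k \geq 1$. The proof is essentially an unwinding of definitions; the only point requiring care is verifying the pairwise disjointness of the $G_n^{(q)}$ and $L_i$ as vertex subsets of $\B{C}_\infty$, which follows from the fact that they are distinct connected components of $\B{C}_\infty$ in the modified configuration $\om'$ used to define the large components in Section \ref{sec:webbing}.
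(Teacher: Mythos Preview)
Your argument is essentially correct and rests on the same idea as the paper's: the pairwise vertex-disjointness of the $G_n^{(q)}$ and the $L_i$, combined with a vertex count in a cube near $e$. There is one small slip. Lemma~\ref{sub_A} only gives $\un{B}(x) \in \un{A}_n^{(q)}$, i.e.\ that $\un{B}(x)$ contains an \emph{endpoint} of an edge in $\pa_o G_n^{(q)}$; this endpoint need not lie in $G_n^{(q)}$ itself (it could be the outer endpoint). What is guaranteed is that the $1$-neighborhood of $\un{B}(x)$ --- equivalently $\un{B}^+(x)$ after a one-step enlargement --- contains a vertex of $G_n^{(q)}$. With this correction your per-cube count becomes $(2k+3)^d$ rather than $(2k+1)^d$, and the product bound becomes $(6k+9)^d$, which is $\leq (11k)^d$ only for $k \geq 2$.

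The paper packages the count slightly differently and thereby recovers the bound for all $k \geq 1$: instead of factorizing into (number of admissible $x$) $\times$ (number of graphs meeting $\un{B}(x)$), it fixes a single $\un{B}(x)$ with $e \subset \un{B}^+(x)$ and observes that for \emph{any} other cutset containing $e$, the corresponding augmented cube must also contain $e$ and hence overlap $\un{B}^+(x)$; the associated vertex of $G_n^{(q')}$ or $L_i$ therefore lands in the five-fold dilate $\un{B}_5(x) = 2kx + [-5k,5k]^d$, which has at most $(10k+1)^d \leq (11k)^d$ vertices. The distinction is purely cosmetic, since $k$ is fixed large in the proof of Proposition~\ref{finite_per} anyway.
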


\begin{proof} Fix $G_n \in \cal{G}_n$. If $\Gamma_n^{(q)}$ uses $e \in \rmE(\Z^d)$, there is $\un{B}(x) \in \un{\Gamma}_n^{(q)}$ so that $e \in \un{B}^+(x)$. By Lemma \ref{sub_A}, $\un{B}^+$ also contains a vertex $y \in G_n^{(q)}$. Suppose another cutset, say $\wh{\Gamma}_n^{(i)}$, also uses $e$. Identical reasoning tells us the five-fold dilate of $\un{B}(x)$,
\begin{align}
\un{B}_5(x) : = 2k x + [-5k,5k]^d
\end{align}
contains both $y$ and a vertex $z \in L_i$. If cutsets corresponding to other connected components of $G_n$ or other $L_i$ also use $e$, at least one vertex in each of these graphs must also lie in $\un{B}_5(x)$. As the components of $G_n$ and the $L_i$ are all disjoint, and because $\un{B}_5(x)$ contains at most $(11k)^d$ vertices, the claim holds.\end{proof}

In the Peierls argument below, the renormalization parameter $k$ is fixed once and for all. 

\begin{prop} Let $d\geq 3$ and $p > p_c(d)$. There is $\gamma= \gamma(p,d) >0$ and positive constants $c_1(p,d)$ and $c_2(p,d)$ so that 
\begin{align}
 \prob_p\left ( \max_{G_n \in \cal{G}_n} | \Gamma_n | \geq \gamma n^{d-1} \right) \leq c_1 \exp \left(- c_2 n^{1/(d-1)} \right) \,.
\end{align}
\label{finite_per}
\end{prop}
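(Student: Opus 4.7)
The plan is a Peierls estimate on the coarse-grained cube set $\un\Gamma_n$. Each edge of $\Gamma_n$ lies in some augmented cube $\un{B}^+(x)$ with $\un{B}(x)\in\un\Gamma_n$, and each augmented cube contains at most $c(d)k^d$ edges, so $|\Gamma_n|\leq c(d)k^d\,|\un\Gamma_n|$. It thus suffices to show that, for $\gamma':=\gamma/(c(d)k^d)$ and the renormalization parameter $k$ fixed later,
\[
\prob_p\Bigl(\max_{G_n \in \cal G_n} |\un\Gamma_n| \geq \gamma' n^{d-1}\Bigr) \leq c_1\exp\bigl(-c_2 n^{1/(d-1)}\bigr).
\]

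I would work on the event $\Omega_n := \{W_n \leq c_3 n^{d-1}\}$ from Proposition \ref{small_web}, whose complement contributes at most $c_1\exp(-c_2 n^{1/(d-1)})$. On $\Omega_n$ one has $|\un\web_n|\leq c(d)n^{d-1}$, and by Lemma \ref{web_conn} the combined cube set $\un\Gamma_n \cup \un\web_n$ is $*$-connected and contained in $[-2n,2n]^d$ for $n$ large relative to $k$. If additionally $|\un\Gamma_n|\geq \gamma'n^{d-1}$, then we can witness the event by a pair $(\un F,\un W)$ of disjoint cube sets with $|\un F|\geq \gamma'n^{d-1}$, $|\un W|\leq c(d)n^{d-1}$, $\un F \cup \un W$ a $*$-connected subset of $[-2n,2n]^d$, and (by Proposition \ref{bad_cubes}, applied once $n$ is large enough that no $G_n$ fits in a single $3k$-cube) every cube of $\un F$ Type-I or Type-II. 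These Type-I/II events depend only on edges internal to a $3k$-cube, so they are unchanged between $\omega$ and the configuration $\omega'$ obtained by closing $\pa^\omega G_n$.

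The Peierls enumeration then proceeds as follows. The number of $*$-connected $k$-cube sets of cardinality $M$ contained in $[-2n,2n]^d$ is at most $c(d)(n/k)^d \cdot C(d)^M$, the first factor choosing an anchor cube and the second bounding the exponential growth in the count of lattice $*$-animals through a fixed cube. Splitting such a set into an ordered pair $(\un F,\un W)$ incurs at most $2^M$. For the probability that every cube of $\un F$ is Type-I or Type-II, observe that when $|x-y|_\infty \geq 3$ the $3k$-cubes $\un{B}_3(x)$ and $\un{B}_3(y)$ meet only along their boundaries, so their internal-edge sets are disjoint and the corresponding Type-I/II events are independent. Coloring $\Z^d$ by residues modulo $3$ partitions the centers into $3^d$ classes and extracts a subfamily $\un F'\subset\un F$ with $|\un F'|\geq |\un F|/3^d$ whose Type-I/II events are mutually independent. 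Proposition \ref{rare_type} then yields
\[
\prob_p\bigl(\text{every cube of } \un F \text{ is Type-I or Type-II}\bigr) \leq \bigl(c_1 e^{-c_2 k}\bigr)^{|\un F|/3^d}.
\]

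Combining these ingredients and summing over $N':=|\un F|\geq \gamma'n^{d-1}$ and $M\leq N'+c(d)n^{d-1}$ produces an upper bound of the form
\[
c(d)n^{d-1}\bigl(n/k\bigr)^d (2C(d))^{c(d)n^{d-1}} \sum_{N'\geq\gamma'n^{d-1}} \bigl(2C(d)(c_1 e^{-c_2 k})^{1/3^d}\bigr)^{N'}.
\]
I first fix $k$ so large that $2C(d)(c_1 e^{-c_2 k})^{1/3^d}\leq 1/2$, collapsing the geometric series, and then choose $\gamma'$ (hence $\gamma$) large enough that the prefactor $(2C(d))^{c(d)n^{d-1}}2^{-\gamma'n^{d-1}}$ is bounded by $e^{-n^{d-1}}$. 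This decay is much stronger than the claimed $\exp(-c_2 n^{1/(d-1)})$; combined with the bound on the complement of $\Omega_n$ it finishes the proof. The main obstacle I anticipate is the bookkeeping that allows the webbing to share cubes with $\un\Gamma_n$ while still being controllable by an honest Peierls count: the per-cube cost $c_1 e^{-c_2 k}$ must absorb both the animal growth $C(d)^M$ and the $2^M$ cost of splitting $\un F \cup \un W$ into an ordered pair, while the webbing only inflates the estimate subexponentially in $N'$ because $|\un\web_n|$ is already $O(n^{d-1})$ on $\Omega_n$.
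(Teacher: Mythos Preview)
There is a genuine gap in your use of Proposition~\ref{bad_cubes}. That proposition asserts that each cube of $\un{\Gamma}$ is Type-I or Type-II \emph{in the modified configuration} $\om_q'$ (respectively $\om_i'$) obtained by closing the open edges of $\pa^\om G_n^{(q)}$ (respectively $\pa^\om L_i$). You claim these events are unchanged between $\om$ and $\om'$ because they depend only on edges internal to the $3k$-cube, but this is exactly where the argument breaks: the configurations $\om$ and $\om'$ may differ on edges internal to $\un{B}_3(x)$, namely on any edge of $\pa^\om G_n$ lying there. Opening such an edge can destroy either property---it can connect the two paths in Definition~\ref{def:typeII}, or connect the path $\gamma$ to the forbidden surface in the Type-I case. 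Since by Lemma~\ref{sub_A} every cube of $\un{\Gamma}_n$ meets $\pa_o G_n$, this is not a degenerate concern. Consequently you cannot apply the $\prob_p$-bound of Proposition~\ref{rare_type} (which is a statement about the original configuration $\om$) to every cube of $\un{F}$.

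The paper repairs this by bringing in the bound $|\pa^\om G_n| \leq \eta_3 n^{d-1}$ from Lemma~\ref{surface_boundary}. After extracting an independent subfamily $\un{\Gamma}_n'$ whose $3k$-cubes have pairwise disjoint interiors, each edge of $\pa^\om G_n$ is internal to at most one such $\un{B}_3(x)$, so at most $|\pa^\om G_n|$ cubes of $\un{\Gamma}_n'$ can have their Type-I/II status altered between $\om$ and the modified configurations (note $\pa^\om L_i \subset \pa^\om G_n$, so the $\un{\wh{\Gamma}}_n^{(i)}$ are handled simultaneously). Taking $\gamma$ large enough that $|\pa^\om G_n|\leq \tfrac{1}{2}|\un{\Gamma}_n'|$ on the event in question leaves a further subfamily $\un{\Gamma}_n''$ of at least half the size on which $\om$ agrees with each $\om_q'$ and $\om_i'$ inside the relevant $3k$-cubes; there Proposition~\ref{rare_type} applies honestly. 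Your Peierls enumeration is otherwise along the right lines---the mod-$3$ coloring is a fine substitute for the paper's appeal to Tur\'an's theorem---but the excision of cubes meeting $\pa^\om G_n$, and the input from Lemma~\ref{surface_boundary} that makes it affordable, are missing.
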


\begin{proof} Let $\cal{E}_{\textsf{web}}$ be the event from Proposition \ref{small_web} that for all $G_n \in \cal{G}_n$, the corresponding graphs $\web_n$ satisfy $| \text{\rm E} (\web_n) | \leq c(p,d)n^{d-1}$. We work with a fixed $G_n \in \cal{G}_n$ and corresponding $\Gamma_n$ throughout the proof, and begin by using the bounds in Proposition \ref{small_web} and in Lemma \ref{surface_boundary}:
\begin{align}
\prob_p &\Big( | \Gamma_n | \geq \gamma n^{d-1} \Big) \leq c_1 \exp \Big(-c_2 n^{1/(d-1)} \Big)  + \prob_p \Big( \Big\{ | \Gamma_n| \geq \gamma n^{d-1} \Big\} \cap \Big\{ | \pa^\om G_n | \leq \eta_3 n^{d-1} \Big\} \cap \cal{E}_{\textsf{web}} \Big) \,,\\
&\leq c_1 \exp \Big(-c_2 n^{1/(d-1)}\Big) + \sum_{j = \gamma n^{d-1}}^\infty  \prob_p \Big( \Big\{ | \Gamma_n| =j \Big\} \cap \Big\{ | \pa^\om G_n | \leq (\eta_3/ \gamma) j \Big\} \cap \cal{E}_{\textsf{web}}  \Big) \,.
\end{align}
Take $\gamma$ large depending on $k,p$ and $d$ so that $\eta_3 / \gamma < [ 2 \cdot 4^d (11k)^d(4k)^{d+1} ]^{-1}$: 
\begin{align}
\label{eq:finite_per_1}
\prob_p\Big( &| \Gamma_n | \geq \gamma n^{d-1} \Big) \leq c_1 \exp \Big(-c_2 n^{1/(d-1)} \Big)\\ 
&+  \sum_{j = \gamma n^{d-1}}^\infty  \prob_p \Big( \Big\{ | \Gamma_n| =j \Big\} \cap \Big\{ | \pa^\om G_n | < [2 \cdot 4^d (11k)^d (4k)^{d+1} ]^{-1} j \Big\} \cap \cal{E}_{\textsf{web}}   \Big) \,.
\end{align}
Equip $\un{\Gamma}_n$ with a graph structure: the vertices are the $k$-cubes in $\un{\Gamma}_n$, and an edge exists between two vertices $\un{B}(x)$ and $\un{B}(y)$ if $x \sim_{*} y$. The maximum degree of any vertex in \emph{this} graph is $3^d$, so by Theorem \ref{turan}, there is a subcollection of cubes $\un{\Gamma}_n' \subset \un{\Gamma}_n$ so that  $| \un{\Gamma}_n' | \geq |\un{\Gamma}_n| / 4^d$, and whenever $\un{B}(x),\un{B}(y) \in \un{\Gamma}_n'$, the corresponding $3k$-cubes $\un{B}_3(x)$ and $\un{B}_3(y)$ have disjoint interiors. Thus,
\begin{align}
\label{eq:finite_per_2}
| \un{\Gamma}_n' | \geq \frac{| \un{\Gamma}_n|}{4^d} \geq \frac{ |\Gamma_n| }{ 4^d (11k)^d (4k)^{d+1} } \,,
\end{align}
as when $k \geq d$, there are at most $(4k)^{d+1}$ edges of $\Z^d$ having an endpoint in a given augmented $k$-cube, and Lemma \ref{i_love_coffee} implies each such edge lies in at most $(11k)^d$ distinct cutsets among the $\Gamma_n^{(q)}$ and $\wh{\Gamma}_n^{(i)}$. Consider the following event:
\begin{align}
\label{eq:finite_per_3}
\Big\{ | \Gamma_n| =j \Big\} \cap \Big\{| \pa^\om G_n | < [2 \cdot 4^d (11k)^d (4k)^{d+1} ]^{-1} j \Big\} \cap \cal{E}_{\textsf{web}} \,.
\end{align}
Within (\ref{eq:finite_per_3}), we know from (\ref{eq:finite_per_2}) that at most half of the cubes in $\un{\Gamma}_n'$ may contain an edge of $\pa^\om G_n$. Thus there is a further subcollection $\un{\Gamma}_n'' \subset \un{\Gamma}_n'$ so that
\begin{align}
| \un{\Gamma}_n''| \geq \frac{|\Gamma_n|}{ 2 \cdot 4^d (11k)^d(4k)^{d+1} } \,,
\end{align}
such that each cube of $\un{\Gamma}_n''$ is either Type-I or Type-II by Proposition \ref{bad_cubes}.

Of course, $\un{\Gamma}_n''$ inherits from $\un{\Gamma}_n'$ the property that any two $\un{B}(x), \un{B}(y) \in \un{\Gamma}_n''$ are such that $\un{B}_3(x)$ and $\un{B}_3(y)$ have disjoint interiors. Thus, for distinct $\un{B}(x)$ and $\un{B}(y)$ in $\un{\Gamma}_n''$, the event that $\un{B}(x)$ is Type-I or Type-II is independent from the event that $\un{B}(y)$ is Type-I or Type-II. 

Continue to work within \eqref{eq:finite_per_3}. Write $s := |\un{\Gamma}_n|$; on $\cal{E}_{\textsf{web}}$, we have $|\un{\web}_n| \leq c(p,d) n^{d-1}$. By Proposition \ref{web_conn}, $\un{\Gamma}_n \cup \un{\web}_n$ is $*$-connected, so Proposition \ref{span_tree} implies there are at most
\begin{align}
(3n)^d [c(d)]^{s + cn^{d-1} }
\end{align}
distinct possibilities for the $k$-cube set $\un{\Gamma}_n \cup \un{\web}_n$. The factor of $(3n)^d$ is a crude upper bound on the number of vertices in $\Z^d \cap [-n,n]^d$. There are at most $2^{s + cn^{d-1}}$ ways to choose $\un{\Gamma}_n$ from $\un{\Gamma}_n \cup \un{\web}_n$, at most the same number of ways to choose $\un{\Gamma}_n'$ from $\un{\Gamma}_n$ and likewise for $\un{\Gamma}''$ from $\un{\Gamma}_n'$. We use a union bound and aforementioned independence to obtain
\begin{align}
\prob_p \Big( \Big\{ | \Gamma_n| =j \Big\} \cap \Big\{| \pa^\om G_n | < [2 \cdot 4^d (&11k)^d (4k)^{d+1} ]^{-1} j \Big\} \cap \cal{E}_{\textsf{web}} \Big) \\
&\leq (3n)^d [c(d)]^{s + cn^{d-1} } [ c_1 \exp(-c_2 k ) ]^{s / (2 \cdot 4^d) } \,.
\end{align}
Above, we've used the lower bound on $|\un{\Gamma}_n''|$ in terms of $s$ following from (\ref{eq:finite_per_2}) and Proposition \ref{rare_type}. From (\ref{eq:finite_per_2}) we have $s \geq j / (11k)^d(4k)^{d+1}$, and as $j \geq \gamma n^{d-1}$, we may take $\gamma$ larger if necessary, again in a way depending on $k,p$ and $d$, so that $s \geq cn^{d-1}$, giving
\begin{align}
\prob_p \Big( \Big\{ | \Gamma_n| =j \Big\} \cap \Big\{| \pa^\om G_n | < [2 \cdot 4^d (&11k)^d (4k)^{d+1} ]^{-1} j \Big\} \cap \cal{E}_{\textsf{web}} \Big) \\ &\leq (3n)^d [c(d)]^{2s } [ c_1 \exp(-c_2 k ) ]^{s / (2 \cdot 4^d) } 
\end{align}

Choose $k$ large enough depending on $p$ and $d$ so that $[c(d)]^2 [ c_1 \exp(-c_2 k ) ]^{1 / 2 \cdot 4^d}  < e^{-1}$, at which point $k$ is fixed. For this $k$, 
\begin{align}
\prob_p \Big( \Big\{ | \Gamma_n| =j \Big\} \cap  \Big\{ | \pa^\om G_n | < [2 \cdot 4^d\eta_4 (4k)^{d+1} ]^{-1} j \Big\} \cap \cal{E}_{\textsf{web}} \Big )\leq (3n)^{d} \exp ( -s) \,,
\end{align}
and we combine this bound with (\ref{eq:finite_per_1}) and ( \ref{eq:finite_per_2}) to obtain
\begin{align}
\prob_p \Big( | \Gamma_n | \geq \gamma n^{d-1} \Big) &\leq  c_1\exp \Big(-c_2 n^{1/(d-1)}\Big) + \sum_{j= \gamma n^{d-1}}^\infty (3n)^{d} \exp(-s) \,, \\
&\leq  c_1\exp \Big(-c_2 n^{1/(d-1)} \Big) + \sum_{j= \gamma n^{d-1}}^\infty (3n)^{d} \exp\left(-j /  [ (11k)^d (4k)^{d+1} ]\right) \,. \end{align}
We choose $\gamma$ sufficiently large depending on $p,d$ and $k = k(p,d)$ to complete the proof. \end{proof}

\begin{rmk}
In the proof of Proposition \ref{finite_per}, the renormalization parameter $k = k(p,d)$ has been fixed. All constructions given in Sections \ref{sec:coarse_original} and \ref{sec:coarse_applied} depended implicitly on this parameter. 
\end{rmk}

\subsection{Properties of $F_n$}

Having exhibited control on $|\Gamma_n|$, we now return to the $F_n$ defined in \eqref{eq:6.2_F}. We asserted that $\Gamma_n$ could be thought of as the boundary of $F_n$; the next proposition justifies this. By Lemma \ref{sub_A}, we can only conclude $F_n \subset [-n -2k, n +2k ]^d$ instead of $F_n \subset [-n,n]^d$. Recall the convention that for $E \subset \R^d$ and $S \subset \rmE(\Z^d)$, we say $E \cap S \neq \emptyset$ if $E$ contains an endpoint vertex of an edge in $S$. 

\begin{prop} Let $d\geq 3$ and $p > p_c(d)$. Define $\ell(n) := \lfloor n^{(1-\e(d))/2d}  \rfloor\,.$ There are positive constants $c_1(p,d)$ and $c_2(p,d)$ so that with probability at least
\begin{align} 
1- c_1 \exp \left(-c_2 n^{(1-\e(d))/2d} \right)  \,,
\end{align}
whenever $F_n$ corresponds to $G_n \in \cal{G}_n$, and whenever $B = [- \ell(n), \ell(n) ]^d +x \,,$ for some $x \in \Z^d$ satisfies $B \cap F_n \neq \emptyset$ and $B \cap F_n \neq B \cap \B{C}_\infty$, then either $B \cap \Gamma_n \neq \emptyset$, or else the three-fold dilate of $B$ around its center intersects $\pa^\om G_n$ non-trivially.
\label{gamma_boundary_of_F}
\end{prop}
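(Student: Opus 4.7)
The plan is to reduce the statement to a local uniqueness-of-cluster estimate for $\B{C}_\infty$ and then exploit the construction of $F_n$. First, I would define a high-probability event $\cal{E}_n$ on which, for every cube $B = x + [-\ell(n),\ell(n)]^d$ with $x \in \Z^d$ such that $B$ meets $[-n-2k,n+2k]^d$, any two vertices of $\B{C}_\infty \cap B$ are joined by an open path contained in the three-fold dilate $3B$. This is a standard supercritical percolation statement (via Pisztora's renormalization, or Antal--Pisztora--style chemical distance estimates): the failure probability for a single such cube decays like $\exp(-c_2 \ell(n))$. Since there are only polynomially many candidate cubes, a union bound yields $\prob_p(\cal{E}_n^c) \leq c_1 \exp(-c_2 \ell(n))$, matching the stated exponent $n^{(1-\e(d))/2d}$.

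Next, work deterministically on $\cal{E}_n$. Fix $G_n \in \cal{G}_n$, let $F_n$ be as in \eqref{eq:6.2_F}, and let $B$ be a cube satisfying the two hypotheses of the proposition. The hypothesis $B \cap F_n \neq \emptyset$ provides $v \in F_n \cap B$, and $B \cap F_n \neq B \cap \B{C}_\infty$, combined with $F_n \subset \B{C}_\infty$, provides $w \in (\B{C}_\infty \setminus F_n) \cap B$. By $\cal{E}_n$, choose an open path $\gamma \subset 3B$ from $v$ to $w$, and let $e = \{u,u'\}$ be the first edge along $\gamma$ whose initial endpoint $u$ lies in $F_n$ and whose terminal endpoint $u'$ does not. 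Both endpoints of $e$ then lie in $3B$.

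It remains to show $e \in \pa^\om G_n$. Since $u \in F_n = G_n \cup \bigcup_j S_j$, either $u \in G_n$ or $u$ lies in some small component $S_j$. In the first case, $u' \notin F_n \supseteq G_n$ and $e$ is open in $\om$ with exactly one endpoint in $G_n$, so $e \in \pa^\om G_n$ directly. In the second case, suppose $u \notin G_n$; then $S_j$ and every $G_n^{(q)}$ are distinct connected components of $\B{C}_\infty$ in the configuration $\om'$ obtained from $\om$ by closing every edge of $\pa^\om G_n$, so $S_j \cap G_n = \emptyset$ and $u' \notin S_j$. Because $S_j$ is a component of $\B{C}_\infty$ in $\om'$, the edge $e$ must be closed in $\om'$; as $\om$ and $\om'$ differ only on $\pa^\om G_n$ and $e$ is open in $\om$, we deduce $e \in \pa^\om G_n$. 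But then $e$ must have an endpoint in $G_n$, forcing $u' \in G_n \subseteq F_n$ and contradicting $u' \notin F_n$. Hence this sub-case is vacuous, $u$ is always in $G_n$, and $e \in \pa^\om G_n$; this establishes $3B \cap \pa^\om G_n \neq \emptyset$.

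The main obstacle in the argument is the calibration of the scale $\ell(n)$: it must be large enough for the local uniqueness-of-cluster estimate to beat the polynomial union bound over candidate cubes, yet small enough that the relation $\ell(n)^d \ll n^{1-\e(d)}$ (separating ``small'' from ``large'' components in Section \ref{sec:coarse_applied}) persists. The choice $\ell(n) = \lfloor n^{(1-\e(d))/2d} \rfloor$ balances these constraints, and the stated error exponent is a direct reflection of this trade-off. The rest of the proof is a short deterministic case analysis built on two structural facts: any open path leaving $G_n$ uses an edge of $\pa^\om G_n$, and any open edge in $\om$ exiting an $\om'$-component of $\B{C}_\infty$ must lie in $\pa^\om G_n$.
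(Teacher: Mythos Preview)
Your argument is correct and in fact proves a slightly stronger conclusion: on your event $\cal{E}_n$, the second alternative ($3B \cap \pa^\om G_n \neq \emptyset$) always holds, so the disjunction with $B \cap \Gamma_n \neq \emptyset$ is never invoked. The paper's route is different. It splits into two cases according to whether some $y \in (\B{C}_\infty \setminus F_n) \cap B$ can reach $\infty$ by a $\Z^d$-path avoiding $\Gamma_n$. If so, the cutset property of $\Gamma_n$ forces $B \cap \Gamma_n \neq \emptyset$; if not, every such $y$ lies in a large component $L_i$, and the paper argues that both $y$ and a chosen $z \in F_n \cap B$ send open paths to $\pa(3B)$ that are not joined inside $3B$ (this step needs Lemma~\ref{opt_macro} to ensure each $G_n^{(q)}$ is too large to fit in $3B$, and the ``large'' threshold to ensure $L_i$ is too). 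That yields the Type-II property for $3B$, and Proposition~\ref{rare_type} finishes. Your local-uniqueness event and the paper's Type-II event are essentially complementary, so the probabilistic input is the same, but your deterministic step is shorter: it needs neither the structure of $\Gamma_n$, nor the large components, nor the auxiliary event of Lemma~\ref{opt_macro}.

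One small note: your closing paragraph says the upper constraint $\ell(n)^d \ll n^{1-\e(d)}$ is needed. In your argument it is not---only the lower requirement (so that $\exp(-c\,\ell(n))$ beats the polynomial union bound) is used. The upper constraint is exactly what the \emph{paper} exploits, to guarantee that a large component $L_i$ cannot fit inside $3B$; since you bypass that step, any $\ell(n)\to\infty$ would suffice for your proof of this particular proposition.
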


\begin{proof} Fix $G_n \in \cal{G}_n$, and consider $F_n$ corresponding to $G_n$. Let $B$ be as in the statement of the proposition, so that $B \cap F_n \neq \emptyset$ and $B \cap F_n \neq B \cap \B{C}_\infty$. Write $B = [-  \ell(n) ,  \ell(n)]^d + x$ for $x \in \Z^d$, and define
\begin{align}
B_3 &:= [-  3\ell(n),  3\ell(n) ]^d + x \,.
\end{align}

Suppose $B$ contains $y \in \B{C}_\infty \setminus F_n$ connected to $\infty$ by a (not necessarily open) $\Z^d$-path $\gamma'$ using no edges of $\Gamma_n$. As $B \cap F_n \neq \emptyset$, $B$ contains some vertex $z$ which is either a member of some $G_n^{(q)}$ or some small component $S_j$. If $\gamma$ is a path from $z$ to $y$ in $B$, $\gamma$ must use an edge of $\Gamma_n$, otherwise $\Gamma_n$ would not surround some $G_n^{(q)}$ or some $S_j$.

Thus we may suppose every vertex $y \in ( \B{C}_\infty \setminus F_n ) \cap B$ is surrounded by one of the cutsets $\Gamma_n^{(q)}$ or $\wh{\Gamma}_n^{(i)}$. Any $y$ with this property lies in some large component $L_i$. Choose $z \in F_n \cap B$, and suppose $B_3 \cap \pa^\om G_n = \emptyset$, so there is no open path from $z$ to $y$ in $B_3$. 

Work within the high probability event from Lemma \ref{opt_macro} that for each $G_n \in \cal{G}_n$, every connected component of $G_n$ satisfies $| G_n^{(q)} | \geq \eta_1 n^d$. For all $n$ sufficiently large, the connected component of $F_n$ containing $z$ is not contained within $B_3$. Likewise, by the largeness of each $L_i$, and due to our choice of $\ell(n)$, no $L_i$ is contained in~$B_3$. 

Thus there is an open path from $z$ to $\pa_* (B_3 \cap \Z^d)$ and an open path from $y$ to $\pa_* (B_3 \cap \Z^d)$, and these paths are not joined by any open path in $B_3$. We have shown that, if $B$ does not contain an edge of $\Gamma_n$, and if $B_3 \cap \pa^\om G_n = \emptyset$, then $B_3$ has the Type-II property (see Definition \ref{def:typeII}). Use Proposition \ref{rare_type} with a union bound (taken over all such $B$ centered at $x \in \Z^d \cap [-n -2k, n +2k ]^d$) to see that with probability at most
\begin{align}
(3n)^d c_1 \exp \Big ( -c_2 n^{(1-\e(n))/ 2d} \Big) \,,
\end{align}
there is a cube $B$ of this form with the Type-II property. Combine this with the bounds of Lemma~\ref{opt_macro} to complete the proof. \end{proof}

To prepare for the contiguity argument in the next section, we demonstrate that when $F_n$ and $G_n$ are encoded as measures, they roughly agree on Borel sets. In Section \ref{sec:notation}, from each $G_n \in \cal{G}_n$ we built the empirical measure $\mu_n \in \cal{M}( [-1,1]^d)$ defined in \eqref{eq:2.3_empirical}. For each $F_n$ associated to $G_n \in \cal{G}_n$, define the \emph{empirical measure} $\wt{\mu}_n$ associated to $F_n$ similarly:
\begin{align}
\wt{\mu}_n := \frac{1}{n^d} \sum_{x\, \in\, \rmV(F_n)} \delta_{x /n } \,.
\label{eq:6.3_empirical}
\end{align}
Note that $\wt{\mu}_n$ is a signed Borel measure on $[-1-2k/n,1+2k/n]^d$. 

\begin{lem} Let $d\geq 3$ and let $p > p_c(d)$. There are positive constants $c_1(p,d)$, $c_2(p,d)$ and $\eta_3(p,d)$ so that  for each Borel $K \subset [-1-2k/n,1+2k/n]^d$,
\begin{align}
\prob \left( \max_{G_n \in \cal{G}_n} | \mu_n(K) - \wt{\mu}_n(K) | >  \eta_3 n^{-\e(d)} \right) \leq c_1 \exp \left(-c_2 n^{(d-1)/2d} \right) \,.
\end{align}
\label{adding_k_small}
\end{lem}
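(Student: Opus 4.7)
The claim reduces cleanly to a volume estimate for $F_n \setminus G_n$, and this estimate in turn follows from a surprisingly simple counting argument once one observes that the $\om$-open boundaries of the small components $S_j$ are disjoint subsets of $\pa^\om G_n$.

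\emph{Step 1 (Reduction to total volume).} Since $F_n \supset G_n$ by construction, the signed measure $\wt\mu_n - \mu_n$ is non-negative with total mass $n^{-d}|F_n \setminus G_n| = n^{-d}\sum_{j=1}^t |S_j|$. Hence for every Borel $K \subset [-1-2k/n,1+2k/n]^d$,
\begin{align*}
|\mu_n(K) - \wt\mu_n(K)| \;=\; (\wt\mu_n - \mu_n)(K) \;\leq\; n^{-d}\sum_{j=1}^{t}|S_j|,
\end{align*}
uniformly in $K$. The problem thus reduces to a uniform high-probability bound on $\sum_j|S_j|$ over $G_n \in \cal{G}_n$.

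\emph{Step 2 (Structure of $\pa^\om S_j$).} The key observation is that each $S_j$ is a maximal connected component of $\B{C}_\infty$ in the configuration $\om'$ obtained by closing all edges of $\pa^\om G_n$. Pick any $e \in \pa^\om S_j$: by definition $e$ is $\om$-open with exactly one endpoint in $S_j$, so if $e$ were also $\om'$-open it would extend $S_j$, contradicting maximality. Therefore $e$ is $\om'$-closed, which forces $e \in \pa^\om G_n$. Because $e \in \pa^\om G_n$ has exactly one endpoint in $G_n$ and $S_j \cap G_n = \emptyset$, the endpoint of $e$ not in $S_j$ must lie in $G_n$, and hence not in any other component $S_k$. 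Consequently the sets $\{\pa^\om S_j\}_{j=1}^t$ are pairwise disjoint subsets of $\pa^\om G_n$, yielding
\begin{align*}
t \;\leq\; \sum_{j=1}^{t} |\pa^\om S_j| \;\leq\; |\pa^\om G_n|,
\end{align*}
where the first inequality uses that $|\pa^\om S_j| \geq 1$: since $\B{C}_\infty$ is $\om$-connected and $S_j \subsetneq \B{C}_\infty$ is finite, some $\om$-open path must exit $S_j$, contributing an edge to $\pa^\om S_j$.

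\emph{Step 3 (Combining the bounds).} By Lemma~\ref{surface_boundary}, there is $\eta_3'(p,d) > 0$ and positive constants $c_1,c_2$ such that the event $\cal{E}_n := \{ \max_{G_n \in \cal{G}_n} |\pa^\om G_n| \leq \eta_3' n^{d-1}\}$ has probability at least $1 - c_1\exp(-c_2 n^{(d-1)/2d})$. On $\cal{E}_n$, using the defining bound $|S_j| \leq n^{1-\e(d)}$ and Step~2,
\begin{align*}
\sum_{j=1}^{t} |S_j| \;\leq\; n^{1-\e(d)} \cdot t \;\leq\; n^{1-\e(d)} \cdot \eta_3' n^{d-1} \;=\; \eta_3' n^{d-\e(d)}.
\end{align*}
Dividing by $n^d$ gives $n^{-d}\sum_j|S_j| \leq \eta_3' n^{-\e(d)}$, and setting $\eta_3 := \eta_3'$ completes the proof.

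The main subtlety is Step~2: correctly exploiting the two different percolation configurations ($\om$ vs.\ $\om'$) so that the $\om'$-maximality of each $S_j$ translates into $\om$-open boundary edges that must belong to $\pa^\om G_n$. Once that containment is in hand, no isoperimetric input on $\B{C}_\infty$ is needed, and the arithmetic of Step~3 is slack enough to absorb the relatively weak failure rate $c_1\exp(-c_2 n^{(d-1)/2d})$ coming from Lemma~\ref{surface_boundary}.
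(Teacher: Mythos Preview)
Your proof is correct and follows essentially the same approach as the paper: reduce to bounding $\sum_j|S_j|$, observe that the open boundaries $\pa^\om S_j$ are pairwise disjoint and contained in $\pa^\om G_n$ (the paper states the slightly weaker ``intersects $\pa^\om G_n$'' together with disjointness, which suffices), apply Lemma~\ref{surface_boundary} to bound $t$, and multiply by the size cap $n^{1-\e(d)}$. Your Step~2 is in fact more carefully justified than the paper's terse one-line assertion.
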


\begin{proof} Work within the high probability event from Lemma \ref{surface_boundary} $| \pa^\om G_n| \leq \eta_3 n^{d-1}$ for all $G_n \in \cal{G}_n$. For each small component $S_j$, the edge set $\pa^\om S_j$ intersects $\pa^\om G_n$. The $\pa^\om S_j$ are pairwise disjoint, thus the number $t$ of small components $S_j$ is at most $\eta_3 n^{d-1}$. From the definition of a small component, 
\begin{align} 
|F_n \setminus G_n | = \sum_{j=1}^t |S_j | \leq \eta_3 n^{d-\e(d)}
\end{align}
The lemma follows from the definitions of empirical measures for $G_n$ \eqref{eq:2.3_empirical} and $F_n$ \eqref{eq:6.3_empirical}. \end{proof}




{\large\section{\B{Contiguity}}\label{sec:contiguity}}

We now pass from each $F_n$ \eqref{eq:6.2_F} to a continuum object through another coarse graining procedure. The emperical measures $\wt{\mu}_n$ \eqref{eq:6.3_empirical} associated to each $F_n$ become \emph{flattened} or homogenized into measures representing sets of finite perimeter. \\

\begin{rmk} We rely heavily on the notation introduced in the previous section, in particular, we reference the large components $\{ L_i\}_{i=1}^m$ defined in Section \ref{sec:webbing} for a given $G_n \in \cal{G}_n$. We no longer use $k$-cubes, and the renormalization parameter $k$ of the last section will not come up except to say that the empirical measures $\wt{\mu}_n$ are elements of $\cal{M}( [-1 -2k/n, 1+ 2k/n]^d)$. The parameter $k$ has itself been fixed since the proof of Proposition \ref{finite_per}. 
\label{rmk:7_fixed}
\end{rmk}

Let us build the relevant continuum objects. Given a finite collection of edges $S$, define 
\begin{align}
\hull(S) : = \Big\{ x \in \Z^d : \text{any $\Z^d$-path from $x$ to $\infty$ must use an edge of $S$} \Big\} \,,
\end{align}
and recall that for $x \in \Z^d$, the unit dual cube $Q(x)$ is defined as $[-1/2,1/2]^d + x$. For fixed $G_n \in \cal{G}_n$, enumerate the connected components of $G_n$ as $ \{G_n^{(q)}\}_{q=1}^M$ and for each $q \in \{ 1, \dots, M\}$, define 
\begin{align}
A_q := \Big\{ i \in \{1, \dots, m\} : L_i \text{ is surrounded by } \Gamma_n^{(q)} \Big \} \,.
\end{align}
For each $q \in \{1, \dots, M\}$, define $H_n^{(q)}$ as the following \emph{collection of vertices}
\begin{align}
H_n^{(q)} := \hull \left( \Gamma_n^{(q)} \right) \setminus \left( \bigcup_{i\, \in\, A_q} \hull\left(\, \wh{\Gamma}_n^{(i)} \right) \right) \,,
\label{eq:7_hull}
\end{align}
and let $H_n = \bigcup_{q=1}^M H_n^{(q)}$. Define the polytope $P_n$ from $H_n$ via
\begin{align}
P_n = \left( \bigcup_{x\, \in\, H_n} n^{-1} Q(x) \right) \cap [-1,1]^d \,.
\label{eq:7_polytope}
\end{align}
Finally, form the measure $\nu_n = \nu_n(\om, G_n) \in \cal{M}( [-1,1]^d)$ representing $P_n$ in the sense of Section~\ref{sec:notation_3}: for $E \subset [-1,1]^d$ Borel, 
\begin{align}
\nu_n(E) := \theta_p(d) \cal{L}^d( E \cap P_n) \,,
\label{eq:7_nu}
\end{align}
The goal of this section is to show for each $G_n$, the measures $\mu_n$, $\wt{\mu}_n$ and $\nu_n$ are all close in the metric $\dw$ defined in \eqref{eq:2_metric}. 

\subsection{Contour and perimeter control} We thought of the edge sets $\Gamma_n$, defined in \eqref{eq:6.2_gamma}, as collections of contours, as in Figure \ref{fig:6.2_contours}. To prove $\dw$-closeness of $\wt{\mu}_n$ and $\nu_n$, we first examine how such contours interact with one another and rule out pathological configurations. 

For a fixed $G_n$, consider the large components $L_i$: each is surrounded by some cutset $\Gamma_n^{(q)}$. A large component $L_i$ is \emph{bad} if it is surrounded by $\Gamma_n^{(q)}$, with the corresponding connected component $G_n^{(q)}$ of $G_n$ surrounded by $\wh{\Gamma}_n^{(i)}$. If $L_i$ is bad, subtracting the hull of $\wh{\Gamma}_n^{(i)}$ in \eqref{eq:7_hull} from the hull of $\Gamma_n^{(q)}$ removes $G_n^{(q)}$ itself, and we cannot expect $\nu_n$ and $\wt{\mu}_n$ to be close. 

\begin{lem} 
For each $G_n \in \cal{G}_n$, it is impossible for any associated large component to be bad, in the sense just defined. 
\label{no_bad_large}
\end{lem}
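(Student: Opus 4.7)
The plan is to assume for contradiction that some $L_i$ is bad and to extract an incompatibility by tracking an infinite open $\Z^d$-path out of $G_n^{(q)}$ in the original configuration $\om$.

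The key observation is that $\wh{\Gamma}_n^{(i)}$ was constructed by Zhang's procedure in the configuration $\om_i'$ obtained from $\om$ by closing every open edge of $\pa^\om L_i$, and that $\wh{\Gamma}_n^{(i)}$ is a closed cutset in $\om_i'$. Since $\om_i'$ differs from $\om$ only on $\pa^\om L_i$, any edge of $\wh{\Gamma}_n^{(i)}$ that is open in $\om$ must belong to $\pa^\om L_i$, and in particular has exactly one endpoint in $L_i$. The identical reasoning applied to $\Gamma_n^{(q)}$ and $\om_q'$ shows that every edge of $\Gamma_n^{(q)}$ open in $\om$ is incident to a vertex of $G_n^{(q)}$.

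Combined with the badness hypothesis, which reads $L_i \subset \hull(\Gamma_n^{(q)})$ and $G_n^{(q)} \subset \hull(\wh{\Gamma}_n^{(i)})$, this yields: \emph{(i)} every open $\Z^d$-path in $\om$ from $G_n^{(q)}$ to $\infty$ visits a vertex of $L_i$; and \emph{(ii)} every open $\Z^d$-path in $\om$ from $L_i$ to $\infty$ visits a vertex of $G_n^{(q)}$. Indeed, such a path must cross the relevant cutset by the hull containment, and the edge it uses must be one that is open in $\om$, so by the previous paragraph it is incident to the relevant vertex set.

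With \emph{(i)} and \emph{(ii)} in hand, I would extract the contradiction by an alternating-visit argument. Fix a simple open $\Z^d$-path $\gamma$ from some $g \in G_n^{(q)}$ to $\infty$, which exists since $G_n^{(q)} \subset \B{C}_\infty$. By \emph{(i)}, $\gamma$ first visits $L_i$ at some vertex $v_1$; the tail of $\gamma$ from $v_1$ is itself a simple open infinite path out of $L_i$, so by \emph{(ii)} it visits $G_n^{(q)}$ at some $w_1$. Iterating, the successive visits $v_1, v_2, \ldots$ of $\gamma$ to $L_i$ are pairwise distinct by simplicity of $\gamma$ and all lie in $L_i$, which is finite because $L_i \subset \hull(\Gamma_n^{(q)})$ and the latter is finite (the complement of the infinite $\Z^d$-component of a finite cutset). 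This contradicts $|L_i| < \infty$.

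The main obstacle is correctly handling the interplay between the two auxiliary configurations $\om_q'$ and $\om_i'$ so as to translate the purely combinatorial badness hypothesis into the path-visit statements \emph{(i)} and \emph{(ii)}; once these are established, the alternating-visit contradiction is immediate from simplicity of $\gamma$ and finiteness of $L_i$.
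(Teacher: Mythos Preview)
Your proof is correct and shares its core with the paper's: both rest on the observation that any edge of $\wh{\Gamma}_n^{(i)}$ open in $\om$ must lie in $\pa^\om L_i$ (and analogously for $\Gamma_n^{(q)}$ and $\pa^\om G_n^{(q)}$), then follow an open path from $G_n^{(q)}$ to $\infty$ to force a contradiction. The paper's execution differs in two minor ways. First, for the direction ``open paths out of $L_i$ meet $G_n^{(q)}$'' it works with $\pa_o L_i$ rather than $\Gamma_n^{(q)}$, deriving the containment $\pa_o L_i \cap \pa^\om L_i \subset \pa^\om G_n^{(q)}$ from the fact that $L_i$ is a connected component in $\om_q'$. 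Second, it avoids your alternating-visit iteration by first passing to the tail of $\gamma$ after its \emph{last} visit to $G_n^{(q)}$; then one application of your claim~\emph{(i)} produces a vertex of $L_i$ on this tail, and the further tail from there is an open path from $L_i$ to $\infty$ that avoids $G_n^{(q)}$ entirely, contradicting your claim~\emph{(ii)} in a single step. Your symmetric use of the two cutsets and the pigeonhole finish against $|L_i|<\infty$ are equally valid and arguably tidier.
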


\begin{proof} Fix $G_n \in \cal{G}_n$, and suppose $L_i$ is a bad large component associated to $G_n$. Recall that $\om_q'$ is the configuration obtained from $\om$ by closing each open edge in $\pa^\om G_n^{(q)}$. As $L_i$ is surrounded by $\Gamma_n^{(q)}$, it follows that $\pa_o L_i$ is a closed cutset separating $L_i$ from $\infty$ in $\om_q'$ (one must remember that the large components were defined as the connected components of the infinite cluster after passing to $\om_q'$). Back in $\om$, it follows that 
\begin{align}
\pa_o L_i \cap \pa^\om L_i \subset \pa^\om G_n^{(q)} \,.
\label{eq:7_no_bad}
\end{align}

Let us see how this gives rise to a contradiction: let $y \in G_n^{(q)}$. Working in the original configuration $\om$, consider a simple path $\gamma$ from $y$ to $\infty$ within $\B{C}_\infty$, so that $\gamma$ uses only open edges. We may assume that $y$ is the only vertex of $G_n^{(q)}$ used by $\gamma$, as $\gamma$ must eventually leave $G_n^{(q)}$ and not return. Because $G_n^{(q)}$ is surrounded by $\wh{\Gamma}_n^{(i)}$, $\gamma$ must use an open edge $e$ in $\wh{\Gamma}_n^{(i)}$. As $\wh{\Gamma}_n^{(i)}$ is a closed cutset in the configuration $\om_i'$, this open edge $e$ must lie in $\pa^\om L_i$. We have shown $\gamma$ uses a vertex of $L_i$, and thus $\gamma$ contains an open path from $L_i$ to $\infty$ using no vertices of $G_n^{(q)}$,  contradicting \eqref{eq:7_no_bad}. \end{proof}

We extract another useful observation from the proof of Lemma \ref{no_bad_large}.

\begin{lem} Let $G_n \in \cal{G}_n$. Each large component $L_i$ corresponding to $G_n$ is surrounded by exactly one cutset $\Gamma_n^{(q)}$ associated to a connected component $G_n^{(q)}$ of $G_n$.
\label{case2}
\end{lem}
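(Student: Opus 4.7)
The plan is to mimic the argument of Lemma \ref{no_bad_large} and derive a contradiction from the assumption that some large component $L_i$ is surrounded by two distinct cutsets $\Gamma_n^{(q)}$ and $\Gamma_n^{(q')}$. Existence of at least one surrounding cutset is immediate from how $L_i$ was enumerated in Section \ref{sec:webbing}, so only uniqueness requires work.

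The first step is to extract from the proof of Lemma \ref{no_bad_large} the following observation: whenever $L_i$ is surrounded by $\Gamma_n^{(q)}$, one has
\begin{align}
\pa_o L_i \cap \pa^\om L_i \subset \pa^\om G_n^{(q)} \,.
\end{align}
Applying this to both $q$ and $q'$ gives $\pa_o L_i \cap \pa^\om L_i \subset \pa^\om G_n^{(q)} \cap \pa^\om G_n^{(q')}$. I then plan to show this intersection is empty. An edge $e$ in it would be open in $\om$, with exactly one endpoint in each of $G_n^{(q)}$ and $G_n^{(q')}$ (as these vertex sets are disjoint). But by Remark \ref{rem:barry_white}, $G_n$ is determined by its vertex set, so $e \in \rmE(G_n)$, forcing its two endpoints into the same connected component of $G_n$ and contradicting $q \neq q'$. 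Hence $\pa_o L_i \cap \pa^\om L_i = \emptyset$.

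To close the argument I would exhibit an element of $\pa_o L_i \cap \pa^\om L_i$, contradicting the previous step. Pick any $y \in L_i$; since $y \in \B{C}_\infty$, there exists an open (in $\om$) path $\gamma$ from $y$ to $\infty$. As $L_i$ is finite, $\gamma$ has a last vertex $y' \in L_i$, and the edge $e$ of $\gamma$ succeeding $y'$ has $y'$ as one endpoint and some $z \notin L_i$ as the other. By the choice of $y'$, the tail of $\gamma$ from $z$ furnishes a $\Z^d$-path to $\infty$ avoiding $L_i$, so $e \in \pa_o L_i$, and $e$ is open with an endpoint in $L_i$, so $e \in \pa^\om L_i$. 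Thus $e \in \pa_o L_i \cap \pa^\om L_i$, yielding the contradiction. The main obstacle is the initial extraction from Lemma \ref{no_bad_large}; once the key inclusion is in hand, the remainder of the proof is a short piece of combinatorics resting on Remark \ref{rem:barry_white} and the finiteness of $L_i$.
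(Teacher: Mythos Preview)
Your proposal is correct and follows essentially the same route as the paper: both extract the inclusion $\pa_o L_i \cap \pa^\om L_i \subset \pa^\om G_n^{(q)}$ from the proof of Lemma~\ref{no_bad_large}, apply it for $q$ and $q'$, and derive a contradiction from the nonemptiness of $\pa_o L_i \cap \pa^\om L_i$ (which holds since $L_i \subset \B{C}_\infty$) together with the disjointness of the open edge boundaries of distinct components of $G_n$. Your version simply spells out the last two points in more detail than the paper does.
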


\begin{proof} Fix $G_n \in \cal{G}_n$. Each large component $L_i$ associated to $G_n$ is surrounded by at least one of the cutsets $\Gamma_n^{(q)}$. Suppose $L_i$ is surrounded by $\Gamma_n^{(q)}$ and by $\Gamma_n^{(q')}$ for $q \neq q'$. Appealing to \eqref{eq:7_no_bad} in the proof of Lemma \ref{no_bad_large}, we find $\pa_oL_i \cap \pa^\om L_i \subset \pa^\om G_n^{(q)}$ and $\pa_oL_i \cap \pa^\om L_i \subset \pa^\om G_n^{(q')}$. As $L_i \subset \B{C}_\infty$, the edge set $\pa_o L_i \cap \pa^\om L_i$ is non-empty, thus $\pa^\om G_n^{(q)} \cap \pa^\om G_n^{(q')}$ is non-empty. But this is impossible, as distinct connected components of $G_n$ must have disjoint open edge boundary. \end{proof}

We use Lemma \ref{no_bad_large} to relate $F_n$ and $H_n$.

\begin{lem} For each $G_n \in \cal{G}_n$, the vertices of $F_n$ are contained in $H_n$. Moreover, the vertex set of $F_n$ is identically $H_n \cap \textbf{\rm \textbf{C}}_\infty$. 
\label{case1}
\end{lem}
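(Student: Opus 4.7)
The plan is to prove both inclusions $F_n \subset H_n$ and $H_n \cap \B{C}_\infty \subset F_n$ by classifying each vertex $x$ according to its connected component in $\B{C}_\infty$ under the modified configuration $\om'$ in which $\pa^\om G_n$ is closed; the components of $\B{C}_\infty(\om')$ are exactly the $G_n^{(q)}$, the small components $S_j$, and the large components $L_i$, and the first two families together constitute $F_n$.

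The main tool is a transport observation: for any such component $C$, every edge of $\Z^d$ with both endpoints in $C$ is open in $\om$ and lies in no $\Gamma_n^{(q)}$ and no $\wh{\Gamma}_n^{(i)}$. Indeed, an edge of $\Gamma_n^{(q)}$ is closed in $\om_q'$, hence either closed in $\om$ or lies in $\pa^\om G_n^{(q)}$; an internal edge of $C$ violates both alternatives because the components of $\B{C}_\infty(\om')$ are pairwise disjoint, so $C \cap G_n^{(q)} = \emptyset$ when $C \neq G_n^{(q)}$ (and if $C = G_n^{(q)}$ then both endpoints lie in $G_n^{(q)}$, so the edge is not in $\pa^\om G_n^{(q)}$). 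The identical argument with $\pa^\om L_i$ and $\om_i'$ in place of $\pa^\om G_n^{(q)}$ and $\om_q'$ handles $\wh{\Gamma}_n^{(i)}$, since each $C$ is disjoint from each $L_i$. Consequently, if a single vertex of $C$ lies in $\hull(S)$ for a cutset $S$ of the form $\Gamma_n^{(q)}$ or $\wh{\Gamma}_n^{(i)}$, then all of $C$ does: concatenate an internal open path in $C$ with any $\Z^d$-path from the escaping vertex to $\infty$ that avoids $S$. In particular, each $C$ is surrounded by at most one $\Gamma_n^{(q)}$, generalizing Lemma \ref{case2}.

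The reverse inclusion follows immediately. Take $x \in H_n^{(q)} \cap \B{C}_\infty$ lying in component $C$. If $C$ is a $G_n^{(q')}$ or an $S_j$, then $x \in F_n$ by definition. If $C = L_i$, transport together with $x \in \hull(\Gamma_n^{(q)})$ force $L_i \subset \hull(\Gamma_n^{(q)})$, so $i \in A_q$; but then $L_i \subset \hull(\wh{\Gamma}_n^{(i)})$ by construction of $\wh{\Gamma}_n^{(i)}$, contradicting $x \notin \hull(\wh{\Gamma}_n^{(i)})$.

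For the forward inclusion, let $x \in F_n$ lie in component $C$ (a $G_n^{(q)}$ or $S_j$), and let $q$ be the unique index with $C \subset \hull(\Gamma_n^{(q)})$; the first defining property of $H_n^{(q)}$ is automatic. Checking that $x \notin \hull(\wh{\Gamma}_n^{(i)})$ for each $i \in A_q$ reduces, via transport, to ruling out $C \subset \hull(\wh{\Gamma}_n^{(i)})$. When $C = G_n^{(q)}$ this is precisely the content of Lemma \ref{no_bad_large}. The main obstacle is the case $C = S_j$, where one needs a small-component analog of Lemma \ref{no_bad_large}. I expect this to follow by noting that $S_j \subset \hull(\wh{\Gamma}_n^{(i)})$ forces every open $\om$-path from $S_j$ to $\infty$ to traverse $\pa^\om L_i$ (since any $\wh{\Gamma}_n^{(i)}$-edge open in $\om$ must lie in $\pa^\om L_i$); extracting the tail of such a path after its last visit to $L_i$ then yields an open $\om$-path from $L_i$ to $\infty$ whose only crossing of $\wh{\Gamma}_n^{(i)}$ can be localized to its initial $\pa^\om L_i$-edge, which, via Zhang's construction and the live/almost-live pond structure, contradicts $\wh{\Gamma}_n^{(i)}$ being a cutset for $L_i$ in $\om_i'$. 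Making this contradiction rigorous is the delicate step of the proof.
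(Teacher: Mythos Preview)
Your transport observation needs a small correction: not every $\Z^d$-edge with both endpoints in $C$ is open (closed edges can join two vertices of the same open cluster), but the $\om'$-open edges along any path in $C$ do avoid every $\Gamma_n^{(q)}$ and $\wh{\Gamma}_n^{(i)}$, and that is all you need. With this fix, your reverse inclusion and the case $C = G_n^{(q)}$ of the forward inclusion are correct.

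The genuine gap is the case $C = S_j$. Your sketch produces an open $\om$-path from $L_i$ to $\infty$ whose only edge in $\wh{\Gamma}_n^{(i)}$ is its initial $\pa^\om L_i$-edge, and then asserts a contradiction ``via Zhang's construction and the live/almost-live pond structure.'' There is none: $\wh{\Gamma}_n^{(i)}$ is a closed cutset in $\om_i'$, edges of $\pa^\om L_i$ are closed in $\om_i'$ and hence perfectly eligible to lie in $\wh{\Gamma}_n^{(i)}$, and your tail path crossing $\wh{\Gamma}_n^{(i)}$ exactly once is precisely what the cutset property demands. Nothing in the pond machinery forbids this configuration, so no contradiction materializes.

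The paper sidesteps a direct $S_j$-argument entirely. The idea you are missing is to run transport for $\wh{\Gamma}_n^{(i)}$ in the configuration $\om_i'$ rather than $\om'$. Any open $\om$-path lying inside $F_n$ remains open in $\om_i'$, because an edge of $\pa^\om L_i$ has an endpoint in $L_i$, which is disjoint from $F_n$. Since every $S_j$ is joined to some $G_n^{(q')}$ by an edge of $\pa^\om G_n$, the vertex $y \in S_j$ lies in the $F_n$-component $F_n^{(q)}$ of $G_n^{(q)}$ (one checks $q' = q$, the unique index with $S_j \subset \hull(\Gamma_n^{(q)})$). Thus $G_n^{(q)}$ sits in the $\om_i'$-cluster $\Lambda$ of $y$; since $y$ is surrounded by $\wh{\Gamma}_n^{(i)}$ and $\wh{\Gamma}_n^{(i)}$ is closed in $\om_i'$, all of $\Lambda$ and in particular $G_n^{(q)}$ is surrounded by $\wh{\Gamma}_n^{(i)}$. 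Now Lemma~\ref{no_bad_large} applies directly. The moral: do not look for an $S_j$-analogue of that lemma; tie $S_j$ back to $G_n^{(q)}$ through $F_n$-connectivity in $\om_i'$ and invoke the lemma itself.
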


\begin{proof} Fix $G_n \in \cal{G}_n$. We begin with \emph{Claim (1)}: if $y \in F_n$ and $y \notin H_n$, there are $L_i$ and $G_n^{(q)}$ so that $y$ is surrounded by $\Gamma_n^{(q)}$ and $\wh{\Gamma}_n^{(i)}$, with $L_i$ itself surrounded by $\Gamma_n^{(q)}$. \emph{Claim (1)} follows directly from the definition of $H_n$: as $y \in F_n$, we have $y \in \hull(\Gamma_n^{(q)})$ for some $q$. From the definition of $H_n$, $y \notin H_n$ implies $y$ is surrounded by some $\wh{\Gamma}_n^{(i)}$ for $i \in A_q$, where we recall that $A_q$ indexes the large components $L_i$ which are surrounded by $\Gamma_n^{(q)}$.  

Suppose for the sake of contradiction there is $y \in F_n \setminus H_n$, and consider $L_i$ and $G_n^{(q)}$ given by \emph{Claim (1)}. Pass to the configuration $\om_i'$ in which each edge of $\pa^\om L_i$ is closed. In this configuration, $\wh{\Gamma}_n^{(i)}$ consists only of closed edges. Let $\Lambda$ be the open cluster containing $y$ in the configuration $\om_i'$, so that $\Lambda$ is surrounded by $\wh{\Gamma}_n^{(i)}$. 

\emph{Claim (2)} is that $\Lambda$ contains $F_n^{(q)}$, defined to be the connected component of $F_n$ containing $G_n^{(q)}$. Let $z \in F_n^{(q)}$ and suppose for the sake of contradiction that $z \notin \Lambda$. Let $\gamma$ be a path from $y$ to $z$ within $F_n^{(q)}$, so that in $\om$, the path $\gamma$ uses only open edges. From the assumption $z \notin \Lambda$, if we pass to $\om_i'$, we see $\gamma$ uses a closed edge $e$, which is necessarily an element of $\pa^\om L_i$ back in $\om$. 

As $\gamma$ is a path in $F_n^{(q)}$, it joins two vertices which are either in $G_n$ or in one of the small components $S_j$. But $e \in \pa^\om L_i$, so an endpoint of $e$ must also lie in $L_i$. It is impossible for $e$ to satisfy all these requirements. Thus, \emph{Claim (2)} holds, and consequently $G_n^{(q)} \subset \Lambda$. In particular, $G_n^{(q)}$ is surrounded by $\wh{\Gamma}_n^{(i)}$, which implies through \emph{Claim (1)} that $L_i$ is bad. We apply Lemma \ref{no_bad_large} to conclude the vertex set of $F_n$ is contained in $H_n$. 

Thus, $F_n \subset H_n \cap \B{C}_\infty$, and to complete the proof it remains to show the opposite containment. This is immediate from the construction of $H_n$. Indeed, suppose $x \in H_n \cap \B{C}_\infty$. Then $x$ is surrounded by some $\Gamma_n^{(q)}$, and hence $x$ is either in $G_n^{(q)}$ for some $q$, or $x$ is an element of one of the large or small components ($L_i$ or $S_j$) associated to $G_n$. It is impossible for $x$ to lie in any $L_i$, as these components were excised in the construction \eqref{eq:7_hull} of $H_n$.  \end{proof}

In the last result of this subsection, we use Proposition \ref{finite_per} to bound the perimeter of each $P_n$. 

\begin{coro} Let $d\geq 3$ and $p > p_c(d)$. There are positive constants $c_1(p,d), c_2(p,d)$ and $\gamma(p,d)$ so that 
\begin{align}
\prob_p\left( \max_{G_n \in \cal{G}_n} \per( nP_n ) \geq \gamma n^{d-1} \right) \leq c_1 \exp\left(-c_2 n^{1/(d-1)} \right)  \,.
\end{align}
\label{finite_per_2}
\end{coro}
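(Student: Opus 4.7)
The plan is to express the perimeter of $nP_n$ as a sum of unit $(d-1)$-faces, each of which either lies on $\pa [-n,n]^d$ or is dual to an edge of $\Z^d$ with one endpoint in $H_n$ and one endpoint outside $H_n$. The first contribution is $O(n^{d-1})$ for purely geometric reasons, and the second I will show is bounded by $|\Gamma_n|$. Given that identification, the corollary is an immediate consequence of Proposition \ref{finite_per}.

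The key geometric step is to prove the containment
\begin{equation}
\pa H_n := \big\{ e = \{x,y\} \in \rmE(\Z^d) : x \in H_n,\, y \notin H_n \big\} \;\subset\; \Gamma_n \,.
\end{equation}
Suppose $e = \{x,y\}$ with $x \in H_n$ and $y \notin H_n$. By definition of $H_n$ via \eqref{eq:7_hull}, there is some $q$ so that $x \in \hull(\Gamma_n^{(q)})$ but $x \notin \hull(\wh{\Gamma}_n^{(i)})$ for all $i \in A_q$. The condition $y \notin H_n$, specialized to this particular $q$, forces one of two cases. In the first case, $y \notin \hull(\Gamma_n^{(q)})$; then from the definition of $\hull$, if $e \notin \Gamma_n^{(q)}$ we could prepend $e$ to any $\Gamma_n^{(q)}$-avoiding path from $y$ to $\infty$, contradicting $x \in \hull(\Gamma_n^{(q)})$, so $e \in \Gamma_n^{(q)} \subset \Gamma_n$. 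In the second case, $y \in \hull(\wh{\Gamma}_n^{(i)})$ for some $i \in A_q$, while $x \notin \hull(\wh{\Gamma}_n^{(i)})$; by the same reasoning applied to $\wh{\Gamma}_n^{(i)}$, we conclude $e \in \wh{\Gamma}_n^{(i)} \subset \Gamma_n$.

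Next, I will compute $\per(nP_n)$ directly. Writing $nP_n = \big( \bigcup_{x \in H_n} Q(x) \big) \cap [-n,n]^d$, the topological boundary of $nP_n$ is a finite union of $(d-1)$-dimensional unit squares. Each such square is either (i) a common face of $Q(x)$ and $Q(y)$ with $x \in H_n$ and $y \notin H_n$ (including the case $y \notin [-n,n]^d$), or (ii) a subset of $\pa [-n,n]^d$ coming from the intersection. Squares of type (i) are in bijection with edges in $\pa H_n$, up to the boundary interaction which is absorbed by type (ii); squares of type (ii) are contained in $\pa [-n,n]^d$ and so contribute at most $\cal{H}^{d-1}(\pa [-n,n]^d) = 2d(2n)^{d-1}$ to the perimeter. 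Combining with the inclusion $\pa H_n \subset \Gamma_n$ from the previous step yields
\begin{equation}
\per(nP_n) \;\leq\; |\Gamma_n| \,+\, 2d(2n)^{d-1} \,.
\end{equation}

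Finally, applying Proposition \ref{finite_per} with parameter $\gamma'$ and taking $\gamma := \gamma' + 2d \cdot 2^{d-1}$, we obtain $\per(nP_n) \leq \gamma n^{d-1}$ simultaneously for every $G_n \in \cal{G}_n$ on an event of probability at least $1 - c_1 \exp(-c_2 n^{1/(d-1)})$. I expect no serious obstacle here: the only place care is required is the verification of $\pa H_n \subset \Gamma_n$, which could potentially fail if the hulls of different components $\Gamma_n^{(q')}$, $q' \neq q$, interacted nontrivially with the base component $\Gamma_n^{(q)}$; however, the argument above only uses the single index $q$ for which $x \in \hull(\Gamma_n^{(q)}) \setminus \bigcup_{i \in A_q}\hull(\wh{\Gamma}_n^{(i)})$, so no interaction between different $q$'s needs to be analyzed.
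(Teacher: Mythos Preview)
Your proposal is correct and follows essentially the same approach as the paper: bound the boundary faces of $\bigcup_{x\in H_n} Q(x)$ by $|\Gamma_n|$, then add $2d(2n)^{d-1}$ for the intersection with $[-n,n]^d$, and invoke Proposition~\ref{finite_per}. Your explicit verification of $\pa H_n \subset \Gamma_n$ via the two hull cases is in fact more detailed than the paper's treatment, which simply asserts that the boundary faces of $n\wt P_n$ are in one-to-one correspondence with $\Gamma_n$.
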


\begin{proof} Work in the event $\cal{E}$ corresponding to Proposition \ref{finite_per} that for each $G_n \in \cal{G}_n$, the corresponding cutset $\Gamma_n$ satisfies $| \Gamma_n | < \gamma n^{d-1}$. Define the polytope $n\wt{P}_n$ as 
\begin{align}
n\wt{P}_n :=  \bigcup_{x \in H_n} Q(x) \,.
\end{align}
Every boundary face of $n\wt{P}_n$ has $\cal{H}^{d-1}$-measure one, and these boundary faces are in one-to-one correspondence with $\Gamma_n$. Thus, in $\cal{E}$, the polytope $n\wt{P}_n$ has perimeter at most $\gamma n^{d-1}$.   As $nP_n = n\wt{P}_n \cap [-n,n]^d$, the perimeter of $nP_n$ is at most $\gamma n^{d-1} + 2d(2n)^{d-1}$, completing the proof. \end{proof}

\subsection{A contiguity argument} Recall the metric $\dw$ introduced in \eqref{eq:2_metric} We adapt the argument of Section 16.2 of \cite{stflour} to our situation to show $\mu_n$ and $\nu_n$ are $\dw$-close with high probability. 

\begin{rmk} We use another renormalization argument, now at a different scale. It is convenient to reuse notation from Section \ref{sec:coarse_original}. Define $\ell(n) := \lfloor n^{(1-\e(d))/2d}  \rfloor$, and suppress the dependence of on $n$ by writing $\ell(n)$ as $\ell$. \emph{Redefine} $\un{B}(x)$ to be the $\ell$-cube $(2\ell) x + [-\ell, \ell]^d$. We also work with $3\ell$-cubes, defined as in \eqref{eq:6.1_3k}, insofar as they are present in the statement of Proposition \ref{gamma_boundary_of_F}. 
\end{rmk} 

For $\delta >0$, introduce the $\Z^d$-process $\{ Z_x^{(\delta)} \}_{x \in \Z^d}$, with each $Z_x^{(\delta)}$ the indicator function of 
\begin{align}
\left\{ \frac{ | \B{C}_\infty \cap \un{B}(x) | }{  \cal{L}^d(\un{B}(x)) } \in \left( \theta_p(d) - \delta,\, \theta_p(d) + \delta \right) \right\} \,. 
\end{align}
Using Corollary \ref{density_control} and a careful examination of the contours defining $F_n$ and $nP_n$, we  show $\wt{\mu}_n$ and $\nu_n$ are close. Recall that $\e(d)$ was defined in \eqref{eq:epsilon_d}.

\begin{prop} Let $d \geq 3$ and let $p > p_c(d)$. Let $Q \subset [-1,1]^d$ be an axis-parallel cube. For all $\delta  >0$, there are positive constants $c_1(p,d,\delta)$ and $c_2(p,d,\delta)$  so that 
\begin{align}
\prob_p \left( \max_{G_n \in \cal{G}_n} | \,\wt{\mu}_n (Q) - \nu_n(Q)| \geq \delta \right) \leq c_1 \exp \left(- c_2 n^{(1-\e(d))/2d} \right) \,.
\end{align}
\label{finite_close}
\end{prop}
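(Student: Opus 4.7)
The plan is to carry out a block-renormalization at scale $\ell = \ell(n) := \lfloor n^{(1-\e(d))/2d} \rfloor$, exactly the scale appearing in Proposition~\ref{gamma_boundary_of_F}, and compare the contributions of each $\ell$-cube $\un{B}(x)$ to the two measures. Tiling $\Z^d$ by the $\ell$-cubes, we may write
\begin{align*}
n^d\wt{\mu}_n(Q) &= \sum_{x\,\in\,\Z^d} |F_n \cap \un{B}(x) \cap nQ| \,, \\
n^d \nu_n(Q)/\theta_p(d) &= \sum_{x\,\in\,\Z^d} \cal{L}^d(\un{B}(x) \cap nQ \cap nP_n) \,.
\end{align*}
Declare an $\ell$-cube \emph{good} if (i)~$\un{B}(x) \subset nQ$, (ii)~the three-fold dilate $\un{B}_3(x)$ contains no endpoint of an edge of $\Gamma_n \cup \pa^\om G_n$, and (iii)~$Z_x^{(\delta')}=1$, where $\delta' > 0$ is a small parameter to be chosen as a constant multiple of $\delta$.

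On a good cube, (ii) implies $\un{B}(x) \cap \Gamma_n = \emptyset$, so any two lattice points of $\un{B}(x)$ are joined inside $\un{B}(x)$ by a path using no edge of any contour $\Gamma_n^{(q)}$ or $\wh{\Gamma}_n^{(i)}$; since each of these contours separates $\Z^d$, the set $\un{B}(x) \cap \Z^d$ is therefore either entirely contained in $H_n$ or entirely disjoint from $H_n$. Proposition~\ref{gamma_boundary_of_F} then identifies $\un{B}(x) \cap F_n$ as $\un{B}(x) \cap \B{C}_\infty$ in the first case and as $\emptyset$ in the second. Condition (iii) gives $\big| |\B{C}_\infty \cap \un{B}(x)| - \theta_p(d)\cal{L}^d(\un{B}(x))\big| \leq \delta' \cal{L}^d(\un{B}(x))$, and the dual-cube construction of $P_n$ gives $\cal{L}^d(\un{B}(x) \cap nP_n)$ equal to $\cal{L}^d(\un{B}(x))$ or $0$ in the two cases respectively, up to an $O(\ell^{d-1})$ boundary slab. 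Summing over all good cubes, their aggregate contribution to $|n^d\wt{\mu}_n(Q) - n^d\nu_n(Q)|$ is at most $\delta' n^d + O(n^d/\ell)$.

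To control the bad cubes, work on the intersection of four high-probability events: $|\Gamma_n| \leq \gamma n^{d-1}$ from Proposition~\ref{finite_per}, $|\pa^\om G_n| \leq \eta_3 n^{d-1}$ from Lemma~\ref{surface_boundary}, the conclusion of Proposition~\ref{gamma_boundary_of_F}, and a density event (from Corollary~\ref{density_control} together with a union bound over the $O((n/\ell)^d)$ $\ell$-cubes intersecting $[-n,n]^d$) ensuring that cubes failing (iii) fill total volume at most $\delta n^d$. Cubes failing (i) lie in a shell of width $2\ell$ around $\pa(nQ)$ and contribute volume $O(n^{d-1}\ell) = o(n^d)$. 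Each edge of $\Gamma_n \cup \pa^\om G_n$ lies in $O(1)$ three-fold dilates $\un{B}_3(x)$, so cubes failing (ii) number at most $Cn^{d-1}$ and contribute volume $O(n^{d-1}\ell^d) = n^d \cdot n^{-(1+\e(d))/2}$, which is $o(n^d)$ precisely because $\e(d) > 0$ for $d \geq 3$. Adding the bad-cube bound to the good-cube estimate and choosing $\delta'$ a sufficiently small multiple of $\delta$ yields $|\wt{\mu}_n(Q) - \nu_n(Q)| \leq \delta$ for $n$ large, with the probability bound inherited from the slowest of the four input tails, namely $\exp(-c_2 n^{(1-\e(d))/2d})$ coming from Proposition~\ref{gamma_boundary_of_F}.

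The main obstacle is reconciling the two competing constraints on the renormalization scale $\ell$: it must be small enough that $n^{d-1}\ell^d = o(n^d)$, so the neighborhood of $\Gamma_n$ is of negligible bulk, yet large enough that the density and contour-discovery estimates at scale $\ell$ carry useful stretched-exponential tails. The choice $\ell = n^{(1-\e(d))/2d}$ with $\e(d) = 1 - d/(d-1)^2$ is exactly what makes both conditions compatible, and this is where the restriction $d \geq 3$ is essential. A secondary point to handle carefully is the dichotomy that on a good cube, $\un{B}(x) \cap \Z^d$ lies entirely in $H_n$ or entirely outside of it; although $H_n$ is defined by simultaneously intersecting and excising hulls of multiple contours, the fact that each contour is avoided separately by edges with endpoints in $\un{B}(x)$, together with the connectedness of the cube, is what closes this step.
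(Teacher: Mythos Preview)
Your block-decomposition strategy at scale $\ell(n)$ is the same as the paper's, and your counting of bad cubes and the associated volume estimates are correct. There is, however, a genuine gap at the matching step. Having established that on a good cube the lattice set $\un{B}(x)\cap\Z^d$ lies entirely inside $H_n$ or entirely outside it, you write that Proposition~\ref{gamma_boundary_of_F} ``then identifies $\un{B}(x)\cap F_n$ as $\un{B}(x)\cap\B{C}_\infty$ in the first case and as $\emptyset$ in the second.'' But Proposition~\ref{gamma_boundary_of_F} only furnishes the \emph{dichotomy} for $F_n$ (either $\un{B}(x)\cap F_n=\emptyset$ or $\un{B}(x)\cap F_n=\un{B}(x)\cap\B{C}_\infty$); it says nothing about which branch aligns with which branch of the $H_n$ dichotomy. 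Without that alignment you cannot compare $\wt{\mu}_n$ and $\nu_n$ cube-by-cube: a good cube could in principle have $\un{B}(x)\subset H_n$ (so $\nu_n$ sees full mass) while $\un{B}(x)\cap F_n=\emptyset$ (so $\wt{\mu}_n$ sees nothing), or vice versa.

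What closes this is Lemma~\ref{case1}, the deterministic identity $F_n = H_n\cap\B{C}_\infty$ (as vertex sets). With that lemma, the $H_n$ dichotomy on a good cube immediately gives the matched $F_n$ behavior, and Proposition~\ref{gamma_boundary_of_F} becomes redundant for your organization of the argument. The paper takes a slightly different route: it records the two dichotomies separately (for $nP_n$ via the perimeter bound, for $F_n$ via Proposition~\ref{gamma_boundary_of_F}) and then rules out the two mismatched cases one at a time using Lemmas~\ref{case1} and~\ref{case2}. Either way, the substance of Lemma~\ref{case1} is essential, and it is the missing ingredient in your write-up.
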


\begin{proof} Fix $G_n \in \cal{G}_n$, and let $F_n$, $\wt{\mu}_n$, $P_n$ and $\nu_n$ be the objects constructed above for this $G_n$. Throughout the proof, we use bounds involving positive constants $c(d), c(p,d)$ and so on, which may change from line to line. Let $\un{L}$ denote the following collection of $\ell$-cubes:
\begin{align}
\un{L} := \Big \{ \un{B}(x) \:: \un{B}(x) \cap [-n-2k,n+2k]^d \neq \emptyset \Big\} \,,
\end{align}
For each $\ell$-cube $\un{B}(x)$, we have the bounds
\begin{align}
\wt{\mu}_n \left( n^{-1} \un{B}(x) \right) \leq c(d) \left( \frac{\ell}{n} \right)^d\,, \hspace{5mm}  \nu_n \left( n^{-1} \un{B}(x) \right) \leq c(d) \left( \frac{\ell}{n} \right)^d \,.
\label{eq:7_finite_close_0}
\end{align}
The boundary $\pa Q$ intersects at most $c(d)n^{d-1}$ cubes $\un{B}(x)$, thus by \eqref{eq:7_finite_close_0}, we have
\begin{align}
| \wt{\mu}_n(Q) - \nu_n(Q) | \leq c(d) \left( \frac{ \ell^d}{n} \right) + \sum_{\un{B}(x) \in \un{L} } \left|\, \wt{\mu}_n \left(n^{-1} \un{B}(x) \right) - \nu_n\left( n^{-1} \un{B}(x)\right) \right| \,.
\end{align}
Define 
\begin{align}
\cal{E}_1 := \left\{ \max_{G_n \in \cal{G}_n} \per(nP_n) < \gamma n^{d-1} \right\}\,, \hspace{5mm}  \cal{E}_2 := \left\{ \max_{G_n \in \cal{G}_n} |\Gamma_n| < \gamma n^{d-1} \right\} \,,
\end{align}
\begin{align}
\cal{E}_3 := \left\{ \max_{G_n \in \cal{G}_n} | \pa^\om G_n | \leq \eta_3 n^{d-1} \right\} \,,
\end{align}
so that $\cal{E}_1$, $\cal{E}_2$ and $\cal{E}_3$ are respectively high probability events from Corollary \ref{finite_per_2}, Proposition \ref{finite_per} and Lemma \ref{surface_boundary}. Finally, let $\cal{E}_4$ be the high probability event in the statement of Proposition \ref{gamma_boundary_of_F}. Work within the intersection of $\cal{E}_1$ through $\cal{E}_4$. This allows us to think of $nP_n$ and $F_n$ as objects with perimeters on the order of $n^{d-1}$. 

Motivated by $\cal{E}_4$, define $\un{L}' \subset \un{L}$ as
\begin{align}
\un{L}' := \left\{ \un{B}(x) \in \un{L} : \begin{matrix} \un{B}(x) \cap nP_n = \emptyset\, \text{ or }\, \un{B}(x) \cap nP_n = \un{B}(x) \\ \text{ \textbf{and} } \\ \un{B}(x) \cap F_n = \emptyset\, \text{ or }\, \un{B}(x) \cap F_n = \un{B}(x) \cap \B{C}_\infty \end{matrix} \right\} \,.
\end{align}
From working in $\cal{E}_1$ through $\cal{E}_4$, there are at most $c(p,d) n^{d-1}$ $\ell$-cubes $\un{B}(x)$ in $\un{L} \setminus \un{L}'$. This is especially due to the event $\cal{E}_4$ from Proposition \ref{gamma_boundary_of_F}, which was designed for use here. Thus,
\begin{align}
| \wt{\mu}_n(Q) - \nu_n(Q) | \leq c(p,d) \left( \frac{ \ell^d}{n} \right) + \sum_{\un{B}(x) \in \un{L}' } \left|\, \wt{\mu}_n \left(n^{-1} \un{B}(x) \right) - \nu_n\left( n^{-1} \un{B}(x) \right) \right| \,.
\label{eq:7_finite_close_1}
\end{align}
Define a further subcollection of boxes, $\un{L}'' \subset \un{L}'$:
\begin{align}
\un{L}'' :=  \left\{ \un{B}(x) \in \un{L}' : 
\begin{matrix} \un{B}(x) \cap nP_n = \emptyset\, \text{ and }\, \un{B}(x) \cap F_n = \emptyset \\ 
\text{ \textbf{or} } \\ 
\un{B}(x) \cap nP_n = \un{B}(x)\, \text{ and }\, \un{B}(x) \cap F_n = \un{B}(x) \cap \B{C}_\infty 
\end{matrix} \right\} \,.
\end{align}
We claim $\un{L}'' = \un{L}'$. To prove this, we show two of the four cases defining $\un{L}'$ are impossible. 

\emph{Case (i):}  Suppose $ \un{B}(x) \cap F_n = \un{B}(x) \cap \B{C}_\infty $ and $ \un{B}(x) \cap nP_n = \emptyset$. Appealing to Lemma~\ref{case1}, as $F_n \subset H_n$, this is impossible unless $\B{C}_\infty \cap \un{B}(x) = \emptyset$, one of the two allowed options.

\emph{Case (ii):} Suppose $\un{B}(x)  \cap F_n= \emptyset$ and $\un{B}(x) \cap nP_n = \un{B}(x)$. If $\un{B}(x) \cap \B{C}_\infty  = \emptyset$, we are in one of the two allowed options, so we may assume there is $y \in \un{B}(x) \cap \B{C}_\infty$. As $\un{B}(x) \cap nP_n = \un{B}(x)$, it follows that $y \in H_n$. Thus $y$ is surrounded by some $\Gamma_n^{(q)}$, and either $y \in F_n$ or $y \in L_i$ for some $i$. The former option is impossible by hypothesis, and $y \in L_i$ for some $i$. By Lemma~\ref{case2}, $L_i$ is surrounded by exactly one of the $\Gamma_n^{(q)}$, and $y \in H_n$ implies $y \in H_n^{(q)}$ and $y \notin H_n^{(q')}$ whenever $q' \neq q$. But in the construction of $H_n^{(q)}$, the hull of $\wh{\Gamma}_n^{(i)}$ is removed from the hull of $\Gamma_n^{(q)}$. As the hull of $\wh{\Gamma}_n^{(i)}$ contains $L_i$ and hence $y$, it is impossible that $y \in H_n^{(q)}$, a contradiction.

We thus conclude $\un{L}'' = \un{L}'$. Replace $\un{L}'$ by $\un{L}''$ in \eqref{eq:7_finite_close_1} and use the defining properties of $\un{L}''$ with the definitions of $\wt{\mu}_n$ and $\nu_n$: 
\begin{align}
|\, \wt{\mu}_n(Q) - \nu_n(Q) | &\leq c(p,d) \left( \frac{ \ell^d}{n} \right) + \sum_{\un{B}(x) \in \un{L}'' } \left| \wt{\mu}_n \left(n^{-1} \un{B}(x) \right) - \nu_n\left( n^{-1} \un{B}(x) \right) \right| \,, \\
&\leq c(p,d) \left( \frac{ \ell^d}{n} \right) + \sum_{\un{B}(x) \in \un{L}'' } \left(  \left|  \frac{| \B{C}_\infty \cap \un{B}(x) | }{n^d}-  \frac{\theta_p(d) \cal{L}^d( \un{B}(x) )}{n^d}    \right| \right) \,.
\label{eq:7_finite_close_2}
\end{align}
Form one last high probability event $\cal{E}_5$ using the $\Z^d$-process $\{Z_x^{(\delta)} \}_{x \in \Z^d}$: let $\cal{E}_5$ be the event that $Z_x^{(\delta)} = 1$ for all $x$ with $\un{B}(x) \in \un{L}''$. By Corollary \ref{density_control}, there are $c_1(p,d,\delta), c_2(p,d,\delta) > 0$ so that 
\begin{align}
\prob(\cal{E}_5^c) \leq c(d) n^d c_1 \exp \Big( - c_2 n^{(1-\e(d))/2d} \Big) \,.
\end{align}
Working now in the intersection of $\cal{E}_1$ through $\cal{E}_5$, bound $|\,\wt{\mu}_n(Q) - \nu_n(Q) |$, continuing from \eqref{eq:7_finite_close_2}:
\begin{align}
|\, \wt{\mu}_n(Q) - \nu_n(Q) | &\leq c(p,d) \left(\frac{ \ell^d}{n}\right)  +| \un{L}''| \max_{\un{B}(x) \in \un{L}''} \left(  \left|  \frac{| \B{C}_\infty \cap \un{B}(x) | }{n^d}-  \frac{\theta_p(d) \cal{L}^d( \un{B}(x) )}{n^d}    \right| \right) \,, \\
&\leq c(p,d)  \left(\frac{ \ell^d}{n}\right)  + \frac{ | \un{L}'' |}{n^d} ( 2 \cal{L}^d( \un{B}(x) ) \delta ) \,,\\
&\leq c(p,d) \left(\frac{ \ell^d}{n}\right) + c(d) \delta \,,
\end{align}
where we have used the bound $|\un{L}''| \leq |\un{L}| \leq c(d) (n /\ell)^d$ in going from the second line to the third line directly above. Take $n$ sufficiently large to get $|\,\wt{\mu}_n(Q) - \nu_n(Q) |  \leq c(p,d) \delta$. The proof is completed by using bounds for the probabilities of $\cal{E}_1, \dots, \cal{E}_5$. \end{proof}

We combine the preceding result with Lemma \ref{adding_k_small} to establish $\dw$-closeness of $\mu_n$ and $\nu_n$. The following is the central theorem of this section.

\begin{thm} Let $d\geq 3$, $p > p_c(d)$ and let $\delta > 0$. There are positive constants $c_1(p,d,\delta),\, c_1(p,d,\delta)$ so that 
\begin{align}
\prob_p \left( \max_{G_n \in \cal{G}_n} \dw(\mu_n, \nu_n) \geq \delta \right) \leq c_1 \exp \left(- c_2 n^{(1-\e(d))/2d} \right) \,.
\end{align}
\label{emp_close}
\end{thm}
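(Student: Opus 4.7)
The plan is to insert the intermediate empirical measures $\wt{\mu}_n$ from \eqref{eq:6.3_empirical} and invoke the triangle inequality
\begin{align}
\dw(\mu_n, \nu_n) \leq \dw(\mu_n, \wt{\mu}_n) + \dw(\wt{\mu}_n, \nu_n)\,,
\end{align}
controlling each summand by $\delta/2$ on a single high-probability event.

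The first summand is handled by Lemma \ref{adding_k_small}. Since $G_n \subset F_n$, the proof of that lemma actually establishes the stronger fact that $0 \leq \wt{\mu}_n(K) - \mu_n(K) \leq \eta_3 n^{-\e(d)}$ holds \emph{uniformly} over Borel $K$ and over all $G_n \in \cal{G}_n$ on a single event $\cal{E}_0$ of probability at least $1 - c_1 \exp(-c_2 n^{(d-1)/2d})$, because the deterministic estimate $|F_n \setminus G_n| \leq \eta_3 n^{d-\e(d)}$ is independent of $K$. Substituting this pointwise bound into the series \eqref{eq:2_metric} defining $\dw$ gives
\begin{align}
\max_{G_n \in \cal{G}_n} \dw(\mu_n, \wt{\mu}_n) \leq \eta_3 n^{-\e(d)} \sum_{k=0}^\infty 2^{-k} = 2\eta_3 n^{-\e(d)}
\end{align}
on $\cal{E}_0$, which is less than $\delta/2$ for $n$ sufficiently large depending on $\delta$.

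The second summand is the main obstacle, because Proposition \ref{finite_close} is a probability estimate on a single axis-parallel cube, while $\dw(\wt{\mu}_n, \nu_n)$ is an infinite series over dyadic cubes at all scales. The plan is to truncate at a scale $K = K(\delta, d)$. Since $\wt{\mu}_n, \nu_n \in \cal{B}_d$ have total variation at most $3^d$, the pointwise bound $|\wt{\mu}_n(Q) - \nu_n(Q)| \leq 2 \cdot 3^d$ controls the tail deterministically:
\begin{align}
\sum_{k \geq K} \frac{1}{2^k} \sum_{Q \in \Delta^k} \frac{|\wt{\mu}_n(Q) - \nu_n(Q)|}{|\Delta^k|} \leq 2 \cdot 3^d \sum_{k \geq K} 2^{-k} = 2^{2-K} \cdot 3^d\,.
\end{align}
Choose $K$ so that this tail is at most $\delta/4$. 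For the head scales $k < K$, the total number of dyadic cubes involved is $\sum_{k < K} 2^{dk} \leq c(d,K)$, a finite constant. Applying Proposition \ref{finite_close} with tolerance $\delta/8$ to each such cube and taking a union bound over these finitely many cubes (the proposition already controls the maximum over $\cal{G}_n$) produces an event $\cal{E}_1$ of probability at least $1 - c(d,K) c_1(p,d,\delta) \exp(-c_2 n^{(1-\e(d))/2d})$ on which the head contribution is at most $(\delta/8)\sum_{k < K} 2^{-k} \leq \delta/4$. Thus $\dw(\wt{\mu}_n, \nu_n) \leq \delta/2$ on $\cal{E}_1$.

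Combining the two estimates on $\cal{E}_0 \cap \cal{E}_1$ gives $\max_{G_n \in \cal{G}_n} \dw(\mu_n, \nu_n) \leq \delta$ with probability at least $1 - c_1 \exp(-c_2 n^{(1-\e(d))/2d})$ after absorbing the $K$-dependent factor into $c_1$. The crucial observation enabling the argument is that the truncation scale $K$ depends only on $\delta$ and $d$ and not on $n$, so the head-scale union bound costs only a multiplicative constant and the stretched-exponential decay inherited from Proposition \ref{finite_close} is preserved.
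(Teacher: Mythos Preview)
Your proof is correct and follows essentially the same approach as the paper: truncate the $\dw$-series at a finite scale depending only on $\delta$ and $d$, control the tail by the crude total-mass bound, and control the finitely many head cubes via Proposition \ref{finite_close} (plus Lemma \ref{adding_k_small}) with a union bound. The paper merges the two ingredients directly into a single per-cube estimate $|\mu_n(Q)-\nu_n(Q)|<\delta$ rather than routing through $\wt{\mu}_n$ and the triangle inequality, but this is only a cosmetic reorganization; your observation that the bound in Lemma \ref{adding_k_small} is uniform in $K$ is correct and lets you dispose of $\dw(\mu_n,\wt{\mu}_n)$ in one stroke.
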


\begin{proof} Let $\delta > 0$ and let $\Delta^k \equiv \Delta^{k,d}$ denote the dyadic cubes in $[-1,1]^d$ at scale $k$, introduced in Section \ref{sec:notation_3}. There is no confusion between the integer $k$ used for dyadic scales and the renormalization parameter from Sections \ref{sec:coarse_original} and \ref{sec:coarse_applied}, as the latter is fixed (see Remark \ref{rmk:7_fixed}). For $Q \in \Delta^k$, use Lemma \ref{adding_k_small} and Proposition \ref{finite_close} to find positive constants $c_1(p,d,\delta), c_2(p,d,\delta)$ so that
\begin{align}
\prob_p \left( \max_{G_n \in \cal{G}_n} |\, \mu_n( Q) - \nu_n(Q) | < \delta \right) \geq 1 - c_1 \exp \Big(-c_2 n^{(1-\e(d))/2d} \Big) \,.
\label{eq:emp_close}
\end{align}

Choose $j$ large enough so that $2^{-j} < \delta$, and let $Q_1, \dots, Q_m$ enumerate all dyadic cubes at scales between $0$ and $j-1$ contained in $[-1,1]^d$. The number $m$ of these cubes depends only on $\delta$ and $d$. Let $\cal{E}_i$ be the high probability event corresponding to \eqref{eq:emp_close} for $Q_i$, and work in $\cal{E} := \bigcap_{i=1}^m \cal{E}_i$, so that by definition \eqref{eq:2_metric} of the metric $\dw$,
\begin{align}
\dw(\mu_n, \nu_n) &\leq \sum_{k=0}^{j-1} \frac{1}{2^k} \sum_{Q \in \Delta^k} \frac{1}{| \Delta^k|} | \mu_n(Q) - \nu_n(Q) | + \sum_{k=j}^{\infty} \frac{1}{2^k} \sum_{Q \in \Delta^k} \frac{1}{| \Delta^k|} | \mu_n(Q) - \nu_n(Q) | \,, \\
&\leq 2\delta + \sum_{k=j}^{\infty} \frac{1}{2^k} \sum_{Q \in \Delta^k} \frac{1}{| \Delta^k|} | \mu_n(Q) - \nu_n(Q) | \,.
\label{eq:7_emp_close}
\end{align}
We control the sum directly above via crude bounds: there is $c(d) >0$ so that for each dyadic cube $Q$, we have $\mu_n(Q) \leq c(d)$ and $\nu_n(Q) \leq c(d)$. Through our choice of $j$, the sum in \eqref{eq:7_emp_close} is then bounded by $c(d) \delta$. Thus, in $\cal{E}$, we have $\dw( \mu_n, \nu_n) \leq c(d) \delta$. As $m$ depends only on $\e$ and $d$, the proof is completed using \eqref{eq:emp_close} with a union bound to control the probability of $\cal{E}^c$.  \end{proof}
 
\subsection{Closeness to sets of finite perimeter} We explore consequences of Theorem \ref{emp_close} before moving to the final section. Recall from Section \ref{sec:notation_3} that $\cal{B}_d$ is the ball about the zero measure of radius $3^d$ in the total variation norm. For $\gamma, \xi > 0$, define the following collection of measures in $\cal{B}_d$. 
\begin{align}
\cal{P}_{\gamma,\,\xi} := \Big\{ \nu_F \:: F \subset [-1,1]^d,\, \per(F) \leq \gamma,\, \cal{L}^d(F) \leq \cal{L}^d( (1+\xi) W_{p,d}) \Big\} \,,
\end{align}
where given $F \subset [-1,1]^d$ Borel, the measure $\nu_F$ representing $F$ is defined as in Section~\ref{sec:notation_3}.

\begin{coro} Let $d \geq 3$, $p >p_c(d)$ and let $\delta >0$. There are positive constants $c_1(p,d,\delta,\xi)$, $c_2(p,d,\delta,\xi)$ and $\gamma(p,d)$ so that 
\begin{align}
\prob_p \left( \max_{G_n \in \cal{G}_n} \dw(\mu_n, \cal{P}_{\gamma,\,\xi}) \geq \delta \right) \leq c_1 \exp \left( -c_2 n^{(1-\e(d))/2d} \right) \,.
\end{align}
\label{close_finite}
\end{coro}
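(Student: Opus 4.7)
The plan is to use the measure $\nu_n$ representing the polytope $P_n$ from \eqref{eq:7_nu} as the witness for $\dw$-closeness to $\cal{P}_{\gamma,\,\xi}$. All three defining properties of $\cal{P}_{\gamma,\,\xi}$ must be verified for $P_n$: the containment $P_n \subset [-1,1]^d$ is immediate from the construction \eqref{eq:7_polytope}; the perimeter bound will come from Corollary \ref{finite_per_2}; and the volume bound is the only piece that requires real argument. I would set $\gamma = \gamma(p,d)$ equal to the constant from Corollary \ref{finite_per_2}, so that $\per(nP_n) \leq \gamma n^{d-1}$, hence $\per(P_n) \leq \gamma$, holds uniformly over $G_n \in \cal{G}_n$ with probability at least $1 - c_1\exp(-c_2 n^{1/(d-1)})$.

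For the volume bound, the idea is to read off the $k=0$ term from the metric \eqref{eq:2_metric}, which gives $|\mu_n([-1,1]^d) - \nu_n([-1,1]^d)| \leq \dw(\mu_n,\nu_n)$. Applying Theorem \ref{emp_close} with an auxiliary parameter $\delta' > 0$ (to be chosen below), this is bounded by $\delta'$ with high probability. Since $\mu_n([-1,1]^d) = |G_n|/n^d \leq |\giant|/(d!\,n^d)$ by the validity of $G_n$, and $\nu_n([-1,1]^d) = \theta_p(d)\cal{L}^d(P_n)$ by definition, I would combine this with the density bound $|\giant|/n^d \leq (\theta_p(d) + \delta')(2n)^d/n^d$ from Corollary \ref{density_control} to obtain
\begin{align}
\theta_p(d)\cal{L}^d(P_n) \leq \theta_p(d)\cal{L}^d(W_{p,d}) + \delta'\left(\tfrac{2^d}{d!} + 1\right).
\end{align}
I would then choose $\delta' = \delta'(\xi,p,d) > 0$ small enough that the error on the right is at most $\theta_p(d)((1+\xi)^d - 1)\cal{L}^d(W_{p,d})$, which forces $\cal{L}^d(P_n) \leq \cal{L}^d((1+\xi)W_{p,d})$.

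To assemble the proof, take $\delta'' := \min(\delta,\delta')$ and intersect three high-probability events: the perimeter event from Corollary \ref{finite_per_2}, the density event from Corollary \ref{density_control} applied to $[-n,n]^d$, and the event from Theorem \ref{emp_close} at threshold $\delta''$. On this intersection, each $\nu_n$ lies in $\cal{P}_{\gamma,\xi}$ and $\dw(\mu_n,\nu_n) < \delta$, so $\dw(\mu_n,\cal{P}_{\gamma,\,\xi}) < \delta$. A union bound over the three failure probabilities, each exponentially small in a positive power of $n$, yields the stated estimate with the slowest rate $n^{(1-\e(d))/2d}$ dominating.

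The argument is really just bookkeeping: the substantive work is already done by Corollary \ref{finite_per_2} and Theorem \ref{emp_close}. The only mildly subtle point, and the one I would flag as most likely to introduce a wrinkle, is the volume estimate, which hinges on observing that $\dw$ dominates total-mass deviation via its coarsest dyadic term and that validity of $G_n$ translates cleanly to a volume bound on $P_n$ through the density of $\B{C}_\infty$.
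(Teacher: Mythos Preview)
Your proposal is correct and follows essentially the same approach as the paper: both arguments intersect the perimeter event from Corollary~\ref{finite_per_2}, the density event from Corollary~\ref{density_control}, and the $\dw$-closeness event from Theorem~\ref{emp_close}, and both extract the volume bound on $P_n$ by comparing $\mu_n$ and $\nu_n$ on the full cube (the $k=0$ term of $\dw$) combined with validity of $G_n$. The paper's proof is slightly terser in that it writes the resulting inequality $\theta_p(d)\cal{L}^d(nP_n) < \delta' n^d + |G_n|$ directly rather than explicitly invoking the $k=0$ level, but the logic is identical.
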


\begin{proof} Let $\delta, \delta', \xi >0$ and let $\gamma(p,d)$ be as in Corollary \ref{finite_per_2}. We first show with high probability, the measures $\nu_n$ lie in $\cal{P}_{\gamma,\, \xi}$, and then we apply Theorem \ref{emp_close}. Work in the intersection of 
\begin{align}
\cal{E}_1 := \left\{ \max_{G_n \in \cal{G}_n} \per(nP_n) < \gamma n^{d-1} \right\}\,, \hspace{5mm} \cal{E}_2 := \left\{ \max_{G_n \in \cal{G}_n} \dw(\mu_n, \nu_n) < \min(\delta, \delta') \right\} \,,
\end{align}
\begin{align}
\cal{E}_3 := \left \{ \frac{ |\giant|}{(2n)^d}  \in ( \theta_p(d) - \delta' , \theta_p(d) + \delta' ) \right \} \,,
\end{align}
respectively from Corollary \ref{finite_per_2}, Theorem \ref{emp_close} and Corollary \ref{density_control}. As we are in $\cal{E}_2$, for each $nP_n$ corresponding to $G_n \in \cal{G}_n$ we have
\begin{align}
\theta_p(d) \cal{L}^d(nP_n) &< \delta' n^d + | G_n| \,,\\
&< \delta' n^d + | \giant | / d! \,.
\end{align}
From working in $\cal{E}_3$, we further conclude
\begin{align}
\cal{L}^d(nP_n) &< n^d \left( \frac{\delta'}{ \theta_p(d)} + \frac{2^d}{ d!} \left( 1 + \frac{\delta'}{ \theta_p(d)} \right) \right) \,, \\
&< n^d \left( \cal{L}^d( (1+\xi) W_{p,d})\right) \,,
\end{align}
where we have taken $\delta'$ small according to $p,d$ and $\xi$. As we are in $\cal{E}_1$, we conclude $\nu_n \in \cal{P}_{\gamma,\,\xi}$ for each $G_n \in \cal{G}_n$. \end{proof}

The next result links $\dw$ to the notion of weak convergence.

\begin{lem} For $\zeta, \zeta_n \in \cal{P}_{\gamma,\, \xi}$, $\dw(\zeta_n, \zeta) \to 0$ if and only if $\zeta_n$ converges to $\zeta$ weakly.
\label{rmk:7_weakness}
\end{lem}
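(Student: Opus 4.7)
The plan is to establish the equivalence by exploiting that dyadic cubes generate the Borel sigma-algebra on $[-1,1]^d$ and that the measures in $\cal{P}_{\gamma,\xi}$ have uniformly bounded total mass. A first preliminary observation is that since dyadic cubes at scale $k$ tile $[-1,1]^d$, for any $\mu,\nu \in \cal{B}_d$ one has $\sum_{Q \in \Delta^k} |\mu(Q) - \nu(Q)| \leq \mu([-1,1]^d) + \nu([-1,1]^d) \leq 2 \cdot 3^d$, so the $k$-th summand in $\dw$ is bounded by $C/(2^k|\Delta^k|) = C/2^{k(d+1)}$. In particular, the tail of $\dw$ past level $K$ is uniformly small as $K \to \infty$, independently of the measures involved.

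For the forward direction, suppose $\dw(\zeta_n, \zeta) \to 0$. Since each $|\zeta_n(Q) - \zeta(Q)|$ appears as a single nonnegative term in the sum defining $\dw$, it follows that $\zeta_n(Q) \to \zeta(Q)$ for every dyadic cube $Q$. Given a continuous $f : [-1,1]^d \to \R$, use uniform continuity to choose $K$ large so that the dyadic step function $\phi_K = \sum_{Q \in \Delta^K} f(x_Q) \1_Q$ (for any choice of points $x_Q \in Q$) satisfies $\|f - \phi_K\|_\infty < \e$. Then split
\[
\left| \int f\, d\zeta_n - \int f\, d\zeta \right| \leq \|f - \phi_K\|_\infty \big(\zeta_n([-1,1]^d) + \zeta([-1,1]^d)\big) + \left| \sum_{Q \in \Delta^K} f(x_Q) (\zeta_n(Q) - \zeta(Q)) \right|.
\]
The first piece is bounded by a constant multiple of $\e$, and the second piece is a finite sum that tends to zero as $n \to \infty$ by the pointwise convergence on dyadic cubes. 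This yields weak convergence of $\zeta_n$ to $\zeta$.

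For the backward direction, assume $\zeta_n \to \zeta$ weakly. Each $\zeta \in \cal{P}_{\gamma,\xi}$ has density $\theta_p(d) \1_F$ with respect to Lebesgue measure, so $\zeta$ assigns zero mass to the boundary of any dyadic cube (since $\pa Q$ has $\cal{L}^d$-measure zero). By the Portmanteau theorem for finite Borel measures on a compact metric space, $\zeta_n(Q) \to \zeta(Q)$ for every such $Q$. Given $\e > 0$, use the uniform tail estimate from the first paragraph to pick $K$ with $\sum_{k > K} C/2^{k(d+1)} < \e/2$. The head $\sum_{k=0}^K 2^{-k}|\Delta^k|^{-1} \sum_{Q \in \Delta^k} |\zeta_n(Q) - \zeta(Q)|$ is a finite sum of terms each tending to zero, so it is below $\e/2$ for $n$ large, yielding $\dw(\zeta_n, \zeta) < \e$.

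The main (mild) obstacle is verifying that weak convergence does pass to indicators of dyadic cubes in the backward direction; this is handled by the Portmanteau theorem once one notes the absolute continuity of the limit measure with respect to Lebesgue measure. Everything else is a routine truncation argument based on the uniform tail bound coming from the total-mass constraint built into $\cal{B}_d$.
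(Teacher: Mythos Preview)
Your proof is correct and follows essentially the same architecture as the paper's: both directions hinge on the fact that $\dw$-convergence is equivalent to convergence on every dyadic cube, and the backward direction in both cases invokes Portmanteau together with the absolute continuity of $\zeta$ to see that dyadic cubes are continuity sets.

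The one place you differ is the forward direction. The paper decomposes an arbitrary open set $U$ into an almost-disjoint union of dyadic cubes, uses absolute continuity of the $\zeta_n$ and $\zeta$ to ignore the overlaps, and concludes $\liminf_n \zeta_n(U) \geq \zeta(U)$ before appealing to Portmanteau. You instead approximate a continuous test function uniformly by a dyadic step function and use the uniform total-mass bound. Your route is slightly more robust: it needs only that the $\zeta_n$ have uniformly bounded mass (which comes for free from $\cal{B}_d$), not that they are absolutely continuous. It also sidesteps any fussiness about whether the Portmanteau liminf criterion for general finite measures requires checking total-mass convergence separately. The paper's route, on the other hand, makes more explicit why the structure of $\cal{P}_{\gamma,\xi}$ (densities with respect to Lebesgue) matters. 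Both are short and standard; neither buys anything dramatic over the other.
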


\begin{proof} Let $\zeta,\, \zeta_n \in \cal{P}_{\gamma,\, \xi}$; if $\dw(\zeta_n, \zeta) \to 0$, it follows from \eqref{eq:2_metric} that $\zeta_n(Q) \to \zeta(Q)$ for each dyadic cube $Q$. As any open subset $U \subset [-1,1]^d$ may be decomposed into a countable union of almost disjoint dyadic cubes, and as each measure in $\cal{P}_{\gamma, \,\xi}$ is absolutely continuous with respect to Lebesgue measure, we conclude $\liminf_{n \to \infty} \zeta_n(U) \geq \zeta(U)$. By the Portmanteau theorem, $\zeta_n$ converges to $\zeta$ weakly. 

Conversely, if $\zeta,\, \zeta_n \in \cal{P}_{\gamma,\, \xi}$ are such that $\zeta_n \to \zeta$ weakly, the Portmanteau theorem also tells us $\zeta_n(A) \to \zeta(A)$ for all continuity sets $A \subset [-1,1]^d$ of $\zeta$, in particular whenever $A$ is a dyadic cube (using the absolute continuity of $\zeta$). Thus $\dw(\zeta_n, \zeta) \to 0$.
\end{proof}

We use Lemma \ref{rmk:7_weakness} to establish compactness of $\cal{P}_{\gamma,\,\xi}$.

\begin{lem} The collection of measures $\cal{P}_{\gamma,\,\xi}$ is compact subset of the metric space $(\cal{B}_d, \dw)$. 

\label{compact}
\end{lem}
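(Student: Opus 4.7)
Since $(\cal{B}_d, \dw)$ is a metric space, it suffices to establish sequential compactness. Let $\{\nu_{F_n}\}_{n\geq 1} \subset \cal{P}_{\gamma,\,\xi}$, so each $F_n \subset [-1,1]^d$ is a set of finite perimeter with $\per(F_n) \leq \gamma$ and $\cal{L}^d(F_n) \leq \cal{L}^d((1+\xi) W_{p,d})$. The plan is to invoke the standard compactness theorem for BV functions to extract an $L^1$-convergent subsequence of indicator functions, identify the limit measure, and then verify it lies in $\cal{P}_{\gamma,\,\xi}$ using lower semicontinuity of the perimeter and continuity of volume under $L^1$ convergence.

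First I would apply the BV compactness theorem on the bounded domain $[-1,1]^d$: since the functions $\1_{F_n}$ are uniformly bounded in $L^\infty$ and have uniformly bounded total variation (by the perimeter bound $\gamma$), there exist a subsequence $\{F_{n_k}\}$ and a Borel set $F \subset [-1,1]^d$ of finite perimeter such that $\1_{F_{n_k}} \to \1_F$ in $L^1([-1,1]^d)$. Consequently, for every Borel $E \subset [-1,1]^d$,
\begin{align}
\nu_{F_{n_k}}(E) \;=\; \theta_p(d) \int_E \1_{F_{n_k}}\,d\cal{L}^d \;\xrightarrow[k\to\infty]{}\; \theta_p(d) \int_E \1_F\,d\cal{L}^d \;=\; \nu_F(E).
\end{align}
In particular, this yields $\nu_{F_{n_k}} \to \nu_F$ weakly as Borel measures on $[-1,1]^d$.

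Next I would check that the candidate limit $\nu_F$ actually belongs to $\cal{P}_{\gamma,\,\xi}$. Lower semicontinuity of the perimeter under $L^1$ convergence of characteristic functions (a classical fact from geometric measure theory, see e.g. Section 14.1 of \cite{stflour}) gives
\begin{align}
\per(F) \;\leq\; \liminf_{k\to\infty} \per(F_{n_k}) \;\leq\; \gamma,
\end{align}
while $L^1$ convergence directly implies $\cal{L}^d(F) = \lim_k \cal{L}^d(F_{n_k}) \leq \cal{L}^d((1+\xi)W_{p,d})$. Thus $\nu_F \in \cal{P}_{\gamma,\,\xi}$.

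Finally, I would conclude $\dw$-convergence. Since both $\{\nu_{F_{n_k}}\}$ and the limit $\nu_F$ lie in $\cal{P}_{\gamma,\,\xi}$, Lemma \ref{rmk:7_weakness} converts the weak convergence established above into convergence in the metric $\dw$, completing the extraction of a convergent subsequence. There is no real obstacle here beyond citing the correct BV compactness and lower semicontinuity statements; the only minor point to keep straight is that the $\dw$/weak-convergence equivalence of Lemma \ref{rmk:7_weakness} requires the limit to already lie in $\cal{P}_{\gamma,\,\xi}$, which is why verifying the perimeter and volume constraints for $F$ must precede the appeal to that lemma.
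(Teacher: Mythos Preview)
Your proof is correct and arguably cleaner than the paper's, though the route is genuinely different. The paper argues indirectly: it first invokes Banach--Alaoglu to get sequential compactness of the ambient ball $\cal{B}_d$ in the weak topology (using separability of $C([-1,1]^d)$), and then shows $\cal{P}_{\gamma,\,\xi}$ is sequentially closed. For closedness, the paper starts from a $\dw$-convergent sequence and argues directly from the definition of $\dw$ that $\cal{L}^d(E\cap F_n)$ is Cauchy for every Borel $E$, from which it extracts a limit indicator $\1_F$ and upgrades to $L^1$ convergence by bounded convergence; the perimeter and volume constraints are then checked via Fatou and Lemma~\ref{lsc}.

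Your approach bypasses the ambient compactness altogether by applying the BV compactness theorem on $[-1,1]^d$ to the indicators $\1_{F_n}$, which hands you the $L^1$-convergent subsequence and the limiting set $F$ in one stroke. This is the more standard geometric-measure-theoretic move and avoids the somewhat ad hoc extraction of $F$ in the paper's argument. The tradeoff is that you import a heavier external tool (BV compactness), whereas the paper stays closer to first principles about measures and the metric $\dw$. Your observation that Lemma~\ref{rmk:7_weakness} requires the limit to already sit in $\cal{P}_{\gamma,\,\xi}$ before you can invoke it is exactly right, and you sequence the steps correctly.
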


\begin{proof} By Banach-Alaoglu, the set $\cal{B}_d$ is compact when equipped with the topology of weak convergence. The continuous functions on $[-1,1]^d$ (equipped with the supremum norm topology) form a separable space, so $\cal{B}_d$ is sequentially compact in the topology of weak convergence. By Lemma \ref{rmk:7_weakness}, it suffices to show  $\cal{P}_{\gamma,\,\xi}$ is sequentially closed. 

Let $\{ \nu_{F_n}\}_{n=1}^\infty$ be a sequence of measures in $\cal{P}_{\gamma,\,\xi}$ converging with respect to $\dw$. Using the definition \eqref{eq:2_metric} of $\dw$, one can show (first by approximating open sets by finite unions of dyadic cubes, and then approximating Borel sets by open sets) that for any $E \subset [-1,1]^d$ Borel, the sequence $\cal{L}^d( E \cap F_n)$ is Cauchy. Thus the indicator functions $\1_{F_n}$ converge pointwise a.e. to some $\1_F$, and the bounded convergence theorem converts this into $L^1$-convergence. 

As $\1_{F_n} \to \1_F$ in $L^1$-sense, \eqref{eq:2_metric} implies $\dw( \nu_{F_n}, \nu_F) \to 0$ as $n \to \infty$, and it remains to check that $\nu_F \in \cal{P}_{\gamma,\,\xi}$. By Fatou's lemma and Lemma \ref{lsc}, 
\begin{align}
\cal{L}^d(F) \leq \liminf_{n\to \infty} \cal{L}^d(F_n)\,, \hspace{5mm}  \per(F) \leq \liminf_{n \to \infty} \per (F_n) \,,
\end{align}
and hence $\nu_F \in \cal{P}_{\gamma, \xi}$. \end{proof}

We work with $\dw$ extensively in the next section. In fact most of our effort will go towards proving the following precursor to Theorem~\ref{main_L1}.

\begin{thm} For $d \geq 3$ and $p > p_c(d)$, let $W_{p,d}$ be the Wulff crystal from Theorem \ref{main_L1}. Define the following subset of $\cal{M}([-1,1]^d)$:
\begin{align}
\cal{W} := \Big\{ \nu_E : E = W_{p,d} + x, \text{ with } W_{p,d} + x \subset [-1,1]^d \Big\} \,.
\end{align}
$\prob_p$-almost surely,
\begin{align}
\max_{G_n \in \cal{G}_n} \dw( \mu_n, \cal{W} ) \xrightarrow[n \to \infty]{}  0 \,. 
\end{align}
\label{main}
\end{thm}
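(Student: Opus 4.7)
The plan is to prove the statement by contradiction, combining the compactness and approximation machinery of the preceding sections with the (forthcoming) lower bound of Section~\ref{sec:final} relating $|\pa^\om G_n|$ to the continuum surface energy, and with Taylor's characterization of Wulff minimizers (Theorem~\ref{wulff_theorem}).

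Fix $\xi > 0$ small and let $\gamma(p,d)$ be as in Corollary~\ref{finite_per_2}. Suppose, toward a contradiction, that on a positive-probability event $\cal{E}$ there exist $\delta > 0$ and a subsequence $(n_k)$ along which $\max_{G_{n_k} \in \cal{G}_{n_k}} \dw(\mu_{n_k}, \cal{W}) \geq \delta$. Pick optimizers $G_{n_k}$ realizing this maximum. By Corollary~\ref{close_finite} and Borel-Cantelli, on $\cal{E}$ each $\mu_{n_k}$ is $\dw$-close to some $\nu_{F_{n_k}} \in \cal{P}_{\gamma,\,\xi}$, with $\dw(\mu_{n_k}, \nu_{F_{n_k}}) \to 0$. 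Compactness of $\cal{P}_{\gamma,\,\xi}$ in $(\cal{B}_d,\dw)$ (Lemma~\ref{compact}) allows us to extract a further subsequence (not relabeled) along which $\nu_{F_{n_k}} \to \nu_F$ for some $F \subset [-1,1]^d$ of finite perimeter; hence also $\mu_{n_k} \to \nu_F$ in $\dw$.

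The next step is to identify $F$ as an isoperimetric minimizer. Since $\mu_{n_k} \to \nu_F$ weakly (Lemma~\ref{rmk:7_weakness}) and $[-1,1]^d$ is a continuity set of $\nu_F$, we obtain
\begin{equation}
\frac{|G_{n_k}|}{n_k^d} \;\longrightarrow\; \theta_p(d)\, \cal{L}^d(F).
\end{equation}
The forthcoming results of Section~\ref{sec:final} will supply the complementary bound
\begin{equation}
\liminf_{k \to \infty} \frac{|\pa^\om G_{n_k}|}{n_k^{d-1}} \;\geq\; \cal{I}_{p,d}(F),
\end{equation}
by covering the reduced boundary of $F$ with small tangent discs and invoking the $\face$-lower bound of Proposition~\ref{disc} on each. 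Dividing these two asymptotics and combining with Corollary~\ref{upper_bound_2} gives
\begin{equation}
\frac{\cal{I}_{p,d}(F)}{\theta_p(d)\, \cal{L}^d(F)} \;\leq\; \liminf_{k \to \infty} n_k \wh{\Phi}_{n_k} \;\leq\; \frac{\cal{I}_{p,d}(W_{p,d})}{\theta_p(d)\, \cal{L}^d(W_{p,d})}.
\end{equation}
Moreover, the admissibility constraint $|G_{n_k}| \leq |\B{C}_{n_k}|/d!$ together with Corollary~\ref{density_control} forces $\cal{L}^d(F) \leq 2^d/d! = \cal{L}^d(W_{p,d})$.

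The Wulff scaling now closes the argument. For any Borel $E \subset [-1,1]^d$ with $V := \cal{L}^d(E) \leq \cal{L}^d(W_{p,d})$, setting $\lambda := (V/\cal{L}^d(W_{p,d}))^{1/d} \leq 1$, the set $\lambda^{-1} E$ has volume exactly $\cal{L}^d(W_{p,d})$, and Theorem~\ref{wulff_theorem} yields $\cal{I}_{p,d}(\lambda^{-1} E) \geq \cal{I}_{p,d}(W_{p,d})$. Homogeneity of $\cal{I}_{p,d}$ gives $\cal{I}_{p,d}(E)/\cal{L}^d(E) \geq \lambda^{-1}\, \cal{I}_{p,d}(W_{p,d})/\cal{L}^d(W_{p,d})$. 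Applied to $E = F$, this combined with the reverse inequality above forces $\lambda = 1$, i.e.\ $\cal{L}^d(F) = \cal{L}^d(W_{p,d})$ and $\cal{I}_{p,d}(F) \leq \cal{I}_{p,d}(W_{p,d})$. Theorem~\ref{wulff_theorem} identifies $F$ with a translate of $W_{p,d}$ up to an $\cal{L}^d$-null set, and since $F \subset [-1,1]^d$ this translate satisfies $\nu_F \in \cal{W}$. This contradicts $\dw(\mu_{n_k},\cal{W}) \geq \delta$ together with $\mu_{n_k} \to \nu_F$.

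The main obstacle is the surface-energy lower bound $\liminf_k n_k^{-(d-1)} |\pa^\om G_{n_k}| \geq \cal{I}_{p,d}(F)$ to be established in Section~\ref{sec:final}: passing from the coarse $\dw$-closeness of $\mu_n$ to $\nu_F$ to a genuine microscopic lower bound requires analyzing $\pa^\om G_n$ locally near $\pa^* F$, via a Vitali covering by small discs on which Proposition~\ref{disc} applies, and summing contributions without double-counting edges. Everything else reduces to compactness, to the upper bound of Corollary~\ref{upper_bound_2}, or to Taylor's uniqueness theorem.
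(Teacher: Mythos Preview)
Your route differs from the paper's: you argue by contradiction, extracting a subsequential limit $F$ via compactness and then identifying $F$ with a translate of $W_{p,d}$ using only Taylor's qualitative uniqueness (Theorem~\ref{wulff_theorem}). The paper instead works directly with probabilities: it covers the compact set $\cal{K}_{\gamma,\xi}(\e) := \cal{P}_{\gamma,\,\xi} \setminus \cal{V}_\e(\cal{W}_\xi)$ by finitely many $\dw$-balls centered at polytopes $F_1,\dots,F_m$, applies Proposition~\ref{prob_conversion} to each, and uses the \emph{quantitative} isoperimetric inequality of Figalli--Maggi--Pratelli (Theorem~\ref{fmp}) to produce a uniform surface-energy gap for polytopes $\e$-far from $\cal{W}$, thereby forcing each summand to contradict the upper bound on $n\Chee$ from Corollary~\ref{upper_bound_2}. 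Your approach would dispense with Theorem~\ref{fmp}; the paper's yields explicit stretched-exponential decay of the bad-event probabilities.

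There is, however, a genuine gap in your argument at the lower-bound step. What Section~\ref{sec:final} actually proves is Proposition~\ref{prob_conversion}: for each \emph{fixed} polytope $F$ and $s\in(0,1/2)$ there is $\wt{\e}_F > 0$ such that the event $\{\exists G_n\in\cal{G}_n:\,|\pa^\om G_n|\le (1-2s)\cal{I}_{p,d}(F)n^{d-1}\text{ and }\dw(\mu_n,\nu_F)\le\wt{\e}_F\}$ has summable probability. In your argument the limit $F$ is \emph{random} --- it depends on $\om$ through the extracted subsequence --- so you cannot invoke Proposition~\ref{prob_conversion} directly, and the pathwise inequality $\liminf_k n_k^{-(d-1)}|\pa^\om G_{n_k}|\ge\cal{I}_{p,d}(F)$ does not follow. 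To repair this you must first fix a countable family of polytopes (for instance, by taking finite covers of $\cal{P}_{\gamma,\,\xi}$ at each rational $s$, or polytopes with rational data approximating in both $\dw$ and $\cal{I}_{p,d}$ via Theorem~\ref{poly_approx}), apply Proposition~\ref{prob_conversion} and Borel--Cantelli to every polytope in the family, and on the resulting full-measure event run a diagonal argument against the random $F$. Once you do this carefully, your proof works and indeed avoids Theorem~\ref{fmp}; but the missing step is essentially the finite-cover mechanism that the paper makes explicit.
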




{\large\section{\B{Lower bounds and main results}}\label{sec:final}}

We prove the main theorems of the paper in this section, throughout which we assume $d\geq 3$ and $p > p_c(d)$. The strategy is as follows: first use Corollary~\ref{close_finite} to anchor the empirical measures $\mu_n$ near (in the sense of $\dw$) measures representing sets of finite perimeter.  Whenever an empirical measure $\mu_n$ is close to such a measure $\nu_F$, we relate the conductance of the corresponding $G_n$ to the conductance of the continuum set $F$. 

The challenge is to show that when $\dw(\mu_n, \nu_F)$ is small, $|\pa^\om G_n|$ and $\cal{I}_{p,d}(nF)$ are close. This is done using a covering lemma, working locally near the boundary of $F$. This local perspective guides a surgery performed on $\pa^\om G_n$ to invoke concentration estimates from Section \ref{sec:consequences_1}. This strategy shares much with the argument in Section 6 of \cite{CeTh_LOWER}. In particular, we rely on the compactness of $\cal{P}_{\gamma, \, \xi}$ established in Lemma \ref{compact}. \newline

\subsection{Setup, the reduced boundary and a covering lemma}\label{sec:setup} Let $\al_d$ denote the volume of the $d$-dimensional Euclidean unit ball. Given a closed Euclidean ball $B(x,r)$ centered at $x \in \R^d$ of radius $r >0$ and a unit vector $v \in \mathbb{S}^{d-1}$, define the \emph{lower half-ball} of $B(x,r)$ in the direction $v$:
\begin{align}
B_-(x,r,v) := \Big\{ y \in B(x,r) : (y -x ) \cdot v \leq 0 \Big\}\,.
\end{align}

\begin{defn}
 For $F \subset \R^d$ Borel, let $\nabla \1_F$ be the distributional derivative of the indicator function $\1_F$. This is a vector-valued measure whose total variation $|| \nabla \1_F ||(\R^d)$ is the perimeter of $F$. For $F \subset \R^d$ a set of finite perimeter, the \emph{reduced boundary} $\pa^* F$ of $F$ is the set of points $x \in \R^d$ such that $(i)$ and $(ii)$ hold:

$(i)$ $|| \nabla \1_F || ( B(x,r)) > 0$ for any $r > 0$. 
 
$(ii)$ If we define 
\begin{align}
v_r(x) := - \frac{\nabla \1_F ( B(x,r)) }{|| \nabla \1_F || ( B(x,r))} \,,
\end{align} 
then $v_r(x)$ tends to a unit vector $v_F(x)$, called the \emph{exterior normal} to $F$ at $x$ as $r \to 0$.
\end{defn}

The following \emph{covering lemma} is specialized to $\cal{I}_{p,d}$.

\begin{lem} (\cite{stflour}, Section 14.3)\,  Let $F \subset \R^d$ be a set of finite perimeter, and let $\cal{I}_{p,d}$ be the surface energy defined in \eqref{eq:new_surface_energy} for $\beta_{p,d}$. For $\delta > 0$ and $s \in (0,1/2)$, there is a finite collection of disjoint balls $\{ B(x_i, r_i) \}_{i=1}^m$ with $x_i \in \pa^* F$ and $r_i \in (0,1)$ for all $i \in \{1, \dots, m\}$, each satisfying
\begin{align}
\cal{L}^d \Big( F \cap B(x_i, r_i)\,\, \Delta \,\, B_-(x_i,r_i, v_F(x_i) ) \Big) \leq \delta \al_d r_i^d \,,
\label{eq:8_covering_volume}
\end{align}
\begin{align}
\left| \cal{I}_{p,d}(F) - \sum_{i=1}^m \al_{d-1} r_i^{d-1} \beta_{p,d}(v_F(x_i)) \right| \leq \wt{\delta}_F(s) \,,
\label{eq:8_covering_surface}
\end{align}
where $\wt{\delta}_F(s) := \tfrac{s}{4} \cal{I}_{p,d}(F)$. 
\label{covering}
\end{lem}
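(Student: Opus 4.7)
The plan is to derive the covering from standard geometric measure theory applied to the reduced boundary. Two inputs drive the argument: the De Giorgi structure theorem, which gives at every $x \in \pa^* F$ the blow-up
\begin{align}
\lim_{r \downarrow 0} \frac{\cal{L}^d \bigl( F \cap B(x,r) \,\Delta\, B_-(x,r,v_F(x)) \bigr)}{\al_d r^d} = 0 \,, \qquad \lim_{r \downarrow 0} \frac{\cal{H}^{d-1} \bigl( \pa^* F \cap B(x,r) \bigr)}{\al_{d-1} r^{d-1}} = 1 \,,
\end{align}
and the representation formula
\begin{align}
\cal{I}_{p,d}(F) = \int_{\pa^* F} \beta_{p,d}(v_F(y)) \, \cal{H}^{d-1}(dy) \,,
\end{align}
which follows from \eqref{eq:new_surface_energy} together with the fact that $|| \nabla \1_F ||$ coincides with $\cal{H}^{d-1}$ restricted to $\pa^* F$.

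First I would fix $\delta, s \in (0, 1/2)$ and, at each $x \in \pa^* F$, use these limits together with the Lebesgue differentiation theorem applied on the Radon measure $\cal{H}^{d-1} \llcorner \pa^* F$ to the integrable function $y \mapsto \beta_{p,d}(v_F(y))$. This produces a threshold $r_0(x) \in (0,1)$ such that for every $r \in (0, r_0(x))$ the volume estimate \eqref{eq:8_covering_volume} holds along with
\begin{align}
\bigl| \cal{H}^{d-1}(\pa^* F \cap B(x,r)) - \al_{d-1} r^{d-1} \bigr| &\leq \tfrac{s}{16} \al_{d-1} r^{d-1} \,, \\
\int_{\pa^* F \cap B(x,r)} \bigl| \beta_{p,d}(v_F(y)) - \beta_{p,d}(v_F(x)) \bigr| \, \cal{H}^{d-1}(dy) &\leq \tfrac{s}{16} \al_{d-1} r^{d-1} \beta_{p,d}(v_F(x)) \,.
\end{align}
The continuity of $\beta_{p,d}$, which is a norm by Proposition \ref{norm}, is used in the second inequality. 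The family $\cal{F} := \{ B(x,r) : x \in \pa^* F,\, 0 < r < r_0(x) \}$ is then a fine Vitali cover of $\pa^* F$, so the Vitali covering theorem for the finite Radon measure $\cal{H}^{d-1} \llcorner \pa^* F$ extracts a countable disjoint subfamily $\{ B(x_i, r_i) \}_{i \geq 1}$ that exhausts $\pa^* F$ up to an $\cal{H}^{d-1}$-null set.

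Next I would truncate to a finite subfamily. Since $\cal{I}_{p,d}(F) < \infty$ and $\beta_{p,d}$ is bounded below by a positive constant on $\mathbb{S}^{d-1}$, one may choose $m$ so large that both the tail $\sum_{i>m} \al_{d-1} r_i^{d-1} \beta_{p,d}(v_F(x_i))$ and the residual integral $\int_{\pa^* F \setminus \bigcup_{i \leq m} B(x_i,r_i)} \beta_{p,d}(v_F) \, d\cal{H}^{d-1}$ are each at most $\tfrac{s}{16} \cal{I}_{p,d}(F)$. Decomposing
\begin{align}
\cal{I}_{p,d}(F) = \sum_{i=1}^m \int_{\pa^* F \cap B(x_i,r_i)} \beta_{p,d}(v_F) \, d\cal{H}^{d-1} + \int_{\pa^* F \setminus \bigcup_{i \leq m} B(x_i,r_i)} \beta_{p,d}(v_F) \, d\cal{H}^{d-1}
\end{align}
and approximating each summand by $\al_{d-1} r_i^{d-1} \beta_{p,d}(v_F(x_i))$ using the two pointwise bounds above yields \eqref{eq:8_covering_surface} after bundling four $s/16$ pieces into the total error $\tfrac{s}{4} \cal{I}_{p,d}(F)$.

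The main obstacle will be the bookkeeping needed to get a single disjoint subcollection that simultaneously delivers the per-ball volume bound \eqref{eq:8_covering_volume} and the global surface-energy bound \eqref{eq:8_covering_surface} with the prescribed constant $s/4$; this is why the error budget must be split evenly among the tail mass, the $(d-1)$-measure defect, the oscillation of $\beta_{p,d} \circ v_F$ on each ball, and the truncation residual. Everything else is either standard geometric measure theory or a direct consequence of $\beta_{p,d}$ being a norm. The parameter $s$ enters only through this final accounting, which is why it is absent from \eqref{eq:8_covering_volume}.
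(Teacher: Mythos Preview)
The paper does not give its own proof of this lemma: it is stated with the citation ``(\cite{stflour}, Section 14.3)'' and used as a black box. Your sketch is essentially the standard argument one finds in that reference --- De Giorgi's structure theorem for the blow-up, the representation $\cal{I}_{p,d}(F) = \int_{\pa^* F} \beta_{p,d}(v_F)\, d\cal{H}^{d-1}$, Lebesgue differentiation for the oscillation of $\beta_{p,d}\circ v_F$, Vitali to extract a disjoint subfamily, and a tail truncation --- so there is nothing to compare beyond noting that your error-splitting into four $s/16$ pieces is a reasonable way to land exactly on $\wt{\delta}_F(s) = \tfrac{s}{4}\cal{I}_{p,d}(F)$.
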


\begin{rmk} Given a set $F \subset [-1,1]^d$ of finite perimeter and a ball $B(x,r)$ with $x \in \pa^*F$ arising from Lemma \ref{covering}, we abbreviate $B_-(x,r, v_F(x))$ as $B_-(x,r)$.
\end{rmk}

We define two global parameters appearing throughout this section. Given a collection of balls $ \{ B(x_i, r_i) \}_{i=1}^m$ as in Lemma \ref{covering}, define 
\begin{align}
\e_F := \delta \min_{i=1}^m \al_d (r_i)^d \,,
\label{eq:8_epsilon}
\end{align}
so that $\e_F$ depends on $F, \delta$ and $s$. Also define
\begin{align}
\lambda_F(s) := (1-2s) \cal{I}_{p,d}(F) \,.
\label{eq:8_lambda}
\end{align}

\begin{rmk} Given $G_n \in \cal{G}_n$ and a ball $B(x_i,r_i)$ as in Lemma \ref{covering} for $\delta >0$ and $s \in (0,1/2)$, let
\begin{align}
\cal{E}(G_n,i) :=  \Big\{ | \pa^\om G_n \cap nB(x_i, r_i) | \leq (1-s) n^{d-1} \al_{d-1} (r_i)^{d-1} \beta_{p,d}(v_i) \Big\}\, 
\label{eq:8_event}
\end{align}
\end{rmk}

The next lemma controls the event that $|\pa^\om G_n|$ is too small using the events $\cal{E}(G_n,i)$. 

\begin{lem} Suppose $F \subset [-1,1]^d$ is a set of finite perimeter. Let $\{B(x_i, r_i)\}_{i=1}^m$ be a collection of balls as in Lemma \ref{covering} for $\delta > 0$ and $s \in (0,1/2)$. For each $G_n \in \cal{G}_n$, 
\begin{align}
\Big\{| \pa^\om G_n| \leq \lambda_F(s) n^{d-1} \Big\} \subset  \bigcup_{i=1}^m \cal{E}(G_n,i) \,,
  \end{align}
where $\lambda_F(s)$ is defined in \eqref{eq:8_lambda}.
\label{9.1}
\end{lem}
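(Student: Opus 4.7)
The plan is to argue by contraposition: assume the percolation configuration $\om$ lies outside every event $\cal{E}(G_n,i)$ and deduce that $|\pa^\om G_n| > \lambda_F(s) n^{d-1}$.

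First, I would observe that the balls $\{B(x_i,r_i)\}_{i=1}^m$ supplied by Lemma~\ref{covering} are pairwise disjoint, and therefore so are their dilates $\{nB(x_i,r_i)\}_{i=1}^m$. This immediately yields the trivial lower bound
\begin{align}
|\pa^\om G_n| \;\geq\; \sum_{i=1}^m \big|\pa^\om G_n \cap nB(x_i,r_i)\big| \,.
\end{align}

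Now suppose $\om$ lies in the complement $\bigcap_{i=1}^m \cal{E}(G_n,i)^c$. By the definition \eqref{eq:8_event} of the events $\cal{E}(G_n,i)$, each summand above satisfies $|\pa^\om G_n \cap nB(x_i,r_i)| > (1-s)n^{d-1}\al_{d-1}(r_i)^{d-1}\beta_{p,d}(v_i)$, where $v_i := v_F(x_i)$. Summing and invoking the surface-energy estimate \eqref{eq:8_covering_surface} from Lemma~\ref{covering} together with $\wt{\delta}_F(s) = \tfrac{s}{4}\cal{I}_{p,d}(F)$, we obtain
\begin{align}
|\pa^\om G_n| \;>\; (1-s)\,n^{d-1}\sum_{i=1}^m \al_{d-1}(r_i)^{d-1}\beta_{p,d}(v_i) \;\geq\; (1-s)\big(1-\tfrac{s}{4}\big)\,n^{d-1}\cal{I}_{p,d}(F) \,.
\end{align}

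The final step is the elementary inequality $(1-s)(1-s/4) = 1 - \tfrac{5s}{4} + \tfrac{s^2}{4} > 1 - 2s$, valid for every $s \in (0,1/2)$. This yields $|\pa^\om G_n| > (1-2s)n^{d-1}\cal{I}_{p,d}(F) = \lambda_F(s)\,n^{d-1}$, which is precisely the negation of membership in $\{|\pa^\om G_n| \leq \lambda_F(s) n^{d-1}\}$, completing the contrapositive. There is no real obstacle here; the argument is a bookkeeping calculation, and the only subtlety is remembering that the slack $\wt{\delta}_F(s) = \tfrac{s}{4}\cal{I}_{p,d}(F)$ in Lemma~\ref{covering} was chosen small enough to be absorbed by the $s$-loss coming from the events $\cal{E}(G_n,i)$.
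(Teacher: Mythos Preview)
Your proof is correct and follows essentially the same approach as the paper's own argument. The paper argues directly (deriving $\lambda_F(s) \leq (1-s)\sum_i \al_{d-1}(r_i)^{d-1}\beta_{p,d}(v_i)$ first and then using disjointness), whereas you run the contrapositive, but both arguments hinge on the same three ingredients: disjointness of the balls from Lemma~\ref{covering}, the surface-energy estimate \eqref{eq:8_covering_surface}, and an elementary comparison between $(1-s)(1-s/4)$ and $1-2s$.
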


\begin{proof} Because the balls $\{ B(x_i,r_i) \}_{i=1}^m$ were chosen in accordance with Lemma \ref{covering}, we combine \eqref{eq:8_covering_surface} with the defintion of $\wt{\delta}_F(s)$ to obtain
\begin{align}
\left| \cal{I}_{p,d} (F) - \sum_{i=1}^m \al_{d-1} (r_i)^{d-1} \beta_{p,d}(v_i) \right| \leq \frac{s}{2} \left( \sum_{i=1}^m \al_{d-1} (r_i)^{d-1} \beta_{p,d}(v_i) \right) \,,
\end{align}
so that 
\begin{align}
\lambda_F(s) \leq (1-s) \left( \sum_{i=1}^m \al_{d-1} (r_i)^{d-1} \beta_{p,d}(v_i) \right) \,.
\label{eq:final_1}
\end{align}
Use the disjointness of the balls in $\{ B(x_i,r_i) \}_{i=1}^m$, (\ref{eq:final_1}) and the definition \eqref{eq:8_event} of $\cal{E}(G_n,i)$. 
 \begin{align}
\Big\{ | \pa^\om G_n| \leq \lambda_F(s) n^{d-1} \Big\} &\subset \left\{ \sum_{i=1}^m | \pa^\om G_n \cap nB(x_i, r_i) | \leq ( 1-s ) n^{d-1} \sum_{i=1}^m \al_{d-1} (r_i)^{d-1} \beta_{p,d}(v_i) \right\} \,,\\
 &\subset \bigcup_{i=1}^m \cal{E}(G_n,i) 
 \end{align}
We complete the proof using the definition \eqref{eq:8_epsilon} of $\e_F$. \end{proof}

\subsection{Local surgery on each $\pa^\om G_n$} In this subsection, we think of $F \subset [-1,1]^d$ as a fixed polytope and work with a fixed $G_n \in \cal{G}_n$. Let $\{ B(x_i, r_i) \}_{i=1}^m$ be a collection of balls as in Lemma \ref{covering} for $F$. Also fix $B(x_i, r_i) \in \{ B(x_i, r_i)\}_{i=1}^m$, and denote this ball as $B(x,r)$, with $v := v_F(x) \in \mathbb{S}^{d-1}$ be the exterior normal vector associated to $x \in \pa^* F$.

Let $B_-(x,r)$ be the lower half-ball associated to $B(x,r)$ and $v$. Let $D(x,r)$ be the closed equatorial disc of this ball, so that $\hyp(D(x,r))$ is orthogonal to $v$. For $h >0$ small, define $r' := (1-h^2)^{1/2} r$, and let $D(x,r') \subset D(x,r)$ be the closed disc of radius $r'$ centered at $x$. Note that $D(x,r')$ is built so that $\cyl( D(x, r'), hr') \subset B(x,r)$. These geometric objects guide a surgery we perform on $\pa^\om G_n$.

 Let $J_n = J_n(\om)$ be the open edges intersecting $\cal{N}_{5d} (n D(x,r))$. Closing each edge in $J_n$ and each edge in $\pa^\om G_n$ breaks $\B{C}_\infty \cap nB(x,r)$ into a finite number of connected components. A component $\Lambda$ is \emph{outward} if it is contained in $G_n \cap (nB(x,r) \setminus nB_-(x,r))$ and is \emph{inward} if it lies in $nB_-(x,r) \setminus G_n$. 

We are only interested in $\Lambda$ containing vertices incident to edges in $J_n$. Enumerate all such outward components as $\Lambda_1^+ , \dots , \Lambda_{\ell^+}^+$, and all such inward components as $\Lambda_1^- , \dots, \Lambda_{\ell^-}^-$. A component (outward or inward) is \emph{good} if it is contained in $n \cyl( D(x,r), hr')$ and is \emph{bad} otherwise. Our notation suppresses the dependence of these components on $G_n$, $B(x,r)$ and $F$.

\begin{figure}[h]
\centering
\includegraphics[scale=1]{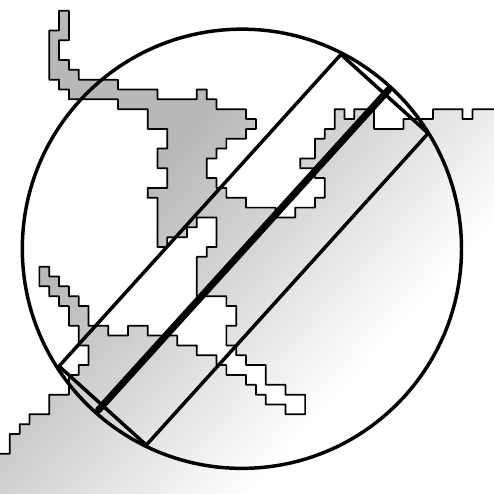}\hspace{10mm}
\includegraphics[scale=1]{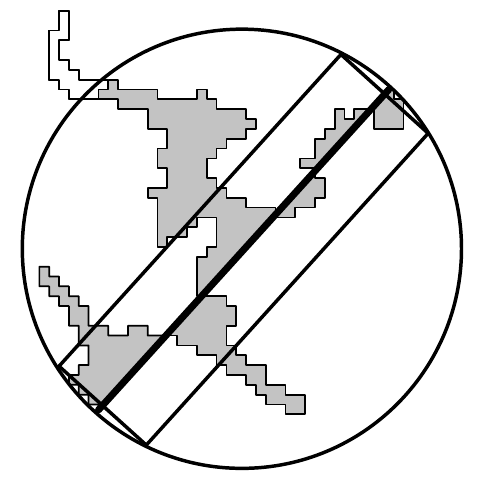}
\caption{The thin cylinder $n\cyl (D(x,r'), hr')$ is drawn as a rectangle, the central disc $nD(x,r)$ is the bold line. On the left is $G_n$ viewed up close. On the right, inward and outward components are in grey (outward components point up and to the left). There are three good components and three bad components.} 
\label{fig:final_2}
\end{figure}

\begin{rmk} Every outward component is a subgraph of $G_n$. Thus, \emph{outward} is understood as relative to the bottom half-ball $nB_-(x,r)$. Figure \ref{fig:final_2} illustrates the objects introduced so far. We regard outward and inward components $\Lambda_j^\pm$ as subgraphs of $\textbf{C}_\infty \cap nB(x,r)$, so that the edge sets $\text{\rm E}(\Lambda_j^\pm)$ are collections of open edges. 
\end{rmk}

The next lemma efficiently truncates bad components. Let $\al \in [0,h/2]$, and given $\Lambda_j^+$, define
\begin{align}
\slice_j^+(\al) := \Big\{ e \in \text{\rm E}( \Lambda_j^+) : e \cap [ nD(x,r) + n\al r' v ] \neq \emptyset \Big\} \,,
\end{align}
where in the above intersection, the edge $e$ is regarded as a line-segment in $\R^d$. Thus $\slice_j^+(\al)$ is the set of edges in $\Lambda_j^+$ touching a prescribed translate of $nD(x,r)$. For an inward component $\Lambda_j^-$, likewise define
\begin{align}
\slice_j^-(\al) := \Big\{ e \in \text{ \rm E}(\Lambda_j^-) : e \cap [ nD(x,r) - n\al r' v ] \neq \emptyset \Big\} \,.
\end{align}

\begin{lem} Let $G_n \in \cal{G}_n$ and $B(x,r)$ with $r \in (0,1)$ be fixed, and let $\Lambda_j^\pm$ denote the outward and inward components constructed above from $G_n$, $B(x,r)$ and $F$. Let $h>0$. There is a positive constant $c(d)$ so that for each outward component $\Lambda_j^+$, there is $h_j^+ \in~[0,h/2]$ so that
\begin{align}
| \slice_j^+(h_j^+)  | \leq c(d) \frac{ |\Lambda_j^+|}{nhr} \,,
\label{eq:8_trunc_bad_1}
\end{align}
and for each inward component $\Lambda_j^-$, there is $h_j^- \in [0,h/2]$ so that 
\begin{align}
| \slice_j^-(h_j^-) | \leq c(d) \frac{ |\Lambda_j^+|}{nhr} \,.
\label{eq:8_trunc_bad_2}
\end{align}
\label{trunc_bad}
\end{lem}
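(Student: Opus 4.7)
The plan is an averaging argument in the height parameter $\al$, essentially a one-dimensional Fubini / pigeonhole. Focus on an outward component $\Lambda_j^+$ (the inward case is symmetric). The key observation is that any edge $e=\{y,z\} \in \rmE(\Z^d)$ has unit Euclidean length, so its projection onto the line $\R v$ has length $|v \cdot (y-z)| \leq |y-z|_2 = 1$. For $e$ to belong to $\slice_j^+(\al)$, the translated hyperplane carrying $nD(x,r) + n\al r' v$ must separate (or meet) the endpoints of $e$, which forces $n\al r'$ to lie in an interval of length at most $1$ along the $v$-axis. Hence
\begin{align*}
\cal{L}^1\bigl(\{\al \in [0,h/2] : e \in \slice_j^+(\al)\}\bigr) \;\leq\; \frac{1}{nr'}.
\end{align*}

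I then integrate over $\al$ and apply Tonelli:
\begin{align*}
\int_0^{h/2} |\slice_j^+(\al)|\, d\al \;=\; \sum_{e \in \rmE(\Lambda_j^+)} \cal{L}^1\bigl(\{\al : e \in \slice_j^+(\al)\}\bigr) \;\leq\; \frac{|\rmE(\Lambda_j^+)|}{nr'} \;\leq\; \frac{d\,|\Lambda_j^+|}{nr'},
\end{align*}
using that $\Lambda_j^+$ is a subgraph of $\Z^d$, hence has at most $d|\Lambda_j^+|$ edges. Since $|\slice_j^+(\cdot)|$ is a nonnegative integer-valued function on $[0,h/2]$, at least one level set on which it is at most its average has positive measure, so there exists $h_j^+ \in [0,h/2]$ with
\begin{align*}
|\slice_j^+(h_j^+)| \;\leq\; \frac{2}{h}\cdot\frac{d\,|\Lambda_j^+|}{nr'}.
\end{align*}
Finally, since $r' = (1-h^2)^{1/2} r$, taking $h$ small (say $h \leq 1/2$, which we may assume since the lemma is only interesting in this regime) gives $r' \geq r/2$, and the bound becomes $|\slice_j^+(h_j^+)| \leq 4d\,|\Lambda_j^+|/(nhr)$, establishing \eqref{eq:8_trunc_bad_1} with $c(d) = 4d$.

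The inward case \eqref{eq:8_trunc_bad_2} follows verbatim after replacing $v$ by $-v$ and $\Lambda_j^+$ by $\Lambda_j^-$. There is no serious obstacle: the only point requiring care is the one-dimensional measure estimate for the set of heights at which a given edge participates in $\slice_j^+(\al)$, and this is immediate from the unit Euclidean length of edges in $\Z^d$.
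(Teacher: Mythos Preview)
Your proof is correct and follows essentially the same averaging/pigeonhole strategy as the paper: the paper discretizes the height parameter as $\al_k = k/(2n)$, observes that slices at well-separated heights are disjoint, and sums, while you integrate continuously over $\al \in [0,h/2]$ and bound the Lebesgue measure of heights at which a given edge participates. Your continuous version is in fact slightly cleaner, since the factor $1/(nr')$ emerges directly from the unit length of edges rather than $1/r$ being inserted by hand via $r\in(0,1)$.
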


\begin{proof}
Let $\Lambda_j^+$ be an outward component. For $k \in \{ 1, \dots, \lceil n h \rceil /2 \}$, define $\al_k := k / 2n$. We have
\begin{align}
\bigcup_{k=1}^{ \lceil nh \rceil /2 } \slice_{j}^+(\al_k) \subset \text{\rm E} ( \Lambda_j^+) \,.
\end{align}
When $k$ and $k'$ satisfy $|k - k'|_2 \geq 10d$, the edge sets $\slice_{j}^+(\al_k)$ and $\slice_{j}^+(\al_{k'})$ are disjoint. Thus,
\begin{align}
\sum_{k=1}^{ \lceil nh \rceil /2} | \slice_{j}^+(\al_k) | \leq (10d) |  \text{\rm E} ( \Lambda_j^+) | \,.
\end{align}
For at least one $k \in \{ 1, \dots, \lceil n h \rceil /2 \}$, we must have 
\begin{align}
| \slice_{j}^+ (\al_k) | \leq c(d) \frac{| \Lambda_j^+ | }{n hr } \,,
\label{eq:8_trunc_bad_3}
\end{align}
for some $c(d) > 0$, and where we have slipped $r$ into the denominator because $r~\in~(0,1)$.

Any $\al_k$ satisfying \eqref{eq:8_trunc_bad_3} is at most $h /2$. Pick one such $\al_k$ and relabel it $h_j^+$. Analogous reasoning for inward components gives $h_j^- \in [0, h/2]$ for each inward $\Lambda_j^-$ so that
\begin{align}
| \slice_j^-( h_j^-) | \leq c(d) \frac{ | \Lambda_j^-| }{nhr} \,,
\end{align}
completing the proof. \end{proof}

\begin{rmk} We continue to use the edge sets given by Lemma \ref{trunc_bad} throughout this subsection, but only when working with bad components. When $\Lambda_j^\pm$ is bad, define
\begin{align}
\slice_j^\pm := \slice_j^\pm( h_j^\pm) \,,
\label{eq:8_slice}
\end{align}
and if $\Lambda_j^\pm$ is good, define $\slice_j^\pm$ to be empty. Figure \ref{fig:final_4} depicts the edge sets $\slice_j^\pm$. As with the $\Lambda_j^\pm$, we suppress the dependence of the $\slice_j^\pm$ on $G_n$, $h>0$, $B(x,r)$ and $F$. 
\label{rmk:8_slices}
\end{rmk}

\begin{figure}[h]
\centering
\includegraphics[scale=1]{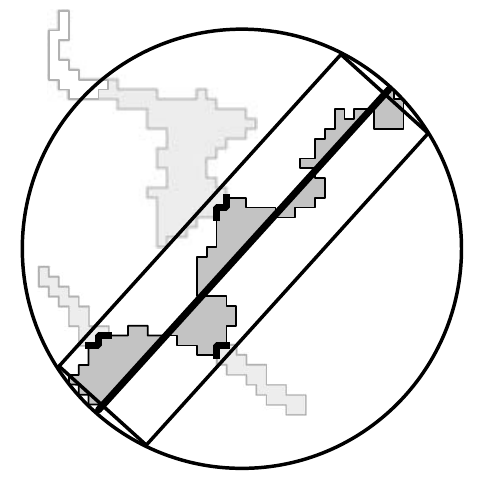}

\caption{The short, bold curves are the efficiently chosen sets of open edges $\slice_j^\pm$ from Lemma \ref{trunc_bad}. We have faded the portions of the bad components which are cut off by the $\slice_j^\pm$. } 
\label{fig:final_4}
\end{figure}

The following is an immediate consequence of Lemma \ref{trunc_bad}.

\begin{coro} Let $G_n \in \cal{G}_n$, $h>0$ and $B(x,r)$ be fixed. Let $\slice_j^\pm$ be the edge sets constructed from $G_n$, $h>0$ and $B(x,r)$. There is $c(d) > 0$ so that
\begin{align}
\sum_{j=1}^{\ell^+} |\slice_j^+| + \sum_{j=1}^{\ell^-} |\slice_j^-| \leq \frac{c(d)}{nhr } \left( \sum_{j=1}^{\ell^+} |\Lambda_j^+| + \sum_{j=1}^{\ell^-} |\Lambda_j^- | \right) \,.
\label{eq:8_coro_trunc_cons}
\end{align}
\label{trunc_cons}
\end{coro}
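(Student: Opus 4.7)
The plan is to combine Lemma \ref{trunc_bad} with the convention from Remark \ref{rmk:8_slices} and just sum over all components. First, I would split each sum into contributions from good and bad components. For good components, the convention established in Remark \ref{rmk:8_slices} declares $\slice_j^\pm = \emptyset$, so $|\slice_j^\pm| = 0$ and these terms contribute nothing to the left-hand side while remaining non-negative on the right-hand side. Thus it suffices to handle the bad components.

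For each bad outward component $\Lambda_j^+$, Lemma \ref{trunc_bad} produces a parameter $h_j^+ \in [0,h/2]$ such that $|\slice_j^+(h_j^+)| \leq c(d)|\Lambda_j^+|/(nhr)$, and by \eqref{eq:8_slice} we have $\slice_j^+ = \slice_j^+(h_j^+)$. The same pointwise bound then holds for $|\slice_j^+|$. Summing over all bad outward components and then reintroducing the good ones (which only add non-negative terms to the right-hand side), I obtain
\begin{align}
\sum_{j=1}^{\ell^+} |\slice_j^+| \leq \frac{c(d)}{nhr} \sum_{j=1}^{\ell^+} |\Lambda_j^+| \,.
\end{align}
The identical argument applied to inward components, using \eqref{eq:8_trunc_bad_2}, yields
\begin{align}
\sum_{j=1}^{\ell^-} |\slice_j^-| \leq \frac{c(d)}{nhr} \sum_{j=1}^{\ell^-} |\Lambda_j^-| \,.
\end{align}
Adding these two inequalities (and taking the maximum of the two dimensional constants, relabeled $c(d)$) gives exactly the bound \eqref{eq:8_coro_trunc_cons}.

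There is no real obstacle here: the work is entirely contained in Lemma \ref{trunc_bad}, whose proof used an averaging argument over parallel slabs to pick an efficient cross-section of each component. The only thing to be careful about is the bookkeeping convention for good components, which was set up precisely so that the final bound could be written uniformly over all $j$ without a separate ``good / bad'' case split on the right-hand side.
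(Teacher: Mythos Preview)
Your proposal is correct and matches the paper's treatment exactly: the paper states this corollary as ``an immediate consequence of Lemma \ref{trunc_bad}'' with no further proof, and your argument spells out precisely that immediate consequence by summing the per-component bounds and using the convention that $\slice_j^\pm = \emptyset$ for good components.
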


\begin{rmk} Corollary \ref{trunc_cons} tells us that to control the total size of the $\slice_j^\pm$, it suffices to control the total volume of the $\Lambda_j^\pm$. At the beginning of Section \ref{sec:contiguity}, we built a polytope $P_n$ whose perimeter we could bound, and whose representative measure $\nu_n$ was $\dw$-close to $\mu_n$. Proposition~\ref{prop:closeness} shows the $\ell^1$-distance of $\1_{\B{C}_\infty \cap n P_n}$ and $\1_{\B{C}_\infty \cap nF}$ is small when $\dw(\nu_n, \nu_F)$ is. Proposition \ref{volume} controls the total volume of the $\Lambda_j^\pm$ when these indicator functions are close.
\label{rmk:great_explain}
\end{rmk}

\begin{prop} Let $F \subset [-1,1]^d$ be a polytope. Recall the metric $\dw$ defined in \eqref{eq:2_metric} and the constant $\gamma$ from Corollary \ref{finite_per_2}. Given $G_n \in \cal{G}_n$, let $P_n \subset [-1,1]^d$ be the polytope defined from $G_n$ in \eqref{eq:7_polytope} with representative measure $\nu_n$. For $\delta >0$, there is $\e(d,\delta, F) >0$ and an event $\cal{E}_0$ so that for all $G_n \in \cal{G}_n$,
\begin{align}
\Big\{ \dw(\nu_n,\nu_F) < \e \Big\} &\cap \Big\{ \max_{G_n \in \cal{G}_n} \per(P_n) \leq \gamma \Big\} \cap \cal{E}_0 \\
&\subset  \Big\{ \left\| \1_{\B{C}_\infty \cap nP_n} - \1_{\B{C}_\infty \cap nF} \right\|_{\ell^1} \leq \delta n^d \Big\} \cap \Big\{ \max_{G_n \in \cal{G}_n} \per(P_n) \leq \gamma \Big\} \cap \cal{E}_0 \,.
\end{align}
Moreover, there are positive constants $c_1(p,d,\delta, F), c_1(p,d,\delta, F)$ so that 
\begin{align}
\prob_p (\cal{E}_0^c) \leq c_1 \exp\Big(-c_2 n^{d-1} \Big) \,.
\label{eq:8_closeness}
\end{align}
\label{prop:closeness}
\end{prop}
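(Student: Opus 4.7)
The plan is to split the argument into a \emph{continuum step}, which upgrades $\dw$-closeness of $\nu_n$ and $\nu_F$ to $L^1$-closeness of $\1_{P_n}$ and $\1_F$, and a \emph{discrete step}, which uses near-uniform density of $\B{C}_\infty$ on a mesoscopic scale to convert this into $\ell^1$-closeness of the $\B{C}_\infty$-restricted indicators.

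For the continuum step, I would establish the following deterministic claim: for any $\delta' > 0$ there exists $\e = \e(d, \delta', F) > 0$ such that any $F' \subset [-1,1]^d$ with $\per(F') \leq \gamma + \per(F)$ and $\dw(\nu_{F'}, \nu_F) < \e$ satisfies $\cal{L}^d(F' \Delta F) < \delta'$. This follows by compactness and contradiction. By the standard BV compactness theorem, the class of such $F'$ (modulo $\cal{L}^d$-null sets) is precompact in $L^1$; equivalently, via the proof of Lemma~\ref{compact} together with Lemma~\ref{rmk:7_weakness}, it is precompact in $(\cal{B}_d, \dw)$, and the assignment $F' \mapsto \nu_{F'}$ is bicontinuous between the two topologies on this class. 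Since $F' \mapsto \dw(\nu_{F'}, \nu_F)$ is continuous on this compact class and vanishes only at $F' = F$ up to a null set, the uniform bound on $\cal{L}^d(F' \Delta F)$ follows. Applied with $F' = P_n$, this gives $\cal{L}^d(P_n \Delta F) < \delta'$ on the event in the hypothesis.

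For the discrete step, I would fix a small $\eta = \eta(d, \delta, F) > 0$, partition $[-1,1]^d$ into axis-parallel sub-cubes $\{Q_j\}$ of side length $\eta$, and take $\cal{E}_0$ to be the event that every $nQ_j$ satisfies
\[
\left| \frac{|\B{C}_\infty \cap nQ_j|}{\cal{L}^d(nQ_j)} - \theta_p(d) \right| \leq \e'
\]
for a small auxiliary parameter $\e'$. By Corollary~\ref{density_control} applied to each sub-cube and a union bound over $O(\eta^{-d})$ cubes, $\prob_p(\cal{E}_0^c) \leq c_1 \exp(-c_2 n^{d-1})$, with $\eta$ absorbed into the constants. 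Writing $A := P_n \Delta F$, I would then decompose
\[
|\B{C}_\infty \cap nA| \;\leq\; \sum_{Q_j \subset A} (\theta_p(d) + \e')\, \cal{L}^d(nQ_j) \;+\; \sum_{Q_j \cap \pa A \neq \emptyset} \cal{L}^d(nQ_j)\,,
\]
using $\cal{E}_0$ for the first sum and the trivial cardinality bound for the second. The first sum is at most $(\theta_p(d) + \e')\, n^d\, \cal{L}^d(A) \leq (\theta_p(d) + \e')\, n^d \delta'$. For the second, the isoperimetric bound $\per(A) \leq \gamma + \per(F)$ guarantees at most $c(F, \gamma)\, \eta^{-(d-1)}$ contributing sub-cubes, giving a contribution of at most $c(F, \gamma)\, \eta\, n^d$. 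Choosing $\eta$, then $\e'$, then $\delta'$ small, and finally $\e$ correspondingly via the continuum step, yields the stated bound $\delta n^d$.

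The main obstacle is the uniformity required in the continuum step: the bound on $\cal{L}^d(P_n \Delta F)$ must hold with $\e$ depending only on $d$, $\delta$, and $F$, not on the particular random realization $P_n$. Compactness and contradiction handle this cleanly by reducing the question to a deterministic statement about the compact class $\{F' : \per(F') \leq \gamma + \per(F)\}$, which can be verified once and for all independently of $n$ and $\omega$.
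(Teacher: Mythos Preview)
Your two-step plan differs from the paper's route. Rather than a compactness argument, the paper works directly at a single dyadic scale $k$: on cubes $Q$ disjoint from $\partial P_n\cup\partial F$ the per-cube $\ell^1$ difference is either $0$ or $|\B{C}_\infty\cap nQ|$, and in the latter case $|\nu_n(Q)-\nu_F(Q)|=\theta_p(d)\,\cal{L}^d(Q)$; on the density event these agree up to a factor close to $1$, so the sum over such cubes is at most a constant times $n^d\,2^k|\Delta^k|\,\dw(\nu_n,\nu_F)$, read straight from the definition of $\dw$. Your continuum step via BV compactness is correct and is a legitimate alternative; it is softer (no explicit dependence of $\e$ on $\delta$) but more robust.

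Your discrete step, however, has a genuine gap. The assertion that $\per(A)\le\gamma+\per(F)$ forces at most $c(F,\gamma)\,\eta^{-(d-1)}$ sub-cubes to meet $\partial A$ is false for general finite-perimeter sets, and nothing in the hypotheses rules out the bad configurations. Concretely, $P_n$ is a union of $n^{-1}$-scale cubes, and $\per(P_n)\le\gamma$ still permits on the order of $n^{d-1}$ isolated $n^{-1}$-cubes scattered through $[-1,1]^d\setminus F$; each speck sits in a distinct $\eta$-cube and makes it a boundary cube, so for fixed $\eta$ and large $n$ the boundary-cube count can reach $\eta^{-d}$ rather than $\eta^{-(d-1)}$, and your second sum is then of order $n^d$. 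The repair is to treat $\partial P_n$ and $\partial F$ asymmetrically. Since $nP_n$ is a union of unit dual cubes one has $\Z^d\cap nP_n=H_n$ exactly (up to the intersection with $[-n,n]^d$), so $\big|\Z^d\cap n(P_n\Delta F)\big|$ differs from $n^d\cal{L}^d(P_n\Delta F)$ only by the lattice discrepancy along $\partial(nF)$, which is at most $c(F)\,n^{d-1}$ because $F$ is a fixed polytope. Combined with your continuum bound $\cal{L}^d(P_n\Delta F)<\delta'$ this gives $\big\|\1_{\B{C}_\infty\cap nP_n}-\1_{\B{C}_\infty\cap nF}\big\|_{\ell^1}\le n^d\delta'+c(F)\,n^{d-1}$, and one may take $\cal{E}_0$ to be the sure event.
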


\begin{proof} Fix $G_n \in \cal{G}_n$ and hence $P_n$ and $\nu_n$. Let $\Delta^k \equiv \Delta^{k,d}$ be the dyadic cubes in $[-1,1]^d$ at scale $k \in \N$. Define:
\begin{align}
\textsf{Q}_0 := \Big\{ Q \in \Delta^k : Q \cap ( \pa F \cup \pa P_n) \neq \emptyset \Big\} \,,
\end{align}
and observe 
\begin{align}
\big\| \1_{\B{C}_\infty \cap nP_n} - &\1_{\B{C}_\infty \cap nF} \big\|_{\ell^1} \leq \sum_{Q\, \in\, \Delta^k} \left\| \1_{\B{C}_\infty \cap nP_n \cap nQ} - \1_{\B{C}_\infty \cap nF \cap nQ} \right\|_{\ell^1} \,, \\
 &\leq \sum_{Q\, \in\, \Delta^k \setminus \textsf{Q}_0} \left\| \1_{\B{C}_\infty \cap nP_n \cap nQ} - \1_{\B{C}_\infty \cap nF \cap nQ} \right\|_{\ell^1} +   \sum_{Q\, \in\, \textsf{Q}_0} \left\| \1_{\B{C}_\infty \cap nP_n \cap nQ} - \1_{\B{C}_\infty \cap nF \cap nQ} \right\|_{\ell^1} \,.
\end{align}
For $n$ sufficiently large depending on $k$, there is $c(d) >0$ so that
\begin{align}
\left\| \1_{\B{C}_\infty \cap nP_n} - \1_{\B{C}_\infty \cap nF} \right\|_{\ell^1} &\leq \sum_{Q\, \in\, \Delta^k \setminus \textsf{Q}_0}  \left\| \1_{\B{C}_\infty \cap nP_n \cap nQ} - \1_{\B{C}_\infty \cap nF \cap nQ} \right\|_{\ell^1} + c(d) (\gamma + \per(F)) n^d 2^{-dk} \,.
\end{align}
Define the following collections of dyadic cubes:
\begin{align}
\textsf{Q}_1 &:= \Big\{ Q \in \Delta^k : P_n \cap Q = Q \Big\} \,, \\
\textsf{Q}_2 &:= \Big\{ Q \in \Delta^k : F \cap Q = Q \Big\} \,, 
\end{align}
For each $Q \in \textsf{Q}_1$, we have $\B{C}_\infty \cap nP_n \cap nQ = \B{C}_\infty \cap nQ$. Likewise, for each $Q \in \textsf{Q}_2$, we have $\B{C}_\infty \cap nF \cap nQ = \B{C}_\infty \cap nQ$. Using these observations, we conclude
\begin{align}
\left\| \1_{\B{C}_\infty \cap nP_n} - \1_{\B{C}_\infty \cap nF} \right\|_{\ell^1} &\leq \sum_{Q\, \in\, \textsf{Q}_1 \Delta \textsf{Q}_2}  \left\| \1_{\B{C}_\infty \cap nP_n \cap nQ} - \1_{\B{C}_\infty \cap nF \cap nQ} \right\|_{\ell^1} + c(d) (\gamma + \per(F)) n^d 2^{-dk} \,.
\end{align}
For $\e > 0$ and for each $Q \in \Delta^k$, introduce the event 
\begin{align}
\cal{E}_Q := \left\{ \frac{\B{C}_\infty \cap nQ}{\cal{L}^d (nQ) } \in \left( \theta_p(d)(1-\e), \theta_p(d)(1+\e) \right) \right\} \,,
\end{align}
and let $\cal{E}_0$ be the intersection $\bigcap_{Q \in \Delta^k} \cal{E}_Q$. Within $\cal{E}_0$, 
\begin{align}
\left\| \1_{\B{C}_\infty \cap nP_n} - \1_{\B{C}_\infty \cap nF} \right\|_{\ell^1} &\leq n^d 2^k |\Delta^k| (1+\e) \dw(\nu_n, \nu_F) + c(d) (\gamma + \per(F)) n^d 2^{-dk} \,.
\end{align}
The term $2^k |\Delta^k|$ above comes from the definition of $\dw$. Choose $k$ sufficiently large depending on $d,\delta$ and $F$ so that $\delta/ 4 \leq c(d) (\gamma + \per(F)) 2^{-dk} \leq \delta /2$. When $\dw(\nu_n, \nu_F) < \e$, 
\begin{align}
\left\| \1_{\B{C}_\infty \cap nP_n} - \1_{\B{C}_\infty \cap nF} \right\|_{\ell^1} &\leq n^d 2^k |\Delta^k| (1+\e) \dw(\nu_n, \nu_F) + \frac{\delta}{2}n^d \,, \\
&\leq c(d) \delta^{-(d+1)/d} (\gamma + \per(F))^{(d+1)/d} (1+\e) \e n^d + \frac{\delta}{2} n^d \,, \\
&\leq \delta n^d \,,
\end{align}
where to obtain the last line, we choose $\e$ small depending on $d, \delta$ and $F$. We complete the proof by a union bound and Corollary \ref{density_control} applied to each $\cal{E}_Q$. \end{proof}

As outlined in Remark \ref{rmk:great_explain}, Proposition \ref{volume} below will be used with Proposition \ref{prop:closeness} and Corollary \ref{trunc_cons} to control the total size of all $\slice_j^\pm$ constructed. 

\begin{prop} Let $G_n \in \cal{G}_n$, let $F \subset [-1,1]^d$ be a polytope and let $B(x,r)$ be a ball with $r \in (0,1)$ and
\begin{align}
\cal{L}^d\Big( (B(x,r) \cap F) \,\, \Delta \,\, B_-(x,r) \Big) \leq \delta \al_d r^d \,.
\label{eq:8_volume_2}
\end{align}
Within the event
\begin{align}
 \Big\{ \left\| \1_{\B{C}_\infty \cap nP_n} - \1_{\B{C}_\infty \cap nF} \right\|_{\ell^1} \leq \delta n^d \al_d r^d \Big\} \cap \Big\{ \max_{G_n \in \cal{G}_n} | \pa^\om G_n | \leq \eta_3 n^{d-1} \Big\} \,,
\label{eq:8_volume_1}
\end{align}
there is  $c(d) >0$ so that for $n$ sufficiently large depending on $d,\delta,F$ and $r$, 
\begin{align}
\sum_{j=1}^{\ell^+} |\Lambda_j^+| + \sum_{j=1}^{\ell^-} |\Lambda_j^-|  \leq c(d) \delta n^d \al_d r^d \,,
\label{eq:8_volume_3}
\end{align}
where the $\Lambda_j^\pm$ are the outward and inward components associated to $G_n$ and $B(x,r)$.

\label{volume}
\end{prop}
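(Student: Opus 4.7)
The plan is to bound $\sum_j(|\Lambda_j^+| + |\Lambda_j^-|)$ by a single symmetric-difference cardinality, and then route this symmetric difference through the two natural intermediate sets $\B{C}_\infty \cap nP_n \cap nB(x,r)$ and $\B{C}_\infty \cap nF \cap nB(x,r)$, matching each resulting piece to one of the two hypotheses in \eqref{eq:8_volume_1}--\eqref{eq:8_volume_2}. The starting observation is purely definitional: the outward components $\Lambda_j^+$ are pairwise vertex-disjoint subsets of $G_n \cap (nB(x,r) \setminus nB_-(x,r))$, while the inward components $\Lambda_j^-$ are pairwise vertex-disjoint subsets of $(\B{C}_\infty \setminus G_n) \cap nB_-(x,r)$. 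Using $G_n \subset \B{C}_\infty$ so that $(G_n \cap nB(x,r)) \cap (\B{C}_\infty \cap nB_-(x,r)) = G_n \cap nB_-(x,r)$, a short computation gives
\begin{align}
\sum_j |\Lambda_j^+| + \sum_j |\Lambda_j^-| \leq \left| (G_n \cap nB(x,r)) \,\Delta\, (\B{C}_\infty \cap nB_-(x,r)) \right|.
\end{align}

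I would then insert $A := \B{C}_\infty \cap nP_n \cap nB(x,r)$ and $B := \B{C}_\infty \cap nF \cap nB(x,r)$ and apply the triangle inequality for symmetric differences. The first piece, $|(G_n \cap nB(x,r)) \,\Delta\, A|$, is bounded by $|F_n \setminus G_n|$: Lemma~\ref{case1} and the definition \eqref{eq:7_polytope} of $P_n$ identify $\B{C}_\infty \cap nP_n$ with the vertex set of $F_n$ (up to an $O(n^{d-1})$ correction coming from the clipping of $P_n$ to $[-1,1]^d$), and $G_n \subset F_n$ then collapses the symmetric difference to $F_n \setminus G_n$. On the event $\{|\pa^\om G_n| \leq \eta_3 n^{d-1}\}$, the argument proving Lemma~\ref{adding_k_small} gives $|F_n \setminus G_n| \leq \eta_3 n^{d-\e(d)}$. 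The second piece is immediately controlled by the hypothesis: $|A \,\Delta\, B| \leq \|\1_{\B{C}_\infty \cap nP_n} - \1_{\B{C}_\infty \cap nF}\|_{\ell^1} \leq \delta n^d \al_d r^d$.

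For the third piece, since $B_-(x,r) \subset B(x,r)$, I would use
\begin{align}
|B \,\Delta\, (\B{C}_\infty \cap nB_-(x,r))| = \left| \B{C}_\infty \cap \big( (nF \cap nB(x,r)) \,\Delta\, nB_-(x,r) \big) \right|,
\end{align}
and then bound this by $|\Z^d \cap E|$ where $E := (nF \cap nB(x,r)) \,\Delta\, nB_-(x,r)$. A standard lattice-point estimate yields $|\Z^d \cap E| \leq \cal{L}^d(E) + c(d)\,\cal{H}^{d-1}(\pa E)$; scaling and the hypothesis \eqref{eq:8_volume_2} give $\cal{L}^d(E) = n^d \cal{L}^d((F \cap B(x,r)) \,\Delta\, B_-(x,r)) \leq \delta n^d \al_d r^d$, while $F$ being a polytope and $B(x,r)$, $B_-(x,r)$ having smooth boundaries contributes $\cal{H}^{d-1}(\pa E) \leq c(F,r)\, n^{d-1}$.

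Summing the three estimates yields
\begin{align}
\sum_j |\Lambda_j^+| + \sum_j |\Lambda_j^-| \leq 2\delta n^d \al_d r^d + \eta_3 n^{d-\e(d)} + c(F,r)\, n^{d-1}.
\end{align}
For $d \geq 3$ we have $\e(d) > 0$, so both error terms are $o(n^d)$, and for $n$ taken sufficiently large in terms of $d, \delta, F$ and $r$ each is dominated by $\delta n^d \al_d r^d$, giving the claimed inequality \eqref{eq:8_volume_3} with $c(d) = 4$. The main obstacle is the passage from $G_n$ to $\B{C}_\infty \cap nP_n$: one must absorb the cumulative vertex contribution of the small components $S_j$, and this is where the open-edge-boundary bound $|\pa^\om G_n| \leq \eta_3 n^{d-1}$ appearing in \eqref{eq:8_volume_1} is essential, as well as the only point where the dimensional restriction $d \geq 3$ enters, through the positivity of $\e(d)$.
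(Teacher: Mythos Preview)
Your proof is correct and uses the same ingredients as the paper's: the containment $G_n \subset F_n = H_n \cap \B{C}_\infty$ from Lemma~\ref{case1}, the bound $|F_n \setminus G_n| \leq \eta_3 n^{d-\e(d)}$ from the argument of Lemma~\ref{adding_k_small}, the $\ell^1$-hypothesis to pass from $nP_n$ to $nF$, and a lattice-point--versus--volume comparison to invoke \eqref{eq:8_volume_2}. The only organizational difference is that the paper treats the outward and inward components separately (bounding $\sum |\Lambda_j^+|$ via $G_n \cap n(B\setminus B_-) \subset \B{C}_\infty \cap nP_n \cap n(B\setminus B_-)$ and $\sum |\Lambda_j^-|$ via $(\B{C}_\infty \setminus G_n)\cap nB_-$), whereas you package both into a single symmetric difference $(G_n\cap nB)\,\Delta\,(\B{C}_\infty\cap nB_-)$ and route it through $A$ and $B$ by the triangle inequality; this is a cosmetic streamlining rather than a different idea.
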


\begin{proof} Write $B(x,r)$ as $B$ and $B_-(x,r)$ as $B_-$ for brevity. We first handle the outward components. If $\Lambda_j^+$ is outward, it is contained in $G_n \cap n(B \setminus B_-)$.  These components are pairwise disjoint, and hence by Lemma \ref{case1},
\begin{align}
\sum_{j=1}^{\ell^+} | \Lambda_j^+| &\leq | G_n \cap n(B \setminus B_-) | \, \\
&\leq | \B{C}_\infty \cap nP_n \cap n(B \setminus B_-) | \, \\
&\leq \left\| \1_{\B{C}_\infty \cap nP_n} - \1_{\B{C}_\infty \cap nF} \right\|_{\ell^1} + | \B{C}_\infty \cap n F \cap n(B \setminus B_-) | \,.
\label{eq:8_vol_1}
\end{align}
Take $n$ sufficiently large depending on $d, F$ and $r$, to obtain
\begin{align}
\sum_{j=1}^{\ell^+} |\Lambda_j^+| &\leq \left\| \1_{\B{C}_\infty \cap nP_n} - \1_{\B{C}_\infty \cap nF} \right\|_{\ell^1} + c(d) n^d \cal{L}^d((B \cap F)\, \Delta\, B_-)\,.
\label{eq:8_vol_2}
\end{align}
Within the event \eqref{eq:8_volume_1} and using the bound \eqref{eq:8_volume_2}, we find
\begin{align}
\sum_{j=1}^{\ell^+} |\Lambda_j^+| &\leq \delta n^d \al_dr^d + c(d) \delta n^d \al_d r^d \,.
\label{eq:8_vol_3}
\end{align}

The inward components are also pairwise disjoint, and each is contained within $(\B{C}_\infty \setminus G_n) \cap nB_-$:
\begin{align}
\sum_{j=1}^{\ell^-} | \Lambda_j^- | &\leq | (\B{C}_\infty \setminus G_n) \cap nB_- | \, \\
&\leq | (\B{C}_\infty \cap nP_n) \setminus G_n | + | (\B{C}_\infty \setminus nP_n) \cap nB_- | \,\\
&\leq | (\B{C}_\infty \cap nP_n)  \setminus G_n| + \left\| \1_{\B{C}_\infty \cap nP_n } - \1_{\B{C}_\infty \cap nF} \right\|_{\ell^1} + | (\B{C}_\infty \setminus nF) \cap nB_- | \,.
\end{align}
The second line above follows from Lemma~\ref{case1}. The discrete precursor to $P_n$ was  $F_n \subset \B{C}_\infty$, defined in \eqref{eq:6.2_F}. The polytope $F$ in the statement of this proposition is \emph{not related} to this $F_n$ (this is the only instance the letter $F$ is overloaded with meaning). Use the definition of $F_n$ and Lemma~\ref{case1}:
\begin{align}
\sum_{j=1}^{\ell^-} | \Lambda_j^- | &\leq | F_n  \setminus G_n| + \left\| \1_{\B{C}_\infty \cap nP_n } - \1_{\B{C}_\infty \cap nF} \right\|_{\ell^1} + | (\B{C}_\infty \setminus nF) \cap nB_- | \,, \\
&\leq | F_n \setminus G_n| + \delta n^d \al_d r^d + c(d) n^d \cal{L}^d ( (B \cap F)\,  \Delta\, B_-) \,,
\label{eq:8_vol_4}
\end{align}
when $n$ is taken sufficiently large depending on $r,F$ and $d$. Within the event \eqref{eq:8_volume_1}, Lemma~\ref{adding_k_small} implies $|F_n \setminus G_n| \leq n^d n^{-\e(d)}$, where $\e(d)$ is defined in \eqref{eq:epsilon_d}. All that matters is that $\e(d) >0$, which is the case when $d \geq 3$, and we use this in \eqref{eq:8_vol_4} to deduce:
\begin{align}
\sum_{j=1}^{\ell^-} | \Lambda_j^- | &\leq n^d n^{-\e(d)} + \delta n^d \al_d r^d + c(d) \delta n^d \al_d r^d  \,.
\label{eq:8_vol_5}
\end{align}
We complete the proof taking $n$ larger if necessary, and using \eqref{eq:8_vol_5} with \eqref{eq:8_vol_3}. \end{proof}

\begin{rmk} We can now bound $|\slice_j^\pm|$ by combining \eqref{eq:8_coro_trunc_cons} of Corollary \ref{trunc_cons} and \eqref{eq:8_volume_3} of Proposition \ref{volume}. As Figure \ref{fig:final_4} suggests, the $\slice_j^\pm$ together with the edges of $\pa^\om G_n$ lying in the thin cylinder $n \cyl(D(x,r'), hr')$ form a cutset separating the faces of this cylinder. We leverage this in the next subsection.
\label{rmk:cut_hints}
\end{rmk}

Motivated by Remark \ref{rmk:cut_hints}, introduce the following edge set depending on $G_n \in \cal{G}_n$, $h>0$ and the ball $B(x,r)$.
\begin{align}
E_n := \Big( \pa^\om G_n \cap n B(x,r) \Big) \cup \left( \bigcup_{j=1}^{\ell^+} \slice_j^+ \right) \cup \left(  \bigcup_{j=1}^{\ell^-} \slice_j^- \right) \,.
\label{eq:8_E_cut}
\end{align}

\subsection{Lower bounds on $|\pa^\om G_n|$} Given a collection of balls $\{ B(x_i, r_i) \}_{i=1}^m$ from Lemma \ref{covering}, a polytope $F$, $h >0$ and $G_n \in \cal{G}_n$, repeat the construction of the previous subsection within each $B(x_i,r_i)$. For $G_n \in \cal{G}_n$, $h >0$ and each $B(x_i,r_i)$, define the edge set $E_n^{(i)}$ as in \eqref{eq:8_E_cut}.

For these objects and the parameters $\delta >0$ and $s \in (0,1/2)$, define an event whose purpose is described in Remark \ref{rmk:F_purpose}:
\begin{align}
\cal{F}(G_n,i;h) := \left\{ \left|E_n^{(i)}\right| \leq \left( 1-s +  c(p,d) \frac{\delta}{h} \right) n^{d-1} \al_{d-1} (r_i)^{d-1} \beta_{p,d}(v_i) \right\} \,.
\label{eq:8_event_2}
\end{align}
The constant $c(p,d) >0$ is not specified here, as it arises naturally in the proof of Corollary~\ref{good_bound} below. It comes from the constants in \eqref{eq:8_volume_3} and \eqref{eq:8_coro_trunc_cons}, the ratio $\al_{d-1} / \al_d$ and the extreme values of $\beta_{p,d}$ over the unit sphere.

\begin{rmk} The bounds on $|\slice^\pm|$ from the previous section also control each $|E_n^{(i)}|$. We use these bounds with concentration estimates when the $E_n^{(i)}$ form a cutset to show $\cal{F}(G_n,i;h)$ is rare when $\delta,h$ and $s$ are chosen appropriately. Corollary \ref{good_bound} below relates the events $\cal{E}(G_n,i)$ introduced at the beginning of the section to the $\cal{F}(G_n,i;h)$. By Lemma \ref{9.1}, knowing each $\cal{F}(G_n,i;h)$ is a low-probability event tells us it is also rare for $|\pa^\om G_n|$ to be too small.  
\label{rmk:F_purpose}
\end{rmk}

\begin{coro} Let $G_n \in \cal{G}_n$, let $F \subset [-1,1]^d$ be a polytope and let $\{ B(x_i ,r_i) \}_{i=1}^m$ be a collection of balls as in Lemma \ref{covering} for $F$ and the paramters $\delta >0 , s \in (0,1/2)$. Let $h >0$, and form the edge sets $E_n^{(i)}$. For $n$ sufficiently large depending on $d, \e_F$ and $F$,
\begin{align}
\cal{E}(G_n,i) \cap  \Big\{ \left\| \1_{\B{C}_\infty \cap nP_n} - \1_{\B{C}_\infty \cap nF} \right\|_{\ell^1} \leq \e_F n^d \Big\} \cap \Big\{ \max_{G_n \in \cal{G}_n} | \pa^\om G_n | \leq \eta_3 n^{d-1} \Big\} \subset \cal{F}(G_n,i;h) \,,
\label{eq:8_good_bound}
\end{align}
where $\cal{E}(G_n,i)$ and $\cal{F}(G_n,i;h)$ are events respectively defined in \eqref{eq:8_event} and \eqref{eq:8_event_2}, and where $\e_F = \delta \min_{i=1}^m (r_i)^d \al_d$ was defined in \eqref{eq:8_epsilon}.
\label{good_bound}
\end{coro}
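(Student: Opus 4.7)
The plan is to bound $|E_n^{(i)}|$ by decomposing it into three pieces and controlling each in turn: the contribution from $\pa^\om G_n$ inside the ball, which is handled directly by the hypothesis $\cal{E}(G_n,i)$, and the two slice contributions $\sum_j |\slice_j^\pm|$, which I will control by combining Corollary~\ref{trunc_cons} with Proposition~\ref{volume}. Writing $B = B(x_i,r_i)$, the starting point will simply be
\begin{align}
\left|E_n^{(i)}\right| \;\leq\; \left|\pa^\om G_n \cap nB\right| + \sum_{j=1}^{\ell^+} \left|\slice_j^+\right| + \sum_{j=1}^{\ell^-} \left|\slice_j^-\right| \,,
\end{align}
where the first term is $\leq (1-s)\,n^{d-1}\al_{d-1}(r_i)^{d-1}\beta_{p,d}(v_i)$ inside $\cal{E}(G_n,i)$.

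Next I will verify the hypotheses of Proposition~\ref{volume} for the ball $B(x_i,r_i)$. The geometric condition \eqref{eq:8_volume_2} is exactly the output \eqref{eq:8_covering_volume} of Lemma~\ref{covering} applied to $F$. The probabilistic hypothesis needs $\|\1_{\B{C}_\infty \cap nP_n} - \1_{\B{C}_\infty \cap nF}\|_{\ell^1} \leq \delta \al_d (r_i)^d n^d$, but by the definition $\e_F = \delta \min_i \al_d (r_i)^d$ we have $\e_F \leq \delta \al_d (r_i)^d$, so the bound $\e_F n^d$ assumed in the statement is strong enough. The surface-area hypothesis is the third event in the intersection. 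Thus Proposition~\ref{volume} yields, for $n$ sufficiently large depending on $d,\e_F,F$,
\begin{align}
\sum_{j=1}^{\ell^+} |\Lambda_j^+| + \sum_{j=1}^{\ell^-} |\Lambda_j^-| \;\leq\; c(d)\,\delta\, n^d \al_d (r_i)^d \,.
\end{align}

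Feeding this into Corollary~\ref{trunc_cons} applied with $r = r_i$ then gives
\begin{align}
\sum_{j=1}^{\ell^+} |\slice_j^+| + \sum_{j=1}^{\ell^-} |\slice_j^-| \;\leq\; \frac{c(d)}{n h r_i}\cdot c(d)\,\delta\,n^d \al_d (r_i)^d \;=\; c(d)\,\frac{\delta}{h}\,n^{d-1}\al_d (r_i)^{d-1}\,.
\end{align}
To match the form of $\cal{F}(G_n,i;h)$, I will rewrite the right-hand side as
\begin{align}
c(d)\,\frac{\delta}{h}\,n^{d-1}\al_d(r_i)^{d-1} \;\leq\; c(p,d)\,\frac{\delta}{h}\,n^{d-1}\al_{d-1}(r_i)^{d-1}\beta_{p,d}(v_i) \,,
\end{align}
which is legitimate because $\beta_{p,d}$ is a norm (Proposition~\ref{norm}) and in particular is bounded below by a positive $p,d$-dependent constant on the unit sphere, so that $\al_d/[\al_{d-1}\beta_{p,d}(v_i)]$ is uniformly bounded in $v_i$. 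Summing this with the bound on $|\pa^\om G_n \cap nB|$ produces exactly the inequality defining $\cal{F}(G_n,i;h)$.

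The argument is essentially a bookkeeping exercise, so there is no genuinely hard step; the only subtlety is choosing the dependencies correctly. The point that requires the most care is ensuring that the $\ell^1$-closeness hypothesis is strong enough to invoke Proposition~\ref{volume} for \emph{every} ball $B(x_i,r_i)$ in the covering simultaneously—this is precisely why $\e_F$ was defined as the minimum over $i$ of $\delta\al_d (r_i)^d$ rather than a single $\delta$, and why Proposition~\ref{prop:closeness} was formulated with the $\ell^1$-distance bounded by $\delta n^d$ for arbitrary $\delta$. The absorption of the geometric factor $\al_d$ into the constant $c(p,d)$ via the lower bound on $\beta_{p,d}$ is the only place where the norm properties established in Section~\ref{sec:norm} enter.
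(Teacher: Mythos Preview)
Your proof is correct and follows essentially the same route as the paper: decompose $|E_n^{(i)}|$ into the $\pa^\om G_n$ piece (controlled by $\cal{E}(G_n,i)$) and the slice pieces, then feed the output of Proposition~\ref{volume} into Corollary~\ref{trunc_cons}, absorbing $\al_d/\al_{d-1}$ and $1/\beta_{p,d}(v_i)$ into the constant $c(p,d)$ defining $\cal{F}(G_n,i;h)$. Your explicit remark that $\e_F \leq \delta\al_d(r_i)^d$ for every $i$ is exactly the point that makes Proposition~\ref{volume} applicable at each ball, and the paper handles the constants in the same way (see the remark preceding the corollary).
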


\begin{proof} For the convenience of the reader, we recall the definition of the event $\cal{E}(G_n,i)$:
\begin{align}
\cal{E}(G_n,i) = \Big\{ | \pa^\om G_n \cap nB(x_i, r_i) | \leq (1-s) n^{d-1} \al_{d-1} (r_i)^{d-1} \beta_{p,d}(v_i) \Big\}\,.
\end{align}
Working within the event on the left-hand side of \eqref{eq:8_good_bound}, and as each $B(x_i,r_i)$ from Lemma~\ref{covering} satisfies 
\begin{align}
\cal{L}^d( (B(x_i,r_i) \cap F) \, \Delta \, B_-(x_i,r_i) ) \leq \delta \al_d (r_i)^d \,,
\end{align}
we apply Proposition \ref{volume} within each ball (with $n$ taken sufficiently large), obtaining 
\begin{align}
\sum_{j=1}^{\ell^+} |\Lambda_j^+(i)| + \sum_{j=1}^{\ell^-} |\Lambda_j^-(i)|  \leq c(d) \delta n^d \al_d (r_i)^d \,,
\end{align}
where the $\Lambda_j^\pm(i)$ are the inward and outward components corresponding to $G_n, h >0$ and the ball $B(x_i,r_i)$. Form the edge sets $\slice_j^\pm(i)$ are defined as in \eqref{eq:8_slice} and apply Corollary \ref{trunc_cons}:
\begin{align}
\sum_{j=1}^{\ell^+} |\slice_j^+(i)| + \sum_{j=1}^{\ell^-} |\slice_j^-(i)| &\leq \frac{c(d)}{nhr_i } \Big( c(d) \delta n^d \al_d (r_i)^d\Big) \,, \\
&\leq c(d) \frac{\delta}{h} n^{d-1} \al_{d-1} (r_i)^{d-1} \,,
\end{align}
Use the definitions of $\cal{E}(G_n,i)$ and the $E_n^{(i)}$: within the event on the left-hand side of \eqref{eq:8_good_bound}, 
\begin{align}
|E_n^{(i)} | \leq (1-s) n^{d-1} \al_{d-1} (r_i)^{d-1} \beta_{p,d}(v_i) + c(d) \frac{\delta}{h} n^{d-1} \al_{d-1} (r_i)^{d-1} \,.
\end{align}
The proof is complete upon defining $\cal{F}(G_n,i ;h)$ appropriately in \eqref{eq:8_event_2}.\end{proof}

The next lemma tells us each $E_n^{(i)}$ forms an open cutset with high probability.

\begin{lem} Let $F$ be a polytope, let $h >0$ and let $\{B(x_i, r_i)\}_{i=1}^m$ be a collection of balls as in Lemma \ref{covering} for $F$, $\delta >0$ and $s \in (0,1/2)$. Let $\cal{E}_1$ be the event that for each $G_n \in \cal{G}_n$ and all $i \in \{1,\dots, m\}$, any open path in $\dcyl( D(x,r_i'), hr_i', n)$ joining the faces $\dface^\pm ( D(x,r_i'), hr_i', n)$ uses an edge of $E_n^{(i)}$. There are positive constants $c_1, c_2$ depending on $p,d,F, \delta,s,h$ so that 
\begin{align}
\prob_p( \cal{E}_1) \geq 1 - c_1 \exp \Big( - c_2 n^{(d-1)/d} \Big) \,,
\end{align}
where we recall $r_i' := (1-h^2) r_i^2$.
\label{E_cuts}
\end{lem}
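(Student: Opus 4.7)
The plan is to prove the cutset property of $E_n^{(i)}$ by combining a deterministic component-level analysis with a probabilistic estimate. Fix $G_n \in \cal{G}_n$ and $i$, and suppose for contradiction that $\gamma$ is an open path in $\dcyl(D(x_i, r_i'), hr_i', n)$ from $\dface^+$ to $\dface^-$ using no edge of $E_n^{(i)}$. Because $E_n^{(i)} \supset \pa^\om G_n \cap nB(x_i, r_i)$ and $\gamma \subset nB(x_i, r_i)$, the path avoids $\pa^\om G_n$ entirely, so either $\gamma \subset G_n$ or $\gamma \subset \B{C}_\infty \setminus G_n$. I will treat the former case; the latter is symmetric via inward components and $\slice_j^-$ edges.

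In the graph obtained from $\B{C}_\infty \cap nB(x_i, r_i)$ by closing every edge of $\pa^\om G_n \cup J_n$, the buffer $J_n$ removes all open crossings of the equator, so each connected component lies entirely on one side of the equatorial plane. The path $\gamma$ decomposes into maximal subpaths separated by $J_n$-uses, each subpath confined to a single component. The initial subpath contains $v_0 \in \dface^+$, lies in $G_n$ above the equator, and must exit through a $J_n$ edge (by the avoidance hypothesis); hence it belongs to an enumerated outward component $\Lambda_{j_1}^+$. If $\Lambda_{j_1}^+$ is bad, then this subpath descends from height $nhr_i'$ at $v_0$ to within $O(1)$ of the equator at its exit, and therefore crosses the plane at height $n h_{j_1}^+ r_i'$ furnished by Lemma \ref{trunc_bad} (we may take $h_{j_1}^+$ bounded away from zero by restricting the pigeonhole step in the proof of that lemma to $\al_k$ above a small positive constant). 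The crossing edge lies in $\rmE(\Lambda_{j_1}^+) \cap \slice_{j_1}^+ \subset E_n^{(i)}$, contradicting the hypothesis. Hence $\Lambda_{j_1}^+$ must be good, i.e., entirely contained in the thin cylinder.

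The remaining task is to show that the presence of such a good outward component touching $\dface^+$---equivalently, a finger of $G_n$ inside the thin cylinder that reaches the top face and descends to the equator---is a rare event. The plan is a Peierls-style estimate: such a finger is surrounded in $\B{C}_\infty$ by a $*$-connected contour consisting of closed edges together with edges of $\pa^\om G_n$, and this contour must have linear diameter at least of order $nhr_i'$. Enumerating candidate contours via the $*$-connectivity of Proposition \ref{star_conn}, in the spirit of the renormalisation argument of Section \ref{sec:coarse_applied} and the Peierls count in the proof of Proposition \ref{finite_per}, the probability that a contour of size $\ell$ arises is bounded by $\exp(-c\ell)$. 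Balancing this against the polynomially many root positions in $\dface^+$ and the cylinder count $m$, the optimal scale is $\ell$ of order $n^{(d-1)/d}$, producing the decay $\exp(-c_2 n^{(d-1)/d})$. The main difficulty is the interplay between the random optimiser $G_n$ and the percolation configuration: the contour encoding must both identify the finger geometrically and certify that it sits inside $G_n$, which is where the surface bound of Lemma \ref{surface_boundary} enters to control the complexity contributed by varying over $G_n \in \cal{G}_n$.
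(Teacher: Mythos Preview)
Your dichotomy ``either $\gamma \subset G_n$ or $\gamma \subset \B{C}_\infty \setminus G_n$'' presupposes that $\gamma$ lies in $\B{C}_\infty$, but the lemma concerns \emph{all} open paths in the cylinder, and an open path may lie in a finite open cluster. This omitted case is in fact the entire probabilistic content of the lemma. The paper's proof asserts that the construction of the $\slice_j^\pm$ guarantees, deterministically, that every open path in $\B{C}_\infty$ between the faces uses an edge of $E_n^{(i)}$; the event $\cal{E}_1$ can therefore fail only if some open path in a \emph{finite} cluster crosses the cylinder. Such a path has length at least $2r_i'hn$, hence its cluster has at least that many vertices, and Theorem~\ref{kzgm} combined with a union bound over the $O(n^d)$ possible starting vertices yields the bound $c_1\exp(-c_2 n^{(d-1)/d})$.

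Your treatment of the $\B{C}_\infty$ case is also problematic. When the component $\Lambda_{j_1}^+$ is good you propose a Peierls argument to show that a finger of $G_n$ reaching $\dface^+$ is rare. But nothing in the setup forbids $G_n$ from occupying the upper half of the ball: the location of $\pa G_n$ relative to the equator is exactly what the surrounding section is trying to establish, so you cannot assume it away, and a contour count rooted at $\dface^+$ would have to vary over all $G_n\in\cal{G}_n$ without any a priori geometric constraint tying the contour to a rare percolation event. The paper avoids this entirely: the slice construction is designed so that the $\B{C}_\infty$ case is handled deterministically, and all the probability goes into excluding long finite clusters.
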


\begin{proof} Our primary tool is Theorem \ref{kzgm}. We drop the indexing for the sake of clarity and work with generic objects: balls $B(x,r)$, discs $D(x,r')$ and edge sets $E_n$. 

From the careful construction of $\slice_j^\pm$, any open path in $\B{C}_\infty$ between faces $\dface^\pm ( D(x,r'), hr', n)$ in $\dcyl( D(x,r'), hr', n)$ uses an edge of $E_n$. In the almost sure event that there is a unique infinite cluster, $\dface^\pm ( D(x,r'), hr', n)$ can only be joined by an open path in $\dcyl( D(x,r'), hr', n)$ if this path lies in a finite open cluster. Such a path uses at least $2r'hn$ edges, and the cluster containing this path must have volume at least $2r'hn$. A union bound with Theorem \ref{kzgm} applied to each point in $[-n,n]^d \cap \Z^d$ gives the desired result.\end{proof}

\begin{rmk} Let $\cal{E}_1$ be the event from Lemma \ref{E_cuts}. For each $\om \in \cal{E}_1$, completing each $E_n^{(i)}$ to a full cutset in $\dcyl( D(x,r'), hr', n)$ implies $| E_n^{(i)} | \geq \face( D(x_i, r_i'), hr_i',n)$ in $\om$. The next proposition aggregates all work done in this section. 
\label{rmk:8_observation}
\end{rmk}

\begin{prop} Let $F \subset [-1,1]^d$ be a polytope, and for $s \in (0,1/2)$, let $\lambda_F(s) = (1-2s) \cal{I}_{p,d}(F)$. There are positive constants $c_1(p,d,s,F)$, $c_2(p,d,s,F)$ and $\wt{\e}_F(p,d,s,F)$ so that 
\begin{align}
\prob_p \Big( \Big\{ \exists G_n \in \cal{G}_n \text{ such that } | \pa^\om G_n| \leq \lambda_F(s) n^{d-1} \text{ and } \dw( \mu_n, \nu_F) \leq \wt{\e}_F \Big\} \Big) \leq c_1 \exp \left( -c_2 n^{1/2d}  \right) \,.
\end{align}
\label{prob_conversion}
\end{prop}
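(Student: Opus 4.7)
The plan is to combine the structural results Lemma~\ref{9.1} and Corollary~\ref{good_bound} with the concentration estimate Proposition~\ref{disc}, using Theorem~\ref{emp_close} and Proposition~\ref{prop:closeness} to transfer from $\dw$-closeness of $\mu_n$ and $\nu_F$ to the $L^1$-hypothesis needed by Corollary~\ref{good_bound}.

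I would fix auxiliary parameters in the following order. First, choose $h > 0$ small enough that $h < \eta(p,d,s/2)$ (the smallness parameter from Proposition~\ref{disc}) and that $(1 - \tfrac{3s}{4})(1-h^2)^{-(d-1)/2} \leq 1 - \tfrac{s}{2}$; the latter absorbs the scale mismatch between $r_i$ and $r_i' = (1-h^2)^{1/2} r_i$. Next choose $\delta > 0$ small enough that $c(p,d)\delta/h \leq s/4$, so that the slack appearing in the definition \eqref{eq:8_event_2} of $\cal{F}(G_n,i;h)$ fits inside $s/4$. Apply Lemma~\ref{covering} to $F$ with these $\delta, s$ to obtain disjoint balls $\{B(x_i,r_i)\}_{i=1}^m$ and the threshold $\e_F$ from \eqref{eq:8_epsilon}. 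Finally, take $\wt{\e}_F > 0$ small enough that, by Proposition~\ref{prop:closeness}, $\dw(\nu_n, \nu_F) \leq 2\wt{\e}_F$ together with $\max_{G_n} \per(nP_n) \leq \gamma n^{d-1}$ implies $\|\1_{\B{C}_\infty \cap nP_n} - \1_{\B{C}_\infty \cap nF}\|_{\ell^1} \leq \e_F n^d$ off an exponentially small event.

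I would work on the intersection of five high probability events: $\cal{A}_1 = \{\max_{G_n} \dw(\mu_n, \nu_n) < \wt{\e}_F\}$ from Theorem~\ref{emp_close}; $\cal{A}_2$ from Corollary~\ref{finite_per_2} controlling $\per(nP_n)$; $\cal{A}_3 = \{\max_{G_n} |\pa^\om G_n| \leq \eta_3 n^{d-1}\}$ from Lemma~\ref{surface_boundary}; the event $\cal{A}_4$ supplied by Proposition~\ref{prop:closeness}; and the event $\cal{A}_5$ from Lemma~\ref{E_cuts}. On $\bigcap_{j=1}^5 \cal{A}_j$, assume some $G_n \in \cal{G}_n$ satisfies $|\pa^\om G_n| \leq \lambda_F(s) n^{d-1}$ and $\dw(\mu_n, \nu_F) \leq \wt{\e}_F$. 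The triangle inequality in $\dw$ yields $\dw(\nu_n, \nu_F) \leq 2\wt{\e}_F$, so the $L^1$-bound holds; combining with $\cal{A}_2$ and $\cal{A}_3$ puts us in the hypothesis of Corollary~\ref{good_bound}. By Lemma~\ref{9.1} some $\cal{E}(G_n, i)$ holds, which by Corollary~\ref{good_bound} forces $\cal{F}(G_n, i; h)$, giving
\begin{align}
|E_n^{(i)}| \leq \left(1 - \tfrac{3s}{4}\right) n^{d-1} \al_{d-1} r_i^{d-1} \beta_{p,d}(v_i).
\end{align}

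On $\cal{A}_5$ the set $E_n^{(i)}$ separates $\dface^\pm(D(x_i, r_i'), h r_i', n)$ in $\dcyl(D(x_i, r_i'), h r_i', n)$, and since $E_n^{(i)}$ consists entirely of open edges, appending the closed edges of the discrete cylinder produces a full cutset $S$ with $|S|_\om = |E_n^{(i)}|$; hence $\face(D(x_i, r_i'), h r_i', n) \leq |E_n^{(i)}|$. Rewriting the discoidal cylinder as $\dcyl(D(x_i/r_i', 1), h, nr_i')$ and using $r_i^{d-1} = (1-h^2)^{-(d-1)/2} (r_i')^{d-1}$, the resulting bound is at most $(1 - \tfrac{s}{2}) (nr_i')^{d-1} \al_{d-1} \beta_{p,d}(v_i)$ by our choice of $h$. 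Proposition~\ref{disc} applied at scale $nr_i'$ with error $s/2$ then bounds the probability of this event by $c_1 \exp(-c_2 (nr_i')^{(d-1)/3})$. Summing over $i = 1, \dots, m$ and adding the exponential bounds on $\prob_p(\cal{A}_j^c)$ for $j = 1, \dots, 5$ gives the desired estimate. The principal difficulty is the parameter bookkeeping: $s$ is fixed by the statement, $h$ and $\delta$ then depend on $s, p, d$, these determine $m$ and the radii $r_i$ via Lemma~\ref{covering}, and $\wt{\e}_F$ is chosen last in terms of $\e_F, d, F$; the final rate is the slowest among the five constituent exponential bounds.
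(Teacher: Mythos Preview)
Your proposal is correct and follows essentially the same route as the paper: the paper likewise works in the intersection of the five events supplied by Theorem~\ref{emp_close}, Corollary~\ref{finite_per_2}, Lemma~\ref{surface_boundary}, Proposition~\ref{prop:closeness} and Lemma~\ref{E_cuts}, applies Lemma~\ref{9.1} and Corollary~\ref{good_bound} to land in $\cal{F}(G_n,i;h)$, uses Remark~\ref{rmk:8_observation} to get $\face(D(x_i,r_i'),hr_i',n)\leq |E_n^{(i)}|$, and finishes with Proposition~\ref{disc}. The only cosmetic difference is that the paper states the covering first and calibrates $h,\delta$ at the end, whereas you fix $h,\delta$ upfront; both orderings are fine.
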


\begin{proof} Let $\{B(x_i,r_i)\}_{i=1}^m$ be a collection of balls as in Lemma \ref{covering} for $F$, $\delta >0$ and $s \in (0,1/2)$. The parameter $\delta$ and a height parameter $h >0$ will be fixed as functions of $p,d$ and $s$ later. 

Let $\cal{E}_0$ be the event from Proposition \ref{prop:closeness} for parameter $\wt{\e}_F$ to be determined later. Let $\cal{E}_1$ be the event from Lemma \ref{E_cuts}, and for constants $\eta_3$ and $\gamma$ from Lemma~\ref{surface_boundary} and Corollary~\ref{finite_per_2}, define
\begin{align}
\cal{E}_2 := \Big\{ \max_{G_n \in \cal{G}_n} \dw(\mu_n, \nu_n) \leq \wt{\e}_F \Big\} \,, \hspace{5mm}\cal{E}_3 := \Big\{ \max_{G_n \in \cal{G}_n} | \pa^\om G_n | \leq \eta_3 n^{d-1} \Big\} \,,
\end{align}
\begin{align}
\cal{E}_4 &:= \Big\{ \max_{G_n \in \cal{G}_n} \per(P_n) \leq \gamma \Big \} \,,
\end{align}
Let $\cal{E}^*$ be the intersection of $\cal{E}_0$ through $\cal{E}_4$. Apply Lemma \ref{9.1} to conclude
\begin{align}
\Big\{ \exists G_n \in \cal{G}_n \text{ such that } | \pa^\om G_n| \leq \lambda_F(s) n^{d-1} &\text{ and } \dw( \mu_n, \nu_F) \leq \wt{\e}_F \Big\} \cap \cal{E}^* \\
&\subset \bigcup_{G_n \in \cal{G}_n} \bigcup_{i=1}^m \cal{E}(G_n, i) \cap \Big\{ \dw(\mu_n, \nu_F ) \leq \wt{\e}_F \Big\} \cap \cal{E}^* \,,\\
& \subset \bigcup_{G_n \in \cal{G}_n} \bigcup_{i=1}^m \cal{E}(G_n, i) \cap \Big\{ \dw(\nu_n, \nu_F ) \leq 2\wt{\e}_F \Big\} \cap \cal{E}^* \,,
\label{eq:9.67}
\end{align}
where \eqref{eq:9.67} follows from $\cal{E}^* \subset \cal{E}_2$. Use that $\cal{E}^*$ is contained in $\cal{E}_0$ and in $\cal{E}_4$ with Proposition~\ref{prop:closeness}, choosing $\wt{\e}_F$ small depending on $\e_F, d$ and $F$ so that 
\begin{align}
\Big\{ \exists G_n \in \cal{G}_n &\text{ such that } | \pa^\om G_n| \leq \lambda_F(s) n^{d-1} \text{ and } \dw( \mu_n, \nu_F) \leq \wt{\e}_F \Big\} \cap \cal{E}^* \\
& \subset \bigcup_{G_n \in \cal{G}_n} \bigcup_{i=1}^m \cal{E}(G_n, i) \cap \Big\{ \big\| \1_{\B{C}_\infty \cap n P_n} - \1_{\B{C}_\infty \cap nF} \big\|_{\ell^1} \leq \e_F n^d \Big\} \cap \cal{E}^* \,,\\
&\subset \bigcup_{G_n \in \cal{G}_n} \bigcup_{i=1}^m \cal{F}(G_n,i ;h) \cap \cal{E}^* \,,
\end{align}
where we have used Corollary \ref{good_bound} and taken $n$ large depending on $d,F$ and $\e_F$, using $\cal{E}^* \subset \cal{E}_3$. 

Finally, as $\cal{E}^*$ contains $\cal{E}_1$, we use Remark~\ref{rmk:8_observation} to conclude 
\begin{align}
\Big\{ &\exists G_n \in \cal{G}_n \text{ such that } | \pa^\om G_n| \leq \lambda_F(s) n^{d-1} \text{ and } \dw( \mu_n, \nu_F) \leq \wt{\e}_F \Big\} \cap \cal{E}^* \\
&\subset \bigcup_{G_n \in \cal{G}_n} \bigcup_{i=1}^m \Big\{  \face( D(x_i, r_i'), hr_i',n) \leq \Big(1-s + c(p,d) \frac{\delta}{h} \Big) n^{d-1} \al_{d-1} (r_i)^{d-1} \beta_{p,d}(v_i) \Big\}  \,, \\
&\subset \bigcup_{i=1}^m \Big\{  \face( D(x_i, r_i'), hr_i',n) \leq \Big(1-s + c(p,d) \frac{\delta}{h} \Big) n^{d-1} \al_{d-1} (r_i)^{d-1} \beta_{p,d}(v_i) \Big\}  \,, \\
&\subset \bigcup_{i=1}^m \Big\{  \face( D(x_i, r_i'), hr_i',n) \leq \Big(1-s + c(p,d) \frac{\delta}{h} \Big) \frac{1}{(1-h^2)^{(d-1)/2}} n^{d-1} \al_{d-1} (r_i')^{d-1} \beta_{p,d}(v_i) \Big\}  \,.
\end{align}

We now calibrate parameters. Choose $h$ small depending on $p,d$ and $s/2$ so that the concentration estimates of Proposition \ref{disc} are applicable when $n$ is taken large depending on $h$ and $\e_F$. Next, choose $\delta$ depending on $s, c(p,d)$ and $h$ so that 
\begin{align}
\Big\{ \exists G_n \in \cal{G}_n &\text{ such that } | \pa^\om G_n| \leq \lambda_F(s) n^{d-1} \text{ and } \dw( \mu_n, \nu_F) \leq \wt{\e}_F \Big\} \cap \cal{E}^* \\
&\subset \bigcup_{i=1}^m \Big\{  \face( D(x_i, r_i'), hr_i',n) \leq \Big(1-s/2) n^{d-1} \al_{d-1} (r_i')^{d-1} \beta_{p,d}(v_i) \Big\}  \,,
\end{align}
noting that the number $m$ of events in the above union now depends only on $p,d,s$ and $F$. By using Proposition \ref{disc}, we find
\begin{align}
\prob_p \Big(\exists G_n \in \cal{G}_n &\text{ such that } | \pa^\om G_n| \leq \lambda_F(s) n^{d-1} \text{ and } \dw( \mu_n, \nu_F) \leq \wt{\e}_F \Big) \\
&\leq \sum_{i=1}^m c_1 \exp\Big(-c_2 n^{(d-1)/3}\Big) + \prob_p( (\cal{E}^*)^c) \,,\\
&\leq c_1 \exp \Big(-c_2 n^{(d-1)/3}\Big) + \prob_p( (\cal{E}^*)^c)\,,
\end{align}
where $c_1$ and $c_2$ are positive constants depending on $p,d,s,h,\delta$ and $F$. As $\wt{\e}_F$, $\delta$ and $h$ all depend only on $p,d,s$ and $F$, these constants have the correct dependencies, and $\prob_p( (\cal{E}^*)^c)$ is also bounded satisfactorily. Use Proposition~\ref{prop:closeness} and Lemma~\ref{E_cuts} to bound $\prob_p(\cal{E}_0^c)$ and $\prob_p(\cal{E}_1^c)$. We further use Theorem~\ref{emp_close}, Lemma~\ref{surface_boundary} and Corollary~\ref{finite_per_2} to bound $\prob_p( \cal{E}_2^c), \prob_p(\cal{E}_3^c)$ and $\prob_p( \cal{E}_4^c)$ respectively, concluding
\begin{align}
\prob_p \Big(\exists G_n \in \cal{G}_n &\text{ such that } | \pa^\om G_n| \leq \lambda_F(s) n^{d-1} \text{ and } \dw( \mu_n, \nu_F) \leq \wt{\e}_F \Big) \leq \sum_{i=1}^m c_1 \exp\Big(-c_2 n^{(1)/2d}\Big) \,,
\end{align}
for $c_1$ and $c_2$ positive constants depending on $p,d,s,F$.  \end{proof}

\begin{rmk} We assert that Proposition \ref{prob_conversion} holds also when $F$ is a translate of the Wulff crystal $W_{p,d}$; this follows from Theorem \ref{poly_approx} for instance. 
\label{rmk:8_observation_2}
\end{rmk}

\begin{rmk} Proposition \ref{prob_conversion} is the result we have been aiming for since the beginning of the section: when $G_n \in \cal{G}_n$ is such that $\dw(\mu_n,\nu_F)$ is small, we have high probability lower bounds on $| \pa^\om G_n|$ in terms of $\cal{I}_{p,d}(F)$. We use Proposition~\ref{prob_conversion} with a compactness argument to prove the main results of the paper.
\end{rmk}

 \subsection{Proof of main results}
 
 We first prove Theorem \ref{main}, from which we deduce Theorem~\ref{main_benj} and Theorem~\ref{main_L1}. A quantitative version of the isoperimetric inequality for $\cal{I}_{p,d}$ is central. Given $F \subset \R^d$ a set of finite perimeter, define the \emph{asymmetry index} of $F$ as
 \begin{align}
 A(F) := \inf \left\{ \frac{ \cal{L}^d( F\, \Delta\, (x + r W_{p,d}) ) }{ \cal{L}^d(F) } : x \in \R^d, \cal{L}^d( rW_{p,d}) = \cal{L}^d(F) \right\} \,.
 \label{eq:asymmetry}
 \end{align}
For $r >0$ chosen to make $rW_{p,d}$ and $F$ equal in volume, define the \emph{isoperimetric deficit} of $F$ as
\begin{align}
D(F) := \frac{\cal{I}_{p,d} (F) - \cal{I}_{p,d}(rW_{p,d} ) }{\cal{I}_{p,d}(rW_{p,d}) } \,.
\end{align}
The isoperimetric inequality implies $D(F) \geq 0$ for all sets $F$ of finite perimeter, while Taylor's theorem (Theorem \ref{wulff_theorem}) implies $D(F) = 0$ if and only if $A(F) =0$. The next result  quantifies this. 

\begin{thm} (Figalli-Maggi-Pratelli \cite{Figalli_Maggi_Pratelli})\, Let $F \subset \R^d$ be a set of finite perimeter with finite volume. There is $c(d)>0$ so that 
\begin{align}
A(F) \leq c(d) D(F)^{1/2} \,.
\end{align}
\label{fmp}
\end{thm}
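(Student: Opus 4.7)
The plan is to follow the selection principle approach of Figalli, Maggi and Pratelli and argue by contradiction. Suppose, after normalizing so that $\cal{L}^d(F_n) = \cal{L}^d(W_{p,d})$, there is a sequence $F_n$ of sets of finite perimeter with $D(F_n) \to 0$ but $A(F_n) / D(F_n)^{1/2} \to \infty$. The qualitative form of Taylor's theorem (Theorem \ref{wulff_theorem}) combined with the compactness of sets of uniformly bounded perimeter and volume forces (up to translation) $F_n \to W_{p,d}$ in $L^1$ and in particular $A(F_n) \to 0$. It is in this near-Wulff regime where the contradiction will be derived.

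The first step is the \emph{selection principle}: replace each $F_n$ by a minimizer $E_n$ of a penalized functional of the form
\begin{align}
\cal{J}_n(E) := \cal{I}_{p,d}(E) + \Lambda_n \big| A(E) - A(F_n) \big| \,,
\end{align}
taken over Borel sets $E$ with $\cal{L}^d(E) = \cal{L}^d(W_{p,d})$, where $\Lambda_n$ is chosen large enough that $A(E_n) = A(F_n)$ while $D(E_n) \leq D(F_n)$. Existence follows from the direct method using lower semicontinuity of $\cal{I}_{p,d}$ (Lemma \ref{lsc}) and $L^1$-continuity of $A(\cdot)$. By construction $E_n$ is a quasi-minimizer of the anisotropic perimeter, so one may invoke the regularity theory for anisotropic surface energies: after translation, $\pa E_n$ is a $C^{1,\gamma}$ hypersurface converging in $C^{1,\gamma}$ to $\pa W_{p,d}$, and for $n$ large $\pa E_n$ is the normal graph over $\pa W_{p,d}$ of a small $C^{1,\gamma}$ function $u_n$.

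At this point the theorem reduces to an anisotropic Fuglede-type inequality on nearly Wulff sets. Expanding $\cal{I}_{p,d}$ to second order in $u_n$ around $W_{p,d}$, and using the volume constraint to fix the mean of $u_n$, one shows
\begin{align}
D(E_n) \geq c(d) \| u_n \|_{H^1(\pa W_{p,d})}^2 \geq c'(d) \| u_n \|_{L^1(\pa W_{p,d})}^2 \geq c''(d) A(E_n)^2 \,,
\end{align}
where the last step uses $\cal{L}^d(E_n \Delta W_{p,d}) \leq C \| u_n \|_{L^1}$. This contradicts the assumed blow-up of $A(F_n)/D(F_n)^{1/2}$. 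The main obstacle is precisely this anisotropic Fuglede inequality: in the Euclidean case one exploits spherical harmonics and the strict convexity of the sphere, but for a general Wulff shape $W_{p,d}$ neither smoothness nor strict convexity of $\pa W_{p,d}$ is available, so the coercivity of the second variation must be extracted purely from the convexity of $\beta_{p,d}$ and a careful linearization of the anisotropic area formula. Handling this, together with the regularity step needed to place ourselves in the graph regime, is where the bulk of the technical work of Figalli--Maggi--Pratelli lies.
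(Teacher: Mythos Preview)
The paper does not prove this theorem: it is quoted as an external input from Figalli--Maggi--Pratelli and used as a black box in the proof of Theorem~\ref{main}. So there is no ``paper's own proof'' to compare against.

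That said, your sketch does not describe the Figalli--Maggi--Pratelli argument. Their proof is a mass-transportation argument: one takes the Brenier map $T$ pushing $\1_F\,d\cal{L}^d$ forward to (a multiple of) $\1_{W_{p,d}}\,d\cal{L}^d$, uses the anisotropic divergence theorem together with the arithmetic--geometric mean inequality on the eigenvalues of $\nabla T$ to recover the Wulff inequality, and then reads off the quantitative deficit from a trace-type Poincar\'e inequality controlling how far $T$ is from a translation. No regularity theory and no Fuglede step are needed. What you outlined is the selection-principle strategy of Cicalese--Leonardi (later extended to the anisotropic setting by others), which postdates and is logically independent of the cited result. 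More importantly, that route has a genuine obstruction here: for a general norm $\beta_{p,d}$ with no smoothness or uniform convexity assumed, the $C^{1,\gamma}$ regularity of quasi-minimizers and the second-variation coercivity underlying the anisotropic Fuglede inequality are not available, so the step where you write $\pa E_n$ as a $C^{1,\gamma}$ graph over $\pa W_{p,d}$ cannot be justified. The transport proof of Figalli--Maggi--Pratelli was designed precisely to bypass these issues.
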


\begin{rmk} It follows from Theorem \ref{fmp} that whenever $rW_{p,d}$ is a dilate of the Wulff crystal, and whenever $F^r$ is a set of finite perimeter such that $\cal{L}^d(F^r) = \cal{L}^d(rW_{p,d})$, we have
\begin{align}
\frac{ \cal{I}_{p,d} (F^r) }{ \cal{I}_{p,d} (rW_{p,d} )} \geq 1 + c(d) (A(F^r))^2  \,.
\label{eq:crucial}
\end{align}
\end{rmk}

The next theorem boosts results for polytopes to results for sets of finite perimeter. 

\begin{thm} (\cite{stflour}, Proposition 14.9)\, Let $F \subset [-1,1]^d$ be a set of finite perimeter. There is a sequence of polytopes $\{ F_n \}_{n=1}^\infty$, each contained within $[-1,1]^d$, so that $\cal{L}^d( F \Delta F_n) \to 0$ and $| \cal{I}_{p,d}(F_n) - \cal{I}_{p,d}(F) | \to 0$ as $n \to \infty$.  
\label{poly_approx}
\end{thm}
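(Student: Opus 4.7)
The plan is to approximate $F$ in two stages: first by sets with smooth boundary via mollification, then by polytopes via polyhedral approximation of smooth hypersurfaces. Throughout, the continuous functional $\cal{I}_{p,d}$ defined in \eqref{eq:new_surface_energy} will be controlled using the uniform continuity of $\beta_{p,d}$ on $\mathbb{S}^{d-1}$ (which follows from Proposition \ref{norm}) together with the coarea formula.

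First, I would replace $F$ by a slightly smaller set $F' \subset [-1-\eta, 1-\eta]^d$ obtained by intersecting $F$ with a marginally smaller cube and then translating; by dominated convergence and lower semicontinuity of $\cal{I}_{p,d}$ (plus an elementary calculation on the negligible boundary faces lost), one can ensure $\cal{L}^d(F \Delta F') $ and $|\cal{I}_{p,d}(F) - \cal{I}_{p,d}(F')|$ are both small. This reduction buys room so that approximants of $F'$ stay inside $[-1,1]^d$. Next, let $\rho_\e$ be a standard mollifier and set $u_\e := \1_{F'} * \rho_\e$, a smooth function taking values in $[0,1]$. The anisotropic coarea formula for BV functions asserts
\begin{align}
\int_0^1 \cal{I}_{p,d}\big(\{u_\e > t\}\big)\, dt \; = \; \int_{\R^d} \beta_{p,d}(\nabla u_\e(x)/|\nabla u_\e(x)|_2)\, |\nabla u_\e(x)|_2\, \cal{L}^d(dx) \,,
\end{align}
and by the standard BV convergence of mollifications together with the Reshetnyak continuity theorem applied to the continuous integrand $\beta_{p,d}$, the right-hand side converges to $\cal{I}_{p,d}(F')$ as $\e \downarrow 0$. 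By Sard's theorem, almost every level $t \in (0,1)$ is regular for $u_\e$, so $E_\e^t := \{u_\e > t\}$ has smooth boundary. An averaging argument then selects a sequence of regular levels $t_\e$ for which $E_\e := E_\e^{t_\e}$ satisfies both $\cal{L}^d(F' \Delta E_\e) \to 0$ (from $u_\e \to \1_{F'}$ in $L^1$) and $\cal{I}_{p,d}(E_\e) \to \cal{I}_{p,d}(F')$.

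Second, I would approximate each smooth set $E_\e$ by a polytope $P_\e$. Since $\pa E_\e$ is a compact smooth hypersurface, one can triangulate it by geodesic simplices at mesh-size $\delta \to 0$ and replace each simplex with its flat Euclidean counterpart, producing a polyhedral surface $\Sigma_\delta$. Classical polyhedral approximation (cf.\ Federer) yields that $\Sigma_\delta \to \pa E_\e$ in Hausdorff distance and that the piecewise-constant unit normal of $\Sigma_\delta$ converges uniformly (off a negligible set) to the smooth unit normal $v_{E_\e}$. Because $\beta_{p,d}$ is continuous on $\mathbb{S}^{d-1}$, this yields
\begin{align}
\cal{I}_{p,d}(P_\e^\delta) \;=\; \int_{\Sigma_\delta} \beta_{p,d}(v_{\Sigma_\delta}) \, d\cal{H}^{d-1} \;\longrightarrow\; \int_{\pa E_\e} \beta_{p,d}(v_{E_\e}) \, d\cal{H}^{d-1} \;=\; \cal{I}_{p,d}(E_\e) \,,
\end{align}
where $P_\e^\delta$ is the polytope bounded by $\Sigma_\delta$. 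Volumes converge similarly. A diagonal extraction along $\e \downarrow 0$ and $\delta \downarrow 0$ then produces polytopes $F_n \subset [-1,1]^d$ (by our initial retraction) with $\cal{L}^d(F \Delta F_n) \to 0$ and $\cal{I}_{p,d}(F_n) \to \cal{I}_{p,d}(F)$.

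The main obstacle is justifying the convergence of anisotropic perimeters — neither mollification nor polyhedral approximation is automatically continuous for the Euclidean perimeter, let alone the anisotropic one. The coarea-plus-Reshetnyak step handles mollification cleanly because the surface energy is a convex one-homogeneous functional of the gradient measure, while the polyhedral step depends crucially on the \emph{uniform} continuity of $\beta_{p,d}$ on $\mathbb{S}^{d-1}$ so that small perturbations of the normal vector produce small perturbations of the integrand. Once these two continuity properties are in place, the rest is bookkeeping.
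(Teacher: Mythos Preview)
The paper does not actually prove this theorem: it is stated as a citation of Proposition 14.9 in \cite{stflour} and no argument is given. So there is no ``paper's own proof'' to compare your proposal against; the result is simply imported as a black box from Cerf's lecture notes.

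Your two-stage outline (mollify and select a good level set via coarea and Reshetnyak continuity, then replace the smooth boundary by a polyhedral one using uniform continuity of $\beta_{p,d}$) is the standard route and is essentially how the result is established in the geometric measure theory literature. One small technical point deserves more care: your initial reduction ``intersect with a slightly smaller cube and translate'' does not obviously control $\cal{I}_{p,d}(F')$ from above, since cutting by a cube can create new boundary along the cube faces whose $\cal{I}_{p,d}$-contribution is not a priori small. A cleaner fix is to skip the reduction, mollify $\1_F$ directly (supported in a slightly larger cube), approximate by polytopes $\wt F_n$, and then take $F_n := \wt F_n \cap [-1,1]^d$; since $[-1,1]^d$ is itself a polytope this preserves the polytope property, the $L^1$ convergence survives because $F \subset [-1,1]^d$, and the extra face contribution is bounded by $\cal{H}^{d-1}(\pa[-1,1]^d \cap \wt F_n \setminus F)$, which tends to zero. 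Alternatively, the cited reference handles the constraint $F_n \subset [-1,1]^d$ by a more careful localization argument. Either way, this is a bookkeeping issue and your main mechanism is correct.
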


\n \B{\emph{Proof of Theorem \ref{main} (Precursor to shape theorem).}} Throughout the proof, write $\theta$ for $\theta_p(d)$. Let $\xi > 0$, define $\eta = \eta(\xi)$ via the relation
\begin{align}
(1- \eta) = \frac{1}{1+\xi} \,,
\label{eq:eta}
\end{align}
and use $\xi$ and $\eta$ to define the following collection of measures:
\begin{align}
\cal{W}_{\xi} := \left\{ \nu_{W+x} : \begin{matrix}  x \in \R^d, (W+x) \subset [-1,1]^d \text{ and } W \text{ is a dilate of } W_{p,d} \\ \text{ such that } \cal{L}^d( (1-\eta) W_{p,d} ) \leq \cal{L}^d(W) \leq \cal{L}^d( (1+ 2\xi) W_{p,d} )\end{matrix} \right\} \,.
\end{align}
Let $\zeta >0$, and choose $\xi = \xi(\zeta) >0$ and $\e = \e(\zeta, \xi) >0$ so that the following relations hold:
\begin{align}
\frac{1}{1 + \xi} = 1- 2\zeta \hspace{20mm} \frac{1 + 2\xi}{  1 + c(p,d) \e^2 } = 1-2\zeta \,,
 \label{eq:xi_dependence}
\end{align}
where $c(p,d)$ is specified later. We remark that $\e$ is distinct from $\e(d)$ defined in (\ref{eq:epsilon_d}); this latter fixed value only appears in exponents of various upper bounds, and we always explicate the dependence on $d$ in this case.  Our principal aim is to show the probabilities
\begin{align}
\prob_p \Big( \exists G_n \in \cal{G}_n \text{ such that } \dw(\mu_n , \cal{W}_{\xi} ) \geq \e \Big)
\label{eq:target}
\end{align}
decay rapidly with $n$. Recall that 
\begin{align}
\cal{P}_{\gamma,\,\xi} = \left\{ \nu_F : F \subset [-1,1]^d,\, \per(F) \leq \gamma,\, \cal{L}^d(F) \leq \cal{L}^d( (1+ \xi) W_{p,d})\right\} \,,
\end{align}
let $\cal{V}_\e( \cal{W}_{\xi} )$ be the open $\e$-neighborhood of $\cal{W}_{\xi}$ in the metric $\dw$, and let $\cal{K}_{\gamma,\xi}(\e)$ be the complement of this neighborhood in $\cal{P}_{\gamma,\,\xi}$. By Lemma \ref{compact}, $\cal{K}_{\gamma,\xi}(\e)$ is compact. Define
\begin{align}
\cal{P}oly_{\gamma,\xi} := \left\{ \nu_F : F \subset [-1,1]^d \text{ is a polytope},\, \per(F) \leq 2\gamma,\, \cal{L}^d(F) \leq \cal{L}^d( (1+ 2\xi) W_{p,d})\right\} \,,
\end{align}
so that using the definition of $\dw$ and Theorem \ref{poly_approx}, the $\dw$-balls
\begin{align}
\Big\{ \cal{B}(\nu_F, \wt{\e}_F/2 ) \Big\}_{F \in \cal{P}oly_{\gamma,\xi}}
\label{eq:8_final_cover}
\end{align}
form an open cover of $\cal{K}_{\gamma,\xi}(\e)$, where $\wt{\e}_F$ is chosen as in Proposition \ref{prob_conversion} for $F$ and $s= \zeta/2$. For a parameter $\delta' >0$ to be used shortly, we lose no generality choosing $\wt{\e}_F$ smaller if necessary so that
\begin{align}
\label{eq:delta_prime}
(1 + \wt{\e}_F /\theta ) &\leq (1+ \delta') \,, \\
\wt{\e}_F &\leq \e/2
\label{eq:ep_comparison}
\end{align}
hold for each $F$. Given $F \in\cal{P}oly_{\gamma,\xi}$, define $\lambda_F(\zeta) := (1- \zeta) \cal{I}_{p,d}(F)$ and use the compactness of $\cal{K}_{\gamma,\xi}(\e)$ to extract a finite subcover from \eqref{eq:8_final_cover}: there are polytopes $F_1, \dots, F_m$ such that 
\begin{align}
\Big\{ \cal{B}(\nu_{F_j}, \wt{\e}_{F_j}/2 ) \Big\}_{j=1}^m
\label{eq:8_final_cover_2}
\end{align}
covers $\cal{K}_{\gamma,\xi}(\e)$. We now begin to estimate (\ref{eq:target}).
\begin{align}
\prob_p \Big( \exists &G_n \in \cal{G}_n \text{ such that } \dw(\mu_n , \cal{W}_{\xi} ) \geq \e \Big) \\
&\leq\prob_p \left( \max_{G_n \in \cal{G}_n} \dw(\mu_n, \cal{W}_{\xi}) \geq \e \text{ and } n \Chee \leq (1 + \delta') \vp_{W_{p,d} }\right) + \prob_p\left( n \Chee > (1 + \delta') \vp_{W_{p,d}} \right) \\
&\leq \prob_p \left( \max_{G_n \in \cal{G}_n} \dw(\mu_n, \cal{W}_{\xi}) \geq \e \text{ and } n \Chee \leq (1 + \delta') \vp_{W_{p,d} }\right) + c_1 \exp \left(-c_2 n^{(d-1)/3} \right)
\end{align}
Where we have used bounds from the proof of Corollary \ref{upper_bound_2}, and we recall that $\vp_{W_{p,d}}$ is the conductance (defined at the very end of Section \ref{sec:norm}) of the Wulff crystal. Choose $\delta >0$ so that
\begin{align}
\delta \leq \min_{j=1}^m \wt{\e}_{F_j} /2 \,,
\label{eq:delta}
\end{align}
and invoke Corollary \ref{close_finite} for $\delta$ to further deduce
\begin{align}
\prob_p \Big( \exists &G_n \in \cal{G}_n \text{ such that } \dw(\mu_n , \cal{W}_{\xi} ) \geq \e \Big) \\
&\leq \prob_p \left( \max_{G_n \in \cal{G}_n} \dw(\mu_n, \cal{W}_{\xi}) \geq \e \text{ and } \max_{G_n \in \cal{G}_n} \dw(\mu_n, \cal{P}_{\gamma,\,\xi}) < \delta \text{ and } n \Chee \leq (1 + \delta') \vp_{W_{p,d} }\right)\\
&\hspace{20mm} + c_1 \exp \left(-c_2 n^{(1-\e(d))/2d} \right) \,.
\end{align}
Use the finite open cover \eqref{eq:8_final_cover_2}, the choice of $\delta$ and a union bound:
\begin{align}
\prob_p \Big( \exists &G_n \in \cal{G}_n \text{ such that } \dw(\mu_n , \cal{W}_{\xi} ) \geq \e \Big) \\
&\leq \sum_{i=1}^m \prob_p \left( \exists G_n \in \cal{G}_n \text{ such that } \dw(\mu_n, \nu_{F_j}) \leq \e_{F_j} \text{ and } n \Chee \leq (1 + \delta') \vp_{W_{p,d} }\right)\\
&\hspace{20mm} + c_1 \exp \left(-c_2 n^{(1-\e(d))/2d} \right) \,.
\label{eq:final_union}
\end{align}
We focus on bounding each summand of the form $\prob_p(\cal{F}_j)$ above, where 
\begin{align}
\cal{F}_j : =\left\{ \exists G_n \in \cal{G}_n \text{ such that } \dw(\mu_n, \nu_{F_j}) \leq \e_{F_j} \text{ and } n \Chee \leq (1 + \delta') \vp_{W_{p,d} }\right\}  \,.
\end{align}
We begin by unravelling the Cheeger constant and using \eqref{eq:delta_prime}.
\begin{align}
\prob_p( \cal{F}_j) &=\prob_p \left( \begin{matrix} \exists G_n \in \cal{G}_n \text{ such that } \dw(\mu_n, \nu_{F_j}) \leq \wt{\e}_{F_j} \\  \text{ and } \\ n |\pa^\om G_n|  \leq (1 + \delta') | G_n |\vp_{W_{p,d} } \end{matrix}\right) \, \\
\label{eq:9.107}
&\leq \prob_p \left( \begin{matrix} \exists G_n \in \cal{G}_n \text{ such that } \dw(\mu_n, \nu_{F_j}) \leq \wt{\e}_{F_j}\\ \text{ and } \\ n |\pa^\om G_n|  \leq (1 + \delta') n^d (\theta\cal{L}^d(F_j) + \wt{\e}_{F_j} ) \vp_{W_{p,d} } \end{matrix}\right) \,\\
&\leq \prob_p \left(\begin{matrix} \exists G_n \in \cal{G}_n \text{ such that } \dw(\mu_n, \nu_{F_j}) \leq \wt{\e}_{F_j}\\ \text{ and }\\  |\pa^\om G_n|  \leq (1 + \delta')^2 n^{d-1} \theta\cal{L}^d(F_j) \vp_{W_{p,d} }\end{matrix}\right)  \,.
\label{eq:9.108}
\end{align}
To obtain \eqref{eq:9.107}, we used the definition of $\dw$, and to obtain \eqref{eq:9.108} we used \eqref{eq:delta_prime}. Observe that
\begin{align}
\prob_p( \cal{F}_j) &\leq \prob_p \left( \begin{matrix}\exists G_n \in \cal{G}_n \text{ such that } \dw(\mu_n, \nu_{F_j}) \leq \wt{\e}_{F_j}\\ \text{ and }\\  |\pa^\om G_n|  \leq (1 + \delta')^2 n^{d-1} \cal{I}_{p,d} (F_j)( \vp_{F_j})^{-1} \vp_{W_{p,d} } \end{matrix} \right) \, \\
&\leq \prob_p \left(\begin{matrix} \exists G_n \in \cal{G}_n \text{ such that } \dw(\mu_n, \nu_{F_j}) \leq \wt{\e}_{F_j}\\ \text{ and }\\  |\pa^\om G_n|  \leq (1 + \delta')^2 \frac{\cal{I}_{p,d}(rW_{p,d})}{\cal{I}_{p,d}(F_j)} r n^{d-1} \cal{I}_{p,d} (F_j) \end{matrix}\right) \,,
\label{eq:cases_final}
\end{align}
where $r >0$ is chosen so that $\cal{L}^d(F_j) = \cal{L}^d(rW_{p,d})$. Form two cases. In \emph{Case (1)}, $r \leq (1-\eta)$, and in \emph{Case (2)}, $r \in (1-\eta, 1+ 2\xi]$. Focusing on the first case for now, use Theorem \ref{wulff_theorem} and the relation \eqref{eq:eta} between $\xi$ and $\eta$:
\begin{align}
\prob_p( \cal{F}_j) &\leq \prob_p \left( \begin{matrix}\exists G_n \in \cal{G}_n \text{ such that } \dw(\mu_n, \nu_{F_j}) \leq \wt{\e}_{F_j}\\ \text{ and }\\  |\pa^\om G_n|  \leq \frac{(1 + \delta')^2}{1+\xi} n^{d-1} \cal{I}_{p,d} (F_j) \end{matrix} \right) 
\end{align}
As $\xi$ was chosen as in (\ref{eq:xi_dependence}), we choose $\delta'$ small enough depending on $\xi$ and $\zeta$ so that 
\begin{align}
\prob_p ( \cal{F}_j) &\leq \prob_p \Big( \exists G_n \in \cal{G}_n \text{ such that } \dw(\mu_n, \nu_{F_j}) \leq \wt{\e}_{F_j} \text{ and }  |\pa^\om G_n| \leq \lambda_{F_j}(\zeta) n^{d-1} \Big) \,
\label{eq:amenable_conversion}
\end{align}
holds whenever we are in \emph{Case (1)}. 

We maneuver into a similar position in \emph{Case (2)}. From \eqref{eq:cases_final}, we deduce
\begin{align}
\prob_p( \cal{F}_j) &\leq \prob_p \left( \begin{matrix}\exists G_n \in \cal{G}_n \text{ such that } \dw(\mu_n, \nu_{F_j}) \leq \wt{\e}_{F_j}\\ \text{ and }\\  |\pa^\om G_n|  \leq \frac{(1 + \delta')^2}{1 +c(d) (A(F_j))^2} (1+2\xi) n^{d-1} \cal{I}_{p,d} (F_j) \end{matrix} \right) \,,
\label{eq:final_case_2}
\end{align}
where $A(F_j)$ is the asymmetry index of $F_j$ introduced in \eqref{eq:asymmetry}, and where we have used the observation in \eqref{eq:crucial}. In \eqref{eq:ep_comparison}, we chose each $\wt{\e}_{F_j}$ to be at most $\e/2$. The finite open cover \eqref{eq:8_final_cover_2} may be assumed to have no redundancies, so by the construction of $\cal{K}_{\gamma,\xi}(\e)$, 
\begin{align}
\dw(\nu_{F_j} \cal{W}_\xi) \geq \e/2
\end{align}
for each $F_j$. Using the definition \eqref{eq:2_metric} of $\dw$, we have the following lower-bound on the asymmetry index of each $F_j$:
\begin{align}
A(F_j) \geq (\e/4)  \cal{L}^d(F_j) \geq (\e/4)(1-\eta) \cal{L}^d(W_{p,d}) \,.
\label{eq:asm_lower}
\end{align}
As $\xi$ and hence $\eta$ will be taken to zero, we lose no generality supposing $\eta < 1/2$. Thus, \eqref{eq:asm_lower} and \eqref{eq:final_case_2} together yield 
\begin{align}
\prob_p( \cal{F}_j) &\leq \prob_p \left( \begin{matrix}\exists G_n \in \cal{G}_n \text{ such that } \dw(\mu_n, \nu_{F_j}) \leq \wt{\e}_{F_j}\\ \text{ and }\\  |\pa^\om G_n|  \leq \frac{(1 + \delta')^2}{1 +c(p,d) \e^2} (1+2\xi) n^{d-1} \cal{I}_{p,d} (F_j) \end{matrix} \right) \,.
\end{align}
Now use our choice of $\e$ in \eqref{eq:xi_dependence}, taking $\delta'$ sufficiently small depending on $\xi$ and $\zeta$ so that 
\begin{align}
\prob_p ( \cal{F}_j) &\leq \prob_p \Big( \exists G_n \in \cal{G}_n \text{ such that } \dw(\mu_n, \nu_{F_j}) \leq \wt{\e}_{F_j} \text{ and }  |\pa^\om G_n| \leq \lambda_{F_j}(\zeta) n^{d-1} \Big) \,
\label{eq:amenable_conversion_2}
\end{align}
holds in \emph{Case (2)} also.

Return to (\ref{eq:final_union}) and apply the bounds \eqref{eq:amenable_conversion} and \eqref{eq:amenable_conversion_2} to each summand:
\begin{align}
\prob_p \Big( \exists &G_n \in \cal{G}_n \text{ such that } \dw(\mu_n , \cal{W}_{\xi} ) \geq \e \Big) \\
&\leq \sum_{i=1}^m \prob_p \left( \exists G_n \in \cal{G}_n \text{ such that } \dw(\mu_n, \nu_{F_j}) \leq \wt{\e}_{F_j} \text{ and }  |\pa^\om G_n| \leq \lambda_{F_j}(\zeta) n^{d-1} \right)\\
&\hspace{20mm} + c_1 \exp \left(-c_2 n^{(1-\e(d))/2d} \right) \,.
\end{align}
Thus,
\begin{align}
\prob_p \Big( \exists &G_n \in \cal{G}_n \text{ such that } \dw(\mu_n , \cal{W}_{\xi} ) \geq \e\Big) \leq mc_1\exp\left( -c_2 n^{1/2d}\right) + c_1 \exp \left(-c_2 n^{(1-\e(d))/2d}\right)\,.
\label{eq:use_in_final}
\end{align}
We have used the hard-earned bounds from Proposition \ref{prob_conversion} directly above. By Borel-Cantelli,\begin{align}
\prob_p \left( \max_{G_n \in \cal{G}_n} \dw (\mu_n , \cal{W}_{\xi} ) \leq \e(\zeta,\xi) \text{ for all but finitely many $n$} \right) = 1 \,.
\end{align}
Observe that 
\begin{align}
\dw( \cal{W}_{\xi}, \cal{W} ) &\leq c(p,d) \max \left( \cal{L}^d ( W_{p,d} \setminus (1-\eta)W_{p,d} ) , \cal{L}^d ((1+2\xi)W_{p,d} \setminus W_{p,d} ) \right) \,,\\
& \leq c(p,d,\xi) \,,
\label{eq:use_in_final_2}
\end{align}
where $c(p,d,\xi)$ tends to $0$ as $\xi \to 0$. From (\ref{eq:xi_dependence}), we have that $\xi \equiv \xi(\zeta) \to 0$ as $\zeta \to 0$ and also that $\e \equiv \e(\zeta, \xi) \to 0$ as $\zeta,\xi \to 0$. Thus, 
\begin{align}
\prob_p \left( \max_{G_n \in \cal{G}_n} \dw (\mu_n , \cal{W} ) \leq c(p,d,\zeta) \text{ for all but finitely many $n$} \right) = 1
\label{eq:coup}
\end{align}
where $c(p,d,\zeta) \to 0$ as $\zeta \to 0$. This completes the proof of Theorem \ref{main}. $\hfill\qed$\newline

\n\B{\emph{Proof of Theorem \ref{main_benj} (Cheeger asymptotics).}} We first show $\cal{W}$ is compact in $\dw$ by appealing to the proof of Lemma \ref{compact}. It suffices to show that whenever $\{W_n\}_{n=1}^\infty$ is a sequence with $\nu_{W_n} \in \cal{W}$ and $\1_{W_n}$ tending to some $\1_F$ in $L^1$-sense, $F$ is a translate of $W_{p,d}$. By dominated convergence, $\cal{L}^d(F) = \cal{L}^d(W_{p,d})$. Lemma \ref{lsc} implies $\cal{I}_{p,d}(F) \leq \cal{I}_{p,d}(W_{p,d})$, and it follows from Theorem \ref{wulff_theorem} that $F$ is a translate of ~$W_{p,d}$. 

Let $\e, \zeta' >0$. For each $\nu_W \in \cal{W}$, choose $\wt{\e}_W$ as in Proposition \ref{prob_conversion} for $\zeta' = 2s$ (see Remark~\ref{rmk:8_observation_2}).  The $\dw$-balls $\cal{B}(\nu_W, \wt{\e}_W/2 )$ indexed by $\cal{W}$ are an open cover of $\cal{W}$; extract a finite collection of translates of $W_{p,d}$, enumerated $W_1, \dots, W_m$, so that 
\begin{align}
\Big\{ \cal{B}( \nu_{W_i}, \wt{\e}_{W_i} /2 ) \Big\}_{i=1}^m
\end{align}
covers $\cal{W}$. Choose $\zeta > 0$ small so that $c(p,d,\zeta)$ in (\ref{eq:coup}) is at most $\min_{i=1}^m \wt{\e}_{W_i} /2$, and work in the almost sure event from (\ref{eq:coup}). Use Proposition \ref{prob_conversion}, Remark \ref{rmk:8_observation_2} and Borel-Cantelli to conclude
\begin{align}
\prob_p \left( \liminf_{n \to \infty} \min_{G_n \in \cal{G}_n} \frac{ |\pa^\om G_n |} { n^{d-1} } \geq (1- \zeta ') \cal{I}_{p,d} (W_{p,d})  \right) = 1\,.
\label{eq:final_surface}
\end{align}
Because we are within the event from \eqref{eq:coup}, we take $\zeta$ smaller if necessary in a way depending on $p, d$ and $\zeta'$ so that 
\begin{align}
\prob_p \left( \limsup_{n \to \infty} \max_{G_n \in \cal{G}_n} \frac{ |G_n| }{n^d} \leq (1+\zeta') \theta_p(d)\cal{L}^d(W_{p,d}) \right) = 1\,.
\label{eq:final_volume}
\end{align}
Choose $\zeta'$ small depending on $\e$ so that by (\ref{eq:final_surface}) and (\ref{eq:final_volume}), 
\begin{align}
\prob_p \left( \liminf_{n \to \infty} n\Chee \geq (1-\e) \frac{ \cal{I}_{p,d}(W_{p,d} ) }{ \theta_p(d) \cal{L}^d(W_{p,d}) } \right) = 1 \,.
\end{align}
The complementary upper bound on $\Chee$ was shown in Corollary \ref{upper_bound_2}, completing the proof. $\hfill\qed$\newline
 
\n\B{\emph{Proof of Theorem \ref{main_L1} (Shape theorem).}} In \eqref{eq:5_convex_empirical}, we defined the empirical measure of a translate $W \subset [-1,1]^d$ of the Wulff crystal as:
\begin{align}
\ov{\nu}_W(n) := \frac{1}{n^d} \sum_{ x\, \in\, \B{C}_\infty \cap nW} \delta_{x/n}\,.
\end{align}
Let $\e, \e' >0$. Define $M_n := n^{-1} \Z^d \cap [-1,1]^d$, so that $| M_n | \leq (3n)^d$. By Corollary \ref{facilitate}, there are positive constants $c_1(p,d,\e')$ and $c_2(p,d,\e')$ so that 
\begin{align}
\prob_p \left( \max_{x \in M_n,\, (W_{p,d} +x) \subset [-1,1]^d}\dw\left( \ov{\nu}_{W_{p,d} +x}(n) , \nu_{W_{p,d} +x} \right) \leq \e' \right) \geq 1- c_1 \exp \Big(-c_2 n^{d-1} \Big)\,,
\label{eq:main_1}
\end{align}
and by Borel-Cantelli, the event
\begin{align}
\cal{E}_1 := \left\{ \limsup_{n \to \infty} \max_{x \in M_n,\, (W_{p,d} + x) \subset [-1,1]^d } \dw \left( \ov{\nu}_{W_{p,d} +x}(n) , \, \nu_{W_{p,d} +x} \right) \leq \e' \right\}
\end{align}
occurs almost surely. 

Choose $\zeta$ in \eqref{eq:coup} small depending on $\e'$, so that the event
\begin{align}
\cal{E}_2 := \left\{ \max_{G_n \in \cal{G}_n} \dw (\mu_n , \cal{W} ) \leq \e' \text{ for all but finitely many $n$} \right\}
\end{align}
also occurs almost surely. Take $n$ large depending on $\e'$ so that for any translate $W \subset [-1,1]^d$ of the Wulff crystal, there is $x \in M_n$ with $\dw(\nu_W , \nu_{W_{p,d} +x} ) \leq \e'$. For any $\om \in \cal{E}_1 \cap \cal{E}_2$, there is $N(\om) \in \N$ so that $n \geq N(\om)$ implies
\begin{align}
\max_{G_n \in \cal{G}_n} \min_{x \in M_n} \dw( \mu_n, \ov{\nu}_{W_{p,d} +x} (n) ) \leq 3 \e' \,.
\end{align}
Suppose the following holds for some $G_n \in \cal{G}_n$ and some $x \in M_n$:
\begin{align}
 \dw( \mu_n, \ov{\nu}_{W_{p,d} +x} (n) ) \leq 3 \e'\,.
 \label{eq:perchance_to_dream}
\end{align}
Let $k \in \N$, and let $\Delta^k \equiv \Delta^{k,d}$ denote the dyadic cubes at scale $k$ contained in $[-1,1]^d$. Define
\begin{align}
\textsf{Q} := \Big\{ Q \in \Delta^k : Q \cap \pa (W_{p,d} +x) \neq \emptyset \Big\}\,,
\end{align} 
and observe that
\begin{align}
\big\| \1_{G_n} - &\1_{ n(W_{p,d} +x ) \cap \giant } \big\|_{\ell^1}  \leq \sum_{Q\, \in\, \Delta^k} \big\| \1_{G_n \cap nQ} - \1_{ n(W_{p,d} +x ) \cap \giant \cap nQ } \big\|_{\ell^1} \,, \\
&\leq \sum_{Q\, \in\, \Delta^k \setminus \textsf{Q}} \big\| \1_{G_n \cap nQ} - \1_{ n(W_{p,d} +x ) \cap \giant \cap nQ } \big\|_{\ell^1} + \sum_{Q\, \in\, \textsf{Q}} \big\| \1_{G_n \cap nQ} - \1_{ n(W_{p,d} +x ) \cap \giant \cap nQ } \big\|_{\ell^1} \,,\\
&\leq \sum_{Q\, \in\, \Delta^k \setminus \textsf{Q}} \big\| \1_{G_n \cap nQ} - \1_{ n(W_{p,d} +x ) \cap \giant \cap nQ } \big\|_{\ell^1} + c(p,d) 2^{-dk} n^d \,,
\end{align}
where $c(p,d) >0$ accounts for the perimeter of $W_{p,d}$. For each $Q \in \Delta^k \setminus \textsf{Q}$, either $n(W_{p,d} +x )  \cap nQ =  nQ$ or $n(W_{p,d} +x ) \cap nQ = \emptyset$. From the definition \eqref{eq:2_metric} of $\dw$, 
\begin{align}
\big\| \1_{G_n} - \1_{ n(W_{p,d} +x ) \cap \giant } \big\|_{\ell^1}  &\leq 2^k |\Delta^k| n^d \dw( \mu_n, \ov{\nu}_{W_{p,d} +x} (n) ) + c(p,d) 2^{-dk} n^d \,.
\end{align}
Choose $k$ large depending on $\e$, and then $\e'$ small depending on $\e$ and $k$, using \eqref{eq:perchance_to_dream}, to conclude
\begin{align}
n^{-d} \big\| \1_{G_n} - \1_{ n(W_{p,d} +x ) \cap \giant } \big\|_{\ell^1} \leq \e \,.
\end{align} 
The choice of $k$ and $\e'$ do not depend on $G_n \in \cal{G}_n$, on $x \in M_n$ or on $\om \in \cal{E}_1 \cap \cal{E}_2$. For $\e'$ chosen this way according to $k$ and $\e$, for any $\om \in \cal{E}_1 \cap \cal{E}_2$, and $n \geq N(\om)$, 
\begin{align} 
\max_{G_n \in \cal{G}_n} \min_{x \in M_n} \left(n^{-d} \big\| \1_{G_n} - \1_{ n(W_{p,d} +x ) \cap \giant } \big\|_{\ell^1} \right) \leq \e \,.
\end{align}
We conclude that for any $\e >0$, 
\begin{align}
\prob_p \left( \limsup_{n \to \infty} \max_{G_n \in \cal{G}_n} \inf_{x \in \R^d} n^{-d} \big \| \1_{G_n} - \1_{\giant \cap (x + nW_{p,d})} \big \|_{\ell^1} \leq \e \right) =1 \,,
\end{align}
completing the proof. $\hfill\qed$\newline



\appendix

{\large\section{\B{Tools from percolation, graph theory and geometry}}\label{sec:perco}}

\subsection{Tools from percolation}

We present tools from percolation used throughout the paper, introducing the notation $\Lambda(n) := [-n,n]^d \cap \Z^d$. Proposition \ref{prop_BBHK} and its corollary control the size of open edge boundaries of large subgraphs of $\giant$. 

\begin{prop} (Berger-Biskup-Hoffman-Kozma \cite{BBHK}, Proposition A.2)\, Let $d \geq 2$ and $p > p_c(d)$. There are positive constants $c_1(p,d), c_2(p,d)$ and $c_3(p,d)$ so that for all $t > 0$, 
\begin{align}
\prob_p\left( \exists \Lambda \ni 0,\, \om\text{\rm-connected},\, |\Lambda| \geq t^{d / (d-1)},\, | \pa^\om \Lambda | < c_3 | \Lambda |^{(d-1) /d} \right) \leq c_1 \exp(- c_2 t) \,.
\end{align}
\label{prop_BBHK}
\end{prop}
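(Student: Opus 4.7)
The plan is to transfer the classical isoperimetric inequality $|\pa S| \geq c(d)|S|^{(d-1)/d}$ on $\Z^d$ to the infinite cluster via a renormalization argument, by demonstrating that $\B{C}_\infty$ behaves like $\Z^d$ on a suitable macroscopic scale. The main obstruction is that the open edge boundary $\pa^\om \Lambda$ may a priori be much smaller than the full combinatorial boundary $\pa \Lambda$ when many edges leaving $\Lambda$ happen to be closed, so the lower bound has to be extracted from percolative regularity at the scale of $\pa\Lambda$.

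Fix a renormalization parameter $k$, to be chosen large depending on $p$ and $d$. Declare a $k$-cube $\un{B}(x)$ to be \emph{good} if the standard coarse-graining estimates of Grimmett--Marstrand and Pisztora hold: the crossing cluster inside $\un{B}_3(x)$ is unique, every open connected subset of $\un{B}_3(x)$ of diameter at least $k/10$ is contained in this crossing cluster, and the density of $\B{C}_\infty$ inside $\un{B}(x)$ differs from $\theta_p(d)$ by at most $\theta_p(d)/2$. The probability that a $k$-cube is bad decays like $\exp(-c(p,d)k)$, and the events $\{\un{B}(x)\text{ is good}\}$ depend only on edges inside $\un{B}_3(x)$, so are independent across cubes whose $3k$-dilates are disjoint.

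Given any $\om$-connected $\Lambda \ni 0$ with $|\Lambda| \geq t^{d/(d-1)}$, first handle the finite-cluster case: the event that the open cluster of $0$ is finite and has size at least $t^{d/(d-1)}$ has probability at most $c_1\exp(-c_2 t)$ by Kesten--Zhang (Theorem~\ref{kzgm}), so we may assume $\Lambda \subset \B{C}_\infty$. Let $\un\Lambda$ be the set of $k$-cubes meeting $\Lambda$, and let $\un{\pa\Lambda}$ consist of the cubes of $\un\Lambda$ with a $\Z^d$-neighbor outside $\un\Lambda$. The edge-isoperimetric inequality on $\Z^d$ applied to $\un\Lambda$, together with $|\un\Lambda| \geq |\Lambda|/(2k+1)^d$, yields
\begin{align*}
|\un{\pa\Lambda}| \geq c(d)|\un\Lambda|^{(d-1)/d} \geq c(d,k)|\Lambda|^{(d-1)/d} \geq c(d,k) t.
\end{align*}
Now if every cube in $\un{\pa\Lambda}$ is good, then each $\un{B}(x) \in \un{\pa\Lambda}$ meets $\Lambda \subset \B{C}_\infty$ while an adjacent cube contains a vertex of $\B{C}_\infty\setminus\Lambda$; uniqueness of the crossing cluster inside $\un{B}_3(x)$ together with the $\om$-connectedness of $\Lambda$ forces at least one open edge of $\pa^\om\Lambda$ to lie in $\un{B}_3(x)$. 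Extracting a sub-family of $|\un{\pa\Lambda}|/3^d$ cubes with pairwise-disjoint $3k$-dilates by a Tur\'an-type thinning (Theorem~\ref{turan}) to avoid overcounting gives $|\pa^\om\Lambda| \geq c_3(k,d)|\Lambda|^{(d-1)/d}$, which is the desired inequality.

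It remains to bound the event that some cube of $\un{\pa\Lambda}$ is bad. By Proposition~\ref{star_conn}, $\un{\pa\Lambda}$ is $*$-connected; by Proposition~\ref{span_tree}, the number of $*$-connected cube sets of cardinality $m$ anchored near $0$ is at most $C(d)^m$. For any such candidate, applying Tur\'an thinning again extracts $m/3^d$ cubes with $3k$-disjoint dilates, so the probability that at least a tenth of the cubes are bad is at most $\bigl(c_1 e^{-c_2 k}\bigr)^{m/(10\cdot 3^d)}$ by independence. Summing over all $m \geq c(d,k)t$ and choosing $k$ so large that $C(d)\cdot\bigl(c_1 e^{-c_2 k}\bigr)^{1/(10\cdot 3^d)} < e^{-1}$ produces the desired $\exp(-c_2 t)$ bound. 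The principal difficulty is the structural step showing a good boundary cube contributes an open edge of $\pa^\om\Lambda$: the definition of \emph{good} must be robust enough that the unique crossing cluster swallows every mesoscopic open component inside $\un{B}_3(x)$, so that $\Lambda$ and its complement in $\B{C}_\infty$ cannot coexist in the cube without open edges joining them across $\pa^\om\Lambda$.
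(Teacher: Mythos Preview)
The paper does not prove this statement; it is cited from \cite{BBHK} as an external input and appears in the appendix without argument. Your sketch is a faithful reconstruction of the renormalization-plus-Peierls approach that Berger--Biskup--Hoffman--Kozma actually use, so in that sense it is aligned with the original source rather than with anything in the present paper.

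Two small technical points deserve attention if you flesh this out. First, the order of operations in the final step is slightly off: you should Tur\'an-thin $\un{\pa\Lambda}$ to a family of $m/3^d$ cubes with disjoint $3k$-dilates \emph{before} counting bad cubes, and then argue that either at least half of the thinned family is good (yielding $|\pa^\om\Lambda| \geq c_3|\Lambda|^{(d-1)/d}$) or at least $m/(2\cdot 3^d)$ of them are bad; as written, ``a tenth of the cubes are bad'' does not interact cleanly with the $3^{-d}$ thinning factor when $d\geq 3$. Second, the structural claim that a good boundary cube $\un{B}(x)$ contributes an open edge of $\pa^\om\Lambda$ requires that the adjacent cube outside $\un\Lambda$ actually contain a vertex of $\B{C}_\infty$; this is not automatic from your definition of \emph{good}, so you should either enlarge the definition to require the density condition on all of $\un{B}_3(x)$ or argue via the crossing cluster in $\un{B}_3(x)$ directly. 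Both fixes are routine and do not affect the architecture of the argument.
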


The next corollary is similar to Proposition A.1 in \cite{BBHK}; we include the proof because it is short.

\begin{coro} Let $d \geq 2$, $p > p_c(d)$ and $\al \in (0,1)$. There are positive constants $c_1(p,d)$, $c_2(p,d)$, $c_3(p,d)$ and an almost surely finite random variable $R = R(\om)$ such that whenever $n \geq R$, we have the following lower bound on $|\pa^\om \Lambda|$ for each $\om$-connected $\Lambda$ satisfying $\Lambda \subset \textbf{\rm \textbf{C}}_{2n}$ and $|\Lambda | \geq n^\al$:
\begin{align}
| \pa^\om \Lambda | \geq c_3 | \Lambda |^{ (d-1) / d} \,.
\end{align}
Moreover, we have the following tail bounds on $R$:
\begin{align}
\prob_p( R > n) \leq c_1 n^d \exp \left( - c_2 n^{\al(d-1)/d} \right) \,.
\end{align}

\label{coro_BBHK}
\end{coro}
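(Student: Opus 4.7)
The plan is to reduce to Proposition~\ref{prop_BBHK} by a translation argument and a union bound, then produce the random variable $R$ via Borel--Cantelli.

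First, I would pass from the ``rooted at $0$'' statement of Proposition~\ref{prop_BBHK} to a statement about $\omega$-connected subgraphs sitting anywhere inside $\mathbf{C}_{2n}$. For each $x \in \Lambda(2n)$, translation invariance of $\prob_p$ together with Proposition~\ref{prop_BBHK} (applied with the threshold $t := n^{\alpha(d-1)/d}$, so that $t^{d/(d-1)} = n^\alpha$) yields positive constants $c_1, c_2, c_3$ depending only on $p$ and $d$ such that
\begin{align}
\prob_p\Big( \exists\, \Lambda \ni x,\ \omega\text{-connected},\ |\Lambda| \geq n^\alpha,\ |\pa^\om \Lambda| < c_3 |\Lambda|^{(d-1)/d} \Big) \leq c_1 \exp\!\left(-c_2 n^{\alpha(d-1)/d}\right).
\end{align}
Since every $\omega$-connected $\Lambda \subset \mathbf{C}_{2n}$ must contain at least one vertex of $\Lambda(2n)$, a union bound over the at most $(4n+1)^d \leq c(d)n^d$ vertices of $\Lambda(2n)$ gives
\begin{align}
\prob_p(A_n) \leq c_1 n^d \exp\!\left(-c_2 n^{\alpha(d-1)/d}\right),
\end{align}
where $A_n$ is the event that some $\omega$-connected $\Lambda \subset \mathbf{C}_{2n}$ with $|\Lambda| \geq n^\alpha$ satisfies $|\pa^\om \Lambda| < c_3 |\Lambda|^{(d-1)/d}$.

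Second, I would define
\begin{align}
R(\om) := \sup\{ n \in \N : A_n \text{ occurs in } \om \},
\end{align}
with the convention $\sup \emptyset = 0$. Because $\sum_n \prob_p(A_n) < \infty$, Borel--Cantelli gives $R < \infty$ almost surely, and by construction, whenever $n > R(\om)$, no $\omega$-connected $\Lambda \subset \mathbf{C}_{2n}$ with $|\Lambda| \geq n^\alpha$ can fail the desired edge boundary lower bound. This yields the first claim of the corollary (with $R + 1$ in place of $R$, which we absorb into notation).

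For the tail bound, $\{R > n\} = \bigcup_{m > n} A_m$, so
\begin{align}
\prob_p(R > n) \leq \sum_{m = n+1}^\infty c_1 m^d \exp\!\left(-c_2 m^{\alpha(d-1)/d}\right).
\end{align}
A standard tail estimate for such stretched-exponential sums bounds the right-hand side by $c_1' n^d \exp(-c_2' n^{\alpha(d-1)/d})$ after absorbing constants (e.g.\ by replacing $c_2$ with $c_2/2$ to dominate the polynomial and geometric factors from the sum). Relabeling the constants gives the stated bound. The only place requiring genuine thought is the absorption step producing the clean $n^d \exp(-c_2 n^{\alpha(d-1)/d})$ form, but this is routine; the real content of the corollary is already encoded in Proposition~\ref{prop_BBHK}, and the main obstacle---controlling exceptional subgraphs not rooted at $0$---is handled by the elementary translation-plus-union-bound argument above.
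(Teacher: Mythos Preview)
Your proposal is correct and follows essentially the same route as the paper: translate Proposition~\ref{prop_BBHK} to each vertex of $\Lambda(2n)$, take a union bound to control the probability of the bad event $A_n$, and invoke Borel--Cantelli to produce the almost surely finite $R$. The only cosmetic difference is that the paper asserts $\{R>n\}\subset \cal{E}_n$ directly to read off the tail bound, whereas you sum $\sum_{m>n}\prob_p(A_m)$ and absorb constants; your route is equally valid and the content is identical.
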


\begin{proof} Let $c_3$ be as in Proposition \ref{prop_BBHK}, and for $\al \in (01,)$ let $\cal{E}_n$ be the following event:
\begin{align}
\Big \{\text{$\exists \Lambda, \om$-connected with $\Lambda \subset \textbf{\rm \textbf{C}}_{2n}$,\, $|\Lambda| \geq n^{\al}$ but $| \pa^\om \Lambda | < c_3|\Lambda |^{(d-1) /d}$}  \Big\} \,. 
\end{align}
Apply Proposition \ref{prop_BBHK} to every point in the box $\Lambda(2n)$ with $t = n^{\al(d-1)/d}$ to obtain
\begin{align}
\prob_p(\cal{E}_n) \leq c_1 n^d \exp \left( - c_2 n^{\al(d-1)/d} \right) \,.
\end{align}
These probabilities are summable in $n$. Let $R$ be the (random) smallest natural number such that that $n \geq R$ implies $\cal{E}_n^c$ occurs. As $\{ R > n \} \subset \cal{E}_n$, the proof is complete. \end{proof}

We now introduce a tool for controlling the density of the infinite cluster within a large box. 

\begin{prop} (Durrett-Schonmann \cite{Durrett_Schonmann}, Gandolfi~\cite{Gandolfi})\, Let $d \geq 2$ and $p > p_c(d)$. Recall that  $\theta_p(d) = \prob_p( 0 \in \text{\rm{\B{C}}}_\infty)$ is the density of the infinite cluster. For any $\e >0$, there are positive constants $c_1(p,d,\e)$ and $c_2(p,d,\e)$ so that 
\begin{align}
\prob_p\left( \frac{| \giant |}{ |\Lambda(n) |} \notin (\theta_p(d) - \e, \theta_p(d) + \e) \right) \leq c_1 \exp \Big( - c_2 n^{d-1} \Big)\,.
\end{align}
\label{Gandolfi}
\end{prop}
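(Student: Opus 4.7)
The plan is to replace the non-local event $\{x \in \B{C}_\infty\}$ by a local proxy whose associated random variables are short-range dependent, and then to use exponential decay of finite cluster radii to control the error. For $k \in \N$ and $x \in \Z^d$, set $\Lambda(x,k) := x + [-k,k]^d$ and define
\begin{align}
\phi_k(x) := \1\big\{x \leftrightarrow \pa_* (\Lambda(x,k) \cap \Z^d) \text{ via open edges of } \Lambda(x,k)\big\}.
\end{align}
The collection $\{\phi_k(x)\}_{x \in \Z^d}$ is $2k$-range dependent. Since $\{x \in \B{C}_\infty\} \subset \{\phi_k(x) = 1\}$ and the intersection of the latter events over $k \in \N$ coincides with the former, monotone convergence gives $\E_p[\phi_k(0)] \downarrow \theta_p(d)$. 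Writing $\psi_k(x) := \phi_k(x) - \1\{x \in \B{C}_\infty\}$, the event $\{\psi_k(x) = 1\}$ is exactly the event that $x$ belongs to a finite open cluster of radius at least $k$, and we have the decomposition
\begin{align}
|\giant| \;=\; \sum_{x \in \Lambda(n)} \phi_k(x) \;-\; \sum_{x \in \Lambda(n)} \psi_k(x).
\end{align}

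Next, choose $k = k(p,d,\e)$ large enough that $0 \leq \E_p[\phi_k(0)] - \theta_p(d) < \e/4$. For the error term, invoke the theorem of Chayes-Chayes-Grimmett-Kesten-Schonmann that, in the supercritical regime, the radius of a finite open cluster has an exponential tail: $\E_p[\psi_k(0)] \leq c_1 \exp(-c_2 k)$; enlarging $k$ if needed, we arrange $\E_p[\psi_k(0)] < \e/4$ as well. With $k$ fixed, the variables $\phi_k(x)$ are $\{0,1\}$-valued and depend only on edges within $\Lambda(x,k)$. A standard partitioning of $\Z^d$ into cosets modulo $4k$ produces $(4k)^d$ sub-collections, each consisting of independent indicators; applying Hoeffding's inequality on each and union bounding gives
\begin{align}
\prob_p\!\left( \Big| \sum_{x \in \Lambda(n)} \phi_k(x) - \E_p\!\Big[\sum_{x \in \Lambda(n)} \phi_k(x)\Big] \Big| > \tfrac{\e}{4} |\Lambda(n)| \right) \leq c_1 \exp(-c_2 n^d / k^d),
\end{align}
and an identical concentration estimate for $\sum_x \psi_k(x)$. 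Combining these with the expectation bounds above yields $\prob_p(||\giant|/|\Lambda(n)| - \theta_p(d)| \geq \e) \leq c_1 \exp(-c_2 n^d)$, which \emph{a fortiori} implies the claimed surface-order tail $c_1 \exp(-c_2 n^{d-1})$ since $n^d \geq n^{d-1}$ for $n \geq 1$.

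The main obstacle is the input on exponential decay of the finite cluster radius in the entire supercritical phase $p > p_c(d)$. For $d \geq 3$ this rests on Grimmett-Marstrand and the identification of $p_c(d)$ with the slab percolation threshold, and for $d = 2$ it follows from planar duality. Once this input is granted, everything else above is soft. An alternative route, closer to Gandolfi's original argument, replaces $\phi_k$ by Pisztora's coarse-graining indicator that a $k$-block possesses a unique dense crossing cluster which merges into $\B{C}_\infty$; the same dependency-partition plus Hoeffding then applies and delivers the genuine surface-order rate $\exp(-c_2 n^{d-1})$ natively, matching the statement exactly.
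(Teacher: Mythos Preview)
The paper does not prove this proposition; it is quoted from Durrett--Schonmann and Gandolfi as a known input. So there is no paper proof to compare against, only your sketch.

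Your argument has a genuine gap. The variables $\psi_k(x) = \1\{x \text{ lies in a finite open cluster of radius} \geq k\}$ are \emph{not} finite-range dependent: whether the cluster containing $x$ is finite is a global property of the configuration, not something determined by edges in $\Lambda(x,k)$. Hence the partition-into-cosets-plus-Hoeffding step, which is correct for $\sum_x \phi_k(x)$, does not apply to $\sum_x \psi_k(x)$, and the line ``an identical concentration estimate for $\sum_x \psi_k(x)$'' is unjustified. This is not a cosmetic issue: the conclusion you draw from it, namely a volume-order bound $\exp(-c_2 n^d)$ on the two-sided deviation, is actually \emph{false} for the lower tail. Closing the $O(n^{d-1})$ edges crossing $\partial[-n,n]^d$ forces $|\giant| \leq c n^{d-1}$, so $\prob_p(|\giant|/|\Lambda(n)| < \theta_p - \e) \geq (1-p)^{c n^{d-1}}$, which shows the lower-deviation rate is genuinely surface order and cannot be upgraded to $n^d$.

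Your final paragraph identifies the correct route: Pisztora's block indicator (the $k$-block contains a unique large crossing cluster which joins $\B{C}_\infty$) \emph{is} a local event once one conditions on the block's connection to its neighbours, and the renormalised process dominates a highly supercritical Bernoulli site process. That machinery is what actually delivers the surface-order rate in the cited references. Your first approach only yields the (easier) upper-deviation bound; for the lower deviation you must either invoke the block construction or directly cite the result as the paper does.
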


Proposition \ref{Gandolfi} was later refined by Pisztora \cite{P}; the following is an immediate corollary. 

\begin{coro} Let $d \geq 2$ and $p > p_c(d)$. Let $r > 0$, let $Q \subset \R^d$ be a translate of the cube $[-r,r]^d$ and let $\e > 0$. There are positive constants $c_1(p,d,\e),c_2(p,d,\e)$ so that 
\begin{align}
\prob \left(  \frac{ | \text{\rm{\B{C}}}_\infty \cap Q |}{\cal{L}^d(Q) } \notin (\theta_p(d) - \e, \theta_p(d) + \e ) \right) \leq c_1 \exp \left(-c_2r^{d-1} \right) \,.
\end{align}
\label{density_control}
\end{coro}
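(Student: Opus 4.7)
The plan is to deduce the result for an arbitrary cube $Q = [-r,r]^d + y$, $y \in \R^d$, by sandwiching $Q$ between axis-parallel cubes of integer side-length centered at integer points, for which Proposition \ref{Gandolfi} (as refined by Pisztora \cite{P}, which upgrades the exponential rate from $n^{d-1}$ applied to $\Lambda(n)$ to apply to arbitrary large boxes) directly applies via translation invariance of $\prob_p$.

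First I would dispose of the case of small $r$: if $r \leq r_0$ for some $r_0 = r_0(p,d,\e)$ to be chosen, then $c_1 \exp(-c_2 r^{d-1}) \geq c_1 \exp(-c_2 r_0^{d-1})$ can be made $\geq 1$ by taking $c_1$ large, so the inequality is trivial. Assume henceforth $r > r_0 \geq 1$. Choose $y', y'' \in \Z^d$ with $|y - y'|_\infty \leq 1$ and $|y - y''|_\infty \leq 1$, and set $n' := \lfloor r \rfloor - 2$ and $n'' := \lceil r \rceil + 2$. Positioning $y', y''$ appropriately, the cubes $Q' := [-n',n']^d + y'$ and $Q'' := [-n'',n'']^d + y''$ satisfy $Q' \subset Q \subset Q''$, so
\[
\frac{|\B{C}_\infty \cap Q'|}{\cal{L}^d(Q'')} \;\leq\; \frac{|\B{C}_\infty \cap Q|}{\cal{L}^d(Q)} \;\leq\; \frac{|\B{C}_\infty \cap Q''|}{\cal{L}^d(Q')}.
\]

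By translation invariance of the Bernoulli measure $\prob_p$, the random variables $|\B{C}_\infty \cap Q'| / |\Lambda(n')|$ and $|\B{C}_\infty \cap Q''| / |\Lambda(n'')|$ have the same laws as $|\giant|/|\Lambda(n')|$ and $|\giant|/|\Lambda(n'')|$ respectively (using $n'$ and $n''$ in the role of $n$). Applying Pisztora's version of Proposition \ref{Gandolfi} with tolerance $\e/2$, and noting $\cal{L}^d(Q')/\cal{L}^d(Q) = (n'/r)^d = 1 + O(1/r)$ and similarly for $Q''$, we can choose $r_0$ large enough (depending on $p,d,\e$) so that for $r \geq r_0$ the multiplicative volume error is swallowed into $\e/2$. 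On the intersection of the two concentration events, both the upper and lower sandwiching ratios lie in $(\theta_p(d) - \e, \theta_p(d)+\e)$, and hence so does $|\B{C}_\infty \cap Q|/\cal{L}^d(Q)$.

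A union bound over the two concentration events gives the desired probability bound $c_1 \exp(-c_2 (n')^{d-1}) + c_1 \exp(-c_2(n'')^{d-1}) \leq c_1' \exp(-c_2' r^{d-1})$, after adjusting constants to absorb the constant-factor gap between $n'$, $n''$ and $r$. The only obstacle is really bookkeeping: making sure Pisztora's refinement applies to cubes of the form $[-n,n]^d + x$ for $x \in \Z^d$ with the rate $n^{d-1}$ uniform in $x$, which follows directly from translation invariance, and choosing the thresholds so the small-$r$ trivial case matches the large-$r$ regime.
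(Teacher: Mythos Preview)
Your proposal is correct and is exactly the natural way to fill in the details. The paper itself gives no proof: it simply states that Proposition~\ref{Gandolfi} ``was later refined by Pisztora~\cite{P}; the following is an immediate corollary,'' and leaves it at that. Your sandwiching of $Q$ between integer-centered cubes, followed by translation invariance and absorption of the $O(1/r)$ volume-ratio error into the tolerance, is the standard way to make this immediate corollary explicit, and your handling of small $r$ by adjusting $c_1$ is the right bookkeeping move.
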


The next result is fundamental, it is used in Section \ref{sec:final}.

\begin{thm} (Grimmett-Marstrand~\cite{GrMa})\, Let $d\geq 2$ and $p > p_c(d)$, and let ${\text{\rm \B{C}}}(0)$ denote the open cluster containing the origin. There is a positive constant $c(p)$ so that
\begin{align}
\prob_p( | \text{\rm {\B{C}}}(0)| = n) \leq \exp\left(- c n^{(d-1)/d} \right) \,.
\end{align}
\label{kzgm}
\end{thm}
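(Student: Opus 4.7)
The plan is to combine the isoperimetric inequality on $\Z^d$ with a renormalization / Peierls argument. If $|\B{C}(0)| = n < \infty$, then $\B{C}(0)$ is finite and, in particular, must be separated from the infinite cluster $\B{C}_\infty$. In supercritical percolation this separation is costly at the coarse-grained level, and the cost grows like the surface area of $\B{C}(0)$, which by isoperimetry is at least of order $n^{(d-1)/d}$.

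The first step is to introduce a renormalization scale $k$ and the $k$-cubes $\un{B}(x)$ as in Section~\ref{sec:coarse_0}. Following the finite-size criterion of Grimmett--Marstrand, declare a $k$-cube $\un{B}(x)$ to be \emph{good} if the percolation configuration restricted to $\un{B}_3(x)$ produces a unique crossing cluster of density close to $\theta_p(d)$ which connects across all $2d$ faces, and \emph{bad} otherwise. For $p > p_c(d)$, one can choose $k = k(p,d)$ large so that $\prob_p(\un{B}(x)\text{ is bad}) \leq \exp(-c k)$, with the event depending only on edges internal to $\un{B}_3(x)$; consequently, two bad-cube events at $*$-distance at least $3$ in the renormalized lattice are independent.

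The second step converts a finite open cluster into a bad coarse-grained surface. Suppose $|\B{C}(0)| = n$, and let $\un{\B{C}}(0)$ be the collection of $k$-cubes meeting $\B{C}(0)$. Any $k$-cube $\un{B}(x)$ lying in the outer vertex boundary $\Delta \un{\B{C}}(0)$ in the renormalized lattice must be bad: otherwise its dense crossing cluster would, on one hand, touch $\B{C}(0)$ through a neighbor in $\un{\B{C}}(0)$, and on the other hand propagate through further good cubes to reach $\infty$, forcing $|\B{C}(0)| = \infty$, a contradiction. Provided $n$ is large enough that $\B{C}(0)$ is not contained in a single $3k$-cube (which we may assume as we are proving an asymptotic tail bound), the coarse image $\un{\B{C}}(0)$ has at least $\lceil n/(2k)^d\rceil$ cubes, and by the standard vertex isoperimetric inequality on $\Z^d$ applied in the renormalized lattice, $|\Delta \un{\B{C}}(0)| \geq c(d)\,(n/k^d)^{(d-1)/d}$. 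By Proposition~\ref{star_conn} (applied in the renormalized lattice), $\Delta \un{\B{C}}(0)$ is a $*$-connected set of bad cubes surrounding the origin.

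The third step is a Peierls estimate. For $s \geq c(d)\,(n/k^d)^{(d-1)/d}$, enumerate $*$-connected sets $\un{\Gamma}$ of $s$ $k$-cubes surrounding the origin and contained in $\Lambda(n)$: by the Kesten--Zhang enumeration bound (Proposition~\ref{span_tree}) there are at most $n^{d}\,[c(d)]^s$ such sets. After passing to a sub-collection of cubes that are pairwise at $*$-distance $\geq 3$ (which loses only a factor $3^d$ in the size by Tur\'an / greedy packing), the bad events on these cubes become mutually independent, and each carries probability at most $\exp(-ck)$. Taking $k = k(p,d)$ large enough that $c(d)\cdot\exp(-ck/3^d) < e^{-1}$, a union bound over $s \geq c'\,n^{(d-1)/d}/k^{d-1}$ yields
\begin{align}
\prob_p(|\B{C}(0)| = n) \;\leq\; n^d \sum_{s \geq c'n^{(d-1)/d}/k^{d-1}} e^{-s} \;\leq\; \exp\!\left(-c(p,d)\,n^{(d-1)/d}\right),
\end{align}
for all $n$ large, which gives the claim (absorbing the polynomial factor and small $n$ into the constant). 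The main obstacle is ensuring that bad-cube events on well-separated cubes are genuinely independent and occur with probability decaying in $k$; this is precisely the content of the Grimmett--Marstrand finite-size criterion together with the fact that the defining event is internal to $\un{B}_3(x)$. Everything else is a renormalized Peierls argument of the same flavor used in Proposition~\ref{finite_per}.
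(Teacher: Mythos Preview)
The paper does not supply a proof of this statement; Theorem~\ref{kzgm} is quoted in the appendix as a known result from the literature, so there is nothing in the paper to compare your argument against. Your overall strategy --- renormalize, exhibit a $*$-connected surface of bad cubes enclosing $\B{C}(0)$, lower-bound its size by isoperimetry in the coarse lattice, and run a Peierls estimate --- is indeed the standard route to this bound.

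That said, your second step has a gap as written. You assert that each cube $\un{B}(x)$ in the \emph{outer} boundary $\Delta\un{\B{C}}(0)$ is bad, because otherwise its crossing cluster would ``touch $\B{C}(0)$ through a neighbour'' and then ``propagate through further good cubes to reach $\infty$''. Neither inference is justified. For the first: a $*$-neighbour $\un{B}(y)\in\un{\B{C}}(0)$ can abut $\partial\un{B}_3(x)$, so the portion of $\B{C}(0)$ lying inside $\un{B}_3(x)$ may have arbitrarily small diameter and need not meet the crossing cluster, even under your ``unique crossing cluster'' definition of good. For the second: a single good cube does not by itself place its crossing cluster inside $\B{C}_\infty$; that would require a separate percolation-of-good-cubes statement, which is circular here. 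The clean repair is to use the \emph{inner} vertex boundary together with a stronger Pisztora-style good event (unique open cluster of diameter $\geq k$ in $\un{B}_3(x)$, and this cluster meets every sub-$k$-cube). If $\un{B}(x)\in\un{\B{C}}(0)$ has a $*$-neighbour $\un{B}(y)\notin\un{\B{C}}(0)$ and $\B{C}(0)\not\subset\un{B}_3(x)$, then $\B{C}(0)$ contains an open path from the centre cube $\un{B}(x)$ to $\partial\un{B}_3(x)$ of diameter $\geq 2k$; goodness forces this path into the unique large cluster $C$, whence $C\subset\B{C}(0)$, yet $C$ also meets $\un{B}(y)$ --- contradicting $\un{B}(y)\notin\un{\B{C}}(0)$. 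With this correction (and the corresponding strengthening of the good event, which still has probability $\geq 1-e^{-ck}$), your isoperimetric lower bound and Peierls estimate go through exactly as you wrote them.
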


We now apply these tools to the $G_n \in \cal{G}_n$, and begin with a basic observation.

\begin{lem} For all $n$, if $G_n \in \cal{G}_n$ is disconnected, then $G_n$ is a finite disjoint union of connected optimal subgraphs. 
\label{opt_conn}
\end{lem}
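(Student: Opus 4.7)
The plan is to show that the conductance of $G_n$ is a weighted average of the conductances of its connected components, and then invoke the mediant inequality together with the defining minimality of $\Chee$ to force each component to be optimal. Finiteness of the number of components is immediate since $G_n$ itself is a finite subgraph of $\giant$.

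First I would enumerate the connected components of $G_n$ as $G_n^{(1)}, \dots, G_n^{(M)}$. The crucial preliminary observation is that
\begin{align}
|\pa^\om G_n| = \sum_{q=1}^M |\pa^\om G_n^{(q)}|,
\end{align}
which reduces to showing that the open edge boundaries of distinct components are disjoint and that no open edge has both endpoints in different components of $G_n$. The latter is where Remark~\ref{rem:barry_white} enters: since $G_n$ is determined by its vertex set, any open edge between two vertices of $G_n$ is included in $G_n$, and hence any two vertices in $G_n$ joined by such an edge lie in the same connected component. Therefore each edge of $\pa^\om G_n^{(q)}$ has its exterior endpoint in $\B{C}_\infty \setminus G_n$, and the sets $\pa^\om G_n^{(q)}$ are pairwise disjoint subsets of $\pa^\om G_n$ whose union is all of $\pa^\om G_n$.

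Next I would check that each component $G_n^{(q)}$ is itself a valid subgraph of $\giant$, i.e.\ satisfies $0 < |G_n^{(q)}| \leq |\giant|/d!$. Nonemptiness is immediate, and $|G_n^{(q)}| \leq |G_n| \leq |\giant|/d!$ follows from validity of $G_n$. By minimality of $\Chee$ we then get $\vp_{G_n^{(q)}} \geq \Chee$ for every $q$. On the other hand, the identity above combined with $|G_n| = \sum_q |G_n^{(q)}|$ expresses
\begin{align}
\Chee \;=\; \vp_{G_n} \;=\; \frac{\sum_{q=1}^M |\pa^\om G_n^{(q)}|}{\sum_{q=1}^M |G_n^{(q)}|}
\end{align}
as a convex combination (with weights $|G_n^{(q)}|/|G_n|$) of the numbers $\vp_{G_n^{(q)}}$. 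Since each $\vp_{G_n^{(q)}} \geq \Chee$ and their weighted average equals $\Chee$, every $\vp_{G_n^{(q)}}$ must equal $\Chee$, so each component is optimal. This is the only nontrivial step, and I do not expect a serious obstacle—the main point to be careful about is invoking Remark~\ref{rem:barry_white} to justify the boundary decomposition.
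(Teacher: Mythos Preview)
Your proposal is correct and follows essentially the same approach as the paper: both use Remark~\ref{rem:barry_white} to decompose $|\pa^\om G_n|$ additively over components, and both then apply the mediant/weighted-average inequality together with the minimality of $\Chee$ to force each component's conductance to equal $\Chee$. Your version is simply a more explicit write-up of the paper's sketch.
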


\begin{proof}The proof follows from the identity that for $a,b,c,d > 0$,
\begin{align}
\frac{a +b}{c +d} \geq \min\left( \frac{a}{c}, \frac{b}{d} \right) \,.
\end{align}
Remark \ref{rem:barry_white} implies the connected components of any $G_n \in \cal{G}_n$ have disjoint open edge boundaries. If $G_n$ is optimal and disconnected, decompose $G$ into two disjoint subgraphs $G_n'$ and $G_n''$ and we must have $\vp_{G_n} = \vp_{G_n'} = \vp_{G_n''}$. \end{proof}

We now use Corollary \ref{density_control} to obtain a high probability upper bound on $\Chee$.

\begin{lem} Let $d \geq 2$ and $p > p_c(d)$. There are positive constants $c_1(p,d), c_2(p,d)$ and $c_3'(p,d)$ so that 
\begin{align}
\prob_p ( \Chee > c_3' n^{-1} ) \leq c_1 \exp \left(-c_2 n^{d-1} \right) \,.
\end{align}
\label{chee_asym}
\end{lem}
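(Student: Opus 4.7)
The goal is to produce, with overwhelming probability, a single explicit valid subgraph $H \subset \giant$ whose conductance is of order $n^{-1}$, at which point the definition of $\Chee$ will deliver the conclusion with the prescribed tail estimate. The simplest candidate is to intersect $\B{C}_\infty$ with an axis-parallel box strictly inside $[-n,n]^d$, so that the boundary of $H$ in $\B{C}_\infty$ consists only of edges crossing the $2d$ faces of the box.

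Concretely, I would fix $c \in (0,1)$ depending only on $p$ and $d$ (to be calibrated below), set $B := [-cn,cn]^d$, and take $H := \B{C}_\infty \cap B$. Since $B$ is strictly contained in $[-n,n]^d$, I immediately get $|\pa_{\B{C}_\infty} H| \leq c(d)\, n^{d-1}$ deterministically, just by counting edges of $\Z^d$ crossing $\pa B$. For the volume, I would apply Corollary \ref{density_control} twice with a fixed $\e \in (0,\theta_p(d)/2)$: once to $B$, giving $|H| \geq (\theta_p(d)-\e)(2cn)^d$, and once to $[-n,n]^d$, giving $|\giant| \geq (\theta_p(d)-\e)(2n)^d$ as well as $|H| \leq (\theta_p(d)+\e)(2cn)^d$. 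Each application fails with probability at most $c_1 \exp(-c_2 n^{d-1})$.

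The only calibration step is choosing $c = c(p,d)$ small enough that $(\theta_p(d)+\e)\, c^d \leq (\theta_p(d)-\e)/d!$; since $\e < \theta_p(d)/2$ is fixed, any $c \leq (2\cdot d!)^{-1/d}$ will do. On the intersection of the two density events this guarantees $0 < |H| \leq |\giant|/d!$, so $H$ is a valid subgraph, and then
\begin{align}
\vp_H \;=\; \frac{|\pa_{\B{C}_\infty} H|}{|H|} \;\leq\; \frac{c(d)\, n^{d-1}}{(\theta_p(d)-\e)(2cn)^d} \;=:\; c_3'\, n^{-1},
\end{align}
from which $\Chee \leq \vp_H \leq c_3' n^{-1}$ by definition \eqref{eq:modified_chee}. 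The failure probability inherited from Corollary \ref{density_control} is of the form $c_1 \exp(-c_2 n^{d-1})$, matching the statement. I do not foresee any substantial obstacle here: the only nontrivial ingredient is the density estimate Corollary \ref{density_control}, and the exponential rate $n^{d-1}$ in the tail is exactly what that corollary supplies, so no further combinatorial or geometric input is needed.
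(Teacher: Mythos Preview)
Your proposal is correct and follows essentially the same approach as the paper: intersect $\B{C}_\infty$ with a smaller axis-parallel box, use Corollary~\ref{density_control} on both boxes to verify validity and lower-bound the volume, and bound the open edge boundary deterministically by the number of edges crossing the box faces. The paper's proof is nearly identical, taking the specific radius $r = n/2(d!)^{1/d}$ where you use a generic $c$, but the logic and the only nontrivial input (the density estimate) are the same.
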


\begin{proof} Abbreviate $\theta_p(d)$ as $\theta$ and work in the high probability event from Corollary \ref{density_control} for the box $[-r,r]^d$ with $r := n / 2(d!)^{1/d}$ for some $\e >0$. Also work in the corresponding high probability event for the box $[-n,n]^d$ with the same $\e$. Write $\e' \equiv \e / \theta$ and let $H_n$ be $\B{C}_\infty \cap [-r,r]^d$, so that
\begin{align}
(1-\e') \theta(2r)^d \leq |H_n| \leq (1 - \e') \theta (2n)^d \,
\end{align}
holds when $\e$ is taken small depending on $d$. Thus, $H_n$ is valid with volume on the order of $n^d$. The size of $\pa^\om H_n$ is at most the $\cal{H}^{d-1}$-measure of $\pa [-r,r]^d$, which completes the proof.  \end{proof}

We use Lemma \ref{chee_asym} with Corollary \ref{coro_BBHK} to bound the volume of any $G_n \in \cal{G}_n$ from below.

\begin{lem} Let $d\geq 2$ and $p > p_c(d)$. There are positive constants $c_1(p,d), c_2(p,d)$ and $\eta_1(p,d)$ so that,
\begin{align}
\prob_p(\exists G_n \in \cal{G}_n\, \text{\rm such that } |G_n| < \eta_1 n^d) \leq c_1 \exp \left(-c_2 n^{(d-1)/2d}\right)\,. 
\end{align}
\label{opt_macro}
\end{lem}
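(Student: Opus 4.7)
The plan is to combine Lemma~\ref{chee_asym}, Lemma~\ref{opt_conn} and Corollary~\ref{coro_BBHK}. Work inside the high-probability event $\cal{E}_1 := \{\Chee \leq c_3' n^{-1}\}$ from Lemma~\ref{chee_asym}. By Lemma~\ref{opt_conn}, every connected component $G_n^{(q)}$ of an optimal $G_n$ is itself optimal, so $|\pa^\om G_n^{(q)}| \leq (c_3'/n) |G_n^{(q)}|$ on $\cal{E}_1$. Since $G_n^{(q)}$ is a non-empty finite subgraph of $\B{C}_\infty$, there is at least one open edge joining $G_n^{(q)}$ to $\B{C}_\infty \setminus G_n^{(q)}$, whence $|\pa^\om G_n^{(q)}| \geq 1$. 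This forces $|G_n^{(q)}| \geq n/c_3'$, and in particular $|G_n^{(q)}| \geq n^{1/2}$ for all $n$ sufficiently large depending on $p$ and $d$.

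Next, apply Corollary~\ref{coro_BBHK} with the choice $\al = 1/2$ and write $\cal{E}_2 := \{R \leq n\}$ for the event that the associated random variable satisfies $R \leq n$. Note $G_n^{(q)} \subset \giant \subset \B{C}_{2n}$, and $G_n^{(q)}$ is $\om$-connected as a component of $G_n \subset \B{C}_\infty$. On $\cal{E}_1 \cap \cal{E}_2$ and for large $n$, the corollary gives $|\pa^\om G_n^{(q)}| \geq c_3 |G_n^{(q)}|^{(d-1)/d}$. Combining the two bounds on $|\pa^\om G_n^{(q)}|$ yields
\begin{align}
c_3 |G_n^{(q)}|^{(d-1)/d} \leq (c_3'/n)|G_n^{(q)}|\,, \hspace{3mm}\text{so}\hspace{3mm} |G_n^{(q)}| \geq (c_3/c_3')^d n^d \,.
\end{align}
Setting $\eta_1 := (c_3/c_3')^d$ and using $|G_n| \geq |G_n^{(q)}|$ finishes the deterministic part.

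It remains to bound the probability of $\cal{E}_1^c \cup \cal{E}_2^c$ by a union bound: Lemma~\ref{chee_asym} contributes $c_1 \exp(-c_2 n^{d-1})$ and Corollary~\ref{coro_BBHK} with $\al = 1/2$ contributes $c_1 n^d \exp(-c_2 n^{(d-1)/2d})$. The latter dominates and absorbs the polynomial prefactor into the exponential after a harmless reduction of $c_2$, yielding the stated rate $c_1 \exp(-c_2 n^{(d-1)/2d})$. No real obstacle arises here; the work has been done in the preceding lemmas, and the only minor subtlety is ensuring that even the smallest possible component is forced to have volume $\geq n^{1/2}$ before the Berger--Biskup--Hoffman--Kozma estimate can be invoked, which is handled by the trivial lower bound $|\pa^\om G_n^{(q)}| \geq 1$.
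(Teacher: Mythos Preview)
Your proof is correct and follows essentially the same approach as the paper: work on the intersection of $\{\Chee \leq c_3' n^{-1}\}$ and $\{R \leq n\}$ with $\al = 1/2$, use Lemma~\ref{opt_conn} to pass to a connected optimal piece, use the nonemptiness of the open edge boundary to get past the threshold $n^{1/2}$, and then apply Corollary~\ref{coro_BBHK} to conclude with $\eta_1 = (c_3/c_3')^d$. The only cosmetic difference is that you phrase the argument for an arbitrary component $G_n^{(q)}$ while the paper extracts one connected $H_n \in \cal{G}_n$; your version directly yields the component-wise lower bound the paper later invokes.
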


\begin{proof} Set $\al = 1/2$, and work in the intersection of the high probability events
\begin{align}
\{ \Chee \leq c_3' n^{-1} \} \cap \{ R \leq n \}
\end{align}
respectively from Lemma \ref{chee_asym} and Corollary \ref{coro_BBHK}. For $G_n \in \cal{G}_n$, use Lemma \ref{opt_conn} to extract from $G_n$ a connected subgraph $H_n \subset G_n$ with $H_n \in \cal{G}_n$. If $|H_n| \leq n^{1/2}$, that $H_n \subset \giant$ implies $\pa^\om H_n$ is non-empty, and hence that $\vp_{H_n} > n^{-1/2}$. This is impossible when $\Chee \leq c_3' n^{-1}$ and $n$ is large.

Suppose $|H_n| \geq n^{1/2}$, and use the event from Corollary \ref{coro_BBHK}:
\begin{align}
| \pa^\om H_n | \geq c_3 |H_n|^{(d-1)/d} \,.
\end{align}
Thus,
\begin{align}
c_3 |H_n|^{-1/d} \leq \vp_{H_n} \leq c_3' n^{-1} \,,
\end{align}
and the claim holds with $\eta_1 = (c_3 / c_3')^d$. \end{proof}

Using Lemma \ref{opt_macro} with Lemma \ref{opt_conn} and the bound $|\Lambda(n) | \leq (3 n)^d$, we deduce the following.

\begin{coro} Let $d \geq 2$ and $p > p_c(d)$. There are positive constants $c_1(p,d), c_2(p,d)$ and $\eta_4(p,d)$ so that 
\begin{align}
\prob_p\left(
\begin{matrix}  \text{\rm$\exists G_n \in \cal{G}_n$ such that the number of }\\
\text{\rm connected components of $G_n$ exceeds $\eta_4$} 
\end{matrix} 
\right) \leq c_1 \exp \left(-c_2 n^{(d-1)/2d} \right)  \,.
\end{align}
\label{conn_finite}
\end{coro}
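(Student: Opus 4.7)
The plan is to apply Lemma \ref{opt_conn} to each $G_n \in \cal{G}_n$ to extract connected sub-optimizers, and then use Lemma \ref{opt_macro} to force each such component to occupy a macroscopic fraction of $\Lambda(n)$. Since the components are disjoint and all live inside $\Lambda(n)$, a simple volume count caps their number.

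In detail, let $\cal{E}_n$ denote the high probability event from Lemma \ref{opt_macro} that every $G_n \in \cal{G}_n$ satisfies $|G_n| \geq \eta_1 n^d$. Work in $\cal{E}_n$, and fix an arbitrary $G_n \in \cal{G}_n$ with connected components $G_n^{(1)}, \dots, G_n^{(M)}$. By Lemma \ref{opt_conn}, each $G_n^{(q)}$ is itself optimal, that is, $\vp_{G_n^{(q)}} = \Chee$. Moreover, each $G_n^{(q)}$ is a valid subgraph of $\giant$, because $0 < |G_n^{(q)}| \leq |G_n| \leq |\giant|/d!$. Hence each $G_n^{(q)} \in \cal{G}_n$, so within $\cal{E}_n$ we may apply the lower bound of Lemma \ref{opt_macro} to every component individually: $|G_n^{(q)}| \geq \eta_1 n^d$ for all $q$.

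Since the components are pairwise vertex-disjoint and contained in $\Lambda(n)$, whose cardinality is at most $(3n)^d$, summing over $q$ yields
\begin{align}
M \eta_1 n^d \leq \sum_{q=1}^M |G_n^{(q)}| \leq |\Lambda(n)| \leq (3n)^d\,,
\end{align}
so $M \leq 3^d / \eta_1 =: \eta_4$. As this bound is uniform in the choice of $G_n \in \cal{G}_n$, on $\cal{E}_n$ no element of $\cal{G}_n$ can have more than $\eta_4$ connected components. The probability estimate then transfers directly from Lemma~\ref{opt_macro}: there exist positive constants $c_1(p,d)$ and $c_2(p,d)$ with
\begin{align}
\prob_p(\cal{E}_n^c) \leq c_1 \exp\!\left(-c_2 n^{(d-1)/2d}\right)\,,
\end{align}
which is the desired bound. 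There is no real obstacle here; the only subtle point is verifying that each connected component of an optimizer is itself a \emph{valid} element of $\cal{G}_n$, not merely an optimal subgraph, so that Lemma \ref{opt_macro} applies componentwise. This is immediate from the volume inequality $|G_n^{(q)}| \leq |G_n| \leq |\giant|/d!$.
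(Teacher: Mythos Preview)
Your proof is correct and follows exactly the approach indicated by the paper, which simply cites Lemma~\ref{opt_macro}, Lemma~\ref{opt_conn}, and the bound $|\Lambda(n)| \leq (3n)^d$. Your explicit verification that each connected component is itself a valid (hence optimal) element of $\cal{G}_n$ is the only detail the paper leaves implicit, and you handle it correctly.
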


Having established that Cheeger optimizers are usually volume order, we now exhibit control on the open edge boundary of each Cheeger optimizer. 

\begin{lem} Let $d \geq 2$ and $p > p_c(d)$. There are positive constants $c_1(p,d)$, $c_2(p,d)$ and $\eta_2(p,d)$, $\eta_3(p,d) $ so that 
\begin{align}
\prob_p\left( \exists \text{$G_n \in \cal{G}_n$ \rm so that } | \pa^\om G_n| < \eta_2 n^{d-1}\, \text{\rm or } |\pa^\om G_n | > \eta_3 n^{d-1} \right) \leq c_1 \exp \left(-c_2 n^{(d-1)/2d} \right) \,.
\end{align}
\label{surface_boundary}
\end{lem}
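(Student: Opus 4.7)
The plan is to handle the upper and lower bounds separately, assembling them from results already established in this appendix. Both bounds will hold on the intersection of a handful of high-probability events, and the slowest decay rate among these events dictates the rate in the statement.

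For the upper bound, I work within the event $\{\Chee \leq c_3' n^{-1}\}$ from Lemma~\ref{chee_asym}. Every $G_n \in \cal{G}_n$ is optimal, so $|\pa^\om G_n| = \vp_{G_n} |G_n| = \Chee \cdot |G_n|$, and since $G_n$ is valid we have $|G_n| \leq |\giant|/d! \leq (2n+1)^d / d! \leq c(d) n^d$. Combining these gives $|\pa^\om G_n| \leq \eta_3 n^{d-1}$ for a suitable $\eta_3(p,d)$. The failure probability of this event is at most $c_1 \exp(-c_2 n^{d-1})$.

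For the lower bound, I first use Lemma~\ref{opt_conn} to extract from each $G_n \in \cal{G}_n$ a connected optimal subgraph $H_n$; note that $H_n$ is itself an element of $\cal{G}_n$. On the event of Lemma~\ref{opt_macro} applied to $H_n$, we have $|H_n| \geq \eta_1 n^d$. On the event of Corollary~\ref{coro_BBHK} with $\al = 1/2$, and using that $H_n \subset \giant \subset \B{C}_{2n}$ with $|H_n| \geq n^{1/2}$ for $n$ large, this yields $|\pa^\om H_n| \geq c_3 |H_n|^{(d-1)/d} \geq c_3 (\eta_1 n^d)^{(d-1)/d} = \eta_2 n^{d-1}$. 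Finally, since distinct connected components of $G_n$ have pairwise disjoint open edge boundaries (any open edge joining two components would fuse them), one has $\pa^\om H_n \subset \pa^\om G_n$, and hence $|\pa^\om G_n| \geq \eta_2 n^{d-1}$.

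The probabilistic cost is dominated by the failure probability from Lemma~\ref{opt_macro}, namely $c_1 \exp(-c_2 n^{(d-1)/2d})$, which matches the rate in the statement. The argument is essentially routine given the appendix's tools; the only point meriting care is the transfer of the lower bound on $|\pa^\om H_n|$ for a connected optimal piece $H_n$ to a bound on $|\pa^\om G_n|$, and this is immediate from the disjointness of the open edge boundaries of the components of $G_n$.
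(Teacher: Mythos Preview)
Your proof is correct and follows essentially the same route as the paper: the upper bound comes from $\Chee \leq c_3' n^{-1}$ together with the trivial volume bound, and the lower bound comes from extracting a connected optimal piece $H_n$ via Lemma~\ref{opt_conn}, applying Lemma~\ref{opt_macro} to get $|H_n|\geq \eta_1 n^d$, and then invoking Corollary~\ref{coro_BBHK}. Your explicit justification of $\pa^\om H_n \subset \pa^\om G_n$ via disjointness of component boundaries (using Remark~\ref{rem:barry_white}) is a detail the paper leaves implicit, but otherwise the arguments match.
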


\begin{proof} Work in the high probability event $\{ \Chee \leq c_3' n^{-1} \}$ from Lemma \ref{chee_asym} and consider $G_n \in \cal{G}_n$. Set $\eta_3 = (c_3')3^d$. As $\vp_{G_n} \leq c_3'n^{-1}$ and $|G_n| \leq (3n)^d$,
\begin{align}
| \pa^\om G_n | \leq \eta_3 n^{d-1} \,.
\end{align}
To prove the second half of this lemma, set $\al = 1/2$ and work in the intersection of the events
\begin{align}
\Big\{R \leq n \Big\} \cap \Big\{\forall G_n \in \cal{G}_n, \text{ we have } |G_n| \geq \eta_1 n^d \Big\}
\end{align}
from Corollary \ref{coro_BBHK} and Lemma \ref{opt_macro}. Given $G_n \in \cal{G}_n$, extract through Lemma \ref{opt_conn} a subgraph $H_n \subset G_n$ which is connected and optimal. Within $\{ R \leq n \}$, 
\begin{align}
|\pa^\om G_n | \geq |\pa^\om H_n | \geq c_3 \left( \eta_1 n^d \right)^{(d-1)/d} \,,
\end{align}
and we set $\eta_2 = c_3 (\eta_1)^{(d-1)/d}$.  \end{proof}

\subsection{Tools from graph theory, approximation, miscellany.}

We begin by stating Tur{\'a}n's theorem, used in the proof of Proposition \ref{finite_per}. For a graph $(\rm{V},\rm{E})$, an \emph{independent} set of vertices $A \subset \rm{V}$ is a collection of vertices such that no two elements of $A$ are joined by an edge in $\rm{E}$. For a finite graph $(\rm{V},\rm{E})$, the \emph{independence number} of $(\rm{V},\rm{E})$ is 
\begin{align}
\al(\rm{V},\rm{E}) := \max \Big\{ |A| : A \text{ is an independent subset of } \rm{V} \Big\} \,.
\end{align}

\begin{thm} (\cite{Zh_Steele}, Lemma 6)\, For $(\rm{V},\rm{E})$ a finite graph with maximal degree $\delta$,
\begin{align}
\al(\rm{V},\rm{E}) \geq \frac{ |\rm{V}|}{\delta +1 } \,.
\end{align}
\label{turan}
\end{thm}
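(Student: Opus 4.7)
The plan is to prove this via a short first-moment (Caro-Wei) argument. I would sample a uniformly random linear order $\pi$ on the finite vertex set $V$ and define
\begin{align}
A := \Big\{ v \in V : \pi(v) < \pi(u) \text{ for every neighbor } u \text{ of } v \Big\},
\end{align}
i.e.\ the vertices that precede all of their neighbors in $\pi$. Observe that $A$ is automatically independent: if two adjacent vertices $u, v$ both lay in $A$, then we would have $\pi(u) < \pi(v)$ and $\pi(v) < \pi(u)$, a contradiction.

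Next I would compute $\mathbb{E}|A|$. For a fixed vertex $v$ of degree $d(v)$, the event $\{v \in A\}$ is exactly the event that $v$ is the minimum of the $d(v)+1$ vertices in $\{v\} \cup \{u : u \sim v\}$ under the uniform random order, which by symmetry has probability $1/(d(v)+1)$. Since $d(v) \leq \delta$ for every $v \in V$ by hypothesis, this probability is at least $1/(\delta+1)$. By linearity of expectation,
\begin{align}
\mathbb{E}|A| \;=\; \sum_{v \in V} \frac{1}{d(v)+1} \;\geq\; \frac{|V|}{\delta+1},
\end{align}
so at least one realization of $\pi$ yields an independent set of cardinality at least $|V|/(\delta+1)$, establishing the claim.

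I do not anticipate any real obstacle: this is the classical Caro-Wei inequality and the whole argument fits in three lines. An equivalent greedy proof would repeatedly select an arbitrary vertex $v$ in the current graph, add $v$ to the independent set, and delete $v$ together with its (at most $\delta$) neighbors; each round removes at most $\delta+1$ vertices while producing one independent vertex, so the process terminates with an independent set of size at least $\lceil |V|/(\delta+1) \rceil$. Either proof suffices for the application to the $*$-adjacency graph on $k$-cubes (where $\delta = 3^d - 1$) used in Proposition~\ref{finite_per}.
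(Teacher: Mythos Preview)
Your argument is correct; this is the standard Caro--Wei proof, and the greedy alternative you sketch is equally valid. The paper itself does not prove this statement at all: it simply cites it as Lemma~6 of \cite{Zh_Steele} and uses it as a black box in the Peierls argument of Proposition~\ref{finite_per} (with $\delta = 3^d - 1$, exactly as you note), so there is nothing to compare.
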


The next tool is standard, it gives bounds on the number of $*$-connected subsets of $\Z^d$ containing the origin. 

\begin{prop} (\cite{Grimmett}, Equation (4.24))\,There is a positive constant $c(d)$ so that the number of $*$-connected subsets of $\Z^d$ of size $s$ containing the origin is at most $[c(d)]^s$. 

\label{span_tree}
\end{prop}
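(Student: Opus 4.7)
The plan is to execute the standard spanning-tree / depth-first-search encoding of $*$-connected subsets of $\Z^d$. Let $S$ be a $*$-connected subset of $\Z^d$ with $0 \in S$ and $|S| = s$. View $S$ as the vertex set of a finite graph $T_S$ whose edges join $x,y \in S$ whenever $x \sim_* y$; by hypothesis $T_S$ is connected, and the degree of any vertex in $T_S$ is at most $3^d - 1$ since each $z \in \Z^d$ has exactly $3^d - 1$ distinct $*$-neighbors. Fix (via some canonical rule) a spanning subtree $\mathcal{T}_S$ of $T_S$, which has $s-1$ edges.

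Next, I would perform the depth-first traversal of $\mathcal{T}_S$ starting from the origin. This produces a walk $(x_0, x_1, \dots, x_{2(s-1)})$ with $x_0 = 0$, in which each edge of $\mathcal{T}_S$ is crossed exactly twice (once outward, once on backtrack). At each step $i$, the vertex $x_{i+1}$ is one of at most $3^d - 1$ $*$-neighbors of $x_i$, so the number of such walks rooted at the origin is at most $(3^d - 1)^{2(s-1)}$. The map $S \mapsto (\mathcal{T}_S,\text{DFS walk})$ is injective: $S$ is recovered as the set of vertices visited by the walk. Hence
\begin{align}
\#\{S \subset \Z^d : 0 \in S,\ |S| = s,\ S \text{ is } *\text{-connected}\} \leq (3^d - 1)^{2(s-1)} \leq [c(d)]^s
\end{align}
for $c(d) := (3^d - 1)^2$, which proves the claim.

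The argument is entirely routine; there is no real obstacle. The only point worth being a little careful about is that the DFS walk be defined in a deterministic way from $\mathcal{T}_S$ (for instance by breaking ties among children at each vertex using the lexicographic order on $\Z^d$), so that distinct subsets $S$ produce distinct walks. Since the bound $(3^d - 1)^{2(s-1)}$ can be absorbed into a single base raised to the power $s$, no sharper combinatorial estimate is needed for the applications in Section~\ref{sec:Peierls}.
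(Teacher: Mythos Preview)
Your proposal is correct and is precisely the standard lattice-animal counting argument; the paper itself does not give a proof but simply cites Grimmett \cite{Grimmett}, whose argument (Equation (4.24)) is exactly the spanning-tree / DFS encoding you outline.
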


Moving back to the continuum, we make a short remark.

\begin{rmk} We use the phrase isometric image throughout the paper. An isometry $i : \R^{d-1} \to \R^d$ is a function preserving Euclidean distances. Given $F \subset \R^d$, say $F$ is the \emph{isometric image} of a set $E \subset \R^{d-1}$ if there is an isometry $i : \R^{d-1} \to \R^d$ so that $i(E) = F$. 
\label{rem:isometric}
\end{rmk}

We now discuss the surface energy functional defined in \eqref{eq:new_surface_energy}.

\begin{lem} (\cite{stflour}, Section 14.2)\, Let $\tau$ be a norm on $\R^d$. The associated surface energy functional $\cal{I}_\tau$ is \emph{lower semicontinuous}: if $E_n$ is a sequence of Borel sets in $\R^d$ such that $\1_{E_n} \to \1_E$ in $L^1$-sense, then
\begin{align}
\cal{I}_\tau (E) \leq \liminf_{n\to \infty} \cal{I}_\tau (E_n)
\end{align}
\label{lsc} 
\end{lem}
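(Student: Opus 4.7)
The plan is to exploit the variational definition of $\cal{I}_\tau$ as a supremum of linear functionals, each of which is continuous with respect to $L^1$-convergence of indicator functions. A supremum of continuous functions is always lower semicontinuous, so this yields the claim immediately.

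First I would fix an admissible test function $f \in C_c^\infty(\R^d, \wh{W}_\tau)$ and consider the linear functional
\begin{align}
L_f(F) := \int_F \text{div}\, f(x)\, \cal{L}^d(dx) = \int_{\R^d} \1_F(x)\, \text{div}\, f(x)\, \cal{L}^d(dx),
\end{align}
defined on Borel subsets $F$ of $\R^d$. Because $f$ is smooth and compactly supported, $\text{div}\, f$ is bounded and supported in some compact $K$. Writing $M := \| \text{div}\, f \|_\infty$, the estimate
\begin{align}
| L_f(E_n) - L_f(E) | \leq M \int_K | \1_{E_n} - \1_E |\, d\cal{L}^d
\end{align}
together with the hypothesis $\1_{E_n} \to \1_E$ in $L^1$ shows $L_f(E_n) \to L_f(E)$ as $n \to \infty$.

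Next, the definition \eqref{eq:new_surface_energy} gives $L_f(E_n) \leq \cal{I}_\tau(E_n)$ for every $n$, so passing to the $\liminf$ yields
\begin{align}
L_f(E) = \lim_{n \to \infty} L_f(E_n) \leq \liminf_{n \to \infty} \cal{I}_\tau(E_n).
\end{align}

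Finally, I would take the supremum over all $f \in C_c^\infty(\R^d, \wh{W}_\tau)$ on the left-hand side. Since the right-hand side does not depend on $f$, this gives
\begin{align}
\cal{I}_\tau(E) = \sup_{f} L_f(E) \leq \liminf_{n \to \infty} \cal{I}_\tau(E_n),
\end{align}
completing the proof. There is no genuine obstacle here: the whole argument is a routine application of the principle that a pointwise supremum of continuous functionals is lower semicontinuous, and compact support of the test functions is what makes the continuity of each $L_f$ with respect to $L^1$-convergence automatic.
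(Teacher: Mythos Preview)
Your proof is correct and is exactly the standard argument: each $L_f$ is $L^1$-continuous because $\text{div}\,f$ is bounded with compact support, and a supremum of continuous functionals is lower semicontinuous. The paper does not supply its own proof of this lemma; it simply cites the result from Section~14.2 of Cerf's Saint-Flour notes \cite{stflour}, where the same argument appears.
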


A consequence of lower semicontinuity is the following approximation result.

\begin{prop} Consider the Wulff crystal $W_{p,d}$ of Theorem \ref{main_L1}. Given $\e >0$, there is a polytope $P_\e \subset W_{p,d}$ so that 

(i) $| \cal{I}_{p,d}(P_\e) - \cal{I}_{p,d}(W_{p,d}) | \leq \e$

(ii) $\cal{L}^d ( W_{p,d} \setminus P_\e) \leq \e $

\label{poly_limit}
\end{prop}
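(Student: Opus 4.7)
The plan is to approximate $W_{p,d}$ from the inside by a sequence of convex polytopes and verify that both the volume and surface energy converge. The key observation is that $W_{p,d}$ is a convex body: it is convex by the Wulff construction (it is a dilation of the ball $\wh{W}_{p,d}$ in the dual norm $\beta_{p,d}'$), bounded by Lemma \ref{containment}, and has non-empty interior since $\cal{L}^d(W_{p,d}) = 2^d/d! > 0$.

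First I would construct inscribed polytope approximants. For each $n$, pick a finite $(1/n)$-net $\{x_{n,i}\}_{i=1}^{m_n}$ of $\pa W_{p,d}$ (possible by compactness of $\pa W_{p,d}$) and set $P_n := \mathrm{conv}\{x_{n,i}\}$. By convexity of $W_{p,d}$ we have $P_n \subset W_{p,d}$, and a standard convex-geometry argument (e.g.\ via the support function) gives ${\rm d}_H(P_n, W_{p,d}) \to 0$. The volume bound (ii) is then immediate: $\1_{P_n} \to \1_{W_{p,d}}$ pointwise $\cal{L}^d$-a.e.\ on $[-1,1]^d$, so dominated convergence yields $\cal{L}^d(W_{p,d} \setminus P_n) \to 0$, and any sufficiently large $n$ will satisfy (ii).

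For (i) I would combine two ingredients. The lower bound comes from Lemma~\ref{lsc} applied to the sequence $\1_{P_n} \to \1_{W_{p,d}}$ in $L^1$, giving
\begin{align}
\cal{I}_{p,d}(W_{p,d}) \leq \liminf_{n \to \infty} \cal{I}_{p,d}(P_n) \,.
\end{align}
The matching upper bound is the monotonicity property: for any two convex bodies $K \subset K'$ and any norm $\tau$ on $\R^d$, one has $\cal{I}_\tau(K) \leq \cal{I}_\tau(K')$. Applied to $P_n \subset W_{p,d}$ this gives $\cal{I}_{p,d}(P_n) \leq \cal{I}_{p,d}(W_{p,d})$ for every $n$, so $\cal{I}_{p,d}(P_n) \to \cal{I}_{p,d}(W_{p,d})$ and (i) also holds for large $n$. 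Setting $P_\e := P_n$ for any such $n$ completes the argument.

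The main obstacle is justifying the monotonicity of $\cal{I}_{p,d}$ on nested convex bodies; this is classical but not entirely routine. One clean route is via the nearest-point projection $\pi : W_{p,d} \to P_n$, which is well-defined because $P_n$ is closed and convex, is $1$-Lipschitz, and restricts to a surjection $\pa W_{p,d} \to \pa P_n$. Combined with the fact that for $\cal{H}^{d-1}$-a.e.\ $x \in \pa P_n$ the outer normal $v_{P_n}(x)$ equals the outer normal $v_{W_{p,d}}(y)$ at any preimage $y \in \pi^{-1}(x) \cap \pa W_{p,d}$ with $y \neq x$, the area formula for $1$-Lipschitz maps yields the stated inequality against any positively homogeneous integrand such as $\beta_{p,d}(\cdot)$. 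Alternatively one may invoke the representation $\cal{I}_\tau(K) = \int_{\mathbb{S}^{d-1}} \tau(v)\, S_K(dv)$ where $S_K$ is the surface area measure of the convex body $K$, together with the classical fact that $K \subset K'$ implies $S_K \leq S_{K'}$ in the sense of $\int f\,dS_K \leq \int f\,dS_{K'}$ for every non-negative positively $1$-homogeneous $f$.
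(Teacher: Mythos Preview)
The paper does not actually prove this proposition; it merely records it in the appendix with the comment ``A consequence of lower semicontinuity is the following approximation result'' and no further argument. Your outline---inscribed convex polytopes, volume convergence by dominated convergence, and surface-energy convergence by combining the lower-semicontinuity Lemma~\ref{lsc} with monotonicity of $\cal{I}_{p,d}$ on nested convex bodies---is exactly how one would flesh out that hint, and the strategy is sound.

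There is, however, a genuine error in your Route~1 justification of monotonicity. The claim that for $\cal{H}^{d-1}$-a.e.\ $x\in\pa P_n$ the outer normal $v_{P_n}(x)$ coincides with $v_{W_{p,d}}(y)$ at any preimage $y\in\pi^{-1}(x)\cap\pa W_{p,d}$ with $y\neq x$ is false. Take $d=2$, $P_n$ a square inscribed in a disc $W$: points on the circular arc above a horizontal edge project straight down to that edge, where the outward normal to $P_n$ is vertical, but the outward normal to $W$ at those arc points is radial. The $1$-Lipschitz projection argument therefore gives only the \emph{Euclidean} perimeter inequality $\cal{H}^{d-1}(\pa P_n)\le\cal{H}^{d-1}(\pa W_{p,d})$, not the anisotropic one.

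Your Route~2 is the correct path, though the phrasing needs sharpening: it is not true that $S_K\le S_{K'}$ against arbitrary non-negative $1$-homogeneous integrands, but it \emph{is} true against support functions of convex bodies, via the mixed-volume identity $\int_{\mathbb{S}^{d-1}} h_L(v)\,S_K(dv)=d\,V(K,\dots,K,L)$ together with monotonicity of mixed volumes in each argument. Since $\beta_{p,d}=h_{\wh{W}_{p,d}}$ is the support function of the unit Wulff crystal, this gives $\cal{I}_{p,d}(P_n)=d\,V(P_n,\dots,P_n,\wh{W}_{p,d})\le d\,V(W_{p,d},\dots,W_{p,d},\wh{W}_{p,d})=\cal{I}_{p,d}(W_{p,d})$, which is what you need.
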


The last object we deal with is $\sfS$ from Section~\ref{sec:norm}, specifically the nicely varying property defined in Section \ref{sec:norm_3}. Let $\mathbb{S}_+^{d-1}$ denote the closed, upper hemisphere of the unit $(d-1)$-sphere $\mathbb{S}^{d-1}$. Write $T\mathbb{S}^{d-1}$ for the tangent bundle of $\mathbb{S}^{d-1}$, and let $T \mathbb{S}_+^{d-1}$ be the restriction of this bundle to the upper hemisphere. An \emph{orthonormal $k$-frame} on $\mathbb{S}_+^{d-1}$ is an assignment taking each $x \in \mathbb{S}_+^{d-1}$ to an ordered collection of $k$ orthonormal vectors in $T_x\mathbb{S}_+^{d-1}$, the tangent plane to $\mathbb{S}_+^{d-1}$ at $x$. This may be written in Euclidean coordinates due to natural embeddings of the sphere and tangent spaces into $\R^d$, and may thus be written as a function
\begin{align}
f : x \mapsto ( v_1(x) , \dots, v_k(x) ) \,,
\end{align}
with $x \in \mathbb{S}_+^{d-1} \subset \R^d$, and with each $v_i(x) \in T_x\mathbb{S}_+^{d-1} \subset \R^d$. 

\begin{prop} There is an orthonormal $(d-1)$-frame $f$ and a constant $C >0$ so that for $\e > 0$, whenever $x,y \in \mathbb{S}_+^{d-1}$ satisfy $|x - y|_2 \leq \e$, we have
\begin{align}
\big|(f(x))_i  - (f(y))_i \big|_2 \leq C\e
\end{align}
for all $i \in \{1, \dots, d-1\}$. 

\label{chosen}
\end{prop}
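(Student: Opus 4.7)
\noindent\textbf{Proof proposal for Proposition~\ref{chosen}.} The plan is to exhibit an explicit smooth frame on an open neighborhood of $\mathbb{S}_+^{d-1}$ in $\R^d$ and then invoke compactness. The essential observation is that $\mathbb{S}_+^{d-1}$ lies strictly away from the south pole $-e_d$, so the ``rotation sending $e_d$ to $x$ along the short geodesic'' is well-defined and smooth on a neighborhood of $\mathbb{S}_+^{d-1}$, which sidesteps the hairy ball obstruction (the latter only lives on the full sphere).

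Concretely, for $x = (x_1,\dots,x_d) \in \R^d$ with $x_d > -1$, define
\begin{align}
v_i(x) := e_i - \frac{x_i}{1 + x_d}(x + e_d), \qquad i = 1, \dots, d-1,
\end{align}
and set $f(x) := (v_1(x), \dots, v_{d-1}(x))$. The denominator $1 + x_d$ is bounded below by $1$ on $\mathbb{S}_+^{d-1}$ (where $x_d \geq 0$), so $f$ is smooth on an open neighborhood $U$ of $\mathbb{S}_+^{d-1}$ in $\R^d$.

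The key routine check is that $f(x)$ is an orthonormal $(d-1)$-frame at each $x \in \mathbb{S}_+^{d-1}$. Using $|x|^2 = 1$, a direct calculation gives $v_i(x) \cdot x = x_i - \tfrac{x_i}{1+x_d}(|x|^2 + x_d) = 0$, so $v_i(x) \in T_x\mathbb{S}^{d-1}$. Setting $\alpha := 1/(1+x_d)$ and $w := x + e_d$ and expanding,
\begin{align}
v_i(x) \cdot v_j(x) = \delta_{ij} - 2\alpha x_i x_j + \alpha^2 x_i x_j\, |w|^2 = \delta_{ij} - 2\alpha x_i x_j + 2\alpha^2 x_i x_j (1 + x_d) = \delta_{ij},
\end{align}
since $|w|^2 = |x|^2 + 2x_d + 1 = 2(1+x_d)$ and $\alpha(1+x_d) = 1$.

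Having established that $f$ is a well-defined orthonormal $(d-1)$-frame on $\mathbb{S}_+^{d-1}$, the Lipschitz estimate follows from compactness: $f$ is $C^\infty$ on the open set $U$, so its Jacobian is bounded on the compact set $\mathbb{S}_+^{d-1} \subset U$. Applying the mean value inequality along the line segment joining $x$ and $y$ (which remains in $U$ when $|x-y|_2$ is small, and otherwise the bound is trivial by taking $C$ larger than $2$ to dominate the case $|x-y|_2 \geq \tfrac{1}{2}\,\mathrm{dist}(\mathbb{S}_+^{d-1}, \R^d \setminus U)$) yields a uniform constant $C > 0$ such that $|v_i(x) - v_i(y)|_2 \leq C |x-y|_2$ for all $x,y \in \mathbb{S}_+^{d-1}$ and all $i$. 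There is no real obstacle here—the only subtle point is choosing the frame in a way that avoids the singularity at $-e_d$, which the explicit formula achieves by construction.
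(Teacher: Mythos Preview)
Your proof is correct and follows essentially the same strategy as the paper: build a smooth frame on the sphere minus the south pole and deduce the Lipschitz bound from compactness. The only difference is cosmetic---the paper pulls back the standard frame on $\R^{d-1}$ via stereographic projection from the south pole, whereas you write down the explicit Householder reflection $v \mapsto v - \frac{v\cdot w}{1+x_d}\,w$ with $w = x+e_d$ (which sends $e_d \mapsto -x$ and $e_i \mapsto v_i(x)$); your formula is in fact the normalized differential of the paper's stereographic map, so the two constructions produce the same frame up to a conformal factor, and your version has the minor advantage of verifying orthonormality directly rather than appealing to conformality of stereographic projection.
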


\begin{proof} Let $s \in \mathbb{S}^{d-1}$ denote the south pole, with coordinate representation $(0, \dots, 0, -1)$ in $\mathbb{R}^d$. Consider the standard stereographic projection $\pi : \mathbb{S}^{d-1} \setminus \{s \} \to \R^{d-1}$. The image of $\mathbb{S}_+^{d-1}$ under $\pi$ is a closed disc $D \subset \R^{d-1}$ centered at the origin. The disc $D$ is \emph{parallelizable}, that is, we may construct a smooth $(d-1)$-frame $g$ on $D$. Indeed, one can take the standard basis for $\R^{d-1}$at each tangent space $T_y D$. Define $f$ as the pullback $\pi^* g$. As $f$ varies smoothly over a compact domain, each of its coordinate functions is Lipschitz, which completes the proof. \end{proof}



\bibliographystyle{abbrv}
\bibliography{iso_sources}
\nocite{*}

\end{document}